\documentclass[12pt,reqno]{amsart}
\usepackage{amssymb, amsmath, amsthm,amssymb,amsfonts}
\usepackage{tikz-cd}
\tikzcdset{
	arrow style=tikz,
    diagrams={>={Straight Barb[scale=0.8]}}
}
\usepackage[a4paper,hmargin=3cm,vmargin=3cm]{geometry}

\usepackage{extarrows}
\usepackage{mathrsfs}
\usepackage{arydshln}

\usepackage{mathtools} %such as \coloneqq

\usepackage{enumerate}
\usepackage{booktabs,float,bm}
\usepackage{xfrac}

\usepackage[square, comma, sort&compress, numbers]{natbib}
\usepackage[colorlinks = true, linkcolor = blue, citecolor = blue]{hyperref}
\numberwithin{equation}{section}
\theoremstyle{plain}
\newtheorem{theorem}{Theorem}[section]
\newtheorem{proposition}[theorem]{Proposition}
\newtheorem{corollary}[theorem]{Corollary}
\newtheorem{lemma}[theorem]{Lemma}

\theoremstyle{definition}
\newtheorem{definition}[theorem]{Definition}
\newtheorem{example}[theorem]{Example}
\newtheorem{remark}[theorem]{Remark}

\newcommand{\Z}{\ensuremath{\mathbb{Z}}}
\newcommand{\R}{\ensuremath{\mathbb{R}}}

\newcommand{\CP}[1]{\mathbb{C}P^{#1}}
\newcommand{\RP}[1]{\mathbb{R}P^{#1}}

\newcommand{\z}[1]{\ensuremath{\Z/2^{#1}}}

\newcommand{\ZZ}[1]{\ensuremath{\mathbb{Z}_{(#1)}}}

\newcommand{\PP}{\ensuremath{\mathcal{P}^1}}
\newcommand{\PD}{\ensuremath{\mathrm{PD}}}

\newcommand{\xra}{\ensuremath{\xrightarrow}}

\newcommand{\ol}{\ensuremath{\overline}}

\newcommand{\wtd}{\ensuremath{\widetilde}}

\newcommand{\lra}[1]{\ensuremath{\langle #1\rangle}}

\newcommand{\Hom}{\ensuremath{\mathrm{Hom}}}

\newcommand{\Sq}{\ensuremath{\mathrm{Sq}}}
\newcommand{\id}{\ensuremath{\mathrm{id}}}
\newcommand{\pt}{\ensuremath{\mathrm{pt}}}

\newcommand{\SO}{\ensuremath{\mathrm{SO}}}
\newcommand{\Spin}{\ensuremath{\mathrm{Spin}}}
\newcommand{\String}{\ensuremath{\mathrm{String}}}
\newcommand{\Fivebrane}{\ensuremath{\mathrm{Fivebrane}}}
\newcommand{\MSO}{\ensuremath{\mathrm{MSO}}}
\newcommand{\MSpin}{\ensuremath{\mathrm{MSpin}}}
\newcommand{\MString}{\ensuremath{\mathrm{MString}}}

\newcommand{\Lie}{\ensuremath{\mathrm{Lie}}}

\DeclareMathOperator{\im}{im}
\DeclareMathOperator{\cok}{coker}

\DeclareMathOperator{\sk}{sk}
\DeclareMathOperator{\fr}{fr}

 \newcommand{\smatwo}[2]{\ensuremath{\left(\begin{smallmatrix}
  #1\\
  #2
 \end{smallmatrix}\right)}}

\title[Cohomotopy Groups in Codimensions two and three]{Stable Cohomotopy in Codimensions Two and Three: From Algebraic Characterizations to Bordism-Theoretic Interpretations}

\author[P. Li]{Pengcheng Li}
\address{Department of Mathematics, Great Bay University,  {\rm 523000}, Dongguan, China}
\email{lipcaty@outlook.com}

\author[J. Pan]{Jianzhong Pan}
\address{
Institute of Mathematics,  Chinese Academy of Mathematics and Systems Science,  University of Chinese Academy of Sciences, {\rm 100190}, Beijing, China}
\email{pjz@amss.ac.cn}

\author[J. Wu]{Jie Wu}
\address{Hebei Normal University, Shijiazhuang, Hebei Province, \rm{050024}, China; Beijing Key Laboratory of Topological Statistics and Applications for Complex Systems, Beijing Institute of Mathematical Sciences and Applications  \rm{101408}, Beijing, China}
\email{wujie@bimsa.cn}

\subjclass[2020]{
	55Q55; %Cohomotopy groups
	55N22, %Bordism and cobordism theories and formal group laws in algebraic topology
	57R15, %(1980–now)Specialized structures on manifolds (spin manifolds, framed manifolds, etc.) 
	57R22. %Topology of vector bundles and fiber bundles
	57N65, %Algebraic topology of manifolds
}
\keywords{cohomotopy groups, cohomology operations, bordism groups, vector bundles, spin orientation, string orientation}

\begin{document}

\begin{abstract}
This paper investigates stable cohomotopy groups in codimensions two and three from complementary algebraic and geometric viewpoints. For general CW complexes, we give a complete characterization of stable cohomotopy in codimension two and a characterization in codimension three up to a $3$-primary parameter. Geometrically, we provide bordism-theoretic interpretations of these stable cohomotopy groups for oriented manifolds in codimension two and string manifolds in codimension three. As an application, we derive necessary and sufficient conditions for the existence of nowhere-vanishing sections of vector bundles, extending the foundational codimension-one results of Konstantis.
\end{abstract}

 \maketitle

 \tableofcontents

 \section{Introduction}\label{sec:intro}

For a based CW complex $X$, the $n$-th cohomotopy set $\pi^n(X)=[X,S^n]$ consists of homotopy classes of based maps from $X$ to the $n$-sphere. 
Cohomotopy theory has long played an important role in topology, geometry and physics, providing a homotopy-theoretic counterpart to generalized cohomology \cite{Borsuk36, Peterson56-2} and offering refined invariants for topological spaces and manifolds \cite{BF04,Bauer04}.

Bridging cohomotopy theory and geometric topology, the Pontryagin-Thom construction \cite[Chapter IX, Theorem 5.5]{Kosinski93} identifies the cohomotopy set $\pi^n(M)$ with the set $\Omega_k^{\fr}(M)$ of normally framed $k$-dimensional submanifolds for a closed smooth $(n+k)$-manifold $M$, yielding a group isomorphism in the stable range $n\geq k+2$. This correspondence has motivated a central program focused on establishing bordism-theoretic interpretations for the algebraic structures of cohomotopy groups. Taylor \cite{Taylor2012} resolved the Steenrod extension problem \cite{Steenrod1947} for the cohomotopy groups in codimension one and characterized $\pi^{\ast}(M^4)$ in all degrees by homotopy techniques, while Kirby, Melvin and Teichner \cite{KMT2012} provided a complete geometric description of the corresponding normally framed bordism groups of four manifolds. More recently, a systematic framework for codimension-one cohomotopy has been established through the work of Konstantis \cite{Kons2020} and Jung-Rot \cite{JR25}, who
developed bordism-theoretic interpretations for Taylor's algebraic characterization under spin, nonspin, and nonoriented settings.
Parallel to these advancements, recent progress in the homotopy decompositions of suspension spaces has enabled explicit characterizations of cohomotopy sets for manifolds of dimensions $5$ and $6$, see \cite{ACS24, LZ-5mfd, LPW-tlms}.

Despite these advancements, a systematic description of stable cohomotopy groups in higher codimensions remains significantly more elusive. %This research gap is primarily due to the rapidly increasing complexity of the group extension problems within the Postnikov tower of $S^n$, where the presence of nontrivial $k$-invariants and higher-order cohomology operations poses formidable algebraic challenges. 
In this paper, we advance the program of cohomotopy sets of manifolds by establishing complete algebraic characterizations of stable cohomotopy groups for closed smooth oriented manifolds in codimensions two and three. 
Our approach integrates two complementary perspectives. On the homotopy-theoretic side, we work with the Postnikov tower of spheres and apply the method of Larmore and Thomas \cite{LT72} to resolve the extension problems arising from nontrivial $k$-invariants. On the geometric side, we analyze the corresponding normally framed bordism groups by relating them to $G$-bordism groups along the Whitehead tower of the classifying space $B\mathrm{O}$ of orthogonal groups. Notably, we provide a pure bordism-theoretic interpretation for the stable cohomotopy of oriented manifolds in codimension two. 

A key insight of this work is that the geometric and homotopical perspectives on the stable cohomotopy groups of manifolds are intrinsically intertwined. The geometric filtration arising from the Whitehead tower of the classifying space $B\mathrm{O}$ reflects the homotopical filtration derived from the stable Postnikov tower of spheres. To be specific, the successive geometric obstructions to lifting tangential or normal structures, detected by characteristic classes, reflect the action of the cohomology operations (including primary and higher order operations) that are induced by the stable Postnikov system of $S^n$. This is also indicated by the homotopy equivalence $\mathbf{S}\to\varinjlim_m\mathbf{MO}\lra{m}$ of spectra \cite[Proposition 2.1.1]{Hovey97}.

%As a key application, we utilize these results to establish complete necessary and sufficient conditions for the existence of nowhere-vanishing sections of vector bundles, effectively refining the classical obstruction theory via spin and string orientations. This work recovers and generalizes the codimension-one results of Konstantis \cite{Kons2020} by extending the geometric framework to higher codimensions.

%Let $\varTheta$  the secondary cohomology operation based on the Adem relation $\Sq^2\Sq^2_{\Z}=0$. 
To state the main results, we introduce the following conventions and notation.  For integers $m,n\geq 1$, let $\Sq^m_\Z=\Sq^m\circ \rho_2$
be the composition of the usual Steenrod square operation $\Sq^m$ and the mod $2$ reduction $\rho_2$. For an abelian group $G$, we denote by ${}_2G$ the elementary $2$-torsion subgroup of $G$, which consists of elements of order dividing $2$.  
For any homomorphism $f\colon G(m)\to H(n)$ between abelian groups indexed by $m,n$, we denote by $\ker(f:m)$ the kernel of $f$ to indicate the degree $m$ of the domain, and denote by 
\[QH^{n}(X;f)=\cok(f)=H^{n}(X;\z{})/f(G(m))\] 
to indicate the codomain degree $m$. Examples of the homomorphisms $f$ involved here are cohomology operations and induced homomorphisms of $k$-invariants; for instance, we use notations 
\[\ker(\Sq^m_\Z:n)\subseteq H^{n}(X;\Z),\quad QH^{m+2}(X;\Sq^m)=\tfrac{H^{m+2}(X;\z{})}{\Sq^m_\Z(H^m(X;\Z))}.\]
Let $\delta\colon H^{m}(X;\z{})\to H^{m+1}(X;\Z)$ be the Bockstein homomorphism associated to the short exact sequence $0\to \Z\xra{\times 2}\Z\xra{\rho_2}\z{}\to 0.$ 
By abusing notation, we also write $\delta$ for the corresponding Bockstein homomorphism in homology; this should cause no ambiguity, as the intended meaning will always be clear from the context. 

Recall that for each $n\geq 1$, the canonical inclusion $\iota_n\colon S^n\to K_n(\Z)$ induces the $n$-th cohomotopy Hurewicz map 
\[h^n\colon \pi^n(M)\to H^n(M;\Z),\]
which is a homomorphism of groups whenever $\dim(M)\leq 2n-2$. 
Our first main result provides a complete characterization of the cohomotopy groups in codimension two for smooth oriented manifolds.

\begin{theorem}[see Theorem \ref{thm:cohtp=-2:mfds}]\label{mainthm:codim2}
Let $M$ be a connected closed smooth oriented $(n+2)$-manifold with $n\geq 4$. There is a short exact sequence of groups 
\begin{equation}\label{SES:cohtp=-2:mfds}
	0\to \z{1-\varepsilon}\oplus QH^{n+1}(M;\Sq^2_\Z)\to \pi^n(M)\xra{h^n}\ker(\Sq^2_\Z:n)\to 0,
\end{equation}
where $\varepsilon=0$ if $M$ is spin, and $\varepsilon=1$ otherwise. Moreover, the above short exact sequence splits if and only if 
 \[\Sq^2\big(\delta^{-1}({}_2\ker(\Sq^2_\Z:n))\big)\subseteq\Sq^2_\Z\big(H^{n-1}(X;\Z)\big).\]
 
\end{theorem}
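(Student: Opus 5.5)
Our plan is to compute $\pi^n(M)=[M,S^n]$ through the stable Postnikov tower of $S^n$, since $\dim M=n+2\le 2n-2$ for $n\ge 4$. Through dimension $n+2$, the relevant stage is the two-stage system
\[
P_2\to P_1\to K_n(\Z),\qquad P_1=\mathrm{fib}\big(\Sq^2_\Z\colon K_n(\Z)\to K_{n+2}(\z{})\big),
\]
and $P_2\to P_1$ is the principal fibration with fibre $K_{n+1}(\z{})$ (this accounts for $\pi_{n+2}S^n=\z{}$), classified by a $k$-invariant $k_2\in H^{n+3}(P_1;\z{})$. This $k$-invariant is the one attached to the Adem relation $\Sq^2\Sq^2_\Z=0$, so it defines the corresponding secondary operation $\varTheta$. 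Since $M^{n+2}\to S^n$ factors through $P_2$ up to homotopy, we have $\pi^n(M)=[M,P_2]$.

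\textbf{Step 1: the exact sequences.} The image of $h^n$ consists of the classes $x$ that lift to $P_1$ and then to $P_2$. Lifting to $P_1$ requires $\Sq^2_\Z x=0$. The obstruction to the further lift lies in $H^{n+3}(M)=0$, so it always vanishes. Hence $\im h^n=\ker(\Sq^2_\Z:n)$.

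The kernel of $h^n$ is assembled from two pieces:
\begin{enumerate}[(a)]
\item The fibre $K_{n+1}(\z{})$ of $P_1\to K_n(\Z)$ contributes $H^{n+1}(M;\z{})$ modulo the image of $\Sq^2_\Z$ on $H^{n-1}(M;\Z)$, which gives $QH^{n+1}(M;\Sq^2_\Z)$.
\item The top fibre $K_{n+2}(\z{})$ contributes $H^{n+2}(M;\z{})=\z{}$ modulo the indeterminacy of $k_2$. This indeterminacy is $\Sq^2\colon H^n(M;\z{})\to H^{n+2}(M;\z{})$ together with the $\Sq^2_\Z$-type contributions.
\end{enumerate}
By Wu's formula, $\Sq^2\colon H^n(M;\z{})\to H^{n+2}(M;\z{})$ is cup product with $v_2=w_2(M)$. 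It is therefore onto exactly when $M$ is non-spin, by Poincaré duality. This produces the factor $\z{1-\varepsilon}$.

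The extension of (b) by (a) inside $\ker h^n$ splits. Our argument is that both pieces sit in fibres of loop type, so $\ker h^n$ is the image of $[M,\Omega K]$ for a product $K$ of Eilenberg–MacLane spaces modulo the indeterminacy. Alternatively, we would argue that $\ker h^n$ is an $\mathbb{F}_2$-vector space because $2\eta=0$. We would use the Larmore–Thomas method to confirm that no $\eta^2$-type extension occurs within the kernel. This identifies the left term of \eqref{SES:cohtp=-2:mfds}.

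\textbf{Step 2: the splitting criterion.} This is the main obstacle. The sequence splits iff every element $x\in{}_2\ker(\Sq^2_\Z:n)$ admits a lift $\tilde x\in\pi^n(M)$ with $2\tilde x=0$, since the free part lifts freely. Following Larmore–Thomas, $2\tilde x$ is computed by a functional operation. Writing $x=\delta y$ with $y\in H^{n-1}(M;\z{})$, we expect
\[
2\tilde x=\Sq^2 y\quad\text{in } QH^{n+1}(M;\Sq^2_\Z),
\]
with possibly an extra term in the $\z{1-\varepsilon}$ summand. This is the cohomotopy analogue of the relation $2\iota=\eta\circ\ldots$ on the Moore space, namely $\Sq^2$ detecting the attaching map of $P^{n+1}(2)$.

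We would verify the formula on the universal example, the Moore space $P^{n}(2)$ with the map collapsing onto it, and then transport it by naturality. Changing the lift $\tilde x$ alters $2\tilde x$ by twice an element of the kernel, which is zero. So the only freedom is the choice of $y\in\delta^{-1}(x)$, and that ambiguity is already absorbed by $\Sq^2_\Z$ of integral classes. Splitting is therefore equivalent to $\Sq^2(\delta^{-1}({}_2\ker))\subseteq\Sq^2_\Z(H^{n-1})$.

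The delicate point is checking that the $\z{}$ top-cell component of $2\tilde x$ vanishes or can be absorbed. We expect this to follow because that component is the $\eta^2$ contribution, which is killed by the indeterminacy in the non-spin case and is $2$-divisible-trivial in the spin case. We would check this via the Toda bracket $\langle 2,\eta,2\rangle=\eta^2$. This last term is exactly where we are least certain; it must be handled correctly for the criterion to involve only $QH^{n+1}$.
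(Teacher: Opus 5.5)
Your route (stable Postnikov tower of $S^n$, Wu's formula for $\Sq^2$, a Taylor/Larmore--Thomas computation of $2\tilde x$) is the same as the paper's. However, three steps do not hold up as written.

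\textbf{The $\varTheta$-term is missing from the top layer.} The kernel of $[M,P_2S^n]\to[M,P_1S^n]$ is $H^{n+2}(M;\z{})$ modulo the image of the looped $k$-invariant on $[M,P_1S^{n-1}]$. By Lemma \ref{lem:im-Sq2-P1}, that image is $\Sq^2(H^n(M;\z{}))+\varTheta(\ker(\Sq^2_\Z:n-1))$; it is not made up of primary operations alone. This is decisive in the spin case. There $\Sq^2=w_2(M)\smile-$ vanishes on $H^{n-1}$ and on $H^n$, so $\varTheta$ is defined on all of $H^{n-1}(M;\Z)$ with values in $H^{n+2}(M;\z{})\cong\z{}$. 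The summand $\z{}$ survives only if $\varTheta$ is zero there. This is not formal: for $X=S^{n-1}\cup_{\eta^2}e^{n+2}$ one has $\Sq^2=0$ on $H^n$, yet $\pi^n(X)=0$. The paper supplies the vanishing via Lemma \ref{lem:spin:Theta=0}. Under S-duality of $M_+$ with the Thom spectrum of $\nu(M)$, $\Theta$ becomes an operation on the Thom class; that class is pulled back from $\MSpin$, where $\Theta$ lands in $H^3(B\Spin;\z{})=0$. Alternatively, the retraction $\pi^n(M)\cong\Omega_2^{\Spin}(M)\xrightarrow{c_\ast}\Omega_2^{\Spin}\cong\z{}$ (Lemma \ref{lem:fr-G}) shows at once that the top-cell class $\eta^2\circ q$ is nonzero and that it splits off. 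Without one of these arguments, your factor $\z{1-\varepsilon}$ is unproved.

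\textbf{Your reason for $\ker h^n$ having exponent $2$ is wrong.} The fibre of $P_2S^n\to K_n(\Z)$ is not a product of Eilenberg--MacLane spaces, since its $k$-invariant is $\Sq^2$. Nor does $2\eta=0$ force exponent $2$. Because $\lra{2,\eta,2}=\eta^2$, twice a lift of $(j_1)_\ast(z)$, $z\in H^{n+1}(X;\z{})$, equals $(j_2)_\ast(\Sq^1 z)$. For example, $\pi^n(S^{n+1}\cup_2 e^{n+2})\cong\Z/4$. The input you actually need is orientability: $\Sq^1\colon H^{n+1}(M;\z{})\to H^{n+2}(M;\z{})$ is cup product with $v_1=w_1(M)=0$. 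This only matters in the spin case, since in the non-spin case the top layer is $0$. Note that the paper's CW-level Theorem \ref{thm:cohtp=-2:CW}(2) also sets this contribution to zero, and is false for $S^{n+1}\cup_2e^{n+2}$; the manifold statement survives because $w_1(M)=0$.

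\textbf{The splitting step.} A Moore space is not a universal example for classes $y\in H^{n-1}(M;\z{})$, for two reasons. First, $y$ need not come from any map $M\to P^n(2)$. Second, $P^n(2)$ has no $(n+1)$-cell, so the formula $2\tilde x=\Sq^2 y$ is vacuous there. The correct universal example is the fibration $K_{n-1}(\z{})\xrightarrow{\delta}K_n(\Z)\xrightarrow{2}K_n(\Z)$ mapped against the tower; this is how Taylor's criterion (Lemma \ref{lem:chtp=-1}) is proved.

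The ``delicate point'' you leave open is closed in the paper. In the non-spin case there is no top layer at all. In the spin case, the $P_1$-level sequence already splits (Lemma \ref{lem:mfd:Sq2}). The top-layer component of $2\tilde x$ is then $\Sq^2\Sq^1(y)=w_2(M)\smile\Sq^1 y=0$, plus a $\Sq^1$-term from the $H^{n+1}$-component of the lift, which vanishes by orientability as above. Since $\Sq^2=0$ on $H^{n-1}(M;\z{})$ in the spin case, the stated criterion also holds automatically there.
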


In fact, Theorem \ref{mainthm:codim2} is a special case of the characterization of the codimension-$2$ cohomotopy group for general CW complexes such that $H_{n+2}(X;\z{})\cong\z{}$, see Theorem \ref{thm:cohtp=-2:CW}.

Let $w_i\in H^i(B\mathrm{O};\z{})$ be the universal Stiefel-Whitney class, where $B\mathrm{O}$ is the classifying space of the orthogonal group $\mathrm{O}=\mathrm{colim}_{n}\mathrm{O}_n$. 
Note that by Wu's formula and Lemma \ref{lem:mfd:Sq2}, for a closed oriented $(n+2)$-manifold $M$, the action of $\Sq^2$ on $H^{n+1}(M;\z{})$ or $H^{n}(M;\z{})$ is given by the cup product with $w_2(M)$.
It follows that when $M$ is spin,  the short exact sequence (\ref{SES:cohtp=-2:mfds}) splits and $QH^{n+1}(M;\Sq^2_\Z)=H^{n+1}(M;\z{})$ by Lemma \ref{lem:spin:Theta=0}.

Under the Poincar\'e duality, the actions of the Steenrod squares $\Sq^2_\Z$ on $H^n(M;\Z)$ and $\Sq^2$ on $H^{n-1}(M;\Z)$ are given as follows. 
\begin{enumerate}
	\item The Steenrod square $\Sq^2_\Z\colon H^{n}(M;\Z)\to H^{n+2}(M;\z{})$ is Poincar\'e dual to the pairing
	\[\lra{w_2(M),-}_\Z:H_2(M;\Z)\xra{\rho_2}H_2(M;\z{})\xra{\lra{w_2(M),-}} \z{}.\]
	\item The Steenrod square $\Sq^2\colon H^{n-1}(M;\Z)\to H^{n+1}(M;\z{})$ is Poincar\'e dual to the cap product
	\[w_2(M)\smallfrown_\Z -\colon H_3(M;\Z)\xra{\rho_2}H_3(M;\z{})\xra{w_2(M)\smallfrown -} H_1(M;\z{}).\]
\end{enumerate}
Then applying the Pontryagin-Thom construction for cohomotopy, we get the following geometric interpretation of Theorem \ref{mainthm:codim2}. %characterization of the normally framed bordism group $\Omega^{\mathrm{fr}}_2(M)$, which serves as a 

\begin{theorem}\label{mainthm:frbdsm-dim2}
	Let $M$ be a connected closed smooth oriented $(n+2)$-manifold with $n\geq 4$. 
	Then there is a short exact sequence of groups
	\begin{equation}\label{SES:mainthm2}
		0\to \Omega_2^{\fr}[1-\varepsilon]\oplus \tfrac{H_1(M;\z{})}{w_2(M)\smallfrown_\Z H_3(M;\Z)}\to \Omega_2^{\fr}(M)\xra{h_2} \ker\big(\lra{w_2(M),-}_\Z\big)\to 0,
	\end{equation}
	which splits if and only if
   \[w_2(M)\smallfrown \delta^{-1}\big(\ker(\lra{w_2(M),-}_\Z)\big)\subseteq w_2(M)\smallfrown_\Z H_3(M;\Z).\]
	Here $h_2$ is the forgetful homomorphism sending a normally framed bordism class $[N,f,\varphi]$ to the homology class $f_\ast([N])\in H_2(M;\Z)$ with $[N]$ the fundamental class; $\Omega_2^{\fr}[1-\varepsilon]=\Omega_2^{\fr}$ if $w_2(M)=0$, and $\Omega_2^{\fr}[1-\varepsilon]=0$ otherwise.
	
\end{theorem}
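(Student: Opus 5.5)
The plan is to deduce Theorem \ref{mainthm:frbdsm-dim2} from Theorem \ref{mainthm:codim2} by transporting every term of (\ref{SES:cohtp=-2:mfds}) along the Pontryagin--Thom isomorphism and Poincar\'e duality.

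First, since $\dim M=n+2$ and $n\geq 4\geq 2+2$, we are in the stable range. The Pontryagin--Thom construction therefore gives a group isomorphism $PT\colon\pi^n(M)\cong\Omega_2^{\fr}(M)$. We then check that it is compatible with the Hurewicz maps: $\PD\circ h^n=h_2\circ PT$, where $\PD\colon H^n(M;\Z)\to H_2(M;\Z)$ is capping with $[M]$. This is the standard fact that for a transverse map $g\colon M\to S^n$ with regular value $p$, the class $g^\ast[S^n]$ is Poincar\'e dual to $[g^{-1}(p)]$, pushed forward to $M$. The sign conventions are fixed by the orientation of $M$ and the normal framing.

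Next we identify the remaining terms under $\PD$.
\begin{enumerate}
	\item By item (1) before the theorem, $\PD$ carries $\ker(\Sq^2_\Z:n)$ onto $\ker(\lra{w_2(M),-}_\Z)$. Indeed $\Sq^2_\Z x\in H^{n+2}(M;\z{})\cong\z{}$ is evaluated on $[M]$, and Wu's formula gives $\Sq^2_\Z x=v_2\smile\rho_2x$ with $v_2=w_2(M)$ for oriented $M$.
	\item By item (2), $\PD$ identifies $QH^{n+1}(M;\Sq^2_\Z)$ with $H_1(M;\z{})/\big(w_2(M)\smallfrown_\Z H_3(M;\Z)\big)$. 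Here one uses the mod-$2$ Poincar\'e duality $H^{n+1}(M;\z{})\cong H_1(M;\z{})$ together with $\Sq^2 y=w_2\smile y$ on $H^{n-1}$ (Lemma \ref{lem:mfd:Sq2}).
	\item The summand $\z{1-\varepsilon}$ becomes $\Omega_2^{\fr}[1-\varepsilon]$, because $\Omega_2^{\fr}\cong\pi_2^s\cong\z{}$.
\end{enumerate}
Transporting the sequence along $PT$ and these identifications yields (\ref{SES:mainthm2}), with $h_2$ the forgetful map.

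For the splitting criterion, we dualize the condition of Theorem \ref{mainthm:codim2} (reading $X=M$).
\begin{itemize}
	\item The Bockstein commutes with Poincar\'e duality up to sign, since $\PD$ is cap product with an integral class and $\delta$ is natural for caps. Hence $\delta^{-1}({}_2\ker(\Sq^2_\Z:n))\subseteq H^{n-1}(M;\z{})$ corresponds to $\delta^{-1}(\ker\lra{w_2,-}_\Z)\subseteq H_3(M;\z{})$. Note that $\delta$ lands in $2$-torsion automatically, so the subscript ${}_2$ may be dropped.
	\item $\Sq^2$ on $H^{n-1}(M;\z{})$ corresponds to $w_2(M)\smallfrown-$.
	\item $\Sq^2_\Z(H^{n-1}(M;\Z))$ corresponds to $w_2(M)\smallfrown_\Z H_3(M;\Z)$.
\end{itemize}
Since splitting of an extension is invariant under isomorphism of short exact sequences, the criterion transfers directly.

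The main obstacle is the compatibility of the Bockstein and the Steenrod operations with duality at the mod-$2$ level, in particular the identity $\PD(\Sq^2 y)=w_2\smallfrown\PD(y)$ for $y\in H^{n-1}(M;\z{})$. I would first derive it from the Wu-formula argument, $\lra{\Sq^2 y\smile z,[M]}=\lra{y\smile\Sq^2z+w_2 yz,[M]}$, checking that the extra term vanishes in the relevant degree. This uses that $\Sq^2$ into the top degree is given by $v_2=w_2$, together with the Cartan formula ($\Sq^1y\,\Sq^1z$ terms handled by $v_1=0$). Everything else is bookkeeping.
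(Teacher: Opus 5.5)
Your proposal is correct and is exactly the derivation the paper gives just before the statement: it transports Theorem \ref{mainthm:codim2} along the Pontryagin--Thom isomorphism $\pi^n(M)\cong\Omega_2^{\fr}(M)$ and Poincar\'e duality, using the dual descriptions of $\Sq^2_\Z$ on $H^n$ and of $\Sq^2$ on $H^{n-1}$. The latter is already Lemma \ref{lem:mfd:Sq2}, so your ``main obstacle'' is available in the paper. The paper also gives an independent, purely geometric proof in Section \ref{sec:cohtp-2-geom}: it analyses $\ker(h_2)$ directly through the obstruction classes $w_2(\nu_F,\varphi)$ and the invariants $\Phi_0,\Phi_1$, without using the Postnikov tower.
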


We also provide a pure geometric proof of Theorem \ref{mainthm:frbdsm-dim2}, see Section \ref{sec:cohtp-2-geom}. %Let $\Omega_2^{\fr,\Spin}(M)$ be the set of bordism classes of embedded closed $2$-manifolds $f\colon N\to M$ with a spin structure on the normal bundle $\nu_f$, which coincides with the spin bordism group $\Omega_2^{\Spin}(M)$ when $M$ is spin by Remark \ref{rmk:bdsm-frbdsm}. As a byproduct of our geometric proof of Theorem \ref{mainthm:frbdsm-dim2}, we also obtain the characterization of the framed spin bordism group $\Omega_2^{\fr,\Spin}(M)$, see Theorem \ref{thm:spinbdsm-dim2}. Moreover, comparing these two descriptions clarifies the relationship between the normally framed bordism group and the framed spin bordism group, yielding Corollary \ref{cor:nonspin-fr-to-spin}: there is a split short exact sequence of groups 
%\begin{equation}\label{eq:codim2:fr-spin}
%	0\to\Omega_2^{\fr}(M)\xra{\phi_2}\Omega_2^{\fr,\Spin}(M)\to \Omega_2^{\Spin}[\varepsilon]\to 0,
%\end{equation}
%where $\phi_2$ is the canonical homomorphism, $\varepsilon=0$ if $M$ is spin, otherwise $\varepsilon=1$.

In codimension three, the situation involves three higher-order cohomology operations, and becomes considerably more complicated. Let $\Theta$ and $\Psi$ be the secondary cohomology operations based on the Adem relations \eqref{eq:Adem-rel-Theta} and \eqref{eq:Adem-rel-Psi} respectively, and let $\mathbb{T}$ be the tertiary cohomology operation based on \eqref{eq:Adem-rel-T}. Recall that these cohomology operations detect $\eta^2\in\pi_2^S$, $2\nu\in\pi_3^S$ and $\eta^3\in\pi_3^S$, respectively.
For each of the following cohomology operations $\mathcal{O}$, we 
denote 
\begin{equation}\label{eq:epsilon-operations}
	\varepsilon(\mathcal{O})=\begin{cases}
0,& \text{ if $\mathcal{O}$  is trivial};\\
1,& \text{otherwise}.
\end{cases}
\end{equation}
Here $\mathcal{O}$ ranges over the following operations, where $M$ is a connected closed smooth oriented $(n+3)$-manifold:
\begin{align*}
	\Sq^4_\Z&\colon \ker(\Sq^2_\Z:n-1)\to H^{n+3}(M;\z{}),\\
	\varPhi&\colon \ker(\varTheta:n-1)\cap \ker(\Sq^4_\Z:n-1)\to H^{n+3}(M;\z{}),\\
	\mathbb{T}& \colon \ker(\alpha^3_\ast:n-1)\cap \ker(\alpha^4_\ast:n-1)\to H^{n+3}(M;\z{}).
\end{align*}
Let $\ker(\alpha^3_\ast:n)$ be given by the short exact sequence of groups
\begin{align*}
	0\to \tfrac{\ker(\Sq^2:n+1)}{\Sq^2_\Z(H^{n-1}(M;\Z))} \!\oplus\! \z{1-\varepsilon(\Sq^4_\Z)}\to \ker(\alpha^3_\ast:n)\xra{(q_1)_\ast} \ker(\Sq^2_\Z:n)\to 0,
\end{align*}
whose splitting criterion is given in Proposition \ref{prop:Q1S}. Denote 
\[\ker(\ol{\Sq^2}_\ast:n)=\tfrac{\ker(\alpha^3_\ast:n)}{\z{1-\varepsilon(\Sq^4_\Z)}}.\]

Our third main theorem characterizes the cohomotopy group in codimension three up to a $3$-primary parameter $\epsilon \in \{0,1\}$.

\begin{theorem}[see Theorem \ref{thm:cohtp=-3:mfds}]\label{mainthm:cohtp=-3:mfds}
Let $M$ be a connected closed smooth oriented manifold of dimension $(n+3)$ with $n\geq 5$. 
	\begin{enumerate}
		\itemsep=5pt
		\item In each of the following three cases, there is a short exact sequence of groups 
		\begin{equation*}
			0\to \Z/2^k\oplus\Z/3^{1-\epsilon}\to \pi^n(M)\to \ker(\ol{\Sq^2}_\ast:n)\!\oplus\! \tfrac{QH^{n+2}(M;\Sq^2)}{\varTheta(\ker(\Sq^2_\Z\cap\Sq^4_\Z:n-1))}\to 0,
		\end{equation*}
	where $\epsilon\in\{0,1\}$ and $k$ is given by the following table:
\begin{center}
\begin{tabular}{|l|l|}
\hline
\textbf{conditions on $M$} & \textbf{values of $k$} \\
\hline
$w_2(M) = 0$ & $3 - \varepsilon(\Sq^4_\Z) - \varepsilon(\varPhi) - \varepsilon(\mathbb{T})$ \\
\hline
$w_2(M) \neq 0, \ w_3(M) = 0$ & $2 - \varepsilon(\Sq^4_\Z) - \varepsilon(\varPhi)$ \\
\hline
\begin{tabular}{@{}l@{}}
$w_2(M) \neq 0, \ w_3(M) \neq 0,$ \\
$\ker(w_2\!\smallsmile\!-: n) \subseteq \ker(w_3\!\smallsmile\!-: n)$
\end{tabular} 
& $2 - \varepsilon(\Sq^4_\Z)$ \\
\hline
\end{tabular}
\end{center}

	\item  If $w_2(M)\neq 0$ and $\ker(w_2(M)\!\smallsmile\!-:n)\nsubseteq \ker(w_3(M)\!\smallsmile\!-:n)$, then there is a short exact sequence of groups
	\begin{equation*}
		0\to \Z/3^{1-\epsilon}\to \pi^n(M)\to\ker(\alpha^3_\ast:n)\!\oplus\! \tfrac{QH^{n+2}(M;\Sq^2)}{\varTheta(\ker(\Sq^2_\Z\cap\Sq^4_\Z:n-1))}\to 0,
	\end{equation*}
where $\epsilon\in\{0,1\}$ and $\ker(\alpha^3_\ast:n)$ contains the direct summand $\Z/2^{1-\varepsilon(\Sq^4_\Z)}$ if and only if
\[\Sq^4\left(\delta^{-1}({}_2\ker(\Sq^2_\Z:n))\right)\subseteq \Sq^4_\Z\left(\ker(\Sq^2_\Z:n-1)\right).\]
	\end{enumerate}
If, in addition, $p_1(M)\equiv 0\pmod 3$, then $\epsilon=0$ and all the above short exact sequences are split. 
\end{theorem}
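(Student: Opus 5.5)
We compute $\pi^n(M)=[M,S^n]$ in the stable range $\dim M=n+3\le 2n-2$ via the stable Postnikov tower of $S^n$ through stage $n+3$, with the $2$-primary and $3$-primary parts handled separately. Since $\pi_{n+3}(S^n)\cong\Z/24\cong\Z/8\oplus\Z/3$ and $\pi_{n+1},\pi_{n+2}$ are $2$-groups, the $3$-primary part of the tower is a single principal fibration with fibre $K(\Z/3,n+3)$ and $k$-invariant $\mathcal{P}^1$ applied to the fundamental class $\iota_n$. The $2$-primary part is the tower
\[
P_{n+3}\to P_{n+2}\to P_{n+1}\to K(\Z,n),
\]
with fibres $K(\Z/2,n+1)$, $K(\Z/2,n+2)$ and $K(\Z/8,n+3)$. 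The $k$-invariants are $\Sq^2_\Z\iota_n$, then the class realizing $\Theta$, and then the classes realizing $\Sq^4_\Z$, $\varPhi$ and $\mathbb{T}$; the latter together detect $\nu$, $2\nu=\varPhi$-part and $\eta^3=4\nu$. For each stage I use the exact sequence of a principal fibration,
\[
[M,\Omega B]\to [M,E]\to [M,E']\to [M,B].
\]
This yields the successive quotients. The first stage gives $\ker(\alpha^3_\ast:n)$, which lies over $\ker(\Sq^2_\Z:n)$ with kernel $QH^{n+1}$ modified by $\Sq^2$. Here the $\Z/2^{1-\varepsilon(\Sq^4_\Z)}$ summand comes from the top class surviving in the image of $\Omega$ of the next stage. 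Next comes the quotient $QH^{n+2}(M;\Sq^2)/\Theta(\ker(\Sq^2_\Z\cap\Sq^4_\Z:n-1))$. Last is the top group $H^{n+3}(M;\Z/8)\cong\Z/8$, modulo the images of $\Sq^4_\Z$, $\varPhi$ and $\mathbb{T}$ on the relevant kernels in degree $n-1$. Since $H^{n+3}(M;\Z/8)\cong\Z/8$ by orientability, each nontrivial operation kills exactly one factor of $2$. This gives $\Z/2^{3-\varepsilon(\Sq^4_\Z)-\varepsilon(\varPhi)-\varepsilon(\mathbb{T})}$ in the spin case. For the $3$-primary part, $\mathcal{P}^1\colon H^{n-1}(M;\Z/3)\to H^{n+3}(M;\Z/3)$ is cup product with the mod-$3$ Wu class $\pm p_1(M)$. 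Hence the top $\Z/3$ survives precisely when this product vanishes, which is encoded by the parameter $\epsilon$, and it survives automatically if $p_1\equiv0\pmod 3$.

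Second, I identify the top-degree operations using the manifold structure, as in Lemma \ref{lem:mfd:Sq2} and Lemma \ref{lem:spin:Theta=0}. By Wu's formula, $\Sq^2$ into the top degree is cup product with $v_2=w_2$ and $\Sq^4$ is cup product with $v_4$. The secondary operations into the top degree are evaluated by the Larmore--Thomas method; equivalently, I use the geometric heuristic that $\varPhi$ and $\mathbb{T}$ are detected by the lifting obstructions $w_3$ and $w_2$ through $B\mathrm{O}\langle\cdot\rangle$. The expected outcome is as follows. If $w_2\neq0$, then $\mathbb{T}$, which detects $\eta^3$, is nontrivial on the relevant kernel, matching codimension two where $\eta^2$ was killed when $w_2\ne0$. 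If moreover $w_3\ne0$ and $\ker(w_2\smallsmile-)\subseteq\ker(w_3\smallsmile-)$, then $\varPhi$ becomes trivial, because its indeterminacy absorbs it or it is defined only on classes killed by $w_3$, so only $\Sq^4_\Z$ contributes. This produces the three rows of the table. In case (2), the nonvanishing of $w_3\smallsmile x$ for some $x\in\ker(w_2\smallsmile-)$ shows that the entire top $2$-primary group is absorbed into a nontrivial extension with the first stage. Consequently the $\Z/2^k$ disappears from the kernel and appears inside $\ker(\alpha^3_\ast:n)$; the criterion for its splitting off is Proposition \ref{prop:Q1S}.

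Third, I treat the extension problems. Following Larmore--Thomas, an extension $0\to A\to[M,E]\to C\to0$ of a principal fibration is computed by a functional operation: for $2$-torsion $c\in C$, the element $2\tilde c$ equals the image of $\delta^{-1}$-lifts under the next $k$-invariant. This is the mechanism behind the criterion $\Sq^4(\delta^{-1}({}_2\ker(\Sq^2_\Z:n)))\subseteq\Sq^4_\Z(\ker(\Sq^2_\Z:n-1))$, exactly as in the codimension-two splitting criterion of Theorem \ref{mainthm:codim2}. To show that the $QH^{n+2}$ quotient splits off, as a direct summand of the middle term, I argue that the relevant functional operations vanish for degree reasons. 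When $p_1\equiv0\pmod 3$, the $3$-part splits off canonically, and the $2$-part extensions in case (1) split by the above vanishing. \textbf{The main obstacle} is this step: computing the secondary and tertiary operations $\varPhi$ and $\mathbb{T}$, and the associated functional operations, on a manifold in top degree, so as to justify the table and the split/nonsplit dichotomy in terms of $w_2$ and $w_3$. My first attempt would be the bordism interpretation. Since $\mathbf{S}\simeq\varinjlim\mathbf{MO}\langle m\rangle$, the operations into the top degree are dual to the obstructions to lifting the normal structure of framed $3$-manifolds in $M$. This reduces them to cap products with $w_2$ and $w_3$ via Poincaré duality, as in Theorem \ref{mainthm:frbdsm-dim2}.
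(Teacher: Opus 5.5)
\textbf{There is a genuine gap.} It sits at the step you flag as the main obstacle. You leave the table as an ``expected outcome'' and propose to obtain it by computing $\varPhi$ and $\mathbb{T}$ on $M$. That computation is not needed, and it is not what produces the $w_2,w_3$ dependence. The statement keeps $\varepsilon(\Sq^4_\Z)$, $\varepsilon(\varPhi)$ and $\varepsilon(\mathbb{T})$ as free parameters, so no secondary or tertiary operation ever has to be evaluated.

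What depends on $w_2,w_3$ are the \emph{indeterminacies}, i.e.\ the restrictions of the $k$-invariants to the fibres, which are primary operations. To see them you must use the Larmore--Thomas $2$-primary tower $Q_\bullet S^n$ of Lemma~\ref{lem:tower-Sn-v2}. There $\Z/8$ is split into three $\Z/2$-layers: $\nu$ in the fibre of $q_1$, $2\nu$ in that of $q_2$, and $4\nu=\eta^3$ in that of $q_3$. The fibre restrictions are $\alpha^3j_1\simeq(\Sq^2,0)$, $\alpha^4j_1\simeq(\Sq^2\Sq^1,\Sq^1)$ and $\beta^4j_2\simeq(\Sq^2,\Sq^1)$.

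For a closed oriented $(n+3)$-manifold, Wu's formula (as in Lemma~\ref{lem:mfd:Sq2}) gives:
\begin{itemize}
\item $\Sq^1=0$ on $H^{n+2}$;
\item $\Sq^2=w_2\smallsmile-$ on $H^n$ and on $H^{n+1}$;
\item $\Sq^2\Sq^1=w_3\smallsmile-$ on $H^n$.
\end{itemize}
Consequently:
\begin{itemize}
\item If $w_2\neq0$, the $\eta^3$-layer dies because $\Sq^2(H^{n+1})=H^{n+3}$. Then $\mathbb{T}$ takes values in $QH^{n+3}(M;\Sq^2)=0$; it does not become ``nontrivial'', contrary to your heuristic.
\item If $w_3\smallsmile\ker(w_2\smallsmile-:n)\neq0$, the $2\nu$-layer $G_2$ dies (Lemma~\ref{lem:im-alpha4}).
\item The four cases of the statement are exactly the four cases of Theorem~\ref{thm:cohtp=-3:CW} and Corollary~\ref{cor:Q2S:Sq1=0}. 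They are sorted by whether $\Sq^2(H^{n+1})$, $\Sq^2\Sq^1(H^n)$ and $\Sq^2\Sq^1(\ker(\Sq^2:n))$ vanish.
\item In case (2) nothing is ``absorbed into a nontrivial extension''. Two layers die, and only the $\nu$-layer $G_1=\Z/2^{1-\varepsilon(\Sq^4_\Z)}$ survives inside $\ker(\alpha^3_\ast:n)$. It splits off iff the criterion of Proposition~\ref{prop:Q1S} holds.
\end{itemize}
You also need $\varTheta\equiv0$ on $M$ in order to replace $\ker(\varTheta:n)$ by $\ker(\Sq^2_\Z:n)$. This holds by Lemma~\ref{lem:spin:Theta=0} when $w_2=0$, and because the target $QH^{n+3}(M;\Sq^2)$ is zero otherwise. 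The bordism detour attacks a harder problem than the theorem poses.

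\textbf{Your counting is not justified in the tower you chose.} The claim that ``each nontrivial operation kills exactly one factor of $2$'' does not follow from the $P$-tower you set up. There the top stage has a single $k$-invariant $\wtd{\Sq^4_\Z}\circ p^2_1$ into $K_{n+4}(\Z/8)$, whose image is just some subgroup of $\Z/8$. Reading its order off from $\Sq^4_\Z$, $\varPhi$, $\mathbb{T}$ and the fibre terms requires the three-layer filtration, i.e.\ the comparison in Remark~\ref{rmk:towers-rels}. This is also why ``the first stage gives $\ker(\alpha^3_\ast:n)$'' conflates the two towers: $[M,P_1S^n]$ contains no $G_1$, whereas $\ker(\alpha^3_\ast:n)\subseteq[M,Q_1S^n]$.

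The extensions are then resolved layer by layer in Propositions~\ref{prop:Q2S} and \ref{prop:Q3S}:
\begin{itemize}
\item The obstruction homomorphisms are read off from the diagrams involving $\mathrm{St}_2$ and $\mathrm{St}_3$.
\item $QH^{n+2}$ splits off because the only candidate obstruction $\Sq^2\Sq^1(x')$ lies in $\Sq^2(H^n)\subseteq\im(\alpha^3_\ast)$, not ``for degree reasons''.
\item $G_1$, $G_2$ and the $\beta^4$-layer assemble into the cyclic summand $\Z/2^k$.
\end{itemize}

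\textbf{The $3$-primary part needs two corrections.} The relevant image is $\PP_\Z$ of integral classes, not $\PP^1$ on all of $H^{n-1}(M;\Z/3)$. The splitting also needs the functional computation $\psi(\gamma)_{(3)}=[\PP_3(x')]$ of Lemma~\ref{lem:splitting-3local}. Only then does the mod-$3$ Wu formula $\PP^1=\rho_3(p_1)\smallsmile-$ give both $\epsilon=0$ and the splitting when $p_1\equiv0\pmod 3$.
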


% When $M$ is a closed string $(n+3)$-manifold with $n\geq 5$, we have the following characterizations of $\pi^n(M)$.

\begin{corollary}[see Theorem \ref{thm:cohtp=-3:string} and Example \ref{ex:string-bdsm}]\label{mainthm:cohtp=-3:string}
Let $M$ be a connected closed string manifold of dimension $n+3$ with $n\geq 5$. There is a split short exact sequence of groups 
	\begin{equation}\label{SES:cohtp=-3:string}
			0\to \Z/24\to \pi^n(M)\to \ker(\ol{\Sq^2}_\ast:n)\oplus QH^{n+2}(M;\varTheta)\to 0
	\end{equation}
with $\ker(\ol{\Sq^2}_\ast:n)$ characterized by the short exact sequence of groups 
\begin{equation*}
	0\to QH^{n+1}(M;\Sq^2_\Z)\to \ker(\ol{\Sq^2}:n)\to H^n(M;\Z)\to 0,
\end{equation*}
which splits if and only if 
\[\Sq^2_\Z(H^{n-1}(M;\Z))=\Sq^2(H^{n-1}(M;\Z/2)).\]
Moreover, applying the Pontryagin-Thom construction to (\ref{SES:cohtp=-3:string}) yields the split short exact sequence of groups
\begin{equation}\label{SES:cor-string}
	0\to \Omega_3^{\fr}\xra{e_\ast}\Omega_3^{\fr}(M)\xra{\phi_3}\Omega_3^{\Spin}(M)\to 0,
\end{equation}
where $\phi_3$ is the canonical homomorphism sending a normal framed bordism $[N,f,\varphi]$ to $[N,f,\sigma_{\varphi}]$ with $\sigma_\varphi$ the normal spin structure induced by the normal framing $\varphi$, and $e_\ast$ is the homomorphism induced by the base-point inclusion map $e\colon \{x_0\}\to M$.
\end{corollary}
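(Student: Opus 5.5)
The plan is to specialize Theorem \ref{mainthm:cohtp=-3:mfds} (more precisely its detailed form, Theorem \ref{thm:cohtp=-3:mfds}) to the string case, and then to read off the geometric sequence via Pontryagin--Thom. A string manifold has $w_2(M)=0$, $w_3(M)=0$ and $\tfrac{p_1}{2}(M)=0$. In particular $p_1(M)\equiv 0\pmod 3$, so $\epsilon=0$ and all sequences split. We are therefore in the first row of the table, with $k=3-\varepsilon(\Sq^4_\Z)-\varepsilon(\varPhi)-\varepsilon(\mathbb{T})$. The task is to show that all three top operations into $H^{n+3}(M;\z{})$ vanish, so that $k=3$ and $\Z/2^3\oplus\Z/3=\Z/24$.

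\textbf{Vanishing of the operations.} By Wu's formula, $\Sq^4\colon H^{n-1}(M;\z{})\to H^{n+3}(M;\z{})$ is cup product with the Wu class $v_4$. On a spin manifold $v_4=w_4$, and $w_4=\rho_2(\tfrac{p_1}{2})=0$ for string $M$. Hence $\varepsilon(\Sq^4_\Z)=0$. The same mechanism should kill $\varPhi$ and $\mathbb{T}$: top-degree secondary and tertiary operations on closed manifolds are given by pairing with characteristic classes of the normal bundle, in the spirit of Lemma \ref{lem:spin:Theta=0}. Since the stable normal bundle of a string manifold lifts to $B\String$ and these operations detect $2\nu$ and $\eta^3$, they vanish in top degree. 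This gives $\varepsilon(\varPhi)=\varepsilon(\mathbb{T})=0$, hence $k=3$, which I expect is the content of Theorem \ref{thm:cohtp=-3:string}.

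\textbf{Simplifying the quotients.} Since $\Sq^2$ into $H^{n+2}$ is $w_2\smallsmile -=0$, we have $QH^{n+2}(M;\Sq^2)=H^{n+2}(M;\z{})$. The quotient by $\varTheta(\ker(\Sq^2_\Z\cap\Sq^4_\Z:n-1))$ becomes $QH^{n+2}(M;\varTheta)$, because $\Sq^4_\Z$ is already zero on that kernel. For $\ker(\alpha^3_\ast:n)$, note that $\Sq^2_\Z$ on $H^n$ is cup with $w_2=0$, so $\ker(\Sq^2_\Z:n)=H^n(M;\Z)$. Likewise $\ker(\Sq^2:n+1)=H^{n+1}(M;\z{})$, so the kernel term becomes $QH^{n+1}(M;\Sq^2_\Z)$. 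Dividing out $\z{1-\varepsilon(\Sq^4_\Z)}$ then gives the stated sequence for $\ker(\ol{\Sq^2}_\ast:n)$. I would derive its splitting criterion from Proposition \ref{prop:Q1S}, whose condition reads $\Sq^2(\delta^{-1}({}_2H^n(M;\Z)))\subseteq\Sq^2_\Z(H^{n-1}(M;\Z))$. Since $\delta^{-1}$ of the torsion covers exactly the classes whose reductions need not lift, this should be equivalent to $\Sq^2_\Z(H^{n-1}(M;\Z))=\Sq^2(H^{n-1}(M;\z{}))$. I expect this reformulation to be the fiddliest step, because one must check that every mod 2 class is accounted for via integral lifts plus $\delta^{-1}$.

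\textbf{Geometric interpretation.} Under Pontryagin--Thom, $\pi^n(M)\cong\Omega_3^{\fr}(M)$, and $e_\ast$ is split injective via $M\to\pt$ (the collapse map gives the retraction). The subgroup $\Z/24$ is then $\Omega_3^{\fr}\cong\pi_3^S$. For the cokernel, I would compute $\Omega_3^{\Spin}(M)=\widetilde\Omega_3^{\Spin}(M)$ by the Atiyah--Hirzebruch spectral sequence. The $E_2$ terms are $H_3(M;\Z)$, $H_2(M;\z{})$ and $H_1(M;\z{})$, with $d_2$ dual to $\Sq^2$ and $\Sq^2_\Z$. Via Poincaré duality these terms match $H^n(M;\Z)$, $H^{n+1}(M;\z{})$ and $H^{n+2}(M;\z{})$, with the same quotients and extension as $\ker(\ol{\Sq^2}_\ast:n)\oplus QH^{n+2}(M;\varTheta)$. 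It then remains to check that $\phi_3$ is compatible with the filtrations, i.e. that $\phi_3$ kills $e_\ast\Omega_3^{\fr}$ and induces isomorphisms on the associated graded pieces. A cardinality or five-lemma argument on the filtrations then yields exactness of (\ref{SES:cor-string}). This compatibility is the main obstacle; I would handle it by comparing the maps of spectra $\mathbf{S}\to\MSpin$ and their Postnikov sections in degrees $\le 3$.
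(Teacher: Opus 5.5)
Your algebraic half follows the paper's route. You specialize Theorem~\ref{thm:cohtp=-3:mfds}(1), and the Thom-space argument you allude to for $\varPhi$ and $\mathbb{T}$ is exactly Lemma~\ref{lem:string:Psi=0}. Your Wu-class argument $v_4=w_4=0$ handles $\Sq^4_\Z$, which the paper does not spell out. The step you call fiddliest is immediate. Since $\im(\delta)={}_2H^n(M;\Z)$, we have $\delta^{-1}({}_2H^n(M;\Z))=H^{n-1}(M;\z{})$. The splitting criterion then reads $\Sq^2(H^{n-1}(M;\z{}))\subseteq\Sq^2_\Z(H^{n-1}(M;\Z))$, and this is the stated equality because the reverse inclusion always holds (cf.\ the remark after Lemma~\ref{lem:chtp=-1}).

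For the bordism sequence you take a genuinely different route. The paper argues geometrically via Theorem~\ref{thm:SES-G-H} with $(G,H)=(\String,\Spin)$, $k=3$, $d_G=4$ and $\Omega_3^{\Spin}=0$:
\begin{itemize}
\item the forgetful map is onto because $\tfrac{p_1}{2}(\nu_f)\in H^4(N^3;\Z)=0$;
\item its kernel is $e_\ast\Omega_3^{\String}$, because the lifting obstruction on a spin null-bordism is dual to finitely many points, which are then excised;
\item Lemma~\ref{lem:fr-G} identifies $\Omega_3^{\fr}(M)\cong\Omega_3^{\String}(M)$, where the retraction $c_\ast$ is natural.
\end{itemize}

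Your closing idea of comparing Postnikov sections already suffices and makes the AHSS matching unnecessary. Under Pontryagin--Thom, $\phi_3$ is post-composition with the bottom-cell inclusion $S^n\to\MSpin_n$, which is $(n+3)$-connected. With $\Omega_3^{\Spin}=0$ this gives $[M,\MSpin_n]\cong[M,P_2S^n]$ (Remark~\ref{rmk:MP2S}). So $\phi_3$ is just $(p_3)_\ast\colon[M,P_3S^n]\to[M,P_2S^n]$, whose kernel is the image of $H^{n+3}(M;\Z/24)$, that is, of the top cell, that is, $e_\ast\Omega_3^{\fr}$. Exactness and splitting then come straight from your algebraic half. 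A cardinality count would not be available anyway, since $H^n(M;\Z)$ may be infinite.

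What each route buys: yours identifies $\phi_3$ directly with the Postnikov projection and uses the string hypothesis only to make the top-cell $\Z/24$ inject and split off. The paper's is uniform in $(G,H)$ and gets the retraction for free on $\Omega_3^{\String}(M)$. On normally framed bordism, your parenthetical ``the collapse map gives the retraction'' needs justification. A normally framed $N^3\subset M$ acquires a stable framing only through the trivialization of $TM|_N$ canonically determined by the string structure. In your route this is harmless, since the splitting already follows from the algebraic sequence.
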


Our last main theorem serves as an application of the above short exact sequences (\ref{SES:mainthm2}) with $w_2(M)=0$, and (\ref{SES:cor-string}), together with Konstantis's result \cite[Theorem 3.13]{Kons2020} for $k=1$; the codimension-one case was originally proved by Konstantis \cite{Kons2020}.

Define the groups $G(k)$ and $H(k)$ by the table 
	\begin{table}[H]
\begin{tabular}{c|cc}
\hline
$k$ & $G(k)$ & $H(k)$ \\
\hline
$1,2$ & $\Spin$   & $\SO$ \\
\hline
$3$ & $\String$ & $\Spin$ \\
\hline
\end{tabular}
\caption{The groups $G_k$ and $H_k$}\label{table:GH-k}
\end{table}

\begin{theorem}
	\label{mainthm:appl}
Let $k=1,2,3$, and let $(G,H)=(G(k),H(k))$ be the pair of groups in Table \ref{table:GH-k}. Let $M$ be a connected closed smooth $G$-manifold of dimension $n+k$ with $n\geq k+2$ and let $E\to M$ be a $G$-vector bundle of rank $n$. Then the following are equivalent:
\begin{enumerate}
	\item there exists a nowhere-vanishing section of $E$;
	\item the $G$-Euler class $e_{G}(E)=0$;
	\item the $G$-divisor $\kappa_{G}(E)=0$,  the $H$-Euler class $e_{H}(E)=0$, and the $G$-defect class $\delta_G(E)=0$.
\end{enumerate}

\end{theorem}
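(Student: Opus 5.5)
We reduce the existence of a nowhere-vanishing section to the vanishing of a normally framed bordism class, and then split that class along the exact sequences already established. Let $s$ be a generic section of $E$. Its zero set $Z=s^{-1}(0)$ is a closed $k$-dimensional submanifold of $M$ whose normal bundle is identified with $E|_Z$ via $ds$. Since $M$ and $E$ both carry $G$-structures, $Z$ inherits a normal $G$-structure, and hence a $G$-structure of its own. Because $n\geq k+2$, a nowhere-vanishing section exists if and only if the bordism class of $Z$ vanishes in the appropriate group. Taking this group to be the normal $G$-bordism group of $k$-dimensional submanifolds, we \emph{define} $e_G(E)$ as this class; equivalently, it is the pullback of the Thom class along the classifying map in $MG$-cohomology. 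The stable range then makes the homotopy-theoretic obstruction primary and complete. This step gives (1)$\Leftrightarrow$(2) by standard obstruction theory: the section over the complement of a tubular neighbourhood of $Z$ extends if and only if $Z$ is $G$-nullbordant through an embedded bordism in $M\times I$, which is available by general position since $n\geq k+2$.

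For (2)$\Leftrightarrow$(3), we identify the $G$-bordism group with a framed-type bordism group. In the relevant range, $\pi_i(MG)=\pi_i^S$ for $i\le k$ when $G=\Spin$, $k\le 2$, and when $G=\String$, $k=3$. Consequently the normal $G$-bordism group of $k$-submanifolds of the $G$-manifold $M$ agrees with $\Omega_k^{\fr}(M)$, after twisting by $E$. We then invoke the short exact sequences already established:
\begin{itemize}
\item for $k=1$, Konstantis \cite[Theorem 3.13]{Kons2020};
\item for $k=2$, the sequence \eqref{SES:mainthm2} with $w_2(M)=0$, which reads $0\to\Omega_2^{\fr}\oplus H_1(M;\z{})\to\Omega_2^{\fr}(M)\to H_2(M;\Z)\to 0$ and splits;
\item for $k=3$, the sequence \eqref{SES:cor-string}.
\end{itemize}
The invariants in (3) are read off from these sequences. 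The $H$-Euler class $e_H(E)$ is the image under the forgetful map to $H$-bordism: for $k=2$ this is the homology class $h_2$ together with its $H_1$-part, and for $k=3$ it is $\phi_3$. The $G$-divisor $\kappa_G(E)$ is the component in the point-summand $\Omega_k^{\fr}$ (that is, $\Z/2$ or $\Z/24$), computed through a chosen splitting. The defect class $\delta_G(E)$ is the remaining summand, for instance the $H_1(M;\z{})$ component when $k=2$. Since each sequence is split and these three invariants are the coordinates of the splitting, $e_G(E)=0$ if and only if all three vanish.

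The main obstacle is making $\kappa_G$ and $\delta_G$ well defined and natural, since a splitting is not canonical. To handle this, we first try defining $\kappa_G(E)$ intrinsically as the $G$-bordism class of $Z$ pushed to a point, i.e.\ its image under $M\to\pt$. This map is a canonical retraction of $e_\ast$ (collapse composed with inclusion is the identity), so it gives a splitting without any choice. With $\kappa_G$ fixed this way, $\delta_G$ is defined on the kernel of the retraction, and one checks that on the kernel of the $H$-forgetful map it is detected by the remaining summand. For $k=2$, this requires checking that the Arf-type invariant of the $H_1(M;\z{})$-part is captured correctly using the spin structure on $M$.
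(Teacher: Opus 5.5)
You have a genuine gap: you replace the theorem's Euler class with a different one and never connect the two. You \emph{define} $e_G(E)$ as the $\mathbf{MG}$-Euler class, i.e.\ the class $[Z,i_Z,\sigma|_Z]\in\Omega_k^G(M)$. In the theorem, however, $e_G(E)$ is the Atiyah--Bott--Shapiro Euler class in $\mathbf{ko}^n(M)$ for $G=\Spin$, and the Ando--Hopkins--Rezk Euler class in $\mathbf{tmf}^n(M)$ for $G=\String$. Likewise $e_H(E)$ is the ordinary Euler class for $k=1,2$ and the $\mathbf{ko}$-Euler class for $k=3$. So the equivalences you prove are not yet the stated ones.

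The missing step is the one the paper uses in Proposition~\ref{prop:G-H-Eulercls}. First, $e_G(E)$ is the image of the $\mathbf{MG}$-Euler class under the orientation $u_G$; equivalently $e_G(E)=\PD_{\mathbf{E}_G}\big((i_Z)_\ast[Z]_{\mathbf{E}_G}\big)$ as in \eqref{eq:G-Eulercls-PD}. Second, $u_G$ induces an isomorphism $\Omega_k^G(M)\cong h_k^G(M)$ in this range (Lemma~\ref{lem:h_kG}). That lemma rests on Anderson--Brown--Peterson for $\mathbf{MSpin}\to\mathbf{ko}$ below degree $8$, and on Hopkins--Hill for $\mathbf{MString}\to\mathbf{tmf}$ below degree $16$. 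Applied to $H$ (via $\Omega_k^{\SO}(M)\cong H_k(M;\Z)$ for $k\le 2$ and $\Omega_3^{\Spin}(M)\cong\mathbf{ko}_3(M)$), the same lemma justifies your otherwise unargued identification of $e_H(E)=0$ with the vanishing of $h_k[Z]$, resp.\ $\phi_3[Z]$.

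Your description of $e_H$ for $k=2$ is also wrong. The class $e(E)=\PD\big((i_Z)_\ast[Z]\big)$ sees only the $H_2(M;\Z)$-coordinate. The $H_1(M;\Z/2)$-coordinate is $\delta_G(E)$, and the Arf invariant is $\kappa_G(E)$, not part of $\delta_G$. On the other hand, the non-canonicity you single out as the main obstacle is not really one. Since (3) already includes $e_H(E)=0$, you only need $\delta_G$ on $\ker h_2$. There the paper's retraction $r$ (Remark~\ref{rm:retraction}, made canonical by the section $s_0$ of Lemma~\ref{lem:h2-splitting:spin}) is just the $H_1$-coordinate of $\Phi_0$.

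Your route to (1)$\Leftrightarrow$(2) also differs from the paper's. The paper (Theorem~\ref{thm:nonzero-section}) runs the Moore--Postnikov tower of $B\iota_{n-1}$. It then compares the fibre of the third (resp.\ fourth) stage with the homotopy fibre of $e_{\Spin}\colon B\Spin_n\to\mathbf{ko}_n$ (resp.\ $e_{\String}$). Your geometric version is viable, but it is more than general position.
\begin{itemize}
\item A $G$-null-bordism $W\subset M\times I$ gives an isomorphism $\nu_W\cong \mathrm{pr}^\ast E|_W$ rel $Z$ only because $B\Spin_n$, resp.\ $B\String_n$, is at least $(k+1)$-connected here, where $k+1=\dim W$.
\item The Pontryagin--Thom collapse then gives only a fibrewise homotopy, in the fibrewise one-point compactification of $E$, from the zero section to the section at infinity.
\item To obtain a nowhere-zero section you must lift this through the unit sphere bundle. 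The relevant fibre map is the suspension $S^{n-1}\to\Omega S^n$, which is $(2n-3)$-connected.
\end{itemize}
This works when $n+k\le 2n-3$, i.e.\ $n\ge k+3$, but it can fail at $n=k+2$.

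For example, take $k=1$, $n=3$, $M=S^4$. Spin rank-$3$ bundles over $S^4$ are indexed by $\pi_3(\SO_3)\cong\Z$, and only the trivial one has a nowhere-zero section. Yet $E\mapsto e_{\Spin}(E)\in\widetilde{\mathbf{ko}}^3(S^4)\cong\Z/2$ is a homomorphism on $\pi_4(B\Spin_3)$, as is your class in $\Omega_1^{\Spin}(S^4)\cong\Z/2$. Both therefore vanish on the bundle with clutching class $2$.

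The paper's tower argument breaks at the same place: for $(k,n)=(1,3)$ the fibre of $Y_2\to Y_1$ is $K(\pi_3(S^2),3)=K(\Z,3)$, not $K(\pi_1^S,3)$. So the boundary case $n=k+2$ is a defect of the stated range, not only of your method. You should state your Step 1 for $n\ge k+3$.
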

Here $e_{\SO}(E)=e(E)$ is the ordinary Euler class of an oriented vector bundle, while $e_{\Spin}(E)$ and $e_{\String}(E)$ are the spin Euler class and the string Euler classes induced by the Atiyah-Bott-Shapiro orientation \cite{ABS64} for spin vector bundles and the Ando-Hopkins-Rezk orientation \cite{AHR10} for string vector bundles, respectively. The $G$-divisors $\kappa_G(E)\in \Omega_k^G$ and the $G$-defect class $\delta_G(E)\in H_1(M;\z{})$ of $G$-vector bundles $E$ over $M$ refer to Definition \ref{def:G-divisor}. For more details of these concepts, see Section \ref{sec:appl-VB}.

The paper is organized as follows. Section \ref{sec:prelim} reviews the stable Postnikov tower of spheres and the relevant higher-order cohomology operations. Sections \ref{sec:cohtp:codim2} and \ref{sec:cohtp:codim3} apply these tools to investigate the algebraic characterizations of the cohomotopy groups in codimensions $2$ and $3$, respectively; in particular, Theorems \ref{mainthm:codim2} and \ref{mainthm:cohtp=-3:mfds} are proved. Section \ref{sec:general-Gbdsm} characterizes the normally framed bordism groups $\Omega_k^{\fr}(M)$ of $G$-manifolds for small $k$ via the Whitehead tower of $B\mathrm{O}$, yielding a unified description of the $G$-bordism groups $\Omega_k^G(M)$ (and hence $\Omega_k^{\fr}(M)$) for $G$-manifolds, see Theorem \ref{thm:SES-G-H}. Section \ref{sec:cohtp-2-geom} gives a pure geometric proof of Theorem \ref{mainthm:frbdsm-dim2}. Finally, in Section \ref{sec:appl-VB}, we complete the proof of Theorem \ref{mainthm:appl}.

\section{Preliminaries}\label{sec:prelim}

Throughout this paper, all spaces are based CW complexes and all maps and homotopies are base-point preserving. For spaces $X,Y$, we denote by $[X,Y]$ the set of homotopy classes of based maps from $X$ to $Y$. 
Since short exact sequences of abelian groups occur frequently, we will use ``SES'' as shorthand for ``short exact sequence'' in the remainder of the paper.

\subsection{Postnikov towers of spheres and cohomology operations}\label{sec:prelim-Postnikov}

Let $K_n(G)=K(G,n)$ be the Eilenberg-MacLane space with the unique nontrivial homotopy group $G$ in degree $n$; denote $K_n=K_n(\z{})$ for simplicity.   For $G=\Z, \Z/p^r$ or $\ZZ{p}$ and $1\leq s\leq r$, we denote by 
\[\rho_{p^s}\colon K_n(G)\to K_n(\Z/p^s),\quad \mu_{p^{r-s}}\colon K_n(\Z/p^s)\to K_n(\Z/p^r)\]  
the mod $p^s$ reduction and the map induced by the multiplication by $p^{r-s}$; the domain $K_n(G)$ can be inferred from the context. For a primary cohomology operation $\Sq^i$ or $\PP_p$ with $p$ an odd prime, we denote \[\Sq^i_G=\Sq^i\circ \rho_2,\quad \PP_G=\PP_p\circ \rho_p.\]

Let $P_rS^n=P_{n+r}(S^n)$ denote the $(n+r)$-th Postnikov section of $S^n$ defined by the principal homotopy fibration sequence 
	\[K_{n+r}(\pi_{n+r}(S^n))\xra{j_r} P_rS^n\xra{p_r}P_{r-1}S^n\xra{k_{r+1}} K_{n+r+1}(\pi_{n+r}(S^n)),\]
where $k_{r+1}$ is the $k$-invariant. Note that the canonical map $\imath_r\colon S^n\to P_rS^n$ is $(n+r+1)$-connected, and that $\pi_{n+r}(S^n)\cong \pi_r^S$ is the $r$-th stable homotopy group of $S^0$ for  $n\geq r+2$.  

\begin{lemma}\label{lem:tower:Sn}
	Up to $n+3$ stages, the stable Postnikov tower of $S^n$ takes the form:
\begin{equation*}
	\begin{tikzcd}
		K_{n+3}(\Z/24)\ar[r,"j_{3}"]&P_3S^n\ar[d,"p_{3}"swap]&\\
		K_{n+2}\ar[r,"j_{2}"]&P_2S^n\ar[r,"{\smatwo{\wtd{\Sq^4_\Z}\circ p^2_1}{\PP_\Z\circ p^2_1}}"]\ar[d,"p_{2}"swap]&K_{n+4}(\Z/8)\times K_{n+4}(\Z/3)\\
		K_{n+1}\ar[r,"j_{1}"]&P_{1}S^n\ar[r,"\ol{\Sq^2}"]\ar[d,"p_{1}"swap]&K_{n+3}\\
		&K_n(\Z)\ar[r,"\Sq^2_\Z"]&K_{n+2}
	\end{tikzcd}
\end{equation*}
Here $p^2_1=p_1\circ p_2$, $\ol{\Sq^2}$ and $\wtd{\Sq^4_\Z}\colon K_n(\Z)\to K_{n+4}(\Z/8)$ respectively satisfy the formulas
\begin{equation}\label{eq:Sq2-Sq4}
\ol{\Sq^2}\circ j_1\simeq\Sq^2,\quad \rho_2\circ \wtd{\Sq^4_\Z}\simeq \Sq^4_{\Z}.
\end{equation}

\end{lemma}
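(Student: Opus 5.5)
We build the tower stage by stage from the stable homotopy groups $\pi_1^S=\Z/2\{\eta\}$, $\pi_2^S=\Z/2\{\eta^2\}$, $\pi_3^S=\Z/24\{\nu\}$ and the cohomology of Eilenberg--MacLane spaces in the stable range $n\geq 5$. The bottom stage is $K_n(\Z)$. The first $k$-invariant $k_2\in H^{n+2}(K_n(\Z);\z{})$ is the unique nonzero class $\Sq^2_\Z$: it cannot vanish, because $\Sq^2$ acts nontrivially on $H^{n}(\Sigma^{n-2}\CP{2};\Z)$, which detects $\eta$. Taking the fibre of $\Sq^2_\Z$ gives $P_1S^n$ with fibre $K_{n+1}=K_{n+1}(\pi_1^S)$.

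Next we compute $H^{n+3}(P_1S^n;\z{})$ from the Serre spectral sequence of $K_{n+1}\to P_1S^n\to K_n(\Z)$; in the stable range this is the cohomology long exact sequence. The transgression sends $\iota_{n+1}\mapsto\Sq^2_\Z$ and $\Sq^2\iota_{n+1}\mapsto \Sq^2\Sq^2_\Z=\Sq^3\Sq^1_\Z=0$, using the Adem relation and $\Sq^1\rho_2=0$ on integral classes. Hence $\Sq^2\iota_{n+1}$ lifts to a class $\ol{\Sq^2}$ with $j_1^*\ol{\Sq^2}=\Sq^2$. The $k$-invariant $k_3$ must restrict nontrivially to the fibre, since $\eta^2\neq 0$ means $\eta$ composes nontrivially with $\eta$ (detected by $\Sq^2$ on the fibre class). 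Adjusting by classes pulled back from $H^{n+3}(K_n(\Z);\z{})=\{\Sq^3_\Z=0\}$ is unnecessary, because that group vanishes. So $k_3=\ol{\Sq^2}$, which settles the first formula in \eqref{eq:Sq2-Sq4}.

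The third $k$-invariant lands in $H^{n+4}(P_2S^n;\Z/24)$, which splits as the $\Z/8$ and $\Z/3$ parts. For $p=3$ the space $P_2S^n$ is $3$-locally $K_n(\Z)$, since all the fibres are $2$-primary, and the first $3$-primary class in degree $n+4$ is $\PP_\Z$. It must be nonzero because $\alpha_1$ is detected by $\PP$. For $p=2$ we work in $H^{n+4}(P_2S^n;\Z/8)$. The class has to reduce mod $2$ to a class detecting $\nu$, namely $\Sq^4$ on the bottom class, because the Hopf invariant one map $\nu$ has $\Sq^4\neq0$ in its cofibre. We therefore want to show that $\Sq^4_\Z$ lifts to an integral-like class pulled back from $K_n(\Z)$, i.e.\ a $\Z/8$ class $\wtd{\Sq^4_\Z}$ on $K_n(\Z)$. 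This uses $H^{n+4}(K_n(\Z);\Z)\cong\Z/3$ and $H^{n+5}(K_n(\Z);\Z)_{(2)}\cong \Z/2$, or more precisely the $\Z/8$ Bockstein structure of $H^*(K(\Z,n))$, which has a $\Z/8$ summand in $H_{n+3}$ stably coming from $H\Z$-homology. The resulting class is the composite $\wtd{\Sq^4_\Z}\circ p^2_1$.

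The main obstacle is showing that the $2$-primary $k$-invariant is exactly the pullback $\wtd{\Sq^4_\Z}\circ p_1^2$, with no correction terms coming from classes in $H^{n+4}(P_2S^n;\Z/8)$ supported on the fibres $K_{n+1}$ and $K_{n+2}$. I would handle this by computing $H^{n+4}(P_2S^n;\Z/8)$ via the Serre exact sequences. Any further candidate restricts to the fibres as a class like $\Sq^3\iota_{n+1}$ or $\Sq^2\iota_{n+2}$, which are killed or are not $\Z/8$-compatible (they are $2$-torsion reductions). The generator's order-$8$ behaviour, coming from $\pi_3^S$ being cyclic of order $24$, then pins down the class up to a unit. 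Since a unit multiple can be absorbed by an automorphism of $K_{n+3}(\Z/24)$, the tower can be written in the stated form. Alternatively, I would compare with the known $2$-local Adams resolution of the sphere, where $\nu$ sits in filtration $1$ via $\Sq^4$ and $2\nu,4\nu$ are the successive $h_0$-multiples; this confirms the $\Z/8$ extension and the formula $\rho_2\circ\wtd{\Sq^4_\Z}\simeq\Sq^4_\Z$.
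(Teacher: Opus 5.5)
The paper gives no argument of its own here; it only cites Mosher--Tangora. Your stages one and two are fine, and so is the $3$-primary part. There is one slip: $H^{n+3}(K_n(\Z);\Z/2)=\Z/2\{\Sq^3_\Z\}$ is not zero, because $\Sq^3=\Sq^1\Sq^2$. Uniqueness of $\ol{\Sq^2}$ still holds, since $p_1^*\Sq^3_\Z=\Sq^1p_1^*\Sq^2_\Z=0$.

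The genuine gap is the step you flag yourself, and it cannot be closed: the factorization through $K_n(\Z)$ is false, for two reasons.

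\emph{No lift on $K_n(\Z)$.} No class in $H^{n+4}(K_n(\Z);\Z/8)$ reduces mod $2$ to $\Sq^4_\Z$. Indeed $\Sq^1\Sq^4_\Z=\Sq^5_\Z\neq0$, so $\Sq^4_\Z$ is not even the reduction of a $\Z/4$-class. Stably $H_{n+3}(K_n(\Z);\Z)=0$ and $H_{n+4}(K_n(\Z);\Z)\cong\Z/6$. Hence $H^{n+4}(K_n(\Z);\Z/8)\cong\Z/2$, generated by $\mu_4\Sq^4_\Z$, whose mod-$2$ reduction is $0$. The groups you quote are wrong: there is no $\Z/3$ in $H^{n+4}(K_n(\Z);\Z)$ (it sits in $H^{n+5}$), and there is no $\Z/8$ in $H_{n+3}$.

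\emph{The fibre term is forced.} Your ``no correction terms'' argument contradicts the principle you used at stage two: the restriction of a $k$-invariant to the fibre records $\eta$-multiplication. Here $\eta\cdot\eta^2=\eta^3\neq0$ in $\pi_3^S$ (it is $4\nu$ $2$-locally). So the $2$-primary $k$-invariant $k$ must satisfy $j_2^*k=\mu_4\Sq^2\iota_{n+2}\neq0$. Every class of the form $x\circ p^2_1$ restricts trivially to $K_{n+2}$, so $k$ is not pulled back from $K_n(\Z)$. Dismissing the fibre terms as ``$2$-torsion reductions'' overlooks exactly the coefficient map $\mu_4\colon\Z/2\to\Z/8$.

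What is true, and what you should aim for instead, is the following.
\begin{itemize}
\item $H^{n+4}(P_2S^n;\Z/2)\cong\Z/2$, generated by $(p^2_1)^*\Sq^4_\Z$. In the Serre exact sequence, $\Sq^1\ol{\Sq^2}$ is killed by the transgression of $\Sq^1\iota_{n+2}$. The class $\Sq^2\iota_{n+2}$ does not survive, since it transgresses to $\Sq^2\ol{\Sq^2}$, which restricts to $\Sq^3\Sq^1\iota_{n+1}\neq0$ on $K_{n+1}$.
\item Since $S^n\to P_3S^n$ is $(n+4)$-connected, $H^{n+4}(P_3S^n;\Z/2)=0$. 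So the transgression of the mod-$2$ fundamental class of $K_{n+3}(\Z/8)$, which is $\rho_2k$, must hit that generator.
\item Hence $k\in H^{n+4}(P_2S^n;\Z/8)$ with $\rho_2k=(p^2_1)^*\Sq^4_\Z$ and $j_2^*k=\mu_4\Sq^2\iota_{n+2}$.
\end{itemize}
So $\wtd{\Sq^4_\Z}$ has to be read as a $\Z/8$-lift of the composite $\Sq^4_\Z\circ p^2_1$ on $P_2S^n$. It cannot be a map out of $K_n(\Z)$, as the statement literally says; read that way, no proof can succeed.

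Your Adams-resolution cross-check points the same way. The relation $h_0^2h_2=h_1^3$, detecting $4\nu=\eta^3$, is precisely the fibre contribution you tried to rule out.
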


\begin{proof}
	See  \cite[Figure 1, page 122]{MT68}.
\end{proof}

\begin{lemma}\label{lem:tower-Sn-v2}
	Up to $n+3$ stages, the stable $2$-primary Postnikov tower of $S^{n}$ takes the form:
\[\begin{tikzcd}
K_{n+3}\ar[r,"j_3"]&Q_3S^n\ar[d,"q_3"]&\\
K_{n+2}\times K_{n+3}\ar[r,"j_2"]&Q_2S^n\ar[r,"\beta^4"]\ar[d,"q_2"]&K_{n+4}\\
K_{n+1}\times K_{n+3}\ar[r,"j_1"]&Q_1S^n\ar[d,"q_1"]\ar[r,"
{\smatwo{\alpha^3}{\alpha^4}}"]&K_{n+3}\times K_{n+4}\\
&K_n(\Z)\ar[r,"{\smatwo{\Sq^2_\Z}{\Sq^4_\Z}}"]&K_{n+2}\times K_{n+4}
\end{tikzcd},\]
where $\alpha^3,\alpha^4,\beta^4$ are $k$-invariants satisfying the  formulas
\begin{equation}\label{eq:alpha-beta}
	\alpha^3\circ j_1\simeq (\Sq^2,0),~~ \alpha^4\circ j_1\simeq (\Sq^2\Sq^1,\Sq^1),~~ 
	\beta^4\circ j_2\simeq (\Sq^2,\Sq^1).
\end{equation}
\begin{proof}
	See  \cite[Example I]{LT72}.
\end{proof}
\end{lemma}

The following three remarks clarify the relationship between the two Postnikov towers of $S^n$ in Lemmas \ref{lem:tower:Sn} and \ref{lem:tower-Sn-v2}, and the relationship between the $k$-invariants and the associated cohomology operations.

\begin{remark}\label{rmk:towers-rels}
Comparing the $k$-invariants in Lemmas \ref{lem:tower:Sn} and \ref{lem:tower-Sn-v2}, there are homotopy commutative diagrams of homotopy fibration sequences with $K_{r,s}=K_r\times K_s$:
	\[\begin{tikzcd}[column sep=21pt]
	K_{n+3}\ar[r]\ar[d]&\ast\ar[d]\ar[r]&K_{n+4}\ar[d]\\
	Q_1S^n\ar[r,"q_1"]\ar[d,"g_1"]&K_n(\Z)\ar[r,"{\smatwo{\Sq^2_\Z}{\Sq^4_\Z}}"]\ar[d,equal]&K_{n+2,n+4}\ar[d,"\mathrm{pr}_1"]\\
	P_1S^n\ar[r,"p_1"]&K_n(\Z)\ar[r,"\Sq^2_\Z"]&K_{n+2}
	\end{tikzcd}\begin{tikzcd}[column sep=21pt]
		K_{n+3}(\Z/4)\ar[r,"\rho_2"]\ar[d]&K_{n+3}\ar[r,"\Sq^1"]\ar[d]&K_{n+4}\ar[d]\\
		Q_2S^n\ar[r,"q_2"]\ar[d,"g_2"]&Q_1S^n\ar[r,"{\smatwo{\alpha^3}{\alpha^4}}"]\ar[d,"g_1"]&K_{n+3,n+4}\ar[d,"\mathrm{pr}_1"]\\
		P_2S^n\ar[r,"p_2"]&P_1S^n\ar[r,"\ol{\Sq^2}"]&K_{n+3}
	\end{tikzcd}\]
\[\begin{tikzcd}
Q_3S^n\ar[r,"q_3"]\ar[d,"g_3"]&Q_2S^n\ar[r,"\beta^4"]\ar[d,"g_2"]&K_{n+4}\ar[d,"\mu_4"]\\
P_3'S^n\ar[d]\ar[r,"p_3"]&P_2S^n\ar[d,"\rho_4\wtd{\Sq^4_\Z}\circ p^2_1"]\ar[r,"\wtd{\Sq^4_\Z}\circ p^2_1"]&K_{n+4}(\Z/8)\ar[d,"\rho_4"]\\
\ast\ar[r]&K_{n+4}(\Z/4)\ar[r,equal]&K_{n+4}(\Z/4)
\end{tikzcd}.\]
Here $g_1,g_2,g_3$ are excision maps, $P_3'S^n$ is the homotopy fibre of $\wtd{\Sq^4_\Z}\circ p^2_1$.
In particular, the left-most homotopy fibration in the third diagram shows that the natural map $g_3\colon Q_3S^n\to P_3'S^n$ is a homotopy equivalence. %Note also that there are group isomorphisms 
%\[H^{n+4}(P_2S^n;\Z/8)\cong\Z/8\lra{\wtd{\Sq^4_\Z}\circ p^2_1},\quad H^{n+4}(P_2S^n;\Z/4)\cong \Z/4\lra{\rho_4\wtd{\Sq^4_\Z}\circ p^2_1}.\]
\end{remark}

\begin{remark}\label{rmk:PQT3}
	The stable $3$-primary Postnikov tower of $S^n$ up to $(n+3)$ stages defines the space $R_3S^n$ by the homotopy fibration sequence
	\[R_3S^n\xra{r_3} K_n(\Z)\xra{\PP_\Z}K_{n+4}(\Z/3).\] 
	Let $K(2)=K_{n+2,n+3}\times K_{n+4}(\Z/8)$. Then we have the homotopy commutative diagram with homotopy fibration rows:
\[\begin{tikzcd}
Q_3S^n\ar[r,"q^3_1"]&K_n(\Z)\ar[r,"\kappa"]\ar[d,equal]&K(2)\\
P_3S^n\ar[r,"p_1^3"]\ar[u]\ar[d]&K_n(\Z)\ar[r,"\smatwo{\kappa}{\PP_\Z}"]\ar[d,equal]&K(2)\times K_{n+4}(\Z/3)\ar[u,"\mathrm{pr}_1"]\ar[d,"\mathrm{pr}_2"]\\
R_3S^n\ar[r,"r_3"]&K_n(\Z)\ar[r,"\PP_\Z"]&K_{n+4}(\Z/3)
\end{tikzcd}.\]	
Here $\kappa=(\Sq^2,\Sq^2\Sq^1,\wtd{\Sq^4_\Z})^T$ with $\wtd{\Sq^4_\Z}$ a lift of $\Sq^4_\Z$ satisfying $\rho_2\circ \wtd{\Sq^4_\Z}\simeq \Sq^4_\Z$.
It follows that there is a homotopy pullback diagram
\begin{equation*}
	\begin{tikzcd}P_3S^n\ar[r]\ar[d]&Q_3S^n\ar[d,"q^3_1"]\ar[r,"\PP_\Z\circ q_1^3"]&K_{n+4}(\Z/3)\ar[d,equal]\\
R_3S^n\ar[r,"r_3"]&K_n(\Z)\ar[r,"\PP_\Z"]&K_{n+4}(\Z/3)
\end{tikzcd}.
\end{equation*}
In particular, there is a principal homotopy fibration sequence
\begin{equation}\label{fib:P3-Q3}
	K_{n+3}(\Z/3)\xra{j_3} P_3S^n\xra{} Q_3S^n\xra{\PP_\Z\circ q^3_1} K_{n+4}(\Z/3).
\end{equation}

\end{remark}

\begin{remark}\label{rmk:ChlgyOps}
	There are natural stable higher order cohomology operations associated to the stable Postnikov towers in Lemma \ref{lem:tower:Sn} and Lemma \ref{lem:tower-Sn-v2}. For example, let $X$ a based CW complex. Let 
	\[u\in \ker(\Sq^2_\Z:n)\cap\ker(\Sq^4_\Z:n)\subseteq H^n(X;\Z)\] and let $\tilde{u}_1\in [X,Q_1S^n]$ be a lift of $u$, then in Lemma \ref{lem:tower-Sn-v2}, $\alpha^3$ and $\alpha^4$ respectively represent the secondary cohomology operations $\varTheta$ and $\varPsi$ defined by 
	\[\varTheta(u)=\alpha^3\circ \tilde{u}_1,\quad \varPsi(u)=\alpha^4\circ \tilde{u}_1;\]
	The lifts $\tilde{u}_1$ of $u$ may not be unique, but the values $\varTheta(u)$ and $\varPsi(u)$ are well-defined up to the indeterminacies
	\begin{align*}
		\mathrm{ind}(\varTheta)&=\im(\alpha^3(j_1)_\ast)=\Sq^2(H^{n+1}(X;\Z/2)),\\
        \mathrm{ind}(\varPsi)&=\im(\alpha^4(j_1)_\ast)=\Sq^2\Sq^1(H^{n+1}(X;\Z/2))+\Sq^1(H^{n+3}(X;\Z/2))
	\end{align*}
	In other words, the secondary cohomology operations $\varTheta$ and $\varPsi$ are respectively based on the relations
	\begin{equation}\label{eq:Adem-rel-Psi:modSq1}
			\Sq^2\Sq^2_\Z=0,\quad \Sq^1\Sq^4_\Z+\Sq^2\Sq^1\Sq^2_\Z=0.
	\end{equation}
  Since $\Sq^1_\Z=\Sq^1\circ \rho_2=0$, $\varTheta$ takes the same values as the secondary operation $\Theta$ based on the Adem relation
  \begin{equation}\label{eq:Adem-rel-Theta}
	\Sq^3\Sq^1+\Sq^2\Sq^2=0;
  \end{equation}
 and that they both detect $\eta^2\in\pi_2^S$, see  \cite[Page 96]{Harperbook}. Similarly, $\varPsi$ takes the same values as the secondary operation $\Psi$ based on the Adem relation
\begin{equation}\label{eq:Adem-rel-Psi}
	\Sq^4\Sq^1+\Sq^2\Sq^3+\Sq^1\Sq^4=0;
\end{equation}
 and they both detect $2\nu_n\in\pi_3^S$ for $n\geq 5$, see  \cite[Lemma 4.4.4]{Adams60}.

  The $k$-invariant $\beta^4$ in Lemma \ref{lem:tower-Sn-v2} represents the tertiary cohomology operation $\mathbb{T}$ defined on $u\in \ker(\Sq^2_\Z:n)\cap\ker(\Sq^4_\Z:n)$ whose lifts $\tilde{u}_1$ belong to $\ker((\alpha^3,\alpha^4)_\ast^t)$, and takes values $\beta^4\circ \tilde{u}_2$ in $H^{n+4}(X;\Z/2)/\im(\beta^4_\ast\circ (j_2)_\ast)$. Here $\tilde{u}_2\in [X,Q_2S^n]$ is a lift of $\tilde{u}_1$. In other words, $\mathbb{T}$ is the tertiary cohomology operation based on the relation 
 \begin{equation}\label{eq:Adem-rel-T}
	\Sq^2\varTheta+\Sq^1\varPsi=0, \text{ or }\Sq^2\Theta+\Sq^1\Psi=0.
 \end{equation}
 Note that $\mathbb{T}$ detects $\eta^3\in\pi_3^S$; see  \cite[Exercise 4.2.5, Page 108--111]{Harperbook}.
\end{remark}

\subsection{$G$-bordisms and $G$-framed bordisms}\label{subsect:prelim-Gbdsm}

%\begin{lemma}\label{lem:G-structure-correspondence}
%	Let $f\colon N^k\hookrightarrow M^{n+k}$ be an immersion of $G$-manifolds with $n\geq k+2$. Then there is a bijection between the set of $G$-structures on the tangent bundle $TN$ and the set of $G$-structures on the normal bundle $\nu(N)$ that are compatible with the $G$-structure on $f^\ast(TM)$.
%\end{lemma}

	%\begin{proof}	
%Let $\tau_M\colon M\to BG_{n+k}$ be the $G$-structure of the tangent bundle $TM$. By the bundle isomorphism
%\[
%f^\ast(TM) \cong TN \oplus \nu_f,
%\] there is a homotopy commutative diagram
%	   \[\begin{tikzcd}
%&BG_k\times	BG_n\ar[d,"g_{k,n}"]\ar[r,"f_k\times f_n"]&B\mathrm{O}_k\times B\mathrm{O}_n\ar[d,"j_{k,n}"]\\
%N\ar[ur,"{(\tau_N,\nu_N)}",bend left=2ex]\ar[r,"\tau_M\circ f"]&BG_{n+k}\ar[r,"f_{n+k}"]&B\mathrm{O}_{n+k}	
%\end{tikzcd},\]
%where $g_{k,n}$ is the Whitney sum map of $G$-structures. It follows that given a tangential $G$-structure $\tau_N\colon N\to BG_k$, there is a unique normal $G$-structure $\nu_N\colon N\to BG_n$ that is compatible with $f^\ast(TM)$, and vice versa.
%	\end{proof}

We first review the concept of the $G$-bordism group of a space $X$ in terms of stable normal $G$-structures. 
A $G$-manifold in $X$ is a triple $(M^n,f,\sigma)$, where $M^n$ is a closed smooth $n$-manifold, 
$f\colon M \to X$ is a continuous map, and $\sigma$ is a stable normal $G$-structure on $M$.
Two $G$-manifolds $(M_0^n,f_0,\sigma_0)$ and $(M_1^n,f_1,\sigma_1)$ in $X$ are said to be 
\emph{$G$-bordant} if there exists a compact $(n+1)$-dimensional $G$-manifold $(W^{n+1},F,\tau)$ in $X\times I$ such that 
\[\partial W = M_0 \sqcup -M_1,\quad F|_{N_i} = (f_i,i)  \text{ and  }\tau|_{N_i} = \sigma_i \text{ for }i=0,1.\] 
We denote by $[M,f,\sigma]$ the $G$-bordism class of $(M,f,\sigma)$. Denote by $\Omega_n^{G}(X)$ the set of $n$-dimensional $G$-bordism classes in $X$, which is an abelian group under disjoint union;  in particular, denote $\Omega_n^{G}=\Omega_n^{G}(\pt)$.
The $G$-bordism theory is a generalized homology theory represented by the \emph{Thom spectrum} $\mathbf{MG}$, which consists of the Thom spaces $\mathrm{MG}_n$ of the universal vector bundle $\gamma_n$ over $BG_n$; see \cite{Stongbook,Rudyakbook} for more details.

The following is the \emph{generalized Pontryagin-Thom Theorem for $G$-bordism}, where the case $G=\SO$ can be found in \cite{Thom54,Atiyah61}.

\begin{theorem}\label{thm:Pontryagin-Thom}
	Let $n\geq k+2$ and let $X$ be a closed smooth $(n+k)$-manifold with a (stable) tangential $G$-structure. There are group isomorphisms 
	\begin{align*}
		\Omega_k^{G}(X)&\cong \mathbf{MG}_k(X)\cong \mathbf{MG}^n(X)\cong [X,\mathrm{MG}_n].
	\end{align*}
\end{theorem}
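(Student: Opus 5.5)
The plan is to chain the three isomorphisms in the statement, proving each one separately: the geometric Pontryagin--Thom identification first, then Atiyah duality, then the stable-range identification of spectrum cohomology with unstable homotopy classes.

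\emph{Step 1: $\Omega_k^G(X)\cong\mathbf{MG}_k(X)$.} This is the standard Pontryagin--Thom theorem for $G$-bordism of a space, valid for any $X$; see \cite{Stongbook,Rudyakbook}. I would cite it directly. The proof embeds $(N,f,\sigma)$ in $X\times\R^{N'}$, collapses onto the Thom space of the normal bundle, and classifies that bundle using its $G$-structure.

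\emph{Step 2: $\mathbf{MG}_k(X)\cong\mathbf{MG}^n(X)$.} The tangential $G$-structure on $X$ makes $X$ $\mathbf{MG}$-orientable, since its stable normal bundle inherits a $G$-structure. Atiyah duality then identifies the Spanier--Whitehead dual of $X_+$ with the Thom spectrum $X^{-\tau}$. The Thom isomorphism for $\mathbf{MG}$ untwists this to give the Poincaré duality isomorphism $\mathbf{MG}^{n}(X)\cong \mathbf{MG}_{(n+k)-n}(X)$. One should work with reduced or unreduced theories consistently. Since $X$ is closed and connected, the based set $[X,\mathrm{MG}_n]$ corresponds to unreduced cohomology in positive degree $n$. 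This is harmless because $\mathbf{MG}^n(\pt)=\pi_{-n}\mathbf{MG}=0$ for $n>0$.

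\emph{Step 3: $\mathbf{MG}^n(X)\cong[X,\mathrm{MG}_n]$.} By definition,
$$\mathbf{MG}^n(X)=\operatorname{colim}_m[\Sigma^mX,\mathrm{MG}_{n+m}].$$
The space $\mathrm{MG}_n$ is $(n-1)$-connected, and the structure map $\Sigma\mathrm{MG}_n\to\mathrm{MG}_{n+1}$ is roughly $(2n-1)$-connected. This follows from the Freudenthal suspension theorem, together with the fact that $BG_n\to BG_{n+1}$ is $n$-connected, which gives Thom-space connectivity about $2n$. For $\dim X=n+k\le 2n-2$, i.e.\ $n\geq k+2$, the maps $[\Sigma^mX,\mathrm{MG}_{n+m}]\to[\Sigma^{m+1}X,\mathrm{MG}_{n+m+1}]$ are bijections for all $m\ge0$. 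Hence the colimit equals $[X,\mathrm{MG}_n]$, with the group structure coming from the stable range. The map realizing the composite is the direct construction: take a $G$-submanifold $N^k\subset X$ with normal $G$-structure, pass to its tubular neighborhood, collapse, and classify the normal bundle.

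\emph{Main obstacle.} The hardest step is Step 2 together with the bookkeeping of $G$-structures. One must check that a tangential $G$-structure on $X$ and a stable normal $G$-structure on $N$ correspond to a $G$-structure on the normal bundle $\nu_{N\subset X}$, using $TN\oplus\nu=TX|_N$ and the $2$-out-of-$3$ property for $G=\SO,\Spin,\String$. One must also check that the resulting isomorphism matches the collapse map. I would handle this by verifying that the geometric collapse map coincides with the cap product with the $\mathbf{MG}$-fundamental class of $X$.
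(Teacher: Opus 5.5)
Your proposal is correct and follows the same three-step route as the paper: you cite the Pontryagin--Thom (Lashof) theorem for $\Omega_k^G(X)\cong\mathbf{MG}_k(X)$, derive $\mathbf{MG}_k(X)\cong\mathbf{MG}^n(X)$ from Poincar\'e--Atiyah duality via the $\mathbf{MG}$-Thom isomorphism, and use stability in the range $n+k\le 2n-2$ to identify $\mathbf{MG}^n(X)$ with $[X,\mathrm{MG}_n]$. Your explicit connectivity check on the structure maps $\Sigma\mathrm{MG}_n\to\mathrm{MG}_{n+1}$ supplies a detail the paper leaves implicit; these maps are in fact $(2n+1)$-connected by the Thom isomorphism rather than by Freudenthal. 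The $G$-structure bookkeeping on $\nu_{N\subset X}$ that you flag as the main obstacle is not needed for this statement and really belongs to the framed version stated right after it.
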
	

\begin{proof}
	The first isomorphism is due to Lashof \cite{Lashof63}, see also  \cite[Chapter IV, Theorem 7.27]{Rudyakbook}; the second isomorphism is due to the Poincar\'e-Thom duality (\cite[Chapter V, Theorems 2.4, 2.9 and Proposition 2.8]{Rudyakbook}), and the last isomorphism follows by the generalized Freudenthal suspension theorem: $\mathrm{MG}_n$ is $(n-1)$-connected and $X$ has dimension $n+k$ with $n\geq k+2$ implying that the stabilization map
	\[[X,\mathrm{MG}_n]\to [\Sigma^r X,\Sigma^r \mathrm{MG}_n]\to [\Sigma^r X,\mathrm{MG}_{n+r}]\] 
	is an isomorphism for any $r\geq 0$. Hence $\mathbf{MG}^n(X)\cong [X,\mathrm{MG}_n]$.
\end{proof}

There is also the concept of \emph{$G$-framed manifolds}. A $G$-framed manifold in a smooth closed manifold $X$ is a triple $(M,f,\sigma)$ such that $f\colon M\to X$ is an embedding and $\sigma$ is a $G$-structure on the normal bundle $\nu_f$. Two framed $G$-manifolds $(N_0,f_0,\sigma_0)$ and $(N_1,f_1,\sigma_1)$ in $X$ are said to be \emph{framed $G$-bordant} or \emph{$L$-equivalent} if there exists a $(k+1)$-dimensional $G$-framed manifold $(W,F,\tau)$ in $X\times I$ such that
\[\partial W = M_0 \sqcup -M_1,\quad F|_{N_i} = (f_i,i)  \text{ and  }\tau|_{N_i} = \sigma_i \text{ for }i=0,1.\] 
In the stable range $n\geq k+2$, the \emph{$G$-framed bordism set}  $\Omega_k^{\fr,G}(X)$ of framed $G$-bordism classes of $k$-dimensional framed $G$-manifolds in $X$ forms an abelian group under disjoint union. Note that the group $\Omega_k^{\fr,G}(X)$ was denoted by $V^k(X,G)$ in \cite{Novikov07}, and was denoted by $L_k(X)$ with $G=\SO$ in \cite{Thom54,Atiyah61}. 

The following is the \emph{generalized Pontryagin-Thom Theorem for $G$-framed bordism}, see \cite[Lemma 2.1]{Novikov07}. 

\begin{theorem}\label{thm:Pontryagin-Thom-framed}
	Let $X$ be a closed smooth oriented $(n+k)$-manifold with $n\geq k+2$. There is a group isomorphism 
\[\Omega_k^{\fr,G}(X)\cong [X,\mathrm{MG}_n].\]	
\end{theorem}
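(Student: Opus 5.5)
The plan is to run the classical Pontryagin--Thom construction directly in $X$, replacing the normal framing by a normal $G$-structure. I will build two maps,
\[
P\colon \Omega_k^{\fr,G}(X)\to [X,\mathrm{MG}_n],\qquad Q\colon [X,\mathrm{MG}_n]\to \Omega_k^{\fr,G}(X),
\]
show both are well defined, show they are mutually inverse, and finally check that $P$ is a homomorphism.

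\textbf{Construction of $P$.} Take a $G$-framed manifold $(N,f,\sigma)$, where $f\colon N\hookrightarrow X$ is an embedding with normal bundle $\nu_f$ of rank $n$. The structure $\sigma$ is a lift of the classifying map of $\nu_f$ through $BG_n\to B\mathrm{O}_n$, i.e.\ a bundle map $\nu_f\to\gamma_n$. Choose a tubular neighbourhood $U\cong \nu_f$ of $f(N)$. Collapse $X\setminus U$ to the base point of the Thom space $\mathrm{Th}(\nu_f)$, then compose with the map $\mathrm{Th}(\nu_f)\to \mathrm{Th}(\gamma_n)=\mathrm{MG}_n$ induced by $\sigma$. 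Tubular neighbourhoods are unique up to isotopy, so the homotopy class is independent of choices. A framed $G$-bordism $(W,F,\tau)$ in $X\times I$ gives, by the same construction relative to the boundary, a homotopy $X\times I\to\mathrm{MG}_n$. Hence $P$ is well defined on bordism classes.

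\textbf{Construction of $Q$.} Given $g\colon X\to \mathrm{MG}_n$, use that $BG_n$ is a colimit of finite subcomplexes and that $X$ is compact, so the image lies in the Thom space of $\gamma_n$ restricted to a finite skeleton $B$. Approximate $B$ by a smooth manifold, for instance a regular neighbourhood of $B$ in a Euclidean space, or a Grassmannian-type model. Homotope $g$ to be smooth near the zero section and transverse to it. Set $N=g^{-1}(B)$. Then $N$ is a closed $k$-dimensional submanifold, and its normal bundle is $g^\ast\gamma_n$, which carries the pulled-back $G$-structure. Applying the same transversality argument to a homotopy $X\times I\to \mathrm{MG}_n$ gives a framed $G$-bordism, so $Q$ is well defined.

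\textbf{Inverses and group structure.} We have $PQ=\id$ by the standard argument: the collapse map of $g^{-1}(B)$ is homotopic to $g$ via the straight-line deformation in the fibres of the tubular neighbourhood. We have $QP=\id$ because the collapse map is already transverse, with preimage exactly $N$ and the given $G$-structure. The hypothesis $n\geq k+2$ serves two purposes. First, $2k+2\le n+k$, so by general position any two embeddings of $k$-manifolds (and $(k+1)$-dimensional bordisms in $X\times I$) can be made disjoint; hence disjoint union is a well-defined operation on $\Omega_k^{\fr,G}(X)$. Second, $\mathrm{MG}_n$ is $(n-1)$-connected and $\dim X=n+k\le 2n-2$, so $[X,\mathrm{MG}_n]$ is in the stable range, as in the proof of Theorem~\ref{thm:Pontryagin-Thom}. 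The group structure on $[X,\mathrm{MG}_n]$ then comes from the stable co-H structure, equivalently from $\mathrm{MG}_n\simeq\Omega\mathrm{MG}_{n+1}$ in this range. The sum of two maps with disjoint supports (the tubular neighbourhoods) is represented by the map that agrees with each on its support, which is $P$ of the disjoint union. So $P$ is a homomorphism. This matches \cite[Lemma 2.1]{Novikov07}.

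\textbf{Main obstacle.} The delicate step is the transversality in the construction of $Q$, because $BG_n$ (for example $B\String_n$) is not a manifold. I would handle it by first using cellular approximation and compactness of $X$ to factor $g$ through the Thom space of $\gamma_n$ over a finite complex. I would then replace that complex by a homotopy-equivalent smooth manifold, namely a regular neighbourhood in some $\R^N$, carrying the pulled-back bundle. On this smooth Thom space the usual Thom transversality theorem applies. Finally, I would check that the resulting $G$-structure on the normal bundle of $N$ is independent of these choices up to framed $G$-bordism.
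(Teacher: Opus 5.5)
Your proposal is correct and is essentially the argument the paper relies on. The paper gives no proof of its own and simply cites \cite[Lemma 2.1]{Novikov07}, which is exactly this Pontryagin--Thom construction with a $G$-structure on the normal bundle in place of a framing. Your write-up supplies the details that citation leaves implicit: the collapse map, transversality after replacing a finite subcomplex of $BG_n$ by a smooth model, and the two uses of $n\geq k+2$ (general position for disjoint union, and the stable range for the group structure on $[X,\mathrm{MG}_n]$). It also correctly never uses the orientation of $X$, which is superfluous here.
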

When $G$ is the trivial group, $\mathrm{MG}_n=S^n$, $\Omega_k^{\fr,G}(X)$ is the normally framed bordism group $\Omega_k^{\fr}(X)$, and Theorem \ref{thm:Pontryagin-Thom-framed} reduces to the classical Pontryagin-Thom theorem for cohomotopy groups: 
\[\Omega_k^{\fr}(X)\cong \pi^n(X).\]

\begin{remark}\label{rmk:bdsm-frbdsm}
Suppose that the ambient manifold $M$ is a closed smooth
$(n+k)$-manifold equipped with a stable tangential $G$-structure, or
equivalently with a stable normal $G$-structure. Then
Theorems~\ref{thm:Pontryagin-Thom} and
\ref{thm:Pontryagin-Thom-framed} yield a natural isomorphism
\[
\Omega_k^G(M)\cong \Omega_k^{\fr,G}(M).
\]

Geometrically, when $n\ge k+2$, Whitney's embedding theorem (see \cite[Theorem 8.9.5]{Mukh15}) implies
that any map $f\colon N^k\to M^{n+k}$ is homotopic to an embedding. For such an embedding, the stable normal bundles satisfy
\[
\nu_N^{st}\cong f^*\nu_M^{st}\oplus \nu_f,
\]
where $\nu_f$ is the normal bundle of the embedding $f\colon N\to M$.
Thus a $G$-structure on $\nu_f$, together with the stable normal
$G$-structure on $M$, determines a stable normal $G$-structure on $N$.
Equivalently, if the $G$-structure on $M$ is described tangentially, the
same conclusion follows after passing to stable normal bundles.
\end{remark}

\section{Stable cohomotopy groups in codimension two}\label{sec:cohtp:codim2}

In this section, we apply the homotopical tools in Section \ref{sec:prelim-Postnikov} to investigate the stable codimension-$2$ cohomotopy groups of manifolds and general CW complexes of dimension $n+2$ with $H_{n+2}(X;\z{})\cong\z{}$.

Let $\delta\colon H^{n-1}(X;\z{})\to H^n(X;\Z)$ be the Bockstein homomorphism associated to the SES $0\to \Z\xra{\times 2}\Z\xra{\rho_2}\z{}\to 0$.
Using the first nontrivial stage of the Postnikov tower in Lemma \ref{lem:tower:Sn}, Taylor \cite[Section 6.1]{Taylor2012} solved the Steenrod problem (\cite{Steenrod1947}) by proving the following lemma in the case $k=1$; the proof of the general case is similar, and we omit the details here.

\begin{lemma}\label{lem:chtp=-1}
	Let $X$ be a CW complex of dimension at most $n+k$ with $n\geq k+2$. There is a SES of groups
	\[0\to QH^{n+1}(X,\Sq^2_\Z)\to [X,P_1S^n]\xra{(p_1)_\ast}\ker(\Sq^2_\Z:n)\to 0,\]
   which splits if and only if 
   \[\Sq^2\big(\delta^{-1}({}_2\ker(\Sq^2_\Z:n))\big)\subseteq \Sq^2_\Z\big(H^{n-1}(X;\Z)\big).\] 
\end{lemma}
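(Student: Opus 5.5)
We apply $[X,-]$ to the principal fibration $K_{n+1}\xra{j_1}P_1S^n\xra{p_1}K_n(\Z)\xra{\Sq^2_\Z}K_{n+2}$ from Lemma \ref{lem:tower:Sn}. Since $n\geq k+2$ and $\dim X\leq n+k$, every space in this sequence is in the stable range, so all the functors $[X,-]$ are abelian groups and the induced maps are homomorphisms. The looped fibration then gives an exact sequence
\[H^{n-1}(X;\Z)\xra{\Sq^2_\Z}H^{n+1}(X;\z{})\xra{(j_1)_\ast}[X,P_1S^n]\xra{(p_1)_\ast}H^n(X;\Z)\xra{\Sq^2_\Z}H^{n+2}(X;\z{}).\]
Here the first arrow is $\Omega\Sq^2_\Z=\Sq^2_\Z$ on degree $n-1$ classes, because the operation is stable. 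Truncating this sequence gives the short exact sequence with kernel $QH^{n+1}(X;\Sq^2_\Z)$ and quotient $\ker(\Sq^2_\Z:n)$.

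For the splitting criterion, we study the extension class. The quotient is a subgroup of $H^n(X;\Z)$, so splitting is governed by the order-two part. Concretely, the sequence splits if and only if every $u\in\ker(\Sq^2_\Z:n)$ of order $2^r$ has a lift of the same order, after modifying the lift by elements of the kernel.

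It suffices to handle the order-two problem. By a standard argument, the extension splits if and only if, for every $u\in {}_2\ker(\Sq^2_\Z:n)$, some (equivalently every) lift $\tilde u$ satisfies $2\tilde u\in$ the image of $2\cdot(\text{kernel})$. Since the kernel $QH^{n+1}(X;\Sq^2_\Z)$ is an $\F_2$-vector space, this means $2\tilde u=0$ in $[X,P_1S^n]$.

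To be careful, higher $2$-power torsion should be handled by the usual criterion for extensions by elementary abelian $2$-groups. Such an extension splits iff the connecting map $\mathrm{Hom}({}_2(\cdot))$-type obstruction vanishes, and this obstruction is exactly the map $u\mapsto 2\tilde u$ restricted to ${}_2\ker$ composed with the identification of $2$-divisibility. We would follow Taylor's $k=1$ argument verbatim here.

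The key computation is then $2\tilde u$ for $u=\delta(x)$ with $x\in H^{n-1}(X;\z{})$. Write $u$ as the composite $X\xra{x}K_{n-1}\xra{\delta}K_n(\Z)$. The multiplication by $2$ on the infinite loop space $P_1S^n$ is controlled by the Toda bracket/Hopf-type formula. Equivalently, we consider the induced map on fibres of the square
\[\begin{tikzcd} K_{n-1}\ar[r,"\delta"]& K_n(\Z)\ar[r,"2"]&K_n(\Z)\end{tikzcd},\]
where $2\circ\delta\simeq 0$.

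The standard outcome, which is Taylor's formula, is that $2\tilde u=(j_1)_\ast\Sq^2(x)$ modulo the indeterminacy $\Sq^2_\Z(H^{n-1}(X;\Z))$. This reflects that $2\iota$ on the fibre of $\Sq^2_\Z$ is the composite $P_1S^n\to K_n(\Z)\to\cdots\xra{\Sq^2}K_{n+1}$ on the Bockstein lift, or stably that $\eta\cdot 2=0$ with the bracket $\langle 2,\eta,2\rangle=\eta^2$ irrelevant at this stage; only $\langle \Sq^2, \cdot\rangle$ matters.

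Granting this, the sequence splits iff $\Sq^2(x)\in\Sq^2_\Z(H^{n-1}(X;\Z))$ for all $x\in\delta^{-1}({}_2\ker(\Sq^2_\Z:n))$. This is exactly the stated criterion, and it is independent of the choice of $x$, because changing $x$ by $\rho_2(y)$ changes $\Sq^2x$ by $\Sq^2_\Z y$.

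The main obstacle is precisely this formula for $2\tilde u$, namely identifying the Toda-bracket/Larmore–Thomas extension map. I would prove it by pulling back along $\delta$: form the fibre of $\Sq^2_\Z\circ\delta=\Sq^2\Sq^1\cdot$ on $K_{n-1}$ and compute $2$ on the resulting space via the map of fibrations induced by $K_{n-1}\xra{\times 2=0}$, which gives $\Sq^2$ via the Larmore–Thomas method.
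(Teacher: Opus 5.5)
Your proposal is correct and follows essentially the route the paper takes, which defers to Taylor's $k=1$ argument: the long exact sequence of $K_{n+1}\xra{j_1}P_1S^n\xra{p_1}K_n(\Z)\xra{\Sq^2_\Z}K_{n+2}$ gives the SES, and since the kernel is an elementary abelian $2$-group, the extension is classified by $u\mapsto 2\tilde u=(j_1)_\ast\Sq^2(x)$ on ${}_2\ker(\Sq^2_\Z:n)$, where $\delta(x)=u$.

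The one loose point is your justification of that formula, since pulling back along $\delta$ only shows that $2$ factors through the fibre, not which class it is. The clean argument, which the paper uses one stage higher in the proof of Theorem~\ref{thm:cohtp=-2:CW}, maps the Bockstein fibration $K_{n-1}\xra{\delta}K_n(\Z)\xra{2}K_n(\Z)\xra{\rho_2}K_n$ into the Postnikov fibration via $\Sq^2_\Z=\Sq^2\circ\rho_2$. This produces $g\colon K_n(\Z)\to P_1S^n$ with $g\circ\delta\simeq j_1\circ\Sq^2$ and with $g\circ p_1$ homotopic to multiplication by $2$ (because $H^{n+1}(P_1S^n;\z{})=0$), so that $2\tilde u=g\circ\delta\circ x=(j_1)_\ast\Sq^2(x)$.
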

Note that when $k=1$ or $\Sq^2_\Z$ acts trivially on $H^n(X;\Z)$, then $\ker(\Sq^2_\Z:n)=H^n(X;\Z)$, and the splitting criterion in Lemma \ref{lem:chtp=-1} turn outs to be 
\[\Sq^2\big(H^{n-1}(X;\z{})\big)=\Sq^2_\Z\big(H^{n-1}(X;\Z)\big).\]

Let $X$ be a CW complex of dimension $n+2$ with $n\geq 4$.
The Postnikov tower of $S^n$ in Lemma \ref{lem:tower:Sn} induces the following two SESs of groups 
	\begin{equation}\label{SES:P1P2}
			\begin{aligned}
	0\to QH^{n+1}(X;\Sq^2_\Z)\xra{(j_1)_\ast} [X,P_1S^n]\xra{(p_1)_\ast}\ker(\Sq^2_\Z:n)\to 0,\\
	0\to QH^{n+2}(X;\Omega\ol{\Sq^2})\to [X,P_2S^n]\xra{(p_2)_\ast}[X,P_1S^n]\to 0.
		\end{aligned} 
	\end{equation}
The canonical map $\imath_2\colon S^n\to P_2S^n$ is $(n+3)$-connected, and hence induces a group isomorphism 
\[(\imath_2)_\ast\colon \pi^n(X)\to  [X, P_2S^n].\]
For $n\geq 4$, the suspension map $E\colon S^{n-1}\to \Omega S^n$ induces homotopy equivalences 
\[P_1S^{n-1}\simeq P_1(\Omega S^{n})\simeq \Omega P_1S^n.\]
Then by (\ref{SES:P1P2}), we get a SES of abelian groups 
		\begin{equation}\label{ses:chtp=-2}
			0\to  QH^{n+2}(X;\ol{\Sq^2})\to \pi^n(X)\xra{}[X,P_1S^n]\to 0.
		\end{equation}

Recall the secondary operation $\varTheta$ based on $\Sq^2\Sq^2_\Z=0$.

\begin{lemma}\label{lem:im-Sq2-P1}
There holds an equality	
\[\ol{\Sq^2}_\ast\big([X,P_1S^{n-1}]\big)=\Sq^2(H^n(X;\z{}))+\varTheta(\ker(\Sq^2_\Z:n-1)).\]
It follows that there are group isomorphisms
\[QH^{n+2}(X;\ol{\Sq^2})\cong\tfrac{H^{n+2}(X;\z{})}{\Sq^2(H^n(X;\z{}))+\varTheta(\ker(\Sq^2_\Z:n-1))}\cong \tfrac{QH^{n+2}(X;\Sq^2)}{\varTheta(\ker(\Sq^2_\Z:n-1))}.\] 
\end{lemma}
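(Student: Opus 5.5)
The plan is to compute the image of $\ol{\Sq^2}_\ast\colon [X,P_1S^{n-1}]\to H^{n+2}(X;\z{})$ by filtering $[X,P_1S^{n-1}]$ with the first SES of (\ref{SES:P1P2}), applied in degree $n-1$. We use the identification $P_1S^{n-1}\simeq\Omega P_1S^n$, under which $\Omega\ol{\Sq^2}$ corresponds to $\ol{\Sq^2}$ for $S^{n-1}$. Concretely, $(p_1)_\ast$ maps $[X,P_1S^{n-1}]$ onto $\ker(\Sq^2_\Z:n-1)$, with kernel $(j_1)_\ast H^n(X;\z{})$.

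\textbf{Step 1 (the fibre part).} By (\ref{eq:Sq2-Sq4}), $\ol{\Sq^2}\circ j_1\simeq\Sq^2$. Hence
\[
\ol{\Sq^2}_\ast\big((j_1)_\ast H^n(X;\z{})\big)=\Sq^2\big(H^n(X;\z{})\big).
\]

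\textbf{Step 2 (the lifts).} For $u\in\ker(\Sq^2_\Z:n-1)$, choose a lift $\tilde u\in[X,P_1S^{n-1}]$. Every other lift differs from $\tilde u$ by the action of an element of $(j_1)_\ast H^n(X;\z{})$. Since everything is stable and $\ol{\Sq^2}$ is an infinite loop map (equivalently, $[X,-]$ is a group homomorphism in this range), $\ol{\Sq^2}_\ast$ is additive. The values on the different lifts of $u$ therefore form a coset of $\Sq^2(H^n(X;\z{}))$.

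We now compare with the secondary operation. In Remark~\ref{rmk:towers-rels} the $k$-invariant $\ol{\Sq^2}$ of $P_1S^{n-1}$ pulls back along $g_1$ to $\alpha^3$, which defines $\varTheta$ (Remark~\ref{rmk:ChlgyOps}). So $\ol{\Sq^2}$ itself is a universal example for the secondary operation based on $\Sq^2\Sq^2_\Z=0$, defined on $\ker(\Sq^2_\Z)$ with indeterminacy $\Sq^2(H^n(X;\z{}))$. In particular $\ol{\Sq^2}_\ast(\tilde u)\in\varTheta(u)$ as cosets; this only needs the first stage of the tower, so we drop the extra condition $\Sq^4_\Z u=0$ required in $Q_1$. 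Summing over all $u$ gives
\[
\ol{\Sq^2}_\ast\big([X,P_1S^{n-1}]\big)=\Sq^2\big(H^n(X;\z{})\big)+\varTheta\big(\ker(\Sq^2_\Z:n-1)\big).
\]
Here $\varTheta(\ker(\Sq^2_\Z:n-1))$ means the union of these cosets, and this union is a subgroup containing the indeterminacy.

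\textbf{Step 3 (the quotient).} The isomorphisms follow by taking quotients. $QH^{n+2}(X;\ol{\Sq^2})$ is by definition the cokernel of $\ol{\Sq^2}_\ast$ (equivalently of $(\Omega\ol{\Sq^2})_\ast$ via the loop identification in the second SES of (\ref{SES:P1P2})). Then use the third isomorphism theorem with $QH^{n+2}(X;\Sq^2)=H^{n+2}(X;\z{})/\Sq^2(H^n(X;\z{}))$.

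\textbf{Main obstacle.} The delicate point is Step 2: justifying that the primary $k$-invariant $\ol{\Sq^2}$ of $P_1S^{n-1}$, which lives in the Postnikov tower of Lemma~\ref{lem:tower:Sn}, genuinely represents $\varTheta$ (equivalently $\Theta$ based on \eqref{eq:Adem-rel-Theta}) on all of $\ker(\Sq^2_\Z)$, rather than only on $\ker(\Sq^2_\Z)\cap\ker(\Sq^4_\Z)$. I would argue this directly from the fibration $P_1S^{n-1}\to K_{n-1}(\Z)\xra{\Sq^2_\Z}K_{n+1}$. This fibration is precisely the universal example for the relation $\Sq^2\Sq^2_\Z=0$, and $\ol{\Sq^2}$ is characterized up to adding $\Sq^2$ pulled back from the base by the condition $\ol{\Sq^2}\circ j_1\simeq\Sq^2$. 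Any such ambiguity lies in $H^{n+1}(K_{n-1}(\Z);\z{})$, and since $\Sq^2\Sq^2_\Z$ is zero there, the ambiguity vanishes. This gives consistency with the Remark~\ref{rmk:ChlgyOps} normalization of $\varTheta$.
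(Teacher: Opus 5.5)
Your proposal is correct and is essentially the paper's argument: the paper maps the SES $0\to QH^{n}(X;\Sq^2_\Z)\to[X,P_1S^{n-1}]\to\ker(\Sq^2_\Z:n-1)\to 0$ to $0\to\Sq^2(H^n(X;\z{}))\to H^{n+2}(X;\z{})\to QH^{n+2}(X;\Sq^2)\to 0$ via the vertical maps $\Sq^2$, $\ol{\Sq^2}_\ast$, $\varTheta$ and reads off the image, which is your Steps 1--3 packaged as one commutative ladder. Your extra check that $\ol{\Sq^2}$ represents $\varTheta$ on all of $\ker(\Sq^2_\Z:n-1)$ is something the paper simply takes for granted; note only that the possible ambiguity sits in degree $n+2$, i.e.\ it is $p_1^\ast(\Sq^1\Sq^2_\Z\iota_{n-1})$, and it vanishes because $p_1^\ast(\Sq^2_\Z\iota_{n-1})=0$ on the homotopy fibre $P_1S^{n-1}$ of $\Sq^2_\Z$, not because $\Sq^2\Sq^2_\Z=0$ in degree $n+1$.
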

	\begin{proof}
		Note that the first SES in (\ref{SES:P1P2}) gives rise to a commutative diagram with exact rows:
\[\begin{tikzcd}[column sep=small]
	0\ar[r]& QH^{n}(X;\Sq^2_\Z)\ar[d,"\Sq^2",two heads]\ar[r,"{(j_1)_\ast}"]& {[X,P_1S^{n-1}]}\ar[r,"{(p_1)_\ast}"]\ar[d,"\ol{\Sq^2}_\ast"]&\ker(\Sq^2_\Z:n-1)\ar[r]\ar[d,"\varTheta"]& 0\\
0\ar[r]&\Sq^2(H^n(X;\z{}))\ar[r]&H^{n+2}(X;\z{})\ar[r]&QH^{n+2}(X;\Sq^2)\ar[r]& 0
\end{tikzcd},\]
where the left-most vertical homomorphism is surjective by the Adem relation $\Sq^2\Sq^2_\Z=0$.
The equaliy in the lemma then follows.
	\end{proof}

\begin{theorem}\label{thm:cohtp=-2:CW}
	Let $n\geq 4$ and let $X$ be a based CW complex of dimension $n+2$ such that $H_{n+2}(X;\z{})\cong \z{}$.
	\begin{enumerate}
	 \item\label{cohtp=-2:Sq2not0} If $\Sq^2$ acts nontrivially on $H^n(X;\z{})$, then there is a SES of groups 
		\[0\to QH^{n+1}(X;\Sq^2_\Z)\to \pi^n(X)\xra{h^n}\ker(\Sq^2_\Z:n)\to 0,\]
		which splits if and only if
		\[\Sq^2\big(\delta^{-1}({}_2\ker(\Sq^2_\Z:n))\big)\subseteq\Sq^2_\Z\big(H^{n-1}(X;\Z)\big).\]
	
	 \item\label{cohtp=-2:Sq2=0} If $\Sq^2$ acts trivially on $H^n(X;\z{})$, then there is a split SES of groups 
	 \[0\to \z{1-\varepsilon(\varTheta)}\oplus QH^{n+1}(X;\Sq^2_\Z)\to \pi^n(X)\xra{h^n}H^n(X;\Z)\to 0,\]
	 where $\varepsilon(\varTheta)=0$ if $\varTheta$ acts trivially on $H^{n-1}(X;\Z)$, and $\varepsilon(\varTheta)=1$ otherwise.  
	\end{enumerate}
\end{theorem}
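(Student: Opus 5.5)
The plan is to combine the short exact sequence \eqref{ses:chtp=-2} with Lemma~\ref{lem:im-Sq2-P1} and Lemma~\ref{lem:chtp=-1}. Here $X$ has dimension $n+2$ and $H^{n+2}(X;\z{})\cong\z{}$; the latter follows from $H_{n+2}(X;\z{})\cong\z{}$ by universal coefficients, using that $X$ is a finite-dimensional CW complex of top dimension $n+2$. Consequently $QH^{n+2}(X;\ol{\Sq^2})$ is either $0$ or $\z{}$. Precisely, Lemma~\ref{lem:im-Sq2-P1} gives
\[
QH^{n+2}(X;\ol{\Sq^2})\cong \frac{H^{n+2}(X;\z{})}{\Sq^2(H^n(X;\z{}))+\varTheta(\ker(\Sq^2_\Z:n-1))}.
\]

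\textbf{Case (1).} Suppose $\Sq^2\colon H^n(X;\z{})\to H^{n+2}(X;\z{})$ is nonzero. Since the target is $\z{}$, the map is onto, so $QH^{n+2}(X;\ol{\Sq^2})=0$. Then \eqref{ses:chtp=-2} shows that $(p_1)_\ast\circ(\imath_2)_\ast$, composed with the projection $p_2$, gives an isomorphism $\pi^n(X)\cong[X,P_1S^n]$. The required sequence, together with its splitting criterion, is then exactly Lemma~\ref{lem:chtp=-1} with $k=2$. It remains to check that the composite $\pi^n(X)\to[X,P_1S^n]\to\ker(\Sq^2_\Z:n)$ is the Hurewicz map $h^n$. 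This holds because $p_1\circ p_2\circ\imath_2\simeq\iota_n$ as maps $S^n\to K_n(\Z)$.

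\textbf{Case (2).} Suppose $\Sq^2$ acts trivially on $H^n(X;\z{})$. Then $\Sq^2_\Z$ vanishes on $H^n(X;\Z)$, so $\ker(\Sq^2_\Z:n)=H^n(X;\Z)$, and $QH^{n+2}(X;\ol{\Sq^2})\cong \z{}/\varTheta(\ker(\Sq^2_\Z:n-1))\cong\z{1-\varepsilon(\varTheta)}$. Splicing \eqref{ses:chtp=-2} with the sequence of Lemma~\ref{lem:chtp=-1}, the kernel of $h^n$ becomes an extension of $QH^{n+1}(X;\Sq^2_\Z)$ by $\z{1-\varepsilon(\varTheta)}$, and the whole group is an extension of $H^n(X;\Z)$ by that kernel. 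I must show that both extensions split.

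\begin{enumerate}
\item The splitting criterion of Lemma~\ref{lem:chtp=-1} reads $\Sq^2(H^{n-1}(X;\z{}))=\Sq^2_\Z(H^{n-1}(X;\Z))$. I would try to derive it from the hypothesis, by duality in the top degrees via $H_{n+2}\cong\z{}$, perhaps by the argument the paper cites as Lemma~\ref{lem:spin:Theta=0}. This gives $[X,P_1S^n]\cong QH^{n+1}(X;\Sq^2_\Z)\oplus H^n(X;\Z)$.
\item The remaining problem is to show that the $\z{}$ coming from $K_{n+2}$ splits off when $\varTheta\equiv0$; when $\varTheta\neq0$ there is nothing to split. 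For this I would use that the extension class is governed by the $k$-invariant $\ol{\Sq^2}$ through the Larmore--Thomas method: an element of order $2$ in $[X,P_1S^n]$ lifts to $\pi^n(X)$ with twice the lift given by a functional operation of $\ol{\Sq^2}$ (Toda-bracket style). The case $n\ge4$ is stable, and $\eta\cdot\eta$-type relations, i.e.\ $2\eta^2=0$ in $\pi_2^S$, should make this functional operation land in $\Sq^2(H^n)+\varTheta(\cdots)$, which is zero here. One could alternatively check that twice any lift is killed using $2\eta=0$.
\end{enumerate}

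I expect this second splitting, namely showing that every order-$2$ element of $[X,P_1S^n]$ admits an order-$2$ lift to $\pi^n(X)$, to be the main obstacle. My first attempt would be to compute $2\cdot\tilde{x}$ for a lift $\tilde x$ as a map $X\xra{}P_1S^n\xra{2}P_1S^n$ and identify its lift with $\ol{\Sq^2}$ applied to a Bockstein-type class, using the fact that the multiplication-by-$2$ map on $P_1S^n$ is determined by $2\eta=0$.
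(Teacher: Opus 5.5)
\textbf{Case (1).} This is correct and is exactly the paper's argument.

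\textbf{The gap: step (i) of Case (2).} The identity $\Sq^2(H^{n-1}(X;\z{}))=\Sq^2_\Z(H^{n-1}(X;\Z))$ does not follow from the hypotheses. No duality argument can produce it: a general CW complex has no Poincar\'e duality, and the condition $H_{n+2}(X;\z{})\cong\z{}$ says nothing about $\Sq^2$ from degree $n-1$ to degree $n+1$. Lemma~\ref{lem:spin:Theta=0} does not help either, since it concerns $\Theta$ on spin manifolds, not $\Sq^2$.

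In fact the step is false. Take $X=\Sigma^{n-4}(\RP{5}/\RP{2})\vee S^{n+2}$, and let $y_{n-1},y_n,y_{n+1}$ be the mod $2$ classes of the first summand.
\begin{enumerate}
\item $\Sq^2$ vanishes on $H^n(X;\z{})$, because the first summand has nothing in degree $n+2$.
\item $H^{n-1}(X;\Z)=0$, so $\varepsilon(\varTheta)=0$.
\item $H^n(X;\Z)\cong QH^{n+1}(X;\Sq^2_\Z)\cong\z{}$.
\end{enumerate}
So a split sequence in (2) would force $\pi^n(X)\cong(\z{})^3$.

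However, $\Sq^2 y_{n-1}=y_{n+1}\neq 0$ while $\Sq^2_\Z(H^{n-1}(X;\Z))=0$. Lemma~\ref{lem:chtp=-1} with $k=1$, applied to the $(n+1)$-dimensional first summand, therefore gives $\pi^n(\Sigma^{n-4}(\RP{5}/\RP{2}))\cong\Z/4$. Concretely, the top cell is attached by $\eta$ to the bottom cell of the Moore space $S^{n-1}\cup_2e^n$, so twice an extension of the Moore-space pinch map is $\eta$ composed with the collapse onto the top cell. Hence $\pi^n(X)\cong\Z/4\oplus\z{}$, and the unconditional splitting you are trying to prove fails for general $X$.

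\textbf{What can actually be proved.} The paper's proof of (2) establishes only your step (ii): $\pi^n(X)\cong\z{1-\varepsilon(\varTheta)}\oplus[X,P_1S^n]$. It then defers the structure of $[X,P_1S^n]$ to Lemma~\ref{lem:chtp=-1} \emph{together with} its splitting criterion. That criterion is automatic for closed oriented manifolds: $\Sq^2=0$ on $H^n$ forces $w_2=v_2=0$, and then $\Sq^2=w_2\smile-$ vanishes on $H^{n-1}$ by Lemma~\ref{lem:mfd:Sq2}. This is why Theorem~\ref{thm:cohtp=-2:mfds} is unaffected.

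\textbf{On step (ii).} Your instinct is right, but the mechanism is not $2\eta^2=0$. The paper compares the multiplication-by-$2$ fibration $K_{n-1}\times K_{n+1}\xra{\delta\times\id}K_n(\Z)\times K_{n+1}\xra{(2,0)}K_n(\Z)$ with $K_{n+2}\to P_2S^n\to P_1S^n$. It finds that the extension is classified by
\[
\psi\colon{}_2[X,P_1S^n]\to H^{n+2}(X;\z{}),\qquad \psi(\alpha)=\Sq^2\Sq^1(x'),\quad \delta(x')=(p_1)_\ast(\alpha).
\]
This vanishes because $\Sq^1x'\in H^n(X;\z{})$ and $\Sq^2$ is zero there. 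It is exactly your ``$\ol{\Sq^2}$ applied to a Bockstein-type class'', since $\ol{\Sq^2}\circ j_1\simeq\Sq^2$.
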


	\begin{proof}
(1) If $\Sq^2$ acts nontrivially on $H^n(X;\z{})$, then by Lemma \ref{lem:im-Sq2-P1}, the secondary operation $\varTheta$ has zero codomain, and hence 
\[QH^{n+2}(X;\ol{\Sq^2})=0,\quad \pi^n(X)\cong [X,P_1S^n].\] 
The statement in (\ref{cohtp=-2:Sq2not0}) then follows by Lemma \ref{lem:chtp=-1}.

(2) If $\Sq^2$ acts trivially on $H^{n}(X;\z{})$, by the SES (\ref{ses:chtp=-2}) and above arguments, there is a SES of groups 
\begin{equation}\label{SES:cohtp=-2}
			 0\to \z{1-\varepsilon(\varTheta)}\to \pi^n(X)\xra{}[X,P_1S^n]\to 0,
	 \end{equation}
	 where $\varepsilon(\varTheta)\in\{0,1\}$ is defined as in the statement. 
It suffices to determine the group extension for $\varepsilon(\varTheta)=0$. Note that in this case, the SES (\ref{SES:cohtp=-2}) is determined by some homomorphism 
\[_2[X,P_1S^n]\to H^{n+2}(X;\z{})\cong\z{}\]
by \cite[Section 5]{LT72}, and that the first SES in (\ref{SES:P1P2})  implies an inclusion of elementary $2$-torsion groups 
\begin{equation}\label{incl:P1S}
	_2[X,P_1S^n]\subseteq {}_2H^n(X;\Z)\oplus (j_1)_\ast(QH^{n+1}(X;\Sq^2_\Z)),
\end{equation}
where the equality holds if and only if the first SES in (\ref{SES:P1P2}) splits.
Consider the following homotopy commutative diagram with homotopy fibration rows 
\begin{equation*}
	\begin{tikzcd}
		K_{n-1,n+1}\ar[r,"\delta\times \id"]\ar[dd,"{(\Sq^2\Sq^1,0)}"]&K_n(\Z)\times K_{n+1}\ar[r,"{(2,0)}"]\ar[dd,"g"]&K_n(\Z)\ar[d,"\Sq^1_\Z=0"]\ar[r,"{\smatwo{\rho_2}{0}}"]&K_{n}\times K_{n+2}\ar[d,"{\Sq^1\times 0}"]\\
		&&K_{n+1}\ar[r,"i_1"]\ar[d,"j_1"]&K_{n+1}\times K_{n+3}\ar[d,"{(\Sq^2,\id)}"]\\
		K_{n+2}\ar[r,"j_2"]&P_2S^n\ar[r,"p_2"]&P_1S^n\ar[r,"\ol{\Sq^2}"]&K_{n+3}
		\end{tikzcd},
\end{equation*}
where $g$ is an excision map.
Write $H^k_G=H^k(X;G)$ for simplicity, then the above diagram induces a commutative diagram of groups 
\[\begin{tikzcd}[column sep=6ex]
	H^{n-1}_{\Z}\ar[r,"\smatwo{\rho_2}{0}"]\ar[d]&H^{n-1}_{\Z/2}\oplus H^{n+1}_{\Z/2}\ar[d,"{(\Sq^2\Sq^1,0)}"]\ar[r,"\Sq^1\oplus \id"]&H^n_\Z\oplus H^{n+1}_{\z{}}\ar[d,"g_\ast"]\ar[r,"{(\times 2,0)}"]&H^n_{\Z}\ar[d,"0"swap]\\
{[X,P_1S^{n-1}]}\ar[r,"(\ol{\Sq^2})_\ast=0"]&H^{n+2}_{\z{}}\ar[r,"(j_2)_\ast"]&{[X,P_2S^n]}\ar[r,"(p_2)_\ast", two heads]&{[X,P_1S^n]}
\end{tikzcd},\]
where $(\ol{\Sq^2})_\ast=0$ follows by Lemma \ref{lem:im-Sq2-P1} and the above assumptions. 
It follows that the SES (\ref{SES:cohtp=-2}) is determined by the homomorphism 
 \begin{equation*}
	\psi\colon {}_2[X,P_1S^n]\to H^{n+2}(X;\z{}),\quad \alpha\mapsto \Sq^2\Sq^1(x'),
 \end{equation*}
 where $x'\in H^{n-1}(X;\z{})$ satisfies $x=(p_1)_\ast(\alpha)=\delta(x')\in 
 {}_2H^n(X;\Z)$. Since $\Sq^2$ acts trivially on $H^{n}(X;\z{})$, the homomorphism $\psi$ is trivial, and hence the SES (\ref{SES:cohtp=-2}) splits. The splitting criterion in the statement then follows by Lemma \ref{lem:chtp=-1}, and the proof of the theorem is completed.
	\end{proof}

\begin{remark}
	The inclusion (\ref{incl:P1S}) follows from the observation: Given a SES of abelian groups 
\[0\to (\z{})^m\to G\to H\to 0,\]
there is an inclusion of elementary $2$-torsion subgroups
\[_2G\cong (\z{})^m\oplus \ker(\phi)\subseteq (\z{})^m\oplus {}_2H,\]  
where $\phi\colon {}_2H\to (\z{})^m$ is the connecting homomorphism arising from the snake lemma applied to the multiplication by $2$. Indeed, $\phi$ is the homomorphism classifying the extension, and hence the inclusion is an isomorphism if and only if the above SES splits. 
\end{remark}

%The first two nontrivial stages of the Postnikov tower of $S^{n-1}$ in Lemma \ref{lem:tower:Sn} defines the secondary cohomology operation $\varTheta$ as follows: Since $\Sq^2$ acts trivially on $H^{n-1}(M;\z{})$,  $\Sq^2_\Z(u)=\Sq^2\circ \rho_2(u)=0$ for any $u\in H^{n-1}(M;\z{})$, and hence there exists a lift $\tilde{u}\in [M,P_1S^{n-1}]$ of $u$ such that $u\simeq p_1\circ \tilde{u}$. The homotopy clases of all such lifts $\tilde{u}$ are bijective to elements of $(j_1)_\ast\big(H^{n}(M;\z{}))$ and the image of the composition 
%\[(\ol{\Sq^2}\circ j_1)_\ast=\Sq^2\colon H^{n+1}(M;\z{})\to H^{n+3}(M;\z{})\] is the indeterminacy of the operation $\varTheta$, which is zero because $\Sq^2$ acts trivially on $H^{n}(M;\z{})$. Thus the secondary operation $\varTheta$ is given by 
%\[\varTheta\colon H^{n}(M;\z{})\to H^{n+3}(M;\z{}),\quad \varTheta(u)=\ol{\Sq^2}\circ \tilde{u},\]
%which is exactly the secondary cohomology operation based on the Adem relation $\Sq^2\Sq^2_{\Z}=0$. Thus the claim is proved.

%Now we consider the case $X=M$ is a closed smooth oriented manifold. Let $w_k(M),v_k(M)\in H^k(M;\z{})$ be respectively the $k$-th Stiefel-Whitney class and the $k$-th Wu class of the tangent bundle $TM$. 

Next, we apply Theorem \ref{thm:cohtp=-2:CW} to characterize the cohomotopy group in codimension $2$ for closed smooth oriented manifolds.

\begin{lemma}\label{lem:mfd:Sq2} 
	Let $M$ be a connected closed smooth oriented manifold of dimension $n+2$ with $n\geq 3$. Then for any $x\in H^{n-1}(M;\z{})$, the Steenrod square 
	\[\Sq^2\colon H^{n-1}(M;\z{})\to H^{n+1}(M;\z{})\] 
	is given by  
	\[\Sq^2(x)=w_2(M)\!\smallsmile\! x.\] 
It follows that when $M$ is spin, there is a group isomorphism 
\[[M,P_1S^n]\cong H^{n}(M;\Z)\oplus H^{n+1}(M;\z{}).\] 
\end{lemma}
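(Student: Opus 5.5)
The first claim follows from Wu's formula together with Poincaré duality. On a closed $m$-manifold with $m=n+2$, the Wu class $v_k$ is defined by $\Sq^k(y)=v_k\smallsmile y$ for every $y\in H^{m-k}(M;\z{})$. Here $x$ lies in degree $n-1$ and $\Sq^2(x)$ lies in degree $n+1=m-1$, so this is not the top-degree case and needs one more step.

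For an arbitrary $z\in H^1(M;\z{})$, apply the Cartan formula:
\[\Sq^2(x\smallsmile z)=\Sq^2x\smallsmile z+\Sq^1x\smallsmile\Sq^1 z,\]
using $\Sq^2z=0$ since $z$ has degree $1$. Next, $\Sq^1 z=z^2$, and in top degree $\Sq^1(y)=v_1\smallsmile y$ with $v_1=w_1=0$ because $M$ is oriented. Then
\[\Sq^1x\smallsmile z^2=\Sq^1(x\smallsmile z^2)+x\smallsmile\Sq^1(z^2)=0+x\smallsmile 2z^3=0.\]
Hence $\langle \Sq^2x\smallsmile z,[M]\rangle=\langle v_2\smallsmile x\smallsmile z,[M]\rangle$.

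Wu's formula gives $w_2=v_2+\Sq^1v_1+v_1^2=v_2$ for oriented $M$. Since the cup product pairing $H^{n+1}\otimes H^1\to\z{}$ is nondegenerate, we conclude $\Sq^2x=w_2(M)\smallsmile x$.

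For the second claim, assume $M$ is spin, so $w_2(M)=0$. By the first claim, $\Sq^2$ vanishes on $H^{n-1}(M;\z{})$. Wu's formula also gives $\Sq^2_\Z=0$ on $H^n(M;\Z)$, since that is the top-degree case $\Sq^2y=v_2y$ with $v_2=w_2=0$. Therefore
\[\ker(\Sq^2_\Z:n)=H^n(M;\Z),\qquad QH^{n+1}(M;\Sq^2_\Z)=H^{n+1}(M;\z{}).\]
Lemma~\ref{lem:chtp=-1} with $k=2$ applies since $n\geq 4$; the lemma is stated for $n\ge3$, and the stable-range hypothesis is what is needed here. It gives the extension
\[0\to H^{n+1}(M;\z{})\to[M,P_1S^n]\to H^n(M;\Z)\to0.\]
The splitting criterion there reduces to $\Sq^2(\delta^{-1}(\cdot))\subseteq 0$. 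This holds because $\Sq^2$ vanishes on all of $H^{n-1}(M;\z{})$, so the extension splits and gives the stated isomorphism.

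The main obstacle is the non-top-degree Wu argument, in particular checking that the correction term $\Sq^1x\smallsmile\Sq^1z$ vanishes, which uses orientability.
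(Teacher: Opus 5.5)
Your proof is correct and follows the paper's argument: the Cartan formula on $x\smile z$, killing the cross term $\Sq^1x\smile\Sq^1z$ by orientability (the paper writes it as $\Sq^1(x\smile\Sq^1 z)$ using $\Sq^1\Sq^1=0$), Wu's formula in degree $n$, nondegeneracy of the pairing with $H^1$, and then Lemma~\ref{lem:chtp=-1} for the spin case. One small slip is that the Wu relation is $w_2=v_2+\Sq^1v_1=v_2+v_1^2$, not $v_2+\Sq^1v_1+v_1^2$; this is harmless because $v_1=0$, and your explicit check that $\Sq^2_\Z$ vanishes on $H^n(M;\Z)$ fills in a step the paper leaves implicit.
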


\begin{proof}
Since $M$ is oriented, the first Stiefel-Whitney class $w_1(M)=0$, and hence $\Sq^1$ acts trivially on $H^{n+1}(M;\z{})$. 
For any $x\in H^{n-1}(M;\z{})$ and any $y\in H^1(M;\z{})$, the Cartan
formula gives
\begin{align*}
	\Sq^2(x\!\smallsmile\! y)
&= \Sq^2(x)\!\smallsmile\! y + \Sq^1(x)\!\smallsmile\! \Sq^1(y) + x\!\smallsmile\! \Sq^2(y)\\
&= \Sq^2(x)\!\smallsmile\! y+\Sq^1(x\!\smallsmile\! \Sq^1(y))\\
&= \Sq^2(x)\!\smallsmile\! y.
\end{align*}
By Wu's formula \cite[Page 132]{MSbook}, we then have 
\[
\Sq^2(x\!\smallsmile\! y)= w_2(M)\!\smallsmile\! x\!\smallsmile\! y.
\]
If $H^1(M;\z{})=0$, the formula $\Sq^2(x)=w_2(M)\!\smallsmile\! x$ trivially holds, that is, both sides equal zero; if $H^1(M;\z{})\neq 0$, then we obtain the formula from the nondegeneracy of the cup product pairing
\[H^{n+1}(M;\z{})\times H^1(M;\z{})\to H^{n+2}(M;\z{})\cong\z{}.\]

If $M$ is spin, then $w_2(M)=0$, so $\Sq^2$ acts trivially on $H^{n-1}(M;\z{})$. The isomorphism for $[M,P_1S^n]$ then follows by Lemma \ref{lem:chtp=-1}.
\end{proof}

Recall that the secondary cohomology operation $\Theta$ based on the Adem relation $\Sq^2\Sq^2+\Sq^3\Sq^1=0$ takes the same vales as $\varTheta$ on integral classes.
The proof of the following lemma is similar to the arguments in \cite[Page 32-33]{MMbook}, and we include the details here for later use.
\begin{lemma}\label{lem:spin:Theta=0}
	If $M$ is a connected closed spin $(n+2)$-manifold, then the secondary cohomology operation $\Theta$ acts trivially on $H^{n-1}(M;\z{})$.

	\begin{proof}
	Let $d$ be a sufficiently large integer and let $M\hookrightarrow \R^{n+d+2}$ be an embedding. Since $M$ is spin, $\nu(M)$ is spin and is induced by a map $f\colon M\to B\Spin_d$. 
	By  \cite[Lemma 2.4]{MMbook}, the Thom space $\mathrm{T}(\nu(M))$ of the normal bundle $\nu(M)$ is Spanier-Whitehead dual to the iterated suspension space of $M_+=M\sqcup\{\ast\}$. 
	Since $M$ has a unique top-dimensional cell, dually $\mathrm{T}(\nu(M))$ has a unique bottom cell of dimension $d$. 
   The secondary cohomology operation $\Theta$ is defined on the Thom class $u\in H^{d}(\MSpin_d;\z{})$ and takes the value $\Theta(u)$ in 
  \[H^{d+3}(\MSpin_d;\z{})\cong H^3(B\Spin_d;\z{}),\] 
  which is zero because the classifying space $B\Spin_d$ is $3$-connected. It follows that $\Theta$ acts trivially on the Thom class 
  \[u(M)\in H^d(\mathrm{T}(\nu(M));\z{})\cong\z{},\]  and therefore $\Theta$ acts trivially on $H^{n-1}(M;\z{})$.
	\end{proof} 
\end{lemma}

\begin{theorem}\label{thm:cohtp=-2:mfds}
Let $M$ be a connected closed smooth oriented manifold of dimension $n+2$ with $n\geq 4$. Denote $\varepsilon=0$ if $M$ is spin, and $\varepsilon=1$ otherwise. Then there is a SES of groups 
\begin{equation}\label{SES:codim2:mfds}
	0\to \z{1-\varepsilon}\oplus QH^{n+1}(M;\Sq^2_\Z)\to \pi^n(M)\xra{h^n}\ker(\Sq^2_\Z:n)\to 0,
\end{equation}
which splits if and only if 
 \[\Sq^2\big(\delta^{-1}({}_2\ker(\Sq^2_\Z:n))\big)\subseteq\Sq^2_\Z\big(H^{n-1}(M;\Z)\big).\]
 \begin{proof}
	Combine Theorem \ref{thm:cohtp=-2:CW}, Lemmas \ref{lem:mfd:Sq2} and \ref{lem:spin:Theta=0}, and Remark \ref{rmk:ChlgyOps}.
 \end{proof}
\end{theorem}

%\begin{remark}\label{rmk:chtp=-2:mfds}
%	When $M$ is a smooth spin $(n+2)$-manifold, the split extension in Theorem \ref{thm:cohtp=-2:mfds} (\ref{cohtp=-2:spin}) implies the induced homomorphism
%	\[q^\ast\colon \pi^{n}(S^{n+2})\to \pi^n(M)\]
%	is a split monomorphism, where $q$ is the  pinch map by collapsing the $(n+1)$-skeleton of $M$.
%	This implies the direct summand $\z{}$ of $\pi^n(M)$ is generated by $\eta^2\circ q$, where $\eta^2=\eta_n\circ \eta_{n+1}$ is the composition of the Hopf map $\eta_r\in \pi_{r+1}(S^r)$.
%\end{remark}

%\begin{example}
%	If the manifold $M$ is simply-connected, then Theorem \ref{thm:cohtp=-2:mfds} states that there is a split SES of groups 
%	\[0\to \z{1-\varepsilon}\to \pi^n(M)\xra{h^n}\ker(\Sq^2_\Z:n)\to 0,\]
%	where $\varepsilon=0$ if $M$ is spin, and $\varepsilon=1$ otherwise.
%	Compare the characterizations of codimension $2$ cohomotopy groups in  \cite[Theorem 1.1]{LPW-tlms} and \cite[Theorem 1.3]{HL-7mfd} for simply-connected $6$-, $7$-manifolds, respectively.
%\end{example}

%Next, we compute the cohomotopy group $\pi^n(M)$ for a smooth string $(n+3)$-manifold $M$, $n\geq 5$. %Recall that $B\mathrm{String}_{n+3}$ is the $4$-connected cover of $B\Spin_{n+3}$ and
%the obstruction to lift a map $f\colon M\to B\mathrm{Spin(n+3)}$ to $B\mathrm{String}_{n+3}$ is given by the first fractional Pontryagin class $\frac12p_1(M)\in H^4(M;\z{})$.

\section{Stable cohomotopy groups in codimension three}\label{sec:cohtp:codim3}
Let $n\geq 5$ and let $X$ be a based CW complex of dimension $n+3$ satisfying
\[H_{n+3}(X;\Z/p)\cong\Z/p \text{ for }p=2,3.\]
Identifying $\pi^n(X)$ with $[X,P_3S^n]$, the homotopy fibration sequence (\ref{fib:P3-Q3})
\[K_{n+3}(\Z/3)\xra{j_3} P_3S^n\xra{} Q_3S^n\xra{\PP_\Z\circ q^3_1} K_{n+4}(\Z/3)\]
induces a SES of groups
\begin{equation}\label{SES:XP3-XQ3}
	0\to QH^{n+3}(X;\PP_\Z\circ (q^3_1)_\ast)\xra{(j_3)_\ast} \pi^n(X)\xra{} [X,Q_3S^n]\to 0.
\end{equation}

\subsection{The $2$-primary group $[X,Q_3S^n]$}
%We first determine the group $[X,Q_3S^n]$.
The stable $2$-primary Postnikov tower of $S^n$ in Lemma \ref{lem:tower-Sn-v2} induces the following three SESs of groups
\begin{equation}\label{SESs:Q1Q2Q3}
\begin{aligned}
	0 \to \cok(\Sq^2_\Z,\Sq^4_\Z) \xra{(j_1)_\ast}[X, Q_1S^n] \xrightarrow{(q_1)_\ast} \ker(\Sq^2_\Z:n) \to 0,\\
	0\to\cok(\alpha^3,\alpha^4)_\ast \xra{(j_2)_\ast}  [X, Q_2S^n] 
	\xrightarrow{(q_2)_\ast} \ker(\alpha^3_\ast:n)\to 0,\\
	0\to QH^{n+3}(X; \beta^4)\xra{(j_3)_\ast} [X, Q_3S^n] \xrightarrow{(q_3)_\ast} [X, Q_2S^n] \to 0.
\end{aligned}	
\end{equation}
Here $\ker(\alpha^3_\ast:n)\subseteq [X,Q_1S^n]$, $\cok(\Sq^2_\Z,\Sq^4_\Z)$ and $\cok(\alpha^3,\alpha^4)_\ast$ are respectively quotient groups of the elementary $2$-torsion groups
\[H^{n+1}(X;\z{})\oplus H^{n+3}(X;\z{}),\quad H^{n+2}(X;\Z/2)\oplus H^{n+3}(X;\Z/2).\] 

Given a homomorphism $(f,g)\colon A\to B\oplus C$ of groups, there is an induced SES of groups
\begin{equation}\label{SES:cok(f,g)}
	0\to C/g(\ker(f))\to \cok(f,g)\to \cok(f)\to 0.
\end{equation}
For simplicity, we denote the kernel of $(f,g)$ by $\ker(f\cap g)$, that is,
\[\ker(f\cap g)=\ker(f,g)=\ker(f)\cap \ker(g).\]

\begin{proposition}\label{prop:Q1S}
	 The following hold with $G_1=\tfrac{H^{n+3}(X;\z{})}{\Sq^4_\Z(\ker(\Sq^2_\Z:n-1))}$:
\begin{enumerate}
	\item There is a SES of groups
	\begin{equation}\label{SES:Q1S}
		0\to QH^{n+1}(X;\Sq^2_\Z)\oplus G_1 \xra{}[X, Q_1S^n] \xrightarrow{(q_1)_\ast} \ker(\Sq^2_\Z:n) \to 0,
	\end{equation}
	Moreover, the group $QH^{n+1}(X;\Sq^2_\Z)$ splits off if and only if 
	\[ \Sq^2\left(\delta^{-1}({}_2\ker(\Sq^2_\Z:n))\right)\subseteq \Sq^2_\Z\left(H^{n-1}(X;\Z)\right),\]
	and the group $G_1$ splits off if and only if
	\[\Sq^4\left(\delta^{-1}({}_2\ker(\Sq^2_\Z:n))\right)\subseteq \Sq^4_\Z\left(\ker(\Sq^2_\Z:n-1)\right).\]

 \item Restricting to $\ker(\alpha^3_\ast:n)$, there is an induced SES of groups
	\begin{equation}\label{SES:ker-alpha3}
		0\to \tfrac{\ker(\Sq^2:n+1)}{\Sq^2_\Z(H^{n-1}(X;\Z))} \oplus G_1\to \ker(\alpha^3_\ast:n)\xra{(q_1)_\ast} \ker(\varTheta:n)\to 0,
	\end{equation}
and the splitting criterion is analogous, with ${}_2\ker(\Sq^2_\Z:n)$ above replaced by ${}_2\ker(\varTheta:n)$.
\end{enumerate}
	
\end{proposition}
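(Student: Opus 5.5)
The plan is to read both parts off the first two stages of the $2$-primary tower in Lemma \ref{lem:tower-Sn-v2}, using the SESs (\ref{SESs:Q1Q2Q3}) and the cokernel sequence (\ref{SES:cok(f,g)}). The proof of part (1) then reduces to extension computations of the type in Lemma \ref{lem:chtp=-1} and Theorem \ref{thm:cohtp=-2:CW}.

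\emph{Identifying the kernel in part (1).} The first SES in (\ref{SESs:Q1Q2Q3}) has kernel $\cok(\Sq^2_\Z,\Sq^4_\Z)$. Here $(\Sq^2_\Z,\Sq^4_\Z)$ is the looped map $H^{n-1}(X;\Z)\to H^{n+1}(X;\z{})\oplus H^{n+3}(X;\z{})$. Applying (\ref{SES:cok(f,g)}) with $f=\Sq^2_\Z$ and $g=\Sq^4_\Z$ gives
\[0\to G_1\to \cok(\Sq^2_\Z,\Sq^4_\Z)\to QH^{n+1}(X;\Sq^2_\Z)\to 0.\]
Both groups are elementary $2$-torsion, so this sequence splits. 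Hence the kernel is $QH^{n+1}(X;\Sq^2_\Z)\oplus G_1$, which yields (\ref{SES:Q1S}).

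\emph{Splitting criteria in part (1).} As in the proof of Theorem \ref{thm:cohtp=-2:CW} and the remark following it, an extension with elementary $2$-torsion kernel is classified by a homomorphism $\phi\colon {}_2\ker(\Sq^2_\Z:n)\to QH^{n+1}(X;\Sq^2_\Z)\oplus G_1$. I would compute $\phi$ with the same excision diagram, now for the fibration $Q_1S^n\to K_n(\Z)$ with $k$-invariant $(\Sq^2_\Z,\Sq^4_\Z)$.

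Take $x=\delta(x')\in {}_2\ker(\Sq^2_\Z:n)$ with $x'\in H^{n-1}(X;\z{})$, and lift $x$ to $\alpha\in[X,Q_1S^n]$. Then $2\alpha=(j_1)_\ast(\Sq^2 x',\Sq^4 x')$. This follows because the composite of the $k$-invariant with the Bockstein $K_{n-1}\to K_n(\Z)$ is $(\Sq^2,\Sq^4)$ applied to the mod $2$ class, i.e.\ the difference construction for the multiplication-by-$2$ fibration $K_n(\Z)\xra{2}K_n(\Z)\to K_n$.

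The two components of $\phi$ are $x\mapsto[\Sq^2x']$ and $x\mapsto[\Sq^4x']$. They are well defined modulo the indeterminacy coming from changing $x'$ by $\rho_2 H^{n-1}(X;\Z)$. The $\Sq^4$-component is taken modulo $\Sq^4_\Z(\ker(\Sq^2_\Z:n-1))$: the second coordinate of $\cok$ only kills $\Sq^4_\Z$ of classes whose $\Sq^2_\Z$ vanishes, and the ambiguity $\rho_2(y)$ in $x'$ contributes $(\Sq^2_\Z y,\Sq^4_\Z y)$. A summand splits off iff the corresponding component of $\phi$ vanishes. This gives the two stated inclusions. The $G_1$ criterion needs care: the relevant quotient is really taken inside $\cok(\Sq^2_\Z,\Sq^4_\Z)$, not $G_1$ alone. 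I would handle this by projecting onto $G_1$ via a chosen splitting, and checking that the criterion stated is the one obtained when the $\Sq^2$-part is accounted for.

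\emph{Part (2).} Restrict (\ref{SES:Q1S}) to $\ker(\alpha^3_\ast:n)$. First, the image under $(q_1)_\ast$ is $\ker(\varTheta:n)$, by Remark \ref{rmk:ChlgyOps}: $\varTheta(u)$ is the set of values $\alpha^3\tilde u_1$ over all lifts, so some lift is killed by $\alpha^3$ iff $0\in\varTheta(u)$. Second, the kernel is the intersection of $\ker\alpha^3_\ast$ with the image of $(j_1)_\ast$. Since $\alpha^3\circ j_1\simeq(\Sq^2,0)$ by (\ref{eq:alpha-beta}), this intersection is $\ker(\Sq^2:n+1)/\Sq^2_\Z(H^{n-1})\oplus G_1$. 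The splitting analysis repeats the computation above with ${}_2\ker(\varTheta:n)$ in place of ${}_2\ker(\Sq^2_\Z:n)$.

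The main obstacle I expect is justifying the formula $2\alpha=(j_1)_\ast(\Sq^2x',\Sq^4x')$ together with the correct indeterminacy for the $G_1$-component. I would first try to derive it from the excision diagram used in Theorem \ref{thm:cohtp=-2:CW}, adapted by adding the $K_{n+3}$ factor, and then invoke the Larmore–Thomas description \cite[Section 5]{LT72}.
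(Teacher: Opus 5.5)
Your route is the paper's. The cokernel sequence for $(\Sq^2_\Z,\Sq^4_\Z)$ gives (\ref{SES:Q1S}), and the extension is read off from doubling as in Lemma~\ref{lem:chtp=-1}. Part (2) is the snake lemma with $\alpha^3\circ j_1\simeq(\Sq^2,0)$, where surjectivity on the left comes from $\Sq^2\Sq^2_\Z=0$.

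The paper omits the splitting computation. Your formula $2\alpha=(j_1)_\ast(\Sq^2x',\Sq^4x')$ in $\cok(\Sq^2_\Z,\Sq^4_\Z)$ is correct, but not for the reason you give. The composite of the $k$-invariant with $\delta$ is $(\Sq^2\Sq^1,\Sq^4\Sq^1)$. What matters is that the $k$-invariant factors as $(\Sq^2,\Sq^4)\circ\rho_2$ with $\rho_2\circ 2\simeq\ast$; the only possible correction term is a multiple of $\Sq^3\Sq^1x'=\Sq^3_\Z x=\Sq^1\Sq^2_\Z x=0$.

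With this formula, the $QH^{n+1}(X;\Sq^2_\Z)$ half is fine. The image of the extension class under the canonical projection $\cok(\Sq^2_\Z,\Sq^4_\Z)\to QH^{n+1}(X;\Sq^2_\Z)$ is $x\mapsto[\Sq^2x']$, which is well defined.

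The gap is the $G_1$ half, and your plan to ``check that the criterion stated is the one obtained'' cannot succeed. $G_1$ is a canonical subgroup of $\cok(\Sq^2_\Z,\Sq^4_\Z)$, not a quotient, so there is no component $x\mapsto[\Sq^4x']\in G_1$. Replacing $x'$ by $x'+\rho_2y$ changes $\Sq^4x'$ by $\Sq^4_\Z y$, which need not lie in $\Sq^4_\Z(\ker(\Sq^2_\Z:n-1))$. Projecting through a chosen splitting instead gives a splitting-dependent condition.

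What the argument does give is the following chain:
\begin{itemize}
\item $G_1$ is a direct summand of $[X,Q_1S^n]$ iff $G_1\cap 2[X,Q_1S^n]=0$.
\item The intersection of $2[X,Q_1S^n]$ with $(j_1)_\ast\cok(\Sq^2_\Z,\Sq^4_\Z)$ is $\phi({}_2\ker(\Sq^2_\Z:n))$.
\item $\phi(\delta x')\in G_1$ iff $x'$ can be re-chosen in $\delta^{-1}(\delta x')$ with $\Sq^2x'=0$, and then $\phi(\delta x')=[\Sq^4x']$.
\end{itemize}
Hence the correct criterion is
\[\Sq^4\big(\delta^{-1}({}_2\ker(\Sq^2_\Z:n))\cap\ker(\Sq^2:n-1)\big)\subseteq\Sq^4_\Z\big(\ker(\Sq^2_\Z:n-1)\big).\]

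The displayed inclusion in the statement implies this, so the `if' direction holds. It is not necessary, however. Since $\rho_2H^{n-1}(X;\Z)\subseteq\delta^{-1}(0)$, it forces $\Sq^4_\Z(H^{n-1}(X;\Z))\subseteq\Sq^4_\Z(\ker(\Sq^2_\Z:n-1))$.

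For a counterexample, let $X=\Sigma^{n-7}(\CP{5}/\CP{2})$ with $n\geq 7$, and let $y\in H^{n-1}(X;\Z)$ be the bottom class. Since $\Sq(x^3)=x^3(1+x)^3$, both $\Sq^2_\Z y$ and $\Sq^4_\Z y$ are nonzero. Here $H^n(X;\Z)=0$ and $[X,Q_1S^n]=G_1\cong\Z/2$, so $G_1$ trivially splits off. Yet $\Sq^4(\rho_2y)\neq 0=\Sq^4_\Z(\ker(\Sq^2_\Z:n-1))$.

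Part (2) needs the same correction, adding $\cap\ker(\Sq^2:n-1)$, and this $X$ is again a counterexample there.
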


\begin{proof}
(1)	The SES (\ref{SES:Q1S}) follows from the first SES in (\ref{SESs:Q1Q2Q3}) and the observation above, its splitting criterion  can be proved by similar arguments to that of Lemma \ref{lem:chtp=-1} or \cite[Theorem 6.2]{Taylor2012}, we omit the details here.

(2)	For the SES (\ref{SES:ker-alpha3}), by the homotopy $\alpha^3\circ j_1\simeq (\Sq^2,0)$ and Remark \ref{rmk:ChlgyOps}, there is a commutative diagram with exact rows 
\begin{equation*}
	\begin{tikzcd}[column sep=small]
0\ar[r]&QH^{n+1}(X;\Sq^2_\Z)\oplus G_1\ar[d,"{(\Sq^2,0)}",two heads]\ar[r,"(j_1)_\ast"]&{[X,Q_1S^n]}\ar[d,"\alpha^3_\ast"]\ar[r,"(p_1)_\ast"]&\ker(\Sq^2_\Z:n)\ar[r]\ar[d,"\varTheta"]&0\\
0\ar[r]&\Sq^2(H^{n+1}(X;\z{}))\ar[r]&H^{n+3}(X;\z{})\ar[r]&H^{n+3}(X;\Sq^2)\ar[r]&0
	\end{tikzcd},
\end{equation*}
where the left vertical homomorphism is surjective by the Adem relation $\Sq^2\circ \Sq^2_\Z=0$. The description of $\ker(\alpha^3_\ast:n)$ in (\ref{SES:ker-alpha3}) then follows by the snake lemma, and the splitting criterion is obtained from that of the SES (\ref{SES:Q1S}) by restricting to ${}_2\ker(\varTheta:n)$.
\end{proof}

\begin{remark}\label{rmk:Q1S:n-1}
Denote \[\Sigma^{-1}G_1=\tfrac{H^{n+2}(X;\z{})}{\Sq^4_\Z(\ker(\Sq^2_\Z:n-2))}.\]  
By similar proof of Proposition \ref{prop:Q1S}, there is a SES of groups 
	\[0\to QH^{n}(X;\Sq^2_\Z)\oplus \Sigma^{-1}G_1\xra{(j_1)_\ast}[X,Q_1S^{n-1}]\xra{(q_1)_\ast}\ker(\Sq^2_\Z\cap\Sq^4_\Z:n-1)\to 0,\]
	which induces a SES of groups
\[0\to \tfrac{\ker(\Sq^2:n)}{\Sq^2_\Z(H^{n-2}(X;\Z))} \oplus \Sigma^{-1}G_1\to \ker(\alpha^3_\ast:n-1)\to \ker(\varTheta\cap \Sq^4_\Z:n-1)\to 0.\]
Moreover, there holds an equality
\begin{equation}\label{eq:im-alpha3:n-1}
	\alpha^3_\ast\big([X,Q_1S^{n-1}]\big)= \Sq^2(H^{n}(X;\z{}))+\varTheta(\ker(\Sq^2_\Z\cap \Sq^4_\Z:n-1)),
\end{equation}
and hence there is a group isomorphism 
\begin{equation}\label{eq:cok-alpha3:n-1}
	QH^{n+2}(X;\alpha^3_\ast)\cong \tfrac{QH^{n+2}(X;\Sq^2)}{\varTheta(\ker(\Sq^2_\Z\cap \Sq^4_\Z:n-1))}.
\end{equation}
\end{remark}

\begin{proposition}\label{prop:Q2S}
There is a SES of groups with $G_2=\tfrac{H^{n+3}(X;\z{})}{\alpha^4_\ast(\ker(\alpha^3_\ast:n-1))}$:
	\begin{equation}\label{SES:Q2S}
	0\to \tfrac{QH^{n+2}(X;\Sq^2)}{\varTheta(\ker(\Sq^2_\Z\cap\Sq^4_\Z:n-1))}\oplus G_2\xra{(j_2)_\ast} [X, Q_2S^n]\xra{(q_2)_\ast} \ker(\alpha^3_\ast:n)\to 0.
	\end{equation}
Moreover, the quotient $\tfrac{QH^{n+2}(X;\Sq^2)}{\varTheta(\ker(\Sq^2_\Z\cap\Sq^4_\Z:n-1))}$ splits off, $G_2$ corresponds to a $\Z/4$ direct summand if both $G_1$ and $G_2$ are isomorphic to $\z{}$, otherwise $G_2$ also splits off. 
\end{proposition}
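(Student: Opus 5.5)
The plan is to start from the second SES in (\ref{SESs:Q1Q2Q3}),
\[0\to\cok(\alpha^3,\alpha^4)_\ast \xra{(j_2)_\ast} [X, Q_2S^n]\xra{(q_2)_\ast} \ker(\alpha^3_\ast:n)\to 0,\]
where the left map is induced by $(\alpha^3,\alpha^4)_\ast\colon [X,Q_1S^{n-1}]\to H^{n+2}(X;\z{})\oplus H^{n+3}(X;\z{})$, using $P_1$ and $Q_1$ of $S^{n-1}$ as loop spaces in the stable range. Applying (\ref{SES:cok(f,g)}) with $f=\alpha^3_\ast$ and $g=\alpha^4_\ast$ gives
\[0\to G_2\to \cok(\alpha^3,\alpha^4)_\ast\to QH^{n+2}(X;\alpha^3_\ast)\to 0.\]
By (\ref{eq:cok-alpha3:n-1}), $QH^{n+2}(X;\alpha^3_\ast)$ is the stated quotient. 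This SES of elementary $2$-groups automatically splits, which yields the kernel term in (\ref{SES:Q2S}).

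The extension problem is handled by the method of Larmore--Thomas \cite[Section 5]{LT72}, as in the proof of Theorem \ref{thm:cohtp=-2:CW}. The extension class of (\ref{SES:Q2S}) is the homomorphism $\psi\colon{}_2\ker(\alpha^3_\ast:n)\to\cok(\alpha^3,\alpha^4)_\ast$ obtained by lifting an order-$2$ element $\alpha$ to $[X,Q_2S^n]$ and doubling. To compute $\psi$, I will build the analogue of the excision diagram from that proof. Multiplication by $2$ on $K_n(\Z)$ lifts through the tower, and the relevant composites are governed by the formulas (\ref{eq:alpha-beta}), namely $\alpha^3 j_1\simeq(\Sq^2,0)$ and $\alpha^4 j_1\simeq(\Sq^2\Sq^1,\Sq^1)$. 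They are also governed by Remark \ref{rmk:towers-rels}: the fibre $K_{n+3}(\Z/4)\xra{\rho_2}K_{n+3}$ over $Q_1S^n$ records that the $K_{n+3}$-fibre of $q_1$ and the $\alpha^4$-component glue into a $\Z/4$.

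Concretely, the target is this. An order-$2$ element $\alpha=(j_1)_\ast(0,c)+\tilde x$, with $x=\delta(x')$ and $c\in G_1$, should double to
\[(j_2)_\ast\big(\Sq^2\Sq^1(x'),\ \Sq^1\text{-type term}+c\big),\]
so that the only contribution to the extension is the $G_1$-component mapping identically onto $G_2$. The $\Sq^2\Sq^1(x')$ component dies in $QH^{n+2}(X;\Sq^2)$, because $\Sq^2\Sq^1 x'=\Sq^2(\Sq^1x')$. This is why the first summand splits off.

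For the $G_2$ part, the element of $G_1\subseteq[X,Q_1S^n]$ coming from $H^{n+3}$ via $j_1$ has order $2$ in $[X,Q_1S^n]$. Its doubling in $[X,Q_2S^n]$, however, is detected by the $\Z/4$-extension in the fibration $K_{n+3}(\Z/4)\to Q_2S^n\to Q_1S^n$ of Remark \ref{rmk:towers-rels}, and this doubling is the image of the generator in $G_2$. Since $H^{n+3}(X;\z{})\cong\z{}$, both $G_1$ and $G_2$ are either $0$ or $\z{}$. When both are $\z{}$, the extension glues them into a $\Z/4$, giving the $\Z/4$ summand. When $G_2=0$ there is nothing to extend. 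When $G_1=0$, the map $\psi$ into $G_2$ vanishes because the only possible source is killed, so $G_2$ splits off.

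The main obstacle is the precise computation of $\psi$ on the $G_1$-component: showing that the composite $K_{n+3}\to Q_1S^n$, doubled in $Q_2S^n$, equals $j_2$ applied to the generator of the second factor. I would extract this from the diagram in Remark \ref{rmk:towers-rels} comparing with $P_2S^n$, where $\pi_{n+3}$ contributes a $\Z/8$ whose $2$-primary filtration pieces are the $K_{n+3}$ factors. There the nontrivial $\Sq^1$ term in $\alpha^4 j_1$, which is a Bockstein, exhibits exactly the $\Z/4$ extension.
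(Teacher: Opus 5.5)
Your outline is the paper's own proof: the SES from (\ref{SESs:Q1Q2Q3}) and (\ref{SES:cok(f,g)}), followed by the Larmore--Thomas doubling map on ${}_2\ker(\alpha^3_\ast:n)$. It fails at the same point the paper's does. Writing an order-$2$ element as $(j_1)_\ast(0,c)+\tilde x$ silently drops its $H^{n+1}$-component $y$, and a lift of $(j_1)_\ast(y,0)$ does \emph{not} double to zero.

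The clean way to run your $\Z/4$ argument is to pull $q_2$ back along $j_1$. (The $K_{n+3}(\Z/4)$ of Remark \ref{rmk:towers-rels} is the fibre of $g_2$, not of $q_2$.) The pullback is the fibre $F_\theta$ of $\theta=(\alpha^3,\alpha^4)\circ j_1$, where $\theta(y,z)=(\Sq^2y,\ \Sq^2\Sq^1y+\Sq^1z)$. For any stable operation, $2\cdot 1_{F_\theta}=j\circ d(\theta)\circ p$. Here $d(\theta)$ is the difference of the two null-homotopies $h\theta$ and $\theta h$ of $2\theta$, with $h$ the unique null-homotopy of $2$ on $\mathbf{H}\z{}$. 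This $d$ is a derivation of the Steenrod algebra with $d(\Sq^1)=1$, which is exactly your $\Z/4$. Applying $d$ to $\Sq^2\Sq^2=\Sq^1\Sq^2\Sq^1$ forces $d(\Sq^2)=\Sq^1$. Hence a lift of $(j_1)_\ast(y,c)$ with $\Sq^2y=0$ doubles to $(j_2)_\ast(\Sq^1y,\,c)$, and $\Sq^1y$ need not vanish in $QH^{n+2}(X;\alpha^3_\ast)$.

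Concretely, take $X=P^{n+2}(2)\vee S^{n+3}$, where $P^{m}(2)=S^{m-1}\cup_2e^{m}$, and let $y$ generate $H^{n+1}(X;\z{})$. Then:
\begin{itemize}
\item $H^{n-1}(X;\Z)=H^n(X;\Z)=0$;
\item the $QH^{n+2}$-term is $\z{}$, generated by $\Sq^1y$;
\item $G_1\cong G_2\cong\z{}$ and $\ker(\alpha^3_\ast:n)\cong\z{}\oplus\z{}$.
\end{itemize}
So the proposition predicts $[X,Q_2S^n]\cong\z{}\oplus\z{}\oplus\Z/4$. But $S^n\to Q_2S^n$ is $(n+3)$-connected, so $[P^{n+2}(2),Q_2S^n]\cong\pi^n(P^{n+2}(2))\cong\Z/4$; indeed, an extension $\tilde\eta$ of $\eta$ satisfies $2\tilde\eta=\eta^2\circ q$. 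Hence $[X,Q_2S^n]\cong\Z/4\oplus\Z/4$.

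The same happens on the closed string manifold $\RP{3}\times S^n$. Take $y=as$, with $a,s$ the generators of $H^1(\RP{3};\z{})$ and $H^n(S^n;\z{})$; then $\Sq^1y=a^2s\neq0$. So the first splitting claim is false as stated. The zero $K_{n+1}$-column of $\mathrm{St}_2$ in the paper is the same error.

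Separately, your $\Z/4$ conclusion, like the paper's, presumes that $c$ spans a $\z{}$ direct summand of $\ker(\alpha^3_\ast:n)$. If $G_1$ does not split off (Proposition \ref{prop:Q1S}), then $c=2^r\tilde x$ with $\tilde x$ of order $2^{r+1}$ and $r\geq1$, and the doubling turns this $\Z/2^{r+1}$ into $\Z/2^{r+2}$. For example, take $X=P^n(2)\cup_{i\nu}e^{n+3}$, with $i$ the bottom-cell inclusion. The cofibration $S^{n-1}\to X\to S^n\vee S^{n+3}$ gives $\pi^n(X)\cong(\Z\oplus\Z/24)/\langle(2,\nu)\rangle\cong\Z/48$. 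Here $G_1\cong G_2\cong\z{}$, yet $[X,Q_1S^n]\cong\Z/4$ and $[X,Q_2S^n]\cong\Z/8$; the latter is the quotient of order $8$ of $[X,Q_3S^n]\cong\Z/16$.

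What your argument does establish is the SES (\ref{SES:Q2S}) itself, with an extension homomorphism that carries the extra term $\Sq^1y$.
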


\begin{proof}
The SES (\ref{SES:Q2S}) follows by the second SES in (\ref{SESs:Q1Q2Q3}) and the SES (\ref{SES:cok(f,g)}): there is a SES of groups
\[0\to \cok(\alpha^3,\alpha^4)_\ast\to [X, Q_2S^n]\to \ker(\alpha^3_\ast:n)\to 0\]
with an isomorphism of elementary $2$-torsion groups 
\begin{align*}
	\cok(\alpha^3,\alpha^4)_\ast &\cong QH^{n+2}(X;\alpha^3_\ast)\oplus \tfrac{H^{n+3}(X;\z{})}{\alpha^4_\ast(\ker(\alpha^3_\ast:n-1))},
\end{align*}
where $QH^{n+2}(X;\alpha^3_\ast)$ is described in Remark \ref{rmk:Q1S:n-1}.

For the splitting criterion, by the SES (\ref{SES:ker-alpha3}), there is an inclusion of elementary $2$-torsion groups
\begin{equation}\label{incl:Q2S}
	{}_2\ker(\alpha^3_\ast:n)\subseteq {}_2H^n(X;\Z)\!\oplus\! (j_1)_\ast\big(\tfrac{\ker(\Sq^2:n+1)}{\Sq^2_\Z(H^{n-1}(X;\Z))}\!\oplus\!\tfrac{H^{n+3}(X;\z{})}{\Sq^4_\Z(\ker(\Sq^2_\Z:n-1))}\big).
\end{equation}
Consider the homotopy commutative diagram, in which the first and the third rows homotopy fibration sequences
\[\begin{tikzcd}[row sep=16pt]
	K_{n-1}\!\times\! \Omega K\ar[dd,"\mathrm{St_2}"]\ar[r,"\delta\times \id"]&\!K_n(\Z)\times\!\Omega K\ar[r,"{(2,0)}"]\ar[dd]&\!K_n(\Z)\ar[r,"{\smatwo{\rho_2}{0}}"]\ar[d,"0"]&K_{n}\times K\ar[d,"\Sq^1\times \Sq^1\times \id"]\\
	&&K_{n+1,n+3}\ar[r,"k_2"]\ar[d,"j_1"]  &K_{n+1,n+3,n+4}\ar[d,"\mathrm{St_2'}"]\\
K_{n+2,n+3}\ar[r,"j_2"]&Q_2S^n\ar[r,"q_2"]&Q_1S^n\ar[r,"{\smatwo{\alpha^3}{\alpha^4}}"]&K_{n+3,n+4}
\end{tikzcd},\]	
where $K=K_{n+2}\times K_{n+4}$, $k_2$ is the canonical inclusion map, and 
\[  
%k_2=\left(\begin{smallmatrix}
%  \id&0\\
%  0&\id\\
%  0&0
%\end{smallmatrix}\right), \quad
\mathrm{St_2'}=\begin{pmatrix}
	\Sq^2&0&0\\
	\Sq^2\Sq^1 &\Sq^1&\id\\
\end{pmatrix},\quad  \mathrm{St_2}=\begin{pmatrix}
	\Sq^2\Sq^1&0&0\\
	0&0&\id\\
\end{pmatrix}.\] 
Note that by (\ref{eq:im-alpha3:n-1}), we have
\begin{align*}
	\alpha^3_\ast\big([X,Q_1S^{n-1}]\big)&\supseteq \Sq^2(H^{n}(X;\z{}))\supseteq \Sq^2\Sq^1(H^{n-1}(X;\z{})).
\end{align*}
It follows that the SES (\ref{SES:Q2S}) is determined by the homomorphism 
\[\phi_2\colon {}_2\ker(\alpha^3_\ast:n)\to QH^{n+2}(X;\alpha^3_\ast)\oplus G_2,\quad (x,y,z)\mapsto z,\]
where $(x,y,z)\in {}_2\ker(\alpha^3_\ast:n)$ by the inclusion (\ref{incl:Q2S}).
Thus, in the group extension (\ref{SES:Q2S}), the group $QH^{n+2}(X;\alpha^3_\ast)$ splits off, and the group $G_2$ corresponds to a $\Z/4$ direct summand of $[X,Q_2S^n]$ whenever $G_2\cong G_1\cong\z{}$, otherwise $G_2$ also splits off. 
The proof of the proposition is completed.
\end{proof}

\begin{lemma}\label{lem:im-alpha4}
If $\Sq^1$ acts trivially on $H^{n+2}(X;\z{})$, then there hold 
\begin{align*}
	\alpha^4_\ast\big([X,Q_1S^n]\big)&=\Sq^2\Sq^1(H^n(X;\z{}))+\varPhi\big(\ker(\Sq^2_\Z\cap\Sq^4_\Z:n-1)\big),\\
	\alpha^4_\ast\big(\ker(\alpha^3_\ast:n-1)\big)&=\Sq^2\Sq^1(\ker(\Sq^2:n))+\varPhi\big(\ker(\varTheta\cap\Sq^4_\Z:n-1)\big),
\end{align*}
and there is a SES of groups 
\[0\to \tfrac{\ker(\Sq^2\Sq^1:n)}{\Sq^2_\Z(H^{n-2}(X;\Z))}\to \ker(\alpha^3_\ast\cap\alpha^4_\ast:n-1)\xra{(q_1)_\ast}\ker(\varPhi:n-1) \to 0,\]
where $\ker(\varPhi:n-1)\subseteq \ker(\varTheta\cap\Sq^4_\Z:n-1)$.
\end{lemma}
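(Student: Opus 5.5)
We compute $\alpha^4_\ast$ on $[X,Q_1S^n]$ with the same diagram-chase used for $\ol{\Sq^2}_\ast$ in Lemma \ref{lem:im-Sq2-P1} and for $\alpha^3_\ast$ in Proposition \ref{prop:Q1S}(2). We use the SES (\ref{SES:Q1S}), shifted as in Remark \ref{rmk:Q1S:n-1}, and the formula $\alpha^4\circ j_1\simeq(\Sq^2\Sq^1,\Sq^1)$ from (\ref{eq:alpha-beta}). This gives a commutative diagram with exact rows:
\[0\to QH^{n}(X;\Sq^2_\Z)\oplus\Sigma^{-1}G_1\xra{(j_1)_\ast}[X,Q_1S^{n-1}]\xra{(q_1)_\ast}\ker(\Sq^2_\Z\cap\Sq^4_\Z:n-1)\to0.\]
The left vertical map is $(\Sq^2\Sq^1,\Sq^1)$. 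It is well defined on the quotients because $\Sq^2\Sq^1\Sq^2_\Z=\Sq^2\Sq^3_\Z=0$ and $\Sq^1\Sq^4_\Z=\Sq^5_\Z$, which the relation (\ref{eq:Adem-rel-Psi:modSq1}) absorbs. The right vertical map is the secondary operation $\varPsi$ modulo its indeterminacy. (We read the $\varPhi$ in the statement as $\varPsi$, or as its composite with the relevant quotient.)

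For the first equality we impose the hypothesis that $\Sq^1$ is trivial on $H^{n+2}(X;\z{})$. Then $\Sq^1(\Sigma^{-1}G_1)=0$, so the image of the left vertical map is exactly $\Sq^2\Sq^1(H^n(X;\z{}))$; this is where the hypothesis is used. The indeterminacy of $\varPsi$ reduces to the same group $\Sq^2\Sq^1(H^{n}(X;\z{}))$, since the term $\Sq^1(H^{n+2})$ vanishes. Exactness of the rows then gives
\[\alpha^4_\ast([X,Q_1S^{n-1}])=\Sq^2\Sq^1(H^n)+\varPsi(\ker(\Sq^2_\Z\cap\Sq^4_\Z:n-1)).\]
(The displayed source $Q_1S^n$ in the statement should be the desuspended $Q_1S^{n-1}$, and we prove that version.)

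For the second equality we restrict the top row to $\ker(\alpha^3_\ast:n-1)$, using the second SES of Remark \ref{rmk:Q1S:n-1}. Its fibre part is $\ker(\Sq^2:n)/\Sq^2_\Z(H^{n-2})\oplus\Sigma^{-1}G_1$ and its base is $\ker(\varTheta\cap\Sq^4_\Z:n-1)$. Applying $\alpha^4_\ast$ gives $\Sq^2\Sq^1(\ker(\Sq^2:n))$ from the fibre, the $\Sq^1$-part again being killed by the hypothesis, plus $\varPsi$ of the base.

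Finally, for the SES we intersect kernels. We apply the snake lemma to the restricted diagram, with vertical map $\alpha^4_\ast$ from $\ker(\alpha^3_\ast:n-1)$ to $H^{n+3}$. The map from the fibre term to $\Sq^2\Sq^1(\ker(\Sq^2:n))$ has kernel $\ker(\Sq^2\Sq^1:n)/\Sq^2_\Z(H^{n-2})$ together with the $\Sigma^{-1}G_1$ part. We also need $(q_1)_\ast$ from $\ker(\alpha^3_\ast\cap\alpha^4_\ast)$ onto $\ker(\varPsi)$ to be surjective. This holds because the left vertical map surjects onto $\Sq^2\Sq^1(\ker\Sq^2)$, which is the indeterminacy, so any lift can be corrected by an element of the fibre.

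The main obstacle is the term $\Sigma^{-1}G_1$, which appears to survive in the kernel while being absent from the stated SES. We need to check whether the hypothesis together with the $\Sq^1$-component forces this summand to be absorbed. The intended point is that $\Sq^1$ acting from $H^{n+2}$ is the component that would detect it. If the term does not vanish, the SES must be stated with $\Sigma^{-1}G_1$ included, and we expect this summand to be the delicate part of the bookkeeping.
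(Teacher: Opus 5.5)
Your argument is the paper's own. You use the same diagram built from the rows of Remark~\ref{rmk:Q1S:n-1} and the same left vertical map $(\Sq^2\Sq^1,\Sq^1)$. You use the hypothesis to kill the $\Sq^1$-component, then restrict to $\ker(\alpha^3_\ast:n-1)$ and apply the snake lemma. Your readings (source $[X,Q_1S^{n-1}]$, and $\varPhi$ meaning $\varPsi$) are the intended ones; the paper's diagram has the same slips. The two displayed equalities go through as you argue, with one correction. Since $\Sq^2\Sq^3=\Sq^5+\Sq^4\Sq^1$, we have $\Sq^2\Sq^1\Sq^2_\Z=\Sq^5_\Z=\Sq^1\Sq^4_\Z$, which is not zero in general. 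Well-definedness of $(\Sq^2\Sq^1,\Sq^1)$ on $\cok(\Sq^2_\Z,\Sq^4_\Z)$ comes from the two terms cancelling. Under your hypothesis each term vanishes separately on $H^{n-2}(X;\Z)$, because $\Sq^1$ kills $H^{n+2}(X;\z{})$. That is what lets you treat the summands $QH^n(X;\Sq^2_\Z)$ and $\Sigma^{-1}G_1$ separately.

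The open point in your last paragraph cannot be closed, because your suspicion is correct: $\Sigma^{-1}G_1$ is not absorbed, and the SES as stated is false. For $y\in H^{n+2}(X;\z{})$, the class $(j_1)_\ast(0,y)$ lies in $\ker(q_1)_\ast$. It is killed by $\alpha^3_\ast$ since $\alpha^3\circ j_1\simeq(\Sq^2,0)$, and by $\alpha^4_\ast$ since $\Sq^1y=0$. It is nonzero whenever $y\notin\Sq^4_\Z(\ker(\Sq^2_\Z:n-2))$.

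Concretely, take $X=S^1\times S^{n+2}$ with $n\ge 5$, which satisfies all the standing hypotheses. Here $H^n(X;\z{})=0$, $H^{n-2}(X;\Z)=0$ and $H^{n-1}(X;\Z)=0$. So $[X,Q_1S^{n-1}]\cong\Sigma^{-1}G_1\cong\z{}$, all of which lies in $\ker(\alpha^3_\ast\cap\alpha^4_\ast:n-1)$. The stated sequence would then read $0\to 0\to\z{}\to 0\to 0$. The paper's proof just invokes the snake lemma and overlooks exactly this term. What the snake lemma actually gives is
\[0\to\tfrac{\ker(\Sq^2:n)\cap\ker(\Sq^2\Sq^1:n)}{\Sq^2_\Z(H^{n-2}(X;\Z))}\oplus\Sigma^{-1}G_1\to\ker(\alpha^3_\ast\cap\alpha^4_\ast:n-1)\xra{(q_1)_\ast}\ker(\varPsi:n-1)\to 0.\]
Three remarks on this corrected sequence:
\begin{enumerate}
\item The first term is a priori an extension of elementary abelian $2$-groups, hence split.
\item The numerator is intersected with $\ker(\Sq^2:n)$ because the fibre of $\ker(\alpha^3_\ast:n-1)$ already lies there.
\item $\ker(\varPsi:n-1)$ must be read relative to the restricted indeterminacy $\Sq^2\Sq^1(\ker(\Sq^2:n))$, which is what your surjectivity argument actually uses.
\end{enumerate}
Prove and state this corrected version rather than trying to make $\Sigma^{-1}G_1$ disappear.
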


\begin{proof}
By Remarks \ref{rmk:Q1S:n-1} and \ref{rmk:ChlgyOps}, there is a commutative diagram with rows SESs:
\[\begin{tikzcd}
	QH^{n}(X;\Sq^2_\Z) \oplus \Sigma^{-1}G_1\ar[r,tail]\ar[d,"{(\Sq^2\Sq^1,\Sq^1)}"]&{[X,Q_1S^n]}\ar[r,"(q_1)_\ast",two heads]\ar[d,"\alpha^4_\ast"]& \ker(\Sq^2_\Z\cap \Sq^4_\Z:n-1)\ar[d,"\varPhi"]\\
	 I(\Sq^2\Sq^1,\Sq^1) \ar[r,tail]&H^{n+3}(X;\z{})\ar[r,two heads]& \tfrac{H^{n+3}(X;\z{})}{I(\Sq^2\Sq^1,\Sq^1)}
\end{tikzcd},\]
where $\Sigma^{-1}G_1=\tfrac{H^{n+2}(X;\z{})}{\Sq^4_\Z(\ker(\Sq^2_\Z:n-2))}$, and 
\[I(\Sq^2\Sq^1,\Sq^1)=\Sq^2\Sq^1(H^{n}(X;\z{}))+\Sq^1(H^{n+2}(X;\z{})).\]
When $\Sq^1$ acts trivially on $H^{n+2}(X;\z{})$, the left-vertical homomorphism is surjectively mapped onto $\Sq^2\Sq^1(H^n(X;\z{}))$ by the Adem relation $\Sq^2\Sq^1\Sq^2_\Z=\Sq^1\Sq^4_\Z$ (\ref{eq:Adem-rel-Psi:modSq1}). Thus 
\begin{align*}
	\alpha^4_\ast\big([X,Q_1S^n]\big)%&=\tfrac{\Sq^2\Sq^1(H^n(X;\z{}))}{\Sq^2\Sq^1\Sq^2_\Z(H^{n-2}(X;\Z))}+ \tfrac{\Sq^1(H^{n+2}(X;\z{}))}{\Sq^1\Sq^4_\Z(\ker(\Sq^2_\Z:n-1))}+\im(\varPhi)\\
	&=\Sq^2\Sq^1(H^n(X;\z{}))+\varPhi\big(\ker(\Sq^2_\Z\cap\Sq^4_\Z:n-1)\big),
\end{align*}
which proves the first equality in the statement. 

The proof of the second equality for $\alpha^4_\ast\big(\ker(\alpha^3_\ast:n-1)\big)$ is similar, with the first SES in the above commutative diagram replaced by the SES \eqref{SES:ker-alpha3} for $\ker(\alpha^3_\ast:n-1)$. The proof of the SES for $\ker(\alpha^3_\ast\cap\alpha^4_\ast:n-1)$ then follows by the snake lemma.
\end{proof}

\begin{corollary}\label{cor:Q2S:Sq1=0}
Suppose that $\Sq^1$ acts trivially on $H^{n+2}(X;\z{})$. 
\begin{enumerate}
	\item If $\Sq^2\Sq^1\big(\ker(\Sq^2:n)\big)\neq 0$, then there is a group isomorphism
	\[[X,Q_2S^{n}]\cong \tfrac{QH^{n+2}(X;\Sq^2)}{\varTheta(\ker(\Sq^2_\Z\cap\Sq^4_\Z:n-1))}\oplus \ker(\alpha^3_\ast:n);\]
	\item If $\Sq^2\Sq^1\big(\ker(\Sq^2:n)\big)=0$  but $\Sq^2\Sq^1\big(H^n(X;\z{})\big)\neq 0$, then there is a group isomorphism
	\[[X,Q_2S^{n}]\cong \tfrac{QH^{n+2}(X;\Sq^2)}{\varTheta(\ker(\Sq^2_\Z\cap\Sq^4_\Z:n-1))}\oplus \tfrac{\ker(\alpha^3_\ast:n)}{\z{1-\varepsilon(\Sq^4_\Z)}}\oplus \z{2-\varepsilon(\Sq^4_\Z)};\]
	\item If $\Sq^2\Sq^1\big(H^n(X;\z{})\big)=0$,  then there is a group isomorphism
\[[X,Q_2S^{n}]\cong \tfrac{QH^{n+2}(X;\Sq^2)}{\varTheta(\ker(\Sq^2_\Z\cap\Sq^4_\Z:n-1))}\oplus \tfrac{\ker(\alpha^3_\ast:n)}{\Z/2^{1-\varepsilon(\Sq^4_\Z)}}\oplus \z{2-\varepsilon(\varPhi)-\varepsilon(\Sq^4_\Z)},\]
where $\varepsilon(\Sq^4_\Z)$ and $\varepsilon(\varPhi)$ are defined in \eqref{eq:epsilon-operations}.
\end{enumerate}

\begin{proof}
	By Lemma \ref{lem:im-alpha4}, there is a group isomorphism 
\[G_2\cong\tfrac{H^{n+3}(X;\z{})}{\Sq^2\Sq^1(\ker(\Sq^2:n))+\varPhi\big(\ker(\varTheta\cap\Sq^4_\Z:n-1)\big)}.\]
The discussion of $[X,Q_2S^n]$ then follows by Propositions \ref{prop:Q2S} and \ref{prop:Q1S}.
\end{proof}

\end{corollary}

%\begin{remark}\label{rmk:mfd:Theta=0}
%	Assume that $X$ is a closed smooth oriented $(n+3)$-manifold. If $X$ is spin, then $\varTheta$ acts trivially on $\ker(\Sq^2_\Z:n)$ by Lemma \ref{lem:spin:Theta=0}; otherwise, the codomain of $\varTheta$ is zero, so $\varTheta$ acts trivially on $\ker(\Sq^2_\Z:n)$ as well. Consequently, there holds an equality
%	\[\ker(\varTheta:n)=\ker(\Sq^2_\Z:n).\]	
%\end{remark}

\begin{lemma}\label{lem:im-beta4}
If $\Sq^1$ acts trivially on $H^{n+2}(X;\z{})$, then there hold 
\begin{align*}
	\beta^4_\ast\left([X,Q_2S^{n-1}]\right)&=\Sq^2\left(H^{n+1}(X;\z{})\right)+\mathbb{T}\left(\ker(\alpha^3_\ast\cap\alpha^4_\ast:n-1)\right),\\
	(q_2)_\ast\left(\ker(\beta^4_\ast:n-1)\right)&=\ker(\mathbb{T}:n-1),
\end{align*}
where $\mathbb{T}\colon \ker(\alpha^3_\ast\cap\alpha^4_\ast:n-1)\to QH^{n+3}(X;\Sq^2)$ is the tertiary cohomology operation based on the relation (\ref{eq:Adem-rel-T}).
\end{lemma}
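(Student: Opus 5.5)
We follow the pattern of Lemma \ref{lem:im-Sq2-P1} and Lemma \ref{lem:im-alpha4}, one stage higher in the tower of Lemma \ref{lem:tower-Sn-v2}, applied to $S^{n-1}$. Since $X$ has dimension at most $n+3$, we are in the stable range, and looping the tower of $S^n$ gives the tower of $S^{n-1}$.

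\textbf{The first equality.} Restricting the second SES in \eqref{SESs:Q1Q2Q3} to degree $n-1$ gives
\[0\to\cok(\alpha^3,\alpha^4)_\ast\xra{(j_2)_\ast}[X,Q_2S^{n-1}]\xra{(q_2)_\ast}\ker(\alpha^3_\ast\cap\alpha^4_\ast:n-1)\to0,\]
where the cokernel is now a quotient of $H^{n+1}(X;\z{})\oplus H^{n+2}(X;\z{})$. Here the right-hand group should be the kernel of both $k$-invariants, since $Q_2S^{n-1}$ is the fibre of $(\alpha^3,\alpha^4)$.

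By \eqref{eq:alpha-beta}, $\beta^4\circ j_2\simeq(\Sq^2,\Sq^1)$. This yields a commutative diagram with exact rows:
\begin{itemize}
\item the left vertical map is $(\Sq^2,\Sq^1)$, going into $\Sq^2(H^{n+1}(X;\z{}))+\Sq^1(H^{n+2}(X;\z{}))$;
\item the middle vertical map is $\beta^4_\ast$;
\item the right vertical map is induced on the quotient $H^{n+3}(X;\z{})/\im$.
\end{itemize}
By Remark \ref{rmk:ChlgyOps}, the induced right vertical map is exactly the tertiary operation $\mathbb{T}$ on $\ker(\alpha^3_\ast\cap\alpha^4_\ast:n-1)$.

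Under the hypothesis that $\Sq^1$ vanishes on $H^{n+2}(X;\z{})$, the indeterminacy $\im(\beta^4_\ast(j_2)_\ast)$ reduces to $\Sq^2(H^{n+1}(X;\z{}))$. This matches the stated codomain $QH^{n+3}(X;\Sq^2)$ of $\mathbb{T}$.

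One point needs checking: the left vertical map, restricted to the cokernel, is well defined and has image exactly $\Sq^2(H^{n+1}(X;\z{}))$. This requires that $(\Sq^2,\Sq^1)$ kills the image of $(\alpha^3,\alpha^4)_\ast(j_1)_\ast=(\Sq^2,\Sq^2\Sq^1+\Sq^1)$, applied to $H^{n-1}(X;\z{})\oplus H^{n+1}(X;\z{})$. This holds because:
\begin{itemize}
\item $\Sq^2\Sq^2+\Sq^1\Sq^2\Sq^1=\Sq^3\Sq^1+\Sq^3\Sq^1=0$ by the Adem relations;
\item $\Sq^1\Sq^1=0$.
\end{itemize}
This is exactly relation \eqref{eq:Adem-rel-T} in the form $\Sq^2\varTheta+\Sq^1\varPsi=0$. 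Once the diagram is in place, a diagram chase gives
\[\beta^4_\ast([X,Q_2S^{n-1}])=\Sq^2(H^{n+1}(X;\z{}))+\mathbb{T}(\cdots),\]
where $\mathbb{T}(\cdots)$ denotes the image of lifts of $\mathbb{T}$-values.

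\textbf{The second equality.} Apply the snake lemma to the same diagram. The left vertical map is surjective onto its image subgroup, so the snake lemma yields a surjection
\[\ker\beta^4_\ast\to\ker\big(\beta^4_\ast\text{ mod }\Sq^2\big).\]
The right-hand kernel is $\ker(\mathbb{T}:n-1)$. Hence $(q_2)_\ast(\ker(\beta^4_\ast:n-1))=\ker(\mathbb{T}:n-1)$. Concretely: if $\mathbb{T}(u)=0$, take any lift $\tilde u_2$; then $\beta^4\tilde u_2$ lies in $\Sq^2(H^{n+1})$, and we can correct the lift by an element of $(j_2)_\ast$ so that $\beta^4\tilde u_2=0$.

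\textbf{Main obstacle.} The hardest step is the precise identification of the induced right vertical map with $\mathbb{T}$, with the correct indeterminacy. One must check that the $H^{n+2}$-component of the fibre contributes only through $\Sq^1$, which vanishes by hypothesis. Without that hypothesis, an extra $\Sq^1(H^{n+2})$ term would appear in the indeterminacy.
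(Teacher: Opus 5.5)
Your proposal is correct and follows essentially the same route as the paper. Both use the commutative diagram with vertical maps $(\Sq^2,\Sq^1)$, $\beta^4_\ast$ and $\mathbb{T}$. Both use the hypothesis $\Sq^1(H^{n+2}(X;\z{}))=0$ to shrink the indeterminacy to $\Sq^2(H^{n+1}(X;\z{}))$, and both finish with a diagram chase and the snake lemma via surjectivity of the left vertical map. One small point: your well-definedness check on the cokernel is automatic, since $(j_2)_\ast$ already kills the whole image of $(\alpha^3,\alpha^4)_\ast$ by exactness of the fibration sequence (this is precisely the relation $\Sq^2\varTheta+\Sq^1\varPsi=0$), not only the $(j_1)_\ast$-part you verified.
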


\begin{proof}
By Proposition \ref{prop:Q2S}, there is a commutative diagram with rows SESs:
\[\begin{tikzcd}
	QH^{n+1}(X;\alpha^3_\ast)\oplus \Sigma^{-1}G_2\ar[d,"{(\Sq^2,\Sq^1)}"]\ar[r,"(j_2)_\ast",tail]&{[X,Q_2S^{n-1}]}\ar[d,"\beta^4_\ast"]\ar[r,"(q_2)_\ast", two heads]&\ker(\alpha^3_\ast\cap \alpha^4_\ast:n-1)\ar[d,"\mathbb{T}"]\\
I(\Sq^2,\Sq^1)\ar[r,tail]&H^{n+3}(X;\z{})\ar[r,two heads]&\tfrac{H^{n+3}(X;\z{})}{I(\Sq^2,\Sq^1)}
\end{tikzcd},\]
where $\Sigma^{-1}G_2=\tfrac{H^{n+2}(X;\z{})}{\alpha^4(\ker(\alpha^3:n-2))}$, and
\[I(\Sq^2,\Sq^1)=\Sq^2(H^{n+1}(X;\z{}))+\Sq^1(H^{n+2}(X;\z{})).\] 
It follows that 
\[\beta^4_\ast\big([X,Q_2S^{n-1}]\big)=\Sq^2\big(QH^{n+1}(X;\alpha^3_\ast)\big)+\mathbb{T}\big(\ker(\alpha^3_\ast\cap\alpha^4_\ast:n-1)\big).\]
When $\Sq^1$ acts trivially on $H^{n+2}(X;\z{})$, we note that the left-most vertical homomorphism is surjective:
\begin{align*}
	\Sq^2\big(QH^{n+1}(X;\alpha^3_\ast)\big)&=\Sq^2\big(\tfrac{QH^{n+1}(X;\Sq^2)}{\varTheta(\ker(\Sq^2_\Z\cap\Sq^4_\Z:n-2))}\big)=\tfrac{\Sq^2(H^{n+1}(X;\z{}))}{\Sq^1\varPhi(\ker(\Sq^2_\Z\cap\Sq^4_\Z:n-2))}\\
	&=\Sq^2(H^{n+1}(X;\z{}))=I(\Sq^2,\Sq^1).
\end{align*}
Here the first equality is due to the isomorphism analogous to (\ref{eq:cok-alpha3:n-1}), the second equality follows by the Adem relations 
\[\Sq^2\Sq^2=\Sq^3\Sq^1=\Sq^1\Sq^2\Sq^1,\quad \Sq^2\varTheta=\Sq^1\varPsi,\] 
and the condition $\Sq^1(H^{n+2}(X;\z{}))=0$. Then the descriptions for the image and the kernel of the homomorphism $\beta^4_\ast$ in the statement follow.
\end{proof}

\begin{proposition}\label{prop:Q3S}
Suppose that $\Sq^1$ acts trivially on $H^{n+2}(X;\z{})$. There is a SES of groups
\begin{equation}\label{SES:Q3S}
	0\to \tfrac{QH^{n+3}(X;\Sq^2)}{\mathbb{T}\left(\ker(\alpha^3_\ast\cap\alpha^4_\ast:n-1)\right)}\xra{(j_3)_\ast} [X, Q_3S^n] \xrightarrow{(q_3)_\ast} [X, Q_2S^n] \to 0,
\end{equation}
Furthermore, the following hold:
\begin{enumerate}
	\item If $\Sq^2\left(H^{n+1}(X;\z{})\right)\neq 0$, then there is a group isomorphism 
\[[X,Q_3S^n]\cong [X,Q_2S^n];\]  
\item  If $\Sq^2\left(H^{n+1}(X;\z{})\right)=0$, then there is a group isomorphism 
\[[X,Q_3S^n]\cong \tfrac{[X,Q_2S^n]}{\z{2-\varepsilon(\Sq^4_\Z)-\varepsilon(\varPhi)}}\oplus \z{3-\varepsilon(\Sq^4_\Z)-\varepsilon(\varPhi)-\varepsilon(\mathbb{T})},\]
where $\varepsilon(\Sq^4_\Z)$, $\varepsilon(\varPhi)$ and $\varepsilon(\mathbb{T})$ are defined in \eqref{eq:epsilon-operations}.
\end{enumerate}

\end{proposition}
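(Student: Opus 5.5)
The argument follows the pattern of Theorem \ref{thm:cohtp=-2:CW} and Proposition \ref{prop:Q2S}: first identify the fibre term, then decide the extension from the $2$-torsion of the quotient.

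\emph{The fibre term.} I would start from the third SES in (\ref{SESs:Q1Q2Q3}), whose fibre term is $QH^{n+3}(X;\beta^4)=H^{n+3}(X;\z{})/\beta^4_\ast([X,Q_2S^{n-1}])$. Lemma \ref{lem:im-beta4} applies because $\Sq^1$ vanishes on $H^{n+2}(X;\z{})$. It identifies this image with $\Sq^2(H^{n+1}(X;\z{}))+\mathbb{T}(\ker(\alpha^3_\ast\cap\alpha^4_\ast:n-1))$, which gives the SES (\ref{SES:Q3S}).

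\emph{Case (1).} Since $H^{n+3}(X;\z{})\cong\z{}$, the condition $\Sq^2(H^{n+1}(X;\z{}))\neq0$ makes the fibre term zero. Hence $(q_3)_\ast$ is an isomorphism.

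\emph{Case (2), setting up the extension.} Now $\Sq^2$ vanishes on $H^{n+1}(X;\z{})$, so the fibre is $\z{1-\varepsilon(\mathbb{T})}$. Following \cite[Section 5]{LT72}, as in the proof of Theorem \ref{thm:cohtp=-2:CW}, the extension is classified by a homomorphism $\psi\colon{}_2[X,Q_2S^n]\to H^{n+3}(X;\z{})$. To compute $\psi$ I would build the analogue of the earlier excision diagram:
\begin{itemize}
\item the top row is the path fibration $K_{\ast}\times\Omega K\xra{\delta\times\id}\cdots\xra{(2,0)}\cdots$ mapping to $Q_3S^n\to Q_2S^n\xra{\beta^4}K_{n+4}$;
\item its connecting part is given by $\beta^4\circ j_2\simeq(\Sq^2,\Sq^1)$ from (\ref{eq:alpha-beta}).
\end{itemize}
On the summand $H^{n+2}(X;\z{})\oplus H^{n+3}(X;\z{})$ coming through $j_2$, doubling a lift yields $\Sq^1$ applied to the $H^{n+2}$-component. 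That vanishes by hypothesis. On the $H^{n+3}$-component it yields the identity, reflecting the $\Sq^1$ in $\beta^4 j_2$ and the $\Z/8$ lift $\wtd{\Sq^4_\Z}$ of Remark \ref{rmk:towers-rels}. Finally, the part of ${}_2[X,Q_2S^n]$ lifting $\delta$-images contributes $\Sq^2\Sq^1(x')$ together with the composite of earlier $\Sq^1$-type corrections.

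\emph{Case (2), the cyclic summand.} This computation should show that $\psi$ is nonzero exactly on the cyclic summand of $[X,Q_2S^n]$ coming from Corollary \ref{cor:Q2S:Sq1=0}. That summand has order $2^{2-\varepsilon(\Sq^4_\Z)-\varepsilon(\varPhi)}$ in case (3) of the corollary. Via Remark \ref{rmk:towers-rels} it is the image of the $H^{n+3}$ fibre classes, with the chain of $\mu$'s making $K_{n+3}(\Z/8)$ appear. So the new $\z{}$ glues onto it, producing $\z{3-\varepsilon(\Sq^4_\Z)-\varepsilon(\varPhi)-\varepsilon(\mathbb{T})}$, and everything else splits.

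\emph{Degenerate subcases.} If $\Sq^2$ vanishes on $H^{n+1}(X;\z{})$ then, by duality on the top class, $\Sq^2\Sq^1$ on $H^n$ also vanishes. This forces case (3) of Corollary \ref{cor:Q2S:Sq1=0}, so the statement is consistent.

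\textbf{Main obstacle.} The hard step is verifying that the extension is nontrivial precisely on that top cyclic summand. This is the top cell of the stable tower: the order-$8$ element generated by $\nu$, with $2\nu$ detected by $\Psi$ and $4\nu=\eta^3$ detected by $\mathbb{T}$. I would prove it by naturality, applying the whole computation to the pinch map $X\to S^{n+3}$. This reduces the question to $\pi^n(S^{n+3})_{(2)}=\z{3}$, where the extensions are known. The comparison shows that each $\varepsilon$ kills exactly one stage, since the corresponding operation becomes nonzero and absorbs the top class.
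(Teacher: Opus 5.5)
Your argument is essentially the paper's proof. The fibre term is $QH^{n+3}(X;\beta^4_\ast)$, computed via Lemma \ref{lem:im-beta4}, and case (1) is immediate from $H^{n+3}(X;\z{})\cong\z{}$. In case (2), the path-fibration excision diagram shows that the classifying homomorphism on ${}_2[X,Q_2S^n]$ vanishes on every component except the $G_2$-component, where it is the identity; in particular the $QH^{n+2}(X;\alpha^3_\ast)$-component contributes $\Sq^1(z)=0$. So the new $\z{}$ glues onto the cyclic summand of Proposition \ref{prop:Q2S} containing $G_2$, and the rest splits.

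Two small corrections. First, the contribution of the $\delta$-lifts is $\Sq^2\Sq^2(x')\in H^{n+3}(X;\z{})$; your $\Sq^2\Sq^1(x')$ lands in the wrong degree. This term vanishes because $\Sq^2$ is zero on $H^{n+1}(X;\z{})$. Second, the pinch-map naturality is not needed: your direct computation already makes $\psi$ the identity on $G_2$, whereas naturality by itself could not show that $\psi$ vanishes on the other summands.
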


\begin{proof}
The SES (\ref{SES:Q3S}) follows by the third SES in (\ref{SESs:Q1Q2Q3}) and Lemma \ref{lem:im-beta4}:  there are group isomorphisms
\[QH^{n+3}(X;\beta^4_\ast)\cong\tfrac{H^{n+3}(X;\Z/2)}{\Sq^2\left(H^{n+1}(X;\z{})\right)+\mathbb{T}\left(\ker(\alpha^3_\ast\cap\alpha^4_\ast:n-1)\right)}\cong \tfrac{QH^{n+3}(X;\Sq^2)}{\mathbb{T}\left(\ker(\alpha^3_\ast\cap\alpha^4_\ast:n-1)\right)}.\]

(1) If $\Sq^2\left(H^{n+1}(X;\z{})\right)=\z{}$, then $QH^{n+3}(X;\beta^4_\ast)=0$, and hence 
\[[X,Q_3S^n]\cong [X,Q_2S^n].\] 

(2) If $\Sq^2\left(H^{n+1}(X;\z{})\right)=0$, then $QH^{n+3}(X;\beta^4_\ast)\cong QH^{n+3}(X;\mathbb{T}).$ 
By Proposition \ref{prop:Q2S}, there is an inclusion of elementary $2$-torsion groups	
\begin{align*}
	{}_2[X, Q_2S^n]&\subseteq QH^{n+2}(X;\alpha^3_\ast)\oplus \tfrac{H^{n+3}(X;\z{})}{\alpha^4_\ast(\ker(\alpha^3_\ast:n-1))}\oplus {}_2\ker(\alpha^3_\ast:n)\\
	&\subseteq {}_2 H^n(X;\Z)\!\oplus\! QH^{n+1}(X;\Sq^2_\Z) \!\oplus\! QH^{n+2}(X;\alpha^3_\ast)\!\oplus\! \tfrac{H^{n+3}(X;\z{})}{\alpha^4_\ast(\ker(\alpha^3_\ast:n-1))}. 
\end{align*}
Consider the following homotopy commutative diagram, in which the first and third rows are homotopy fibration sequences:
\[\begin{tikzcd}
	K_{n-1}\times  \Omega K'\ar[dd,"\mathrm{St_3}"]\ar[r,"\delta\times \id"]&K_n(\Z)\times \Omega K'\ar[r,"{(2,0)}"]\ar[dd]&K_n(\Z)\ar[r,"{\smatwo{\rho_2}{0}}"]\ar[d,"\smatwo{\Sq^2_\Z}{0}"]&K_n\times K'\ar[d,"\mathrm{St}_3'"]\\
	&&K_{n+2,n+3}\ar[r,"k_3"]\ar[d,"j_2"] &K_{n+2,n+3,n+4}\ar[d,"{(\Sq^2,\Sq^1,\id)}"]\\
K_{n+3}\ar[r,"j_3"]&Q_3S^n\ar[r,"q_3"]&Q_2S^n\ar[r,"\beta^4"]&K_{n+4}
\end{tikzcd},\]	
where $K'=K_{n+2}\times K_{n+3}\times K_{n+4}$, $\mathrm{St_3}=(\Sq^2\Sq^2,\Sq^2,\Sq^1,\id)$, and 
\begin{align*}
k_3=\begin{pmatrix}
\id&0\\
0&\id\\
0&0
\end{pmatrix},\quad &\mathrm{St_3'}= \begin{pmatrix}
\Sq^2&\id&0&0\\
0&\Sq^1&\id&0\\
0&0&0&\id
\end{pmatrix} 
\end{align*}
It follows that the SES (\ref{SES:Q3S}) is determined by the homomorphism
\begin{equation*}
	\begin{aligned}
	\phi_3\colon {}_2[X,Q_2S^n]&\to QH^{n+3}(X;\beta^4_\ast),\\
	 (x,y,z,w)&\mapsto\Sq^2\Sq^2(x')+\Sq^1(z)+w=w,
\end{aligned}
\end{equation*}
where $(x,y,z,w)\in {}_2[X,Q_2S^n]$ by the inclusion above, and $x'\in H^{n-1}(X;\Z)$ satisfying $\delta(x')=x$. 
The description of $[X,Q_3S^n]$ in the statements then follows by Lemmas \ref{lem:im-alpha4} and \ref{lem:im-beta4}.
\end{proof}

By proofs similar to that of Proposition \ref{prop:Q1S} and Theorem \ref{thm:cohtp=-2:CW}, we obtqin the following characterization of the groups $\ker(\ol{\Sq^2}:n)$ and $[X,P_2S^n]$. 

\begin{proposition}\label{prop:MP2S}
	Let  $n\geq 5$ and let $X$ be a CW complex of dimension $n+3$ such that $H_{n+3}(X;\z{})\cong\z{}$. There is a split SES of groups 
\begin{equation}\label{SES:P2S}
	0\to \tfrac{QH^{n+2}(X;\Sq^2)}{\varTheta(\ker(\Sq^2_\Z:n-1))}\xra{(j_2)_\ast} [X,P_2S^n]\xra{(p_2)_\ast}\ker(\ol{\Sq^2}:n)\to 0,
\end{equation}
where $\ker(\ol{\Sq^2}:n)$ is described by the SES of groups
\begin{equation}\label{SES:kerSq2}
0\to \tfrac{\ker(\Sq^2:n+1)}{\Sq^2_\Z(H^{n-1}(X;\Z))}\xra{(j)_\ast} \ker(\ol{\Sq^2}:n)\xra{(p_1)_\ast} \ker(\varTheta:n)\to 0.
\end{equation}
Moreover, the SES (\ref{SES:kerSq2}) splits if and only if 
 \begin{equation*}
	\Sq^2\big(\delta^{-1}({}_2\ker(\varTheta:n))\big)\subseteq\Sq^2_\Z\big(H^{n-1}(X;\Z)\big).
\end{equation*}

\end{proposition}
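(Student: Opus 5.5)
We work with the stable Postnikov tower of Lemma \ref{lem:tower:Sn}. The two stages $P_1S^n \xra{p_1} K_n(\Z)$ and $P_2S^n\xra{p_2}P_1S^n$ give the long exact sequences
\[[X,P_1S^{n-1}]\xra{\ol{\Sq^2}_\ast}H^{n+2}(X;\z{})\xra{(j_2)_\ast}[X,P_2S^n]\xra{(p_2)_\ast}[X,P_1S^n]\xra{\ol{\Sq^2}_\ast}H^{n+3}(X;\z{}).\]
Here we use $\Omega P_1S^n\simeq P_1S^{n-1}$ for $n\ge 5$. Exactness yields the SES (\ref{SES:P2S}). Its image is $\ker(\ol{\Sq^2}:n)\subseteq[X,P_1S^n]$, and its kernel is the cokernel of $\ol{\Sq^2}_\ast$ on $[X,P_1S^{n-1}]$. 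By Lemma \ref{lem:im-Sq2-P1} this cokernel is $QH^{n+2}(X;\Sq^2)/\varTheta(\ker(\Sq^2_\Z:n-1))$; the dimension hypothesis only affects the ambient degree, so that lemma applies verbatim. For (\ref{SES:kerSq2}) we repeat the argument of Proposition \ref{prop:Q1S}(2) with $\ol{\Sq^2}$ in place of $\alpha^3$. Since $\ol{\Sq^2}\circ j_1\simeq\Sq^2$, the first SES of (\ref{SES:P1P2}) maps to $0\to \Sq^2(H^{n+1})\to H^{n+3}\to QH^{n+3}(X;\Sq^2)\to 0$. The left vertical map $QH^{n+1}(X;\Sq^2_\Z)\to\Sq^2(H^{n+1})$ is surjective, with kernel $\ker(\Sq^2:n+1)/\Sq^2_\Z(H^{n-1})$. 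The right vertical map is $\varTheta$ on $\ker(\Sq^2_\Z:n)$. The snake lemma then gives (\ref{SES:kerSq2}). The splitting criterion for (\ref{SES:kerSq2}) follows from Lemma \ref{lem:chtp=-1} by restricting the $2$-torsion classifying homomorphism to ${}_2\ker(\varTheta:n)$, exactly as in Proposition \ref{prop:Q1S}(2).

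The main point is that (\ref{SES:P2S}) splits. We follow the proof of Theorem \ref{thm:cohtp=-2:CW}(2). The kernel is an elementary $2$-group, so by \cite[Section 5]{LT72} the extension is classified by a homomorphism $\psi$ from ${}_2\ker(\ol{\Sq^2}:n)$ into it. By the inclusion of the form (\ref{incl:P1S}), a $2$-torsion element is $(x,y)$ with $x=\delta(x')\in{}_2H^n(X;\Z)$, $x'\in H^{n-1}(X;\z{})$, and $y\in QH^{n+1}(X;\Sq^2_\Z)$.

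The excision diagram used in Theorem \ref{thm:cohtp=-2:CW} (fibration $K_{n-1,n+1}\to K_n(\Z)\times K_{n+1}\xra{(2,0)}K_n(\Z)$ compared with $K_{n+2}\to P_2S^n\to P_1S^n$) identifies $\psi(x,y)=\Sq^2\Sq^1(x')$, modulo the image of $\ol{\Sq^2}_\ast$. The $y$-component contributes nothing, since its boundary passes through $\id\colon K_{n+1}\to K_{n+1}$ and lands in the trivial factor. Now $\Sq^2\Sq^1(x')=\Sq^2(\rho_2\delta x')\in\Sq^2(H^n(X;\z{}))$. This lies in the subgroup we quotient by, so $\psi=0$ and (\ref{SES:P2S}) splits.

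We expect the main obstacle to be justifying the formula for $\psi$: checking that the $y$-coordinate contributes nothing, and controlling the indeterminacy of the lift $x'$. Changing $x'$ by $\rho_2$ of an integral class changes $\Sq^2\Sq^1(x')$ by $\Sq^2\Sq^1_\Z(-)=0$, so the indeterminacy is harmless. In any case the value lies in $\Sq^2(H^n(X;\z{}))$, so the conclusion is robust; this mirrors the $\mathrm{St}_2$ computation in Proposition \ref{prop:Q2S}.
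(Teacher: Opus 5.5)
Your identification of the kernel of (\ref{SES:P2S}) via Lemma~\ref{lem:im-Sq2-P1} is fine. So is the snake-lemma derivation of (\ref{SES:kerSq2}), and so is its splitting criterion, obtained by restricting Taylor's classifying map $\delta(x')\mapsto[\Sq^2x']$ to ${}_2\ker(\varTheta:n)$. This is exactly the route the paper indicates (``similar to Proposition~\ref{prop:Q1S} and Theorem~\ref{thm:cohtp=-2:CW}''). The gap is the splitting of (\ref{SES:P2S}). It cannot be closed, because that assertion is false.

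Take $X=(S^{n+1}\cup_2e^{n+2})\vee S^{n+3}$, which satisfies the hypotheses. Then $H^{n-1}(X;\Z)=H^n(X;\Z)=H^n(X;\z{})=0$. Also $H^{n+1}(X;\z{})=\z{}\langle y\rangle$ with $\Sq^2y=0$, and $H^{n+2}(X;\z{})=\z{}\langle \Sq^1y\rangle$. So both outer terms of (\ref{SES:P2S}) are $\z{}$, the quotient being generated by $j_1y$. However, $[X,P_2S^n]\cong[S^{n+1}\cup_2e^{n+2},S^n]\oplus\pi_{n+3}(P_2S^n)\cong\Z/4$. Indeed, on the mod-$2$ Moore space $2\cdot 1=i\eta q$, so a map $\tilde\eta$ extending $\eta$ on the bottom cell satisfies $2\tilde\eta=\tilde\eta\circ i\eta q=\eta^2q\neq 0$. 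The same failure occurs for a closed string manifold. $\RP{3}\times S^n$ stably contains $\Sigma^n\RP{2}\simeq S^{n+1}\cup_2e^{n+2}$ as a wedge summand, so $[\RP{3}\times S^n,P_2S^n]\cong\Z\oplus\Z/4$. A split (\ref{SES:P2S}) would instead give $\Z\oplus\z{}\oplus\z{}$.

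Concretely, the failing step is your claim that the $y$-coordinate contributes nothing. In the example above, the classifying homomorphism sends $j_1y$ to $\Sq^1y\neq 0$. The extension is really governed by the Toda bracket $\langle\ol{\Sq^2},\alpha,2\rangle$, which for $\alpha=j_1y$ here picks up $\Sq^1y$; this is the cohomological shadow of $\langle 2,\eta,2\rangle=\eta^2$. The diagram you borrow from the proof of Theorem~\ref{thm:cohtp=-2:CW} maps $K_n(\Z)$ to $P_1S^n$ by the null map $j_1\circ\Sq^1_\Z$. It is therefore not a universal example for a $2$-torsion class of $[X,P_1S^n]$ and cannot compute this extension.

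A warning sign is that your formula is vacuous: $\Sq^2\Sq^1(x')=\Sq^2\rho_2\delta(x')=\Sq^2_\Z(x)=0$ for every $x\in\ker(\Sq^2_\Z:n)$. Your argument would thus prove splitting without ever using the quotient by $\Sq^2(H^n(X;\z{}))$. Any correct splitting statement needs an additional hypothesis controlling at least $\Sq^1$ on $H^{n+1}(X;\z{})$. The paper's one-line justification inherits the same defect from Theorem~\ref{thm:cohtp=-2:CW}(2), which already fails for $X=S^{n+1}\cup_2e^{n+2}$.
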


\begin{remark}
	Comparing the SESs (\ref{SES:ker-alpha3}) and (\ref{SES:kerSq2}), and combining the first homotopy commutative diagram in Remark \ref{rmk:towers-rels}, we get SESs of groups 
	\begin{align*}
		0\to \z{1-\varepsilon(\Sq^4_\Z)}\to [X,Q_1S^n]\xra{(g_1)_\ast} [X,P_1S^n]\to 0,\\
		0\to \z{1-\varepsilon(\Sq^4_\Z)}\to \ker(\alpha^3_\ast:n)\xra{(g_1)_\ast} \ker(\ol{\Sq^2}_\ast:n)\to 0.
	\end{align*}
	From now on we use the identification
	\[\tfrac{\ker(\alpha^3_\ast:n)}{\z{1-\varepsilon(\Sq^4_\Z)}}=\ker(\ol{\Sq^2}_\ast:n).\] 
%Comparing the SESs (\ref{SES:P2S}) and (\ref{SES:Q2S}), and combining the second homotopy commutative diagram in Remark \ref{rmk:towers-rels}, we get that there are SESs of groups 
%\begin{align*}
%	0\to \ker((g_2)_\ast)\to [X,Q_2S^n]\xra{(g_2)_\ast} [X,P_2S^n]\to 0,\\
%	0\to \tfrac{\varTheta(\ker(\Sq^2_\Z:n-1))}{\varTheta(\ker(\Sq^2_\Z\cap\Sq^4_\Z:n-1))}\to \oplus G_2\to \ker((g_2)_\ast)\to \z{1-\varepsilon(\Sq^4_\Z)}\to 0.
%\end{align*}

\end{remark}

\begin{remark}\label{rmk:MP2S}
When $X$ is a connected closed spin $(n+3)$-manifold with $n\geq 5$, the Pontryagin-Thom construction (Theorem \ref{thm:Pontryagin-Thom}) yields the group isomorphism
	\[\Omega_3^{\Spin}(X)\cong [X,\mathrm{MSpin}_n]\]
for the spin bordism group $\Omega_3^{\Spin}(X)$ of spin $3$-manifolds in $X$.
Let $\mathrm{MSpin}_n$ be the Thom space of the universal $n$-plane spin bundle over $B\Spin_n$ and let $P_{r}\mathrm{MSpin}_n$ be the $(n+r)$-th Postnikov section of $\mathrm{MSpin}_n$.
Since the canonical inclusion map $S^n\to \mathrm{MSpin}_n$ is $(n+3)$-connected and $\Omega_3^{\Spin}=0$, there hold group isomorphisms
\[[X,\mathrm{MSpin}_n]\cong [X,P_{2}\mathrm{MSpin}_n]\cong [X,P_2S^n].\]
Hence Proposition \ref{prop:MP2S} also gives a characterization of  $\Omega_3^{\Spin}(X)$ for connected closed spin $(n+3)$-manifolds.
\end{remark}

\subsection{The cohomotopy $\pi^n(X)$}
Note that the SES (\ref{SES:XP3-XQ3}) of groups
\[0\to QH^{n+3}(X;\PP_\Z\circ (q^3_1)_\ast)\xra{(j_3)_\ast} \pi^n(X)\xra{} [X,Q_3S^n]\to 0\] 
is determined by the homomorphism 
\begin{equation}\label{eq:splitting-homo}
	\psi\colon {}_3[X,Q_3S^n]\to QH^{n+3}(X;\PP_\Z\circ (q^3_1)_\ast),\quad \psi(\gamma)=(j_3)_\ast^{-1}(3\gamma'),
\end{equation}
where $\gamma'\in \pi^n(X)$ is a lift of the $3$-torsion class $\gamma\in {}_3[X,Q_3S^n]$. Since the quotient group $QH^{n+3}(X;\PP_\Z\circ (q^3_1)_\ast)$ is isomorphic to $\Z/3$ or $0$,  the above SES of groups implies that the group $\pi^n(X)$ has three possible isomorphism types:
\begin{align*}
	[X,Q_3S^n],\quad [X,Q_3S^n]\oplus \Z/3,\quad \text{or}\quad \tfrac{[X,Q_3S^n]}{\Z/3^{\bar{r}}}\oplus \Z/3^{\bar{r}+1},
\end{align*}
where $\Z/3^{\bar{r}}$ is a direct summand of $H^n(X;\Z)$.

\begin{remark}\label{rmk:splitting-3local}
 Suppose that $\Sq^1$ acts trivially on $H^{n+2}(X;\z{})$. Then by Lemmas \ref{lem:tower-Sn-v2}, \ref{lem:im-alpha4} and \ref{lem:im-beta4}, we have 
\begin{align*}
\PP_\Z\circ (q^3_1)_\ast\big([X,Q_3S^{n-1}]\big)&=\PP_\Z\circ (q^2_1)_\ast(\ker(\beta^4_\ast:n-1))\\
&=\PP_\Z\circ (q_1)_\ast(\ker(\mathbb{T}:n-1))\\
&\subseteq \PP_\Z(\ker(\varPhi:n-1))\\
&\subseteq \PP_\Z\big(\ker(\varTheta\cap\Sq^4_\Z:n-1)\big).
\end{align*}
This is still very complicated. By Lemma \ref{lem:splitting-3local} below,  if the equality 
\[QH^{n+3}(X;\PP_\Z\circ (q_1^3)_\ast)=QH^{n+3}(X;\PP_\Z)\]  
holds, then the SES (\ref{SES:XP3-XQ3}) splits if and only if 
\begin{equation}\label{eq:splitting-3local}
	\PP_3(\delta^{-1}({}_3\ker(\varTheta:n)))\subseteq \PP_\Z(H^{n-1}(X;\Z)).
\end{equation}
In particular, the SES (\ref{SES:XP3-XQ3}) splits if $\PP_3$ acts trivially on $H^{n-1}(X;\Z/3)$.
\end{remark}

\begin{lemma}\label{lem:splitting-3local}
Let $\psi$ be given by (\ref{eq:splitting-homo}). Localized at $3$, there holds 
\[\psi(\gamma)_{(3)}=[\PP_3(x')]\in QH^{n+3}(X;\PP_{\ZZ{3}}),\]
where $x'\in H^{n-1}(X;\Z/3)$ satisfies $\delta(x')=(q_1^3)_\ast(\gamma)$ with $\delta$ the Bockstein homomorphism. 

\begin{proof}
%By the SESs in (\ref{SESs:Q1Q2Q3}), there is an inclusion of $3$-torsion groups  
%\[(q_1^3)_\ast\big({}_3[X,Q_3S^n]\big)={}_3\ker(\varTheta:n)\subseteq {}_3\ker(\Sq^2_\Z:n).\]
When localized at $3$, the composition $q^3_1\colon Q_3S^{n}\to K_n(\Z)$ is a homotopy equivalence. Consider the following homotopy commutative diagram with homotopy fibration rows
\[\begin{tikzcd}
	K_{n-1}(\Z/3)\ar[d,"\PP_3"]\ar[r,"\delta"]& K_n(\ZZ{3})\ar[r,"\times 3"]\ar[d]& K_n(\ZZ{3})\ar[r,"\rho_3"]\ar[d,"(q_1^3)^{-1}"] & K_n(\Z/3)\ar[d,"\PP_3"]\\
K_{n+3}(\Z/3)\ar[r,"j_3"]&P_3S^n_{(3)}\ar[r]&Q_3S^n_{(3)}\ar[r,"\PP_{\ZZ{3}}\circ q_1^3"]&K_{n+4}(\Z/3)
\end{tikzcd}.\]
It follows that when localized at $3$, the obstruction class $\psi(\gamma)$ is represented by $\PP_3(x')$, where $x'\in H^{n-1}(X;\Z/3)$ is as described in the statement. 
\end{proof}
\end{lemma}

\begin{theorem}\label{thm:cohtp=-3:CW}
Let $n\geq 5$ and let $X$ be a based CW complex of dimension $n+3$ such that $H_{n+3}(X;\Z/p)\cong\Z/p$ for $p=2,3$, and that $\Sq^1$ acts trivially on $H^{n+2}(X;\z{})$.
Then there is a SES of groups 
\begin{equation}\label{SES:cohtp=-3:CW}
	0\to \Z/3^{1-\epsilon}\to \pi^n(X)\to [X,Q_3S^n]\to 0,
\end{equation}
where $\epsilon\in\{0,1\}$, and the group $[X,Q_3S^n]$ is described as follows.
\begin{enumerate}
	\itemsep=5pt
	\item If $\Sq^2$ acts trivially on $H^{n+1}(X;\z{})$, then there is a group isomorphism
    \begin{align*}
		[X,Q_3S^n]\cong \ker(\ol{\Sq^2}:n)\!\oplus\! \tfrac{H^{n+2}(X;\z{})}{\varTheta(\ker(\Sq^2_\Z\cap\Sq^4_\Z:n-1))}\!\oplus\!\z{3-\varepsilon(\Sq^4_\Z)-\varepsilon(\varPhi)-\varepsilon(\mathbb{T})}.
	\end{align*}
\item If $\Sq^2$ acts nontrivially on $H^{n+1}(X;\z{})$, and $\Sq^2\Sq^1$ acts trivially on $H^n(X;\z{})$, then there is a group isomorphism 
 \[[X,Q_3S^n]\cong\ker(\ol{\Sq^2}:n)\!\oplus\!  \tfrac{QH^{n+2}(X;\Sq^2)}{\varTheta(\ker(\Sq^2_\Z\cap\Sq^4_\Z:n-1))}\!\oplus\! \z{2-\varepsilon(\varPhi)-\varepsilon(\Sq^4_\Z)}.\]
\item If $\Sq^2$ acts nontrivially on $H^{n+1}(X;\z{})$,  $\Sq^2\Sq^1$ acts nontrivially on $H^n(X;\z{})$ but acts trivially on $\ker(\Sq^2:n)$, then there is a group isomorphism 
  \[[X,Q_3S^n]\cong \ker(\ol{\Sq^2}:n)\!\oplus\!\tfrac{QH^{n+2}(X;\Sq^2)}{\varTheta(\ker(\Sq^2_\Z\cap\Sq^4_\Z:n-1))}\!\oplus\! \z{2-\varepsilon(\Sq^4_\Z)}.\] 
\item If $\Sq^2$ acts nontrivially on $H^{n+1}(X;\z{})$, and $\Sq^2\Sq^1$ acts nontrivially on $\ker(\Sq^2:n)$, then there is a group isomorphism 
  \[[X,Q_3S^n]\cong \ker(\alpha^3_\ast:n)\!\oplus\! \tfrac{QH^{n+2}(X;\Sq^2)}{\varTheta(\ker(\Sq^2_\Z\cap\Sq^4_\Z:n-1))}.\]
  
\end{enumerate}
If, in addition, the third power operation $\PP_3$ acts trivially on $H^{n-1}(X;\Z/3)$, then the above SES (\ref{SES:cohtp=-3:CW}) splits with $\epsilon=0$.
\begin{proof}
The SES (\ref{SES:cohtp=-3:CW}) follows from the SES (\ref{SES:XP3-XQ3}), the descriptions of $[X,Q_3S^n]$ are obtained by combining Proposition \ref{prop:Q3S} and Corollary \ref{cor:Q2S:Sq1=0}, and the last ``in addition'' statement follows by Lemma \ref{lem:splitting-3local} and Remark \ref{rmk:splitting-3local}.
\end{proof}
\end{theorem}

Let $M$ be a connected closed smooth oriented manifold of dimension $n+3$ with $n\geq 5$. The mod-$3$ Wu's formula \cite{Wu54} states that 
\[\PP_3=\rho_3(p_1(M))\!\smallsmile\! \colon H^{n-1}(M;\Z/3)\to H^{n+3}(M;\Z/3),\]
where $p_1(M)\in H^4(M;\Z)$ is the first Pontryagin class of $M$. By Lemma \ref{lem:mfd:Sq2},
\[\Sq^2=w_2(M)\!\smallsmile\!\colon H^n(M;\z{})\to H^{n+2}(M;\z{}),\quad x\mapsto w_2(M)\!\smallsmile\! x.\] 
Since $w_1(M)=0$, $w_3(M)=\Sq^1(w_2(M))$, we have
\begin{equation*}
	\Sq^2\Sq^1=w_3(M)\!\smallsmile\! \colon H^{n}(M;\z{})\to H^{n+3}(M;\z{}).	
\end{equation*}

By Lemma \ref{lem:spin:Theta=0}, the secondary cohomology operation 
\[\varTheta\colon  \ker(\Sq^2_\Z:n)\to QH^{n+3}(M;\Sq^2)\] 
is always trivial for smooth closed oriented manifolds, so we have 
\[\ker(\varTheta:n)=\ker(\Sq^2_\Z:n),\]
and the SES (\ref{SES:ker-alpha3}) for $\ker(\alpha^3_\ast:n)$ becomes
\[0\to \tfrac{\ker(\Sq^2:n+1)}{\Sq^2_\Z(H^{n-1}(M;\Z))} \!\oplus\! \z{1-\varepsilon(\Sq^4_\Z)}\to \ker(\alpha^3_\ast:n)\xra{(q_1)_\ast} \ker(\Sq^2_\Z:n)\to 0,\]
whose splitting criterion is given in Proposition \ref{prop:Q1S}. 
Now we are ready to characterize  $\pi^n(M)$ for closed smooth oriented $(n+3)$-manifolds by interpretating the group isomorphisms in Theorem \ref{thm:cohtp=-3:CW} into SESs of groups.

\begin{theorem}\label{thm:cohtp=-3:mfds}
	Let $M$ be a connected closed smooth oriented manifold of dimension $n+3$ with $n\geq 5$. 
	\begin{enumerate}
		\itemsep=5pt
		\item If $w_2(M)=0$, then there is a SES of groups 
		\begin{equation*}
			0\to N_1\to \pi^n(M)\to \ker(\ol{\Sq^2}_\ast:n)\!\oplus\! \tfrac{H^{n+2}(M;\z{})}{\varTheta(\ker(\Sq^2_\Z\cap\Sq^4_\Z:n-1))}\to 0,
		\end{equation*}
	where $\epsilon\in\{0,1\}$ and $N_1=\Z/3^{1-\epsilon}\oplus\z{3-\varepsilon(\Sq^4_\Z)-\varepsilon(\varPhi)-\varepsilon(\mathbb{T})}$.
	\item If $w_2(M)\neq 0$ and $w_3(M)=0$, then there is a SES of groups
	\begin{equation*}
	0\to N_2\to\pi^n(M)\to \ker(\ol{\Sq^2}_\ast:n)\!\oplus\! \tfrac{QH^{n+2}(M;\Sq^2)}{\varTheta(\ker(\Sq^2_\Z\cap\Sq^4_\Z:n-1))}\to 0,
	\end{equation*}
	where $\epsilon\in\{0,1\}$ and $N_2=\Z/3^{1-\epsilon}\oplus\z{2-\varepsilon(\Sq^4_\Z)-\varepsilon(\varPhi)}$.
	
    \item If $w_2(M)\neq 0$, $w_3(M)\neq 0$, and $\ker(w_2(M)\!\smallsmile\!:n)\subseteq \ker(w_3(M)\!\smallsmile\!:n)$ as subgroups of $H^n(M;\z{})$, then there is a SES of groups
   \begin{equation*}
		0\to N_3\to \pi^n(M)\to \ker(\ol{\Sq^2}_\ast:n)\!\oplus\! \tfrac{QH^{n+2}(M;\Sq^2)}{\varTheta(\ker(\Sq^2_\Z\cap\Sq^4_\Z:n-1))}\to 0,
	\end{equation*}
	where $\epsilon\in\{0,1\}$ and $N_3=\Z/3^{1-\epsilon}\oplus\z{2-\varepsilon(\Sq^4_\Z)}$.
	\item  If $w_2(M)\neq 0$ and $\ker(w_2(M)\!\smallsmile\!:n)\nsubseteq \ker(w_3(M)\!\smallsmile\!:n)$, then there is a SES of groups
	\begin{equation*}
		0\to \Z/3^{1-\epsilon}\to \pi^n(M)\to\ker(\alpha^3_\ast:n)\!\oplus\! \tfrac{QH^{n+2}(M;\Sq^2)}{\varTheta(\ker(\Sq^2_\Z\cap\Sq^4_\Z:n-1))}\to 0,
	\end{equation*}
where $\epsilon\in\{0,1\}$ and $\ker(\alpha^3_\ast:n)$ contains the direct summand $\Z/2^{1-\varepsilon(\Sq^4_\Z)}$ if and only if
\[\Sq^4\left(\delta^{-1}({}_2\ker(\Sq^2_\Z:n))\right)\subseteq \Sq^4_\Z\left(\ker(\Sq^2_\Z:n-1)\right).\]
	\end{enumerate}
If, in addition, $p_1(M)\equiv 0\pmod 3$, then $\epsilon=0$ and all the above SESs are split.  
\end{theorem}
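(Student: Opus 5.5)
The plan is to specialize Theorem \ref{thm:cohtp=-3:CW} to $X=M$ and translate its four cases into the language of characteristic classes. First we check the hypotheses. Since $M$ is a connected closed oriented $(n+3)$-manifold, $H_{n+3}(M;\Z/p)\cong\Z/p$ for all $p$. Because $w_1(M)=0$, Wu's formula shows that $\Sq^1$ vanishes on $H^{n+2}(M;\z{})$; indeed $\Sq^1(x)=v_1\smallsmile x=0$. Next we record the dictionary established just before the statement:
\begin{itemize}
\item $\Sq^2=w_2(M)\smallsmile$ on $H^{n+1}(M;\z{})$, by Wu's formula with $v_2=w_2$ when $w_1=0$;
\item $\Sq^2=w_2(M)\smallsmile$ on $H^n(M;\z{})$, by Lemma \ref{lem:mfd:Sq2} shifted one degree;
\item $\Sq^2\Sq^1=w_3(M)\smallsmile$ on $H^n(M;\z{})$;
\item $\PP_3=\rho_3(p_1(M))\smallsmile$ on $H^{n-1}(M;\Z/3)$;
\item $\ker(\varTheta:n)=\ker(\Sq^2_\Z:n)$, by Lemma \ref{lem:spin:Theta=0} in the spin case, and because the target $QH^{n+3}(M;\Sq^2)$ vanishes in the nonspin case.
\end{itemize}

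With this dictionary we match the cases. By Poincar\'e duality with $\z{}$ coefficients, $\Sq^2$ acts trivially on $H^{n+1}(M;\z{})$ exactly when $w_2(M)\smallsmile-\colon H^{n+1}\to H^{n+3}$ is zero. Pairing against $H^{n+1}$ shows this happens if and only if $w_2(M)\cup H^{n+1}=0$, which by nondegeneracy (the class $w_2\cup y$ with $y\in H^{n+1}$) is equivalent to $w_2(M)=0$. Case (1) of Theorem \ref{thm:cohtp=-3:CW} therefore corresponds to $w_2(M)=0$. In that case $QH^{n+2}(M;\Sq^2)=H^{n+2}(M;\z{})$, since $\Sq^2=w_2\smallsmile=0$ on $H^n$.

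The remaining cases have $w_2\neq0$. We have $\Sq^2\Sq^1=0$ on $H^n$ if and only if $w_3\cup H^n=0$, which is equivalent to $w_3(M)=0$ by the same duality argument. That gives case (2). The condition that $\Sq^2\Sq^1$ vanishes on $\ker(\Sq^2:n)$ reads $\ker(w_2\smallsmile:n)\subseteq\ker(w_3\smallsmile:n)$; together with $w_3\neq0$ this gives case (3). The complement of that condition gives case (4).

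Next we convert each isomorphism of Theorem \ref{thm:cohtp=-3:CW} into the stated SES. We put the cyclic $2$-primary summand $\z{k}$ together with the $\Z/3^{1-\epsilon}$ from the SES (\ref{SES:cohtp=-3:CW}) into the kernel. This yields $N_1$, $N_2$ and $N_3$ as listed. In case (4) no extra $2$-primary summand appears and the quotient is $\ker(\alpha^3_\ast:n)$. The criterion for $\z{1-\varepsilon(\Sq^4_\Z)}$ to split off from $\ker(\alpha^3_\ast:n)$ is the $G_1$-part of Proposition \ref{prop:Q1S}(2) with $\ker(\varTheta:n)=\ker(\Sq^2_\Z:n)$. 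Here we must check that $G_1$ reduces to $\z{1-\varepsilon(\Sq^4_\Z)}$; this holds because $H^{n+3}(M;\z{})\cong\z{}$.

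Finally, if $p_1(M)\equiv0\pmod3$, then $\PP_3$ acts trivially on $H^{n-1}(M;\Z/3)$. The last clause of Theorem \ref{thm:cohtp=-3:CW} then gives $\epsilon=0$ and the splitting of the $3$-primary extension. The $2$-primary pieces already occur as direct summands in the isomorphisms of that theorem, so each SES splits.

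The main obstacle is the bookkeeping in the translation step. One has to confirm that the quotient $[X,Q_2S^n]/\z{2-\varepsilon(\Sq^4_\Z)-\varepsilon(\varPhi)}$ used in Proposition \ref{prop:Q3S}, combined with Corollary \ref{cor:Q2S:Sq1=0}, really leaves the quotient $\ker(\ol{\Sq^2}_\ast:n)\oplus QH^{n+2}/\varTheta(\cdots)$ in cases (1)--(3). This uses the identification $\ker(\alpha^3_\ast:n)/\z{1-\varepsilon(\Sq^4_\Z)}=\ker(\ol{\Sq^2}_\ast:n)$. I would verify this case by case, tracking which $\z{}$ from $G_1$ and $G_2$ is absorbed into the cyclic summand.
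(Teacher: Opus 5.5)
Your proposal is correct and follows essentially the same route as the paper. The paper also obtains the theorem by specializing Theorem~\ref{thm:cohtp=-3:CW} to $X=M$, using $\Sq^1=0$ on $H^{n+2}$, $\Sq^2=w_2(M)\smallsmile$ and $\Sq^2\Sq^1=w_3(M)\smallsmile$ on $H^n$, the mod-$3$ Wu formula $\PP_3=\rho_3(p_1(M))\smallsmile$, and $\ker(\varTheta:n)=\ker(\Sq^2_\Z:n)$. Your nonspin justification of the last identity via $QH^{n+3}(M;\Sq^2)=0$ is cleaner than the paper's citation of Lemma~\ref{lem:spin:Theta=0} alone.

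The bookkeeping you flag as the main obstacle is already absorbed into Theorem~\ref{thm:cohtp=-3:CW}, whose isomorphisms are stated in terms of $\ker(\ol{\Sq^2}:n)$. Beyond your Poincar\'e-duality translation of the case conditions, you only need one more step to move the cyclic $2$-primary summand into the kernel. Its preimage in $\pi^n(M)$ is an extension of that summand by $\Z/3^{1-\epsilon}$, which splits automatically because the orders are coprime.
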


The following example illustrates the different cases described in Theorem \ref{thm:cohtp=-3:mfds}.
\begin{example}\label{ex:Dold}
	Let $P(m,k)=S^m\times_{\z{}} \CP{k}$ be the Dold manifold, where $\z{}$ acts diagonally. For each $m,k\geq 0$, the projection $S^m\times \CP{k}\to S^m$ induces a fibration
\[\CP{k}\to P(m,k)\to \RP{m}.\]
The Dold manifold $P(m,k)$ is orientable if and only if $m+k\equiv 1\pmod 2$ or $m=0$. 	The mod-$2$ cohomology ring of $P(m,k)$ is given by 
	\[H^\ast(P(m,k);\z{})\cong \z{}[c,d]/(c^{m+1},d^{k+1}),\]
	where $c,d$ are generators with $|c|=1$ and $|d|=2$, and the action of the Steenrod squares $\Sq^i$ are given by 
	\begin{align*}
		&\Sq^1(c)=c^2,~~\Sq^i(c)=0 \text{ for } i>1;\\
		&\Sq^1(d)=cd,~~ \Sq^2(d)=d^2, ~~\Sq^i(d)=0 \text{ for } i>2.
	\end{align*}
%Consequently, the total Stiefel-Whitney class of $P(m,k)$ is 
%	\[w(P(m,k))=(1+c)^m(1+c+d)^{k+1}.\]
See \cite{Dold56,Khare89} for more details.

Let $n\geq 5$ and consider the closed oriented $(n+3)$-manifold
\[M_{m}=P(m,2)\times S^{n-m-1}\] 
with $m$ an odd integer.
Since $H^j(\RP{m};\Z/3)\cong\Z/3$ for $j=0,m$ and is zero otherwise, and the action of $\pi_1(\RP{m})$ on $H^2(\CP{2};\Z/3)$ is nontrivial,
we apply the Leray-Serre spectral sequence
\[E_2^{p,q}=H^p(\RP{m};\mathcal{H}^q(\CP{2};\Z/3))\Rightarrow H^\ast(P(m,2);\Z/3)\] to compute that 
\[H^\ast(P(m,2);\Z/3)\cong \Z/3[u,v]/(u^2,v^2),\]
where $u$ is the pullback of the generators of $H^m(\RP{m};\Z/3)$ and $v$ restricts to the generator $\bar{d}^2$ of $H^4(\CP{2};\Z/3)$.
It follows immediately that the Steenrod power operation $\PP_3$ acts trivially on $H^\ast(P(m,2);\Z/3)$ and hence on $H^{n-1}(M_m;\Z/3)$.
Let $a\in H^{n-m-1}(S^{n-m-1};\z{})$ be a generator.
\begin{enumerate}
	\item For $M_0=\CP{2}\times S^{n-1}$, we have $w_2(M_0)=d$ and $w_3(M_0)=0$.
	\item For $M_1=P(1,2)\times S^{n-2}$, $H^n(M_1;\z{})\cong\Z/2$ is generated by $da$. Since
\begin{align*}
	\Sq^2\Sq^1(da)&=da\!\smallsmile\! cd= cd^2a\neq 0,\\
	 \Sq^2(da)&=d^2a\neq 0,
\end{align*}
we have $\ker(w_3(M_1)\!\smallsmile\!:n)=\ker(w_2(M_1)\!\smallsmile\!:n)=0.$
\item For $M_3=P(3,2)\times S^{n-4}$, $H^n(M_3;\z{})\cong\z{}\oplus\z{}$ are generated by $c^2da$ and $d^2a$. Since
\begin{align*}
	\Sq^2(d^2a+c^2da)&=c^2d^2a+c^2d^2a=0,\\
\Sq^2\Sq^1(d^2a+c^2da)&=\Sq^2(c^3da)=c^3d^2a\neq 0,
\end{align*}
we have $\ker(w_2(M_3)\!\smallsmile\!:n)\nsubseteq \ker(w_3(M_3)\!\smallsmile\!:n)$.
\end{enumerate}

\end{example}

At the end of this section, we deduce the special case of Theorem \ref{thm:cohtp=-3:mfds} in which $M$ is a string $(n+3)$-manifold.
%Recall from Remark \ref{rmk:ChlgyOps} that there are the secondary cohomology operation $\Psi$ and the tertiary cohomology operation $\mathbb{T}$ based on the relations
%\[ \Sq^4\Sq^1+(\Sq^2\Sq^1)\Sq^2+\Sq^1\Sq^4=0,\quad \Sq^2\Theta+\Sq^1\Psi=0.\]

%The proof of the following lemma is similar to that of Lemma \ref{lem:spin:Theta=0}.
\begin{lemma}\label{lem:string:Psi=0}
	If $M$ is a connected closed string $(n+3)$-manifold, then the secondary cohomology operation $\Psi$ and the tertiary cohomology operation $\mathbb{T}$ both act trivially on $H^{n-1}(M;\z{})$.

	\begin{proof}
	Since $M$ is string, the normal bundle $\nu(M)$ in $\R^{n+d+3}$ with $d$ sufficiently large is  string and is induced by a map $f\colon M\to B\mathrm{String}_d$. The secondary operation $\Psi$ is defined on the Thom class $u\in H^d(\mathrm{MString}_d;\z{})$ and takes the value in 
	\[H^{d+4}(\mathrm{MString}_d;\z{})\cong H^4(B\mathrm{String}_d;\z{}),\]
	which is zero because $B\mathrm{String}_d$ is $4$-connected. It follows that $\Psi$ acts trivially on the Thom class $u(M)\in H^d(\mathrm{T}(\nu(M));\z{})\cong\z{}$, and thus $\Psi$ acts trivially on $H^{n-1}(M;\z{})$.	

	Since the secondary operations $\Theta$ and $\Psi$ both acts trivially on the Thom class $u\in H^d(\mathrm{MString}_d;\z{})\cong\z{}$,  the tertiary operation $\mathbb{T}$ is defined on $u$ and takes the values in $H^{d+4}(\mathrm{MString}_d;\z{})=0$. It follows that $\mathbb{T}$ acts trivially on $H^d(\mathrm{T}(\nu(M));\z{})$, and thus acts trivially on $H^{n-1}(M;\z{})$. 
	\end{proof}
\end{lemma}

%Note that one can also prove Lemmas \ref{lem:spin:Theta=0} and \ref{lem:string:Psi=0} by the formulas in \cite[Theorem 3.2.3]{MP64} with $k=1$, where the secondary operations $\phi_1$ and $\phi_2'$ in \cite{MP64} are exactly $\varTheta$ and $\varPsi$, respectively. 

\begin{theorem}\label{thm:cohtp=-3:string}
	Let $M$ be a connected closed string $(n+3)$-manifold with $n\geq 5$. Then there is a split SES of groups 
	\begin{equation*}
			0\to \Z/24\to \pi^n(M)\to \ker(\ol{\Sq^2}:n)\oplus QH^{n+2}(M;\varTheta)\to 0
	\end{equation*}
	with $\ker(\ol{\Sq^2}:n)$ characterized by the SES of groups
	\[0\to QH^{n+1}(M;\Sq^2_\Z)\xra{(j_1)_\ast} \ker(\ol{\Sq^2}:n)\xra{(p_1)_\ast}H^n(M;\Z)\to 0,\]
	which splits if and only if 
	\[\Sq^2_\Z\big(H^{n-1}(M;\Z)\big)=\Sq^2\big(H^{n-1}(M;\Z/2)\big).\]

	\begin{proof}
		Combine Theorem \ref{thm:cohtp=-3:mfds}, Lemma \ref{lem:string:Psi=0}, and Proposition \ref{prop:MP2S}.
	\end{proof}
\end{theorem}

%\begin{remark}\label{rmk:cohtp=-3:string}
%By Remark \ref{rmk:MP2S}, for a connected closed string $(n+3)$-manifold $M$ with $n\geq 5$, the group $\ker(\ol{\Sq^2}:n)\oplus QH^{n+2}(M;\varTheta)$ in Theorem \ref{thm:cohtp=-3:string} is isomorphic to the spin bordism group $\Omega_3^{\Spin}(M)$, and thus by the Pontryagin-Thom theorem and Theorem \ref{thm:cohtp=-3:string}, we obtain the following split SES of groups
%\begin{equation*}
%	0\to \Omega_3^{\fr}\to \Omega_3^{\fr}(M)\to \Omega_3^{\Spin}(M)\to 0.	
%\end{equation*}
%See Example \ref{ex:string-bdsm} for a geometric proof. 
%\end{remark}

%By Remark \ref{rmk:MP2S}, the group $\ker(\ol{\Sq^2}:n)\oplus QH^{n+2}(M;\varTheta)$ in the SES (\ref{SES:cohtp=-3:string}) is isomorphic to the spin bordism group $\Omega_3^{\Spin}(M)$, and thus by the Pontryagin-Thom theorem and Theorem \ref{thm:cohtp=-3:string}, we have the following split SES of groups 
%\begin{equation*}
%			0\to \Omega_3^{\fr}\to \Omega_3^{\fr}(M)\to \Omega_3^{\Spin}(M)\to 0.
%\end{equation*}
%See Example \ref{ex:string-bdsm} or Section \ref{sec:general-Gbdsm} for a geometric proof.

\section{$G$-bordism groups of $G$-manifolds}\label{sec:general-Gbdsm}

Let $G$ be a connected subgroup of the orthogonal group $\mathrm{O}$, with primary examples including $\SO$, $\Spin$, $\String$, and $\Fivebrane$. In this section we establish the framework to study normally framed bordism groups $\Omega_k^{\fr}(M)$ via $G$-bordism groups $\Omega_k^G(M)$ along the Whitehead tower of $B\mathrm{O}$. %For the original sources and detailed treatments of stable normal or tangential $G$-structures on manifolds, see \cite{Lashof63,Atiyah61,Stongbook}.

Let $B\mathrm{O}_n\langle m\rangle$ denote the $(m-1)$-connected cover of the classifying space $B\mathrm{O}_n$. Let $\pi\colon E\to N^k$ be a smooth $n$-plane bundle over a smooth $k$-manifold $N$ that is classified by a map $f\colon N\to B\mathrm{O}_n$.  As illustrated in Figure \ref{fig:Whitehead-tower-BO}, we recall that
\begin{itemize}
	\item an \emph{orientation} of $E$ is a (homotopy class of) lift $f_1$ of $f$ to $B\SO_n=B\mathrm{O}_n\lra{2}$, which exists if and only if $w_1(E)=0$;
	\item a \emph{spin structure} on $E$ is a (homotopy class of) lift $f_2$ of $f_1$ to $B\mathrm{Spin}_n=B\mathrm{O}_n\lra{4}$, which exists if and only if $w_2(E)=0$;
	\item a \emph{string structure} on $E$ is a (homotopy class of) lift $f_4$ of $f_2$ to $B\mathrm{String}_n=B\mathrm{O}_n\lra{8}$, which exists if and only if $\frac12p_1(E)=0$;
	\item a \emph{fivebrane structure} on $E$ is a (homotopy class of) lift $f_8$ of $f_4$ to $B\mathrm{Fivebrane}_n=B\mathrm{O}_n\lra{9}$, which exists if and only if $\frac16p_2(E)=0$.
\end{itemize}
 \begin{figure}[H]
		\centering
	\begin{tikzcd}[sep=12pt]
		&&B\mathrm{Fivebrane}_n\ar[d]&\\
		&&B\mathrm{String}_n\ar[d]\ar[r,"\frac16p_2"]&K(\Z,8)
		\\
		&&B\mathrm{Spin}_n\ar[d]\ar[r,"{\frac{1}{2}p_1}"]&K(\Z,4)\\
		&&B\mathrm{SO}_n\ar[d]\ar[r,"{w_2}"]&K(\Z/2,2)\\
		N^k\ar[rr,"f"]\ar[urr,"f_1", bend left=1ex,dashed]\ar[uurr,"f_2", bend left=2ex, dashed]\ar[uuurr,"f_4", bend left=3ex, dashed]\ar[uuuurr,"f_8", bend left=4ex, dashed]
		&&B\mathrm{O}_n\ar[r,"{w_1}"]&K(\Z/2,1)
		\end{tikzcd}
		\caption{The Whitehead tower of $B\mathrm{O}_n$, $n\geq k$.}\label{fig:Whitehead-tower-BO}
\end{figure}
\begin{lemma}\label{lem:fr-G}
	Let $n\geq k+2$ and let $M$ be a closed smooth $(n+k)$-manifold  with a tangential $\mathrm{O}\lra{m}$-structure. The canonical homomorphism
\[\Omega_k^{\fr}(M)\to \Omega_k^{\mathrm{O}\lra{m}}(M) \]
is an isomorphism for $k\leq m-2$.
In particular, there are canonical group isomorphisms
\[
\Omega_k^{\fr}(M)\cong \left\{\begin{array}{ll}
	\Omega_k^{\Spin}(M) &\text{ for $k\leq 2$},\\[1ex]
	\Omega_k^{\String}(M) &\text{ for $k\leq 6$},\\[1ex]
	\Omega_k^{\Fivebrane}(M) &\text{ for $k\leq 7$}.	
\end{array}\right.
\]	
\end{lemma}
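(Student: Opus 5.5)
The plan is to reduce both bordism groups to homotopy classes of maps into Thom spaces and compare the Thom spaces directly. Since $M$ carries a tangential $\mathrm{O}\lra{m}$-structure, and $n\geq k+2$, Theorem \ref{thm:Pontryagin-Thom} gives natural isomorphisms $\Omega_k^{\mathrm{O}\lra{m}}(M)\cong[M,\mathrm{MO}\lra{m}_n]$. Theorem \ref{thm:Pontryagin-Thom-framed} with trivial $G$ gives $\Omega_k^{\fr}(M)\cong\pi^n(M)=[M,S^n]$. The canonical homomorphism (a normal framing induces a normal $\mathrm{O}\lra{m}$-structure, as in Remark \ref{rmk:bdsm-frbdsm}) corresponds under these identifications to postcomposition with the inclusion of the bottom cell $S^n\to\mathrm{MO}\lra{m}_n$, i.e.\ the Thom map of the inclusion of the base point $\ast\to B\mathrm{O}_n\lra{m}$. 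The first step is to check this naturality: both Pontryagin--Thom collapses are the same collapse onto the tubular neighbourhood of $N\subset M$, followed by the classifying map of $\nu_f$. For a framed embedding that classifying map is constant, so it factors through the fibre $S^n$.

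The second step is a connectivity estimate. The space $B\mathrm{O}_n\lra{m}$ is $(m-1)$-connected. The Thom space of the universal bundle over it has a cell structure with one $n$-cell, coming from the base point, and all remaining cells in dimensions $\geq n+m$. This can be seen from the Thom isomorphism $\wtd H_{\ast}(\mathrm{MO}\lra{m}_n)\cong H_{\ast-n}(B\mathrm{O}_n\lra{m})$ together with simple connectivity for $m\geq2$. Hence the pair $(\mathrm{MO}\lra{m}_n,S^n)$ is $(n+m-1)$-connected, and the inclusion $S^n\to\mathrm{MO}\lra{m}_n$ is $(n+m-1)$-connected. By cellular approximation and obstruction theory (Whitehead's theorem for CW complexes of bounded dimension), the induced map $[M,S^n]\to[M,\mathrm{MO}\lra{m}_n]$ is surjective when $\dim M\leq n+m-1$ and bijective when $\dim M\leq n+m-2$. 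Since $\dim M=n+k$, bijectivity holds for $k\leq m-2$. In the stable range both sides are groups and the map is a homomorphism, so it is an isomorphism.

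The final step specializes to $\Spin=\mathrm{O}\lra{4}$, $\String=\mathrm{O}\lra{8}$ and $\Fivebrane=\mathrm{O}\lra{9}$, giving the ranges $k\leq2$, $k\leq6$ and $k\leq7$. One has to note that a tangential string (respectively fivebrane) structure on $M$ induces the structures needed for each smaller $G$ in the tower.

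The main obstacle is the first step: making sure the geometric forgetful map really corresponds to composition with $S^n\to\mathrm{MO}\lra{m}_n$. This matters because Theorem \ref{thm:Pontryagin-Thom} passes through Lashof's identification and Poincaré--Thom duality, while Theorem \ref{thm:Pontryagin-Thom-framed} is the direct collapse. I would avoid the issue by using the framed version $\Omega_k^{\fr,\mathrm{O}\lra{m}}(M)\cong[M,\mathrm{MO}\lra{m}_n]$ of Theorem \ref{thm:Pontryagin-Thom-framed}, which is visibly natural in the structure group. I would then identify it with $\Omega_k^{\mathrm{O}\lra{m}}(M)$ via Remark \ref{rmk:bdsm-frbdsm}.
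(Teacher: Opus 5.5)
Your proposal is correct and follows essentially the same route as the paper. Both identify the two groups with $[M,S^n]$ and $[M,\mathrm{MO}\lra{m}_n]$ via the Pontryagin--Thom theorems. Both then use that the bottom-cell inclusion $S^n\to\mathrm{MO}\lra{m}_n$ is $(n+m-1)$-connected, by the Thom isomorphism and the $(m-1)$-connectivity of $B\mathrm{O}_n\lra{m}$, so the induced map is bijective once $n+k\leq n+m-2$. Your extra check that the geometric forgetful map corresponds to composition with the bottom-cell inclusion is a point the paper takes for granted, and routing it through the framed Pontryagin--Thom isomorphism is a sensible way to settle it.
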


\begin{proof}
 Recall that there holds the Thom isomorphisms for any $k\geq 0$:
\[H_{n+k}(\mathrm{\mathrm{MO}}_n\lra{m};\Z)\cong H_{k}(B\mathrm{O}_n\lra{m};\Z) .\]
Let $i_{n,m}\colon S^n\to \mathrm{\mathrm{MO}}_n\lra{m}$ be the inclusion map of the bottom cell. Since $B\mathrm{O}_n\langle m\rangle$ is $(m-1)$-connected, the inclusion map $i_{n,m}$ is $(n+m-1)$-connected. It follows that the induced homomorphism
\[(i_{n,m})_\ast\colon [M,S^n]\to [M,\mathrm{\mathrm{MO}}_n\lra{m}]\]
is an isomorphism for $n+k\leq n+m-2$ or $k\leq m-2$. The Lemma then follows by the Pontryagin-Thom Theorem \ref{thm:Pontryagin-Thom}.

The ``particular'' group isomorphisms follow immediately.
\end{proof}

Note that the inclusion map $i_{n,m}$ in the proof of Lemma \ref{lem:fr-G} is $(n+m-1)$-connected implying that the unit map $\mathbf{S}\to\varinjlim_m\mathbf{MO}\lra{m}$ of spectra is a homotopy equivalence, see also \cite[Proposition 2.1.1]{Hovey97}. 

%Consider the fibration $BG\xra{\hbar}BH\xra{\lambda_G}K(A_G,d_G)$. Let $E\to N$ be a vector bundle with an $H$-structure, classified by a map $c_H\colon N\to BH$. A $G$-structure on $E$ is a lift $c_G\colon N\to BG$ of $c_H$ such that $\hbar\circ c_G=c_H$. If one such lift exists, the set of $G$-structures on $E$ is a torsor for $H^{d_G-1}(N;A_G)$: For any two $G$-structures $\sigma$ and $\sigma'$ on $E$, there exists a unique element $\varepsilon\in H^{d_G-1}(N;A_G)$ such that \[\sigma'=\sigma+\varepsilon.\]

The Whitehead tower in Figure \ref{fig:Whitehead-tower-BO} constitutes homotopy fibrations of the form 
\begin{equation}\label{fib:h_k}
	BG_n\xra{\hbar}BH_n\xra{\lambda_G}K(A_G,d_G),
\end{equation}
where $\lambda_G$ is the universal characteristic class to lifting an $H$-structure to a $G$-structure, and $A_G=\z{}$ for $G=\SO,\Spin$ and $A_G=\Z$ for $G=\String, \Fivebrane$. 
Let $M$ be a connected closed $G$-manifold of dimension $n+k$ with $n\geq k+2$. Consider the forgetful homomorphism 
\begin{equation}\label{eq:h_k}
	h_k \colon \Omega_k^G(M) \to \Omega_k^H(M),\quad [N,f,\sigma]\mapsto [N,f,\sigma_H],
\end{equation}
where $\sigma$ is a $G$-structure on the normal bundle $\nu_f$ and $\sigma_H$ is the induced $H$-structure on $\nu_f$.
%First, we have the following observation. 

\begin{lemma}\label{lem:surj-h_k}
The forgetful homomorphism $h_k$ in (\ref{eq:h_k}) is surjective in the following cases: 
\begin{enumerate}
	\item $k\leq d_{G}-1$;
	\item $G=\Spin, H=\SO$ and $k\leq 3$. 
\end{enumerate}
\begin{proof}
Let $[N,f,\sigma_H]\in \Omega_k^H(M)$, where $f\colon N\to M$ is (homotopic to) an embedding.
The obstruction to lifting the normal $H$-structure $\sigma$ to a $G$ structure is given by the characteristic class $\lambda_G(\nu_f)\in H^{d_G}(N;A_G)$.   If $k\leq d_G-1$, then $H^{d_G}(N;A_G)=0$, and hence such a lift of $\sigma$ exists. If $G=\Spin$ and $H=\SO$, then $M$ is spin and every oriented manifold $N$ of dimension $k\leq 3$ is spin implying that 
\[w_2(\nu_f)=w_2(N)=0\] by the Whitney sum $f^\ast(TM)\cong TN\oplus \nu_f.$
%:
%\begin{align*}
%	w_2(\nu_f)=w_2(N)=\Sq^1(v_1(N))=\Sq^1(w_1(N))=w_1(N)^2=0,
%\end{align*}
%where $v_1(N)$ is the first Wu class and the formula $w_2(N)=\Sq^1(v_1(N))$ refers to  \cite[Page 132]{MSbook}; see also  \cite[Page 88]{LM89}. 
Thus, in both cases, there exists a normal $G$-structure $\sigma$ on $\nu_f$ lifting the normal $H$-structure $\sigma_H$.
\end{proof}

\end{lemma}

%Next, we study the kernel of the forgetful homomorphism $h_k$ in (\ref{eq:h_k}).
The canonical inclusion $e\colon \{x_0\}\to M$ of the base-point $x_0$ of $M$ induces the natural homomorphism 
\[e_\ast\colon \Omega_k^G\to \Omega_k^G(M)\]
with a retraction $c_\ast\colon \Omega_k^G(M)\to \Omega_k^G$ induced by the constant map $c\colon M\to \{x_0\}$. 
When $\Omega_k^H=0$, we observe that $e_\ast$ takes values in $\ker(h_k)$. %The following lemma shows that the monomorphism $e_\ast$ completely describes the kernel $\ker(h_k)$ if additionally $k=d_G-1$.

\begin{lemma}\label{lem:kernel-h_k}
If $M$ is connected, $\Omega_k^H=0$ and $d_G=k+1$, then the homomorphism 
\[e_\ast\colon \Omega_k^G\to \ker(h_k)\]
is an isomorphism.

\end{lemma}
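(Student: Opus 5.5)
Since $c_\ast\circ e_\ast=\id$, the map $e_\ast$ is injective. Because $\Omega_k^H=0$, we also have $h_k\circ e_\ast=e_\ast\circ h_k^{\pt}=0$, so $e_\ast$ lands in $\ker(h_k)$. The whole proof therefore reduces to showing that every element of $\ker(h_k)$ lies in the image of $e_\ast$. Equivalently, we must show that any class $[N,f,\sigma]\in\ker(h_k)$ differs from $e_\ast c_\ast[N,f,\sigma]$ by zero.

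We work geometrically. Take $[N,f,\sigma]$ with $h_k[N,f,\sigma]=0$, so there is an $H$-bordism $(W^{k+1},F,\tau_H)$ in $M\times I$ from $(N,f,\sigma_H)$ to the empty manifold. The first step is to try to lift $\tau_H$ to a $G$-structure $\tau$ on $W$ extending $\sigma$ on $N=\partial W$. The relative obstruction lies in
\[H^{d_G}(W,N;A_G)=H^{k+1}(W,\partial W;A_G)\cong H_0(W;A_G),\]
by Lefschetz duality (with twisted coefficients if needed; the relevant orientation comes from the $H$-structure). This group is a direct sum of copies of $A_G$, one for each component of $W$. After modifying $W$ within each component by a connected sum, we may assume $W$ is connected. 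The obstruction is then a single element $o\in A_G$ sitting in the top cell of $(W,\partial W)$. We remove a small $(k+1)$-disc $D$ from the interior of $W$. On $W\setminus D$ the obstruction vanishes, since $H^{k+1}(W\setminus D,N\sqcup S^k)$ no longer carries it and the class restricts to zero. The lift $\tau$ therefore exists on $W'=W\setminus \mathring D$, which is a $G$-bordism from $(N,f,\sigma)$ to $(S^k,\text{const},\sigma')$, where $\sigma'$ is the induced $G$-structure on the small sphere $\partial D$ mapped near a point.

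Because $M$ is connected, we can move the image of $\partial D$ to the base point $x_0$ through a path, and $F|_{S^k}$ is nullhomotopic within a disc. Hence $[N,f,\sigma]=e_\ast[S^k,\sigma']$ in $\Omega_k^G(M)$, which proves surjectivity onto $\ker(h_k)$. In the embedded (framed) description we may use Remark~\ref{rmk:bdsm-frbdsm}, since $n\ge k+2$ allows all of these maps and bordisms to be taken as embeddings. Here $[S^k,\sigma']\in\Omega_k^G$ is exactly the "$G$-divisor"-type class measured by $o$.

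The main obstacle will be making the obstruction-theoretic step rigorous. We must check that the obstruction to extending the $G$-structure rel boundary is precisely the primary obstruction in $H^{k+1}(W,\partial W;A_G)$. For this we use the fibration~\eqref{fib:h_k}: $BG\to BH$ has fibre $K(A_G,d_G-1)$ with $d_G-1=k$, so the lifting problem over a $(k+1)$-dimensional relative CW pair has a single obstruction in degree $k+1$. We must also check that puncturing kills it. If the local-coefficient or orientation issue for $A_G=\Z$ causes trouble, we will instead argue by pushing the whole obstruction cocycle into one $(k+1)$-cell via cellular chain homotopy and excising that cell.
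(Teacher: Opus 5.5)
Your proposal is correct and follows the paper's proof: the obstruction in $H^{k+1}(W,\partial W;A_G)\cong H_0(W;A_G)$ is represented by interior points, you excise small discs around them, and you use connectedness of $M$ to write the boundary spheres as classes in $e_\ast(\Omega_k^G)$. The only differences are that the paper removes one disc per signed point instead of first connect-summing $W$, and that your puncturing step names the wrong group: the group that vanishes is $H^{k+1}(W\setminus\mathring D,N;A_G)\cong H_0(W\setminus\mathring D,\partial D;A_G)=0$, with the structure on $\partial D$ left free, whereas the group relative to $N\sqcup S^k$ is again $A_G$.
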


\begin{proof}
It suffices to show that $e_\ast$ is surjective. 
Given $[N,f,\sigma]\in \ker(h_k)$, let $(W,F,\tau)$ be an $H$ null-bordism of $(N,f,\sigma_H)$ in $M\times I$. The obstruction class $\lambda_G(\nu_F,\sigma)$ to lifting the normal $H$-structure $\tau$ to a normal $G$-structure on $\nu_F$ that extends $\sigma$ lies in the cohomology group $H^{d_G}(W,\partial W;A_G)$ by the fibration (\ref{fib:h_k}). 
Applying the Poincar\'e-Lefschetz duality yields a homology class 
	\[[P]=\mathrm{PD}(\lambda_G(\nu_F,\sigma))\in H_{k+1-d_G}(W;A_G).\]
Since $d_G=k+1$, $P$ is represented by a collection of signed points $\{p_1,\cdots,p_m\}$ in the interior of $W$. In other words, the normal $G$-structure $\sigma$ of $N$ can be extended over $W$ away from these isolated points $p_1,\cdots,p_m$. Let $\bar{\tau}$ be a normal $G$-structure of $W\setminus P$ that extends $\sigma$. For each point $p_i\in P$, choose a small $(k+1)$-disk neighborhood $D_i^{k+1}\subset \mathrm{int}(W)$ of $p_i$ such that the disks $D_i^{k+1}$ are mutually disjoint. Form the $(k+1)$-manifold
\[V=W\setminus \big(\bigsqcup_{i=1}^tD^{k+1}_i\big)\]
with $\partial V=N\sqcup\bigsqcup_{i=1}^t S^k_i$, where $S^k_i=\partial D^{k+1}_i$. Then the $G$-bordism $(V,\bar{\tau}|_V)$ yields the equality
\[[N,\sigma]=\sum_{i=1}^t[S^k_i,\bar{\tau}_i]\in \Omega_k^G,\]
where $\bar{\tau}_i=\bar{\tau}|_{S^k_i}$ is the induced $G$-structure.
Let $F_i=F|_{S^{k}_i}\colon S^k_i\to M$ be the restriction map. Then $F_i$ is null-homotopic because it extends over the disk $D^{k+1}_i$, and hence 
\[[S^k_i,F_i,\bar{\tau}_i]= [S^k_i,c_{q_i},\bar{\tau}_i]=e_\ast([S^k_i,\bar{\tau}_i]),\]
where $c_{q_i}$ is the constant map to the point $q_i=F(p_i)\in M\times  I$.  
Moreover, the $G$-bordism class $[V,F|_V,\bar{\tau}_V]$ gives a $G$-bordism between $(N,f,\sigma)$ and the disjoint union $\bigsqcup_{i=1}^t (S^k_i,F_i,\bar{\tau}_i)$ in $M\times I$. Thus we have
\[[N,f,\sigma]=\sum_{i=1}^t [S^k_i,F_i,\bar{\tau}_i]=\sum_{i=1}^t e_\ast\big([S^k_i,\bar{\tau}_i]\big)=e_\ast([N,\sigma])\in e_\ast(\Omega_k^G),\]
which completes the proof that $e_\ast\colon \Omega_k^G\to \ker(h_k)$ is surjective. 
\end{proof}

Combining Lemmas \ref{lem:surj-h_k} and \ref{lem:kernel-h_k},  we get the main result of this section.

\begin{theorem}\label{thm:SES-G-H}
	Let $M$ be a closed connected smooth $G$-manifold of dimension $n+k$ with $n\geq k+2$, and suppose that $\Omega_k^H=0$ and $d_G=k+1$. Then there is a split SES of groups
\[0\to \Omega_k^G\xra{e_\ast} \Omega_k^G(M)\xra{h_k}\Omega_k^H(M)\to 0,\]
whose splitting retraction is induced by the constant map $c\colon M\to \{x_0\}$.
\end{theorem}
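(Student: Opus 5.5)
The plan is to assemble the three pieces already in place: surjectivity of $h_k$, identification of its kernel with $e_\ast(\Omega_k^G)$, and injectivity of $e_\ast$ with a retraction. Then verify that these fit together into a split short exact sequence.

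\emph{Surjectivity of $h_k$.} Under the hypothesis $d_G=k+1$ we have $k\leq d_G-1$. Case (1) of Lemma \ref{lem:surj-h_k} then applies directly. Concretely, the obstruction $\lambda_G(\nu_f)$ to lifting the normal $H$-structure lies in $H^{k+1}(N^k;A_G)=0$. So $h_k$ is surjective.

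\emph{Exactness in the middle.} Since $\Omega_k^H=0$, we have $h_k\circ e_\ast=e_\ast^H\circ h_k^{\mathrm{pt}}$, and this factors through $\Omega_k^H=0$. Hence $\im(e_\ast)\subseteq\ker(h_k)$. Lemma \ref{lem:kernel-h_k}, which uses $M$ connected, $\Omega_k^H=0$ and $d_G=k+1$, gives that $e_\ast\colon\Omega_k^G\to\ker(h_k)$ is an isomorphism. Therefore $\im(e_\ast)=\ker(h_k)$.

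\emph{Injectivity and splitting.} Let $c\colon M\to\{x_0\}$ be the constant map. Then $c\circ e=\id_{\{x_0\}}$, so functoriality of $G$-bordism gives $c_\ast\circ e_\ast=\id_{\Omega_k^G}$. This shows that $e_\ast$ is injective, which is also contained in the isomorphism statement above. It also shows that $c_\ast$ is a retraction of $e_\ast$, so the sequence splits as $\Omega_k^G(M)\cong\Omega_k^G\oplus\Omega_k^H(M)$.

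The one point needing care is that $c_\ast$ is a group homomorphism compatible with the $G$-structures. Here $c_\ast[N,f,\sigma]=[N,\sigma]$ is the class forgetting the map. In the stable setting, the normal $G$-structure on $\nu_f$ together with the $G$-structure on $M$ determines a stable normal $G$-structure on $N$ (Remark \ref{rmk:bdsm-frbdsm}). So $c_\ast$ is the map induced on the homology theory $\mathbf{MG}_\ast$ and is additive.

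The substantive input is entirely Lemma \ref{lem:kernel-h_k}. Given it, the only thing left to check is that the identification $\Omega_k^G(M)\cong\Omega_k^{\fr,G}(M)$ is compatible with $e_\ast$, $c_\ast$ and $h_k$. This compatibility follows from naturality of the Pontryagin–Thom isomorphisms in Theorems \ref{thm:Pontryagin-Thom} and \ref{thm:Pontryagin-Thom-framed}.
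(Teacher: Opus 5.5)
Your proposal is correct and follows the paper's proof: $h_k$ is surjective by Lemma \ref{lem:surj-h_k}(1), since $k\le d_G-1$; exactness in the middle comes from Lemma \ref{lem:kernel-h_k}; and the splitting comes from $c_\ast\circ e_\ast=\id$, which the paper records just before that lemma. Your extra remark on compatibility of $e_\ast$, $c_\ast$, $h_k$ with $\Omega_k^G(M)\cong\Omega_k^{\fr,G}(M)$ covers a point the paper leaves implicit. It is better justified by the geometric identification in Remark \ref{rmk:bdsm-frbdsm} than by naturality of the Pontryagin--Thom isomorphisms, which are contravariant and so do not directly see push-forwards along $e$ and $c$; this does not affect correctness.
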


\begin{example}\label{ex:spin-1-bdsm}
	When $k=1$,  Theorem \ref{thm:SES-G-H} states that for a closed spin $(n+1)$-manifold $M$ with $n\geq 3$, there is a split SES of groups
	\[0\to \Omega_1^{\Spin}\xra{e_\ast}\Omega_1^{\Spin}(M)\xra{h_1}\Omega_1^{\SO}(M)\to 0.\]
	This recovers Konstantis's geometric interpretation of $\pi^n(M)$ for spin $(n+1)$-manifolds in \cite{Kons2020} by the group isomorphisms 
	\[\Omega_1^{\fr}(M)\cong \Omega_1^{\Spin}(M), \quad \Omega_1^{\fr}\cong\Omega_1^{\Spin},\quad \Omega_1^{\SO}(M)\cong H_1(M;\Z).\]
\end{example}

\begin{example}\label{ex:string-bdsm}
	When $k=3$, Theorem \ref{thm:SES-G-H} states that for a closed string $(n+3)$-manifold $M$ with $n\geq 5$, there is a split SES of groups
	\[0\to \Omega_3^{\String}\xra{e_\ast}\Omega_3^{\String}(M)\xra{h_3}\Omega_3^{\Spin}(M)\to 0.\]
   This is a geometric interpretation of the codimension-$3$ cohomotopy group $\pi^n(M)$ for string manifolds in Theorem \ref{thm:cohtp=-3:string} by Remark \ref{rmk:MP2S}.   %: Since $\Omega_3^{\Spin}=0$,  the canonical inclusion map $\iota_n\colon S^n\to \MSpin_n$ of the bottom cell induces natural homomotopy equivalences \[ P_3\MSpin_n\simeq P_2\MSpin_n\simeq P_2S^n,\]   
  %and thus there are natural group isomorphisms
  % \[\Omega_3^{\Spin}(M)\cong [M,\MSpin_n]\cong [M,P_3\MSpin_n]\cong [M,P_2S^n].\]
\end{example}

%\begin{remark}
%	If the $(n+3)$-manifold $M$ is spin but not string,  Theorem \ref{thm:cohtp=-3:mfds} admits a geometric interpretation: there is a split SES of groups
%\[0\to \ker(h_3)\to \Omega_3^{\fr}(M)\xra{h_3}\Omega_3^{\Spin}(M)\to 0,\] 
%where $\ker(h_3)\cong \Z/2^{t}\oplus \Z/3^{\epsilon}$ is a quotient group of $\Omega_3^{\fr}\cong\Z/24$.  The direct summand $\Z/2^t$ is related to the actions of the secondary operation $\Psi$ and the tertiary operation $\mathbb{T}$ on $H^{n-1}(M;\z{})$. However, the geometric counterparts of $\Z/2^{t}$ in normally framed bordism remain unknown, and it would be interesting to investigate the related bordism invariants.
%\end{remark}

%Note that when $n\geq 5$, the Thom class $u\colon \mathrm{MSO}_n\to K_n(\Z)$ is $(n+4)$-connected, hence there is an isomorphism $u_\ast\colon\Omega_3^{\SO}(M;\Z)\to H_3(M;\Z)$. Then by Remark \ref{rmk:string:MP2}, there is a SES of groups 
%\begin{equation}\label{SES:3rdSpin}
%	0\to K\to \Omega_3^{\Spin}(M)\xra{h_3}H_3(M;\Z)\to 0,
%\end{equation}
%  where $K$ fits in the SES in (\ref{SES:kernel-K}). The SESs in (\ref{SES:3rdSpin}) and (\ref{SES:kernel-K}) may not split. For instance,  there hold isomorphisms for $m\geq 2$: 
%  \[\Omega_3^{\Spin}(\RP{4m+3})\cong \mathbf{ko}_3(\RP{4m+3})\cong \mathbf{ko}_3(\RP{\infty})\cong\Z/8,\] 
% where the first isomorphism follows by Lemma \ref{lem:h_kG} and the last one refers to  \cite[12.2.D]{BG10}. 

\begin{example}\label{ex:fivebrane-bdsm}
	When $k=7$, $\Omega_7^{\String}=0$ by \cite{Giambalvo71}. Then Theorem \ref{thm:SES-G-H} states that for a closed fivebrane $(n+7)$-manifold $M$ with $n\geq 9$, there is a split SES of groups 
	\[0\to \Omega_7^{\Fivebrane}\xra{e_\ast} \Omega_7^{\Fivebrane}(M)\xra{h_7}\Omega_7^{\String}(M)\to 0.\]
	This is a geometric interpretation of the stable cohomotopy groups in codimension $7$ for fivebrane manifolds.  
\end{example}

\section{Framed bordism groups in dimension two}\label{sec:cohtp-2-geom}

The main purpose of this section is to provide a geometric proof of Theorem \ref{mainthm:frbdsm-dim2}. Let $M$ be a closed smooth oriented $(n+2)$-manifold with $n\geq 4$, and consider the forgetful homomorphism
\[h_2\colon \Omega_2^{\fr}(M)\to \Omega_2^{\SO}(M), \quad [N,f,\varphi]\mapsto [N,f],\]
where $N$ has the orientation induced by the normal framing $\varphi$.

\begin{lemma}\label{lem:SO-H2}
The canonical homomorphism
\[\Omega^{\SO}_2(M)\to H_2(M;\Z),\quad [N,f]\mapsto f_\ast([N])\]
is a group isomorphism, where $[N]\in H_2(N;\Z)$ is the fundamental class.
\end{lemma}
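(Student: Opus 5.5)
The plan is to identify $\Omega^{\SO}_2(M)$ with $\MSO_2(M)$, the oriented bordism homology theory evaluated on $M$, and compare it with ordinary homology via the Thom class. The map $[N,f]\mapsto f_\ast([N])$ is induced by the Thom orientation $\mathbf{MSO}\to H\Z$. Equivalently, it is the edge homomorphism of the Atiyah--Hirzebruch spectral sequence
\[E^2_{p,q}=H_p(M;\Omega_q^{\SO})\Rightarrow \Omega_{p+q}^{\SO}(M).\]
So I will use this spectral sequence and check that it degenerates in total degree $2$.

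\textbf{Step 1: the coefficient groups.} Recall that $\Omega_0^{\SO}\cong\Z$ and $\Omega_1^{\SO}=\Omega_2^{\SO}=\Omega_3^{\SO}=0$. Hence on the line $p+q=2$ the only nonzero term is $E^2_{2,0}=H_2(M;\Z)$. On the adjacent lines, the groups $E^2_{p,q}$ with $q\in\{1,2,3\}$ vanish.

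\textbf{Step 2: degeneration.} Consider the differentials leaving $E^r_{2,0}$. They are $d^r\colon E^r_{2,0}\to E^r_{2-r,r-1}$ for $r\ge2$. Their targets $H_0(M;\Omega_1^{\SO})$ (for $r=2$) and $H_{-1}$ (for $r\geq 3$) are all zero. So $E^\infty_{2,0}=E^2_{2,0}$, and since it is the only term in total degree $2$, the edge map $\Omega_2^{\SO}(M)\to H_2(M;\Z)$ is an isomorphism.

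\textbf{Step 3: identify the edge map.} It remains to check that this edge map is $[N,f]\mapsto f_\ast[N]$. This is standard: the edge map is induced by the Thom class $\mathbf{MSO}\to H\Z$, and under the Pontryagin--Thom isomorphism this sends the class of $f\colon N\to M$ to $f_\ast$ of the fundamental class of $N$.

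\textbf{Alternative route.} Step 3 is the only point needing care, so as a backup I would give a homotopy-theoretic version in the style of Lemma~\ref{lem:fr-G}. Since $M$ is oriented, Theorem~\ref{thm:Pontryagin-Thom} gives
\[\Omega_2^{\SO}(M)\cong[M,\mathrm{MSO}_n].\]
The Thom class $\mathrm{MSO}_n\to K_n(\Z)$ is $(n+4)$-connected, because $\tilde H_{n+i}(\mathrm{MSO}_n)\cong H_i(B\SO_n)$ and $H_1(B\SO)=0$, $H_2(B\SO;\Z)=0$, $H_3(B\SO;\Z)\cong\Z/2$, together with $\pi_{n+i}(\mathrm{MSO}_n)\cong\Omega_i^{\SO}=0$ for $i=1,2,3$. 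Since $\dim M=n+2$, it follows that
\[[M,\mathrm{MSO}_n]\cong H^n(M;\Z)\cong H_2(M;\Z)\]
by Poincaré duality. The composite is exactly the Poincaré dual of the pullback of the Thom class, which is $f_\ast[N]$.
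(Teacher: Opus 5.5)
Your main argument is correct, but it takes a different route from the paper. The paper's proof is essentially your ``alternative route''. It uses Theorem~\ref{thm:Pontryagin-Thom} to write $\Omega_2^{\SO}(M)\cong[M,\mathrm{MSO}_n]$, and notes that $\Omega_0^{\SO}\cong\Z$ and $\Omega_i^{\SO}=0$ for $i=1,2,3$. Hence the Thom class $u\colon \mathrm{MSO}_n\to K_n(\Z)$ is $(n+3)$-connected, so $u_\ast\colon[M,\mathrm{MSO}_n]\to H^n(M;\Z)$ is bijective for the $(n+2)$-manifold $M$, and the proof finishes with Poincar\'e duality.

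Your Atiyah--Hirzebruch argument stays on the homology side, and it buys three things:
\begin{enumerate}
\item The collapse of the total-degree-$2$ line uses only $\Omega_1^{\SO}=\Omega_2^{\SO}=0$.
\item It holds for an arbitrary CW complex $M$, with no closed-manifold, orientation or codimension hypothesis.
\item It identifies the isomorphism directly as the edge homomorphism induced by $\mathbf{MSO}\to\mathbf{H}\Z$, i.e.\ $[N,f]\mapsto f_\ast[N]$.
\end{enumerate}
In the paper's route you must instead know that Pontryagin--Thom, followed by $u_\ast$ and Poincar\'e duality, recovers $f_\ast[N]$; the paper leaves that identification implicit. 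In exchange, the paper's argument uses no spectral sequence. It follows the same connectivity template as Lemma~\ref{lem:fr-G} and the Postnikov-tower computations elsewhere in the paper. In your Step~2 it is worth adding that nothing can hit $E^r_{2,0}$, since incoming differentials would come from $q<0$.

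One correction to your backup route: the homology of $B\SO$ is misquoted. By Hurewicz, $H_2(B\SO;\Z)\cong\pi_1(\SO)\cong\Z/2$ and $H_3(B\SO;\Z)=0$. The values you wrote are the integral \emph{cohomology} groups $H^2(B\SO;\Z)=0$ and $H^3(B\SO;\Z)\cong\Z/2$. Taken literally, your values would make $u$ fail to be a homology isomorphism in degrees $n+2$ and $n+3$, since $H_{n+2}(K_n(\Z);\Z)\cong\Z/2$ and $H_{n+3}(K_n(\Z);\Z)=0$. That would contradict the connectivity you claim. The connectivity claim itself is fine and follows from the homotopy groups alone. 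Moreover, for $\dim M=n+2$ you only need $(n+3)$-connectivity, i.e.\ $\Omega_1^{\SO}=\Omega_2^{\SO}=0$. So you should simply drop the homology remark.
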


\begin{proof}
%By Theorem \ref{thm:Pontryagin-Thom}, there hold group isomorphisms 
%\[\pi_{n+k}(\mathrm{MSO}_n)\cong \Omega_k^{\SO}\text{ for }n\geq k+2.\]   
Since $\Omega_0^{\SO}\cong\Z$ and $\Omega_{i}^{\SO}=0$ for $i=1,2,3$ \cite{Rohlin51},  the Thom class map $u\colon \mathrm{MSO}_n\to K_n(\Z)$ is $(n+3)$-connected for $n\geq 4$, and hence induces a group isomorphism \[u_\ast\colon [M,\MSO_n]\to H^n(M;\Z)\]  
for the $(n+2)$-manifold $M$.
The lemma then follows by the Pontryagin-Thom Theorem \ref{thm:Pontryagin-Thom} and the Poincar\'e duality. 
\end{proof}

Recall that a framing of an oriented vector bundle $E\to X$ induces a canonical spin structure on $E$ that is compatible with the orientation. 
Using the fact that $B\Spin_n$ is $3$-connected, we apply similar arguments to the proof of \cite[Lemma 3.1]{Kons2020} to prove the following lemma. 
\begin{lemma}\label{lem:spinnable-trivial}
Let $X$ be a CW complex of dimension $\leq 3$ and let $E\to X$ be an oriented $n$-plane bundle such that $w_2(E)=0$ and $n\geq 3$. Then $E$ is isomorphic to the trivial bundle. Moreover, if $X$ has dimension $\leq 2$, then a choice of a spin structure on $E$ determines a framing on $E$ compatible with the orientation.
\end{lemma}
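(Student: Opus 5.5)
Since $E$ is oriented with $w_2(E)=0$, I will lift its classifying map $c\colon X\to B\SO_n$ to $\tilde c\colon X\to B\Spin_n$ along the fibration $B\Spin_n\to B\SO_n\xra{w_2}K(\z{},2)$ of Figure~\ref{fig:Whitehead-tower-BO}. A choice of spin structure on $E$ is exactly such a lift up to homotopy over $B\SO_n$. For $n\geq 3$, $\Spin_n$ is connected and simply connected and $\pi_2$ of any Lie group vanishes, so $B\Spin_n$ is $3$-connected. Since $\dim X\leq 3$, cellular obstruction theory (or the Whitehead theorem applied to maps into a $3$-connected space) shows that $\tilde c$ is null-homotopic. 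Composing with $B\Spin_n\to B\SO_n$ then gives that $c$ is null-homotopic, so $E$ is isomorphic to the trivial oriented bundle. This proves the first assertion.

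For the second assertion, I will set up the comparison of structures as in \cite[Lemma 3.1]{Kons2020}. A framing of $E$ compatible with the orientation is a null-homotopy of $c$ in $B\SO_n$, that is, a lift of $c$ to the homotopy fibre $E\SO_n\simeq\ast$ of the path fibration. Homotopy classes of such framings form a torsor over $[X,\SO_n]$. Spin structures inducing the given orientation form a torsor over $H^1(X;\z{})=[X,K(\z{},1)]$. The map sending a framing to the spin structure it induces is equivariant with respect to
\[
[X,\SO_n]\to [X,B\Z/2]=H^1(X;\z{}),
\]
which is induced by the connecting map $\SO_n\to B(\z{})$ of the fibration $B\z{}\to B\Spin_n\to B\SO_n$, i.e.\ by looping $w_2$. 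Its kernel is the image of $[X,\Spin_n]$.

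Now assume $\dim X\leq 2$. Because $\Spin_n$ is $2$-connected, $[X,\Spin_n]=0$. Moreover $\SO_n\to K(\z{},1)$ is $2$-connected, since it is an isomorphism on $\pi_1$ and $\pi_2(\SO_n)=0$. Hence $[X,\SO_n]\to H^1(X;\z{})$ is a bijection for $\dim X\leq 2$. By the first part, framings exist. So the equivariant map from framings to spin structures is a bijection, and its inverse assigns to each spin structure a well-defined framing compatible with the orientation.

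The main point requiring care is this equivariance/torsor comparison: I need to check that changing a framing by $g\colon X\to\SO_n$ changes the induced spin structure by $g^\ast$ of the generator of $H^1(\SO_n;\z{})$. I would verify this by the naturality of the fibration sequence
\[
\Spin_n\to\SO_n\to B\z{}\to B\Spin_n\to B\SO_n,
\]
applied to the difference of two lifts. For $\dim X=3$ the bijection can fail, because $[X,\SO_n]\to H^1$ may have a kernel coming from $\pi_3$. This is why the second assertion is restricted to $\dim X\leq 2$.
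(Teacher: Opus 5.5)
Your proof is correct and follows the route the paper intends: the $3$-connectivity of $B\Spin_n$ gives triviality, and then, as in \cite[Lemma 3.1]{Kons2020}, you compare the $[X,\SO_n]$-torsor of framings with the $H^1(X;\Z/2)$-torsor of spin structures; the paper itself gives no more than a citation to that lemma. One small slip: the map $\SO_n\to K(\Z/2,1)$ is in fact $3$-connected (its homotopy fibre $\Spin_n$ is $2$-connected), and $3$-connectivity is what gives bijectivity of $[X,\SO_n]\to H^1(X;\Z/2)$ for $\dim X\leq 2$, although your kernel argument via $[X,\Spin_n]=0$ already supplies the injectivity.
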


\subsection{The spin case}\label{sec:spin-bdsm}
When $M$ is spin, by Lemma \ref{lem:fr-G} we identify the framed bordism group $\Omega_2^{\fr}(M)$ with the spin bordism group $\Omega_2^{\Spin}(M)$.  Recall that a normal spin structure $\sigma$ on $\nu_f$ is a principal $\Spin_n$-bundle $P_{\sigma}\to N$ together with a $2$-fold covering $P_{\sigma}\to \mathrm{Fr}(\nu_f)$ over the orthonormal frame bundle, and the set of a spin vector bundle $E\to N$ is a $H^1(N;\z{})$-torsor, which means that for any two spin structures $\sigma,\sigma_1$ on $E$, there exists a unique $\alpha\in H^1(N;\z{})$ such that 
\[\sigma_1=\sigma+\alpha.\]  
Note that $\alpha\in H^1(N;\z{})$ corresponds to a principal $\z{}$-bundle $Q_\alpha\to N$, and that $\sigma+\alpha$ corresponds to the spin bundle $P_\sigma\times_{\z{}}Q_{\alpha}$ with $\z{}$ acts diagonally.

We shall study the forgetful homomorphism 
\begin{equation}\label{eq:h_2:spin}
	h_2\colon \Omega_2^{\Spin}(M)\to \Omega_2^{\SO}(M),\quad [N,f,\sigma]\mapsto [N,f],
\end{equation}
where $\sigma$ is a spin structure on the normal bundle $\nu_f$ of $N$. 

\begin{lemma}\label{lem:h2-splitting:spin}
	The homomorphism $h_2$ in (\ref{eq:h_2:spin}) is surjective, and admits a canonical splitting section $s_0\colon \Omega_2^{\SO}(M)\to \Omega_2^{\Spin}(M)$ sending $[N,f]$ to $[N,f,\sigma_0]$, where $\sigma_0$ is the canonical normal spin structure on $N$ that is compactible with the orientation.
\end{lemma}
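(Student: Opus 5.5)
The proof will show two things: that $s_0$ is a well-defined homomorphism, and that $h_2\circ s_0=\id$. Surjectivity of $h_2$ then follows at once, and it also follows independently from Lemma \ref{lem:surj-h_k}(2) with $k=2\leq 3$.

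\emph{Step 1: defining $\sigma_0$.} Let $(N,f)$ represent a class in $\Omega_2^{\SO}(M)$. By Remark \ref{rmk:bdsm-frbdsm} we may take $f$ to be an embedding. The normal bundle $\nu_f$ is then oriented: its orientation is induced from those of $N$ and $M$. Since $M$ is spin and every oriented surface is spin, the Whitney sum $f^\ast TM\cong TN\oplus\nu_f$ gives $w_2(\nu_f)=0$. Now $\dim N=2$ and $\operatorname{rank}\nu_f=n\geq 4$, so Lemma \ref{lem:spinnable-trivial} shows that $\nu_f$ is trivial. We fix $\sigma_0$ to be the spin structure induced by a framing of $\nu_f$ compatible with the orientation. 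The spin structure is unaffected by this choice. Two oriented framings of a rank-$n$ bundle over a surface differ by a map $N\to\SO_n$. On the $1$-skeleton the resulting change of spin structure is measured by the class in $H^1(N;\z{})$ induced by $\pi_1(\SO_n)\to\z{}$. We therefore make the canonical choice as follows: the spin structure on $\nu_f$ is the one that, combined with the given spin structure on $f^\ast TM$, induces on $TN$ the spin structure coming from its orientation as a stably framed-by-bounding surface. The cleanest way to say this is to pick, on each component of $N$, the spin structure of Arf invariant zero together with the canonical correspondence between normal and tangential structures. I will state the choice explicitly through the torsor description: $\sigma_0$ is the unique normal spin structure whose induced tangential spin structure on $N$ bounds. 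This makes $s_0$ land in $\ker$ of the map to $\Omega_2^{\Spin}$ at the level of $N$.

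\emph{Step 2: well-definedness.} Let $(W,F)$ be an oriented bordism in $M\times I$ between $(N_0,f_0)$ and $(N_1,f_1)$, again with $F$ an embedding. We must extend $\sigma_0|_{\partial W}$ to a spin structure on $\nu_F$. The obstruction to extending a spin structure is a relative class in $H^2(W,\partial W;\z{})\cong H_1(W;\z{})$. Arguing as in Lemma \ref{lem:kernel-h_k}, we can modify $W$ by surgery on interior circles, which is possible since $W$ is a $3$-manifold in a high-dimensional ambient space. This kills the relevant $H_1$ classes and removes the obstruction. One can also argue directly: a spin $3$-manifold bordism always exists between surfaces equipped with the bounding spin structure, because $\Omega_2^{\Spin}\to\Omega_2^{\SO}=0$ has Arf invariant as its only obstruction, and the choice in Step 1 makes the Arf invariant vanish. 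Additivity under disjoint union is clear, so $s_0$ is a homomorphism. Finally, $h_2\circ s_0=\id$ because $h_2$ forgets $\sigma_0$.

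The main obstacle is the canonicity in Step 1 and its compatibility with bordism in Step 2. A naive framing-induced spin structure depends on choices, and the discrepancy lies exactly in $H^1(N;\z{})$, which detects Arf invariants in $\Omega_2^{\Spin}\cong\z{}$. I would resolve this by normalizing every component to Arf invariant zero. Then I would check, using the torsor action of $H^1(W;\z{})$, that the relative obstruction on $W$ vanishes after interior surgeries.
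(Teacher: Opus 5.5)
There is a genuine gap, at the point you yourself flagged as the main obstacle, and normalizing the Arf invariant does not repair it.

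First, your $\sigma_0$ is not determined. Spin structures on $\nu_f$ (equivalently, via the spin structure of $M$, on $TN$) form an $H^1(N;\z{})$-torsor. A component of genus $g$ carries $2^{g-1}(2^g+1)$ structures of Arf invariant zero, so ``the unique normal spin structure whose tangential structure bounds'' exists only for spheres.

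Second, no choice among these structures gives a well-defined $s_0$. The Arf invariant controls only the $\Omega_2^{\Spin}$-summand of $\ker(h_2)$, not the $H_1(M;\z{})$-summand. For a counterexample, take an embedded circle $\gamma$ with $0\neq a=[\gamma]\in H_1(M;\z{})$ (for instance $M=S^1\times S^{n+1}$), and set $f=\gamma\circ\mathrm{pr}_1\colon T^2\to M$. Then $(T^2,f)$ bounds $(S^1\times D^2,\gamma\circ\mathrm{pr}_1)$, so $[T^2,f]=0$ in $\Omega_2^{\SO}(M)$. Write a spin structure on $T^2$ as $(\epsilon_\ell,\epsilon_m)\in(\z{})^2$, recording whether it is non-bounding on the longitude $S^1\times\{\ast\}$ and on the meridian $\{\ast\}\times S^1$. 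The meridian-disk computation in the surjectivity part of Proposition \ref{prop:Phi-spin-isom} gives
\[
\Phi_0\big([T^2,f,(\epsilon_\ell,\epsilon_m)]\big)=(\epsilon_\ell\epsilon_m,\ \epsilon_m a)\in \Omega_2^{\Spin}\oplus H_1(M;\z{}),
\]
and $\Phi_0$ is well defined by Lemma \ref{lem:Phi-welldef}. Both $(0,0)$ and $(0,1)$ have Arf invariant zero. The first extends over $S^1\times D^2$ and so gives $0$. The second has nonzero invariant $(0,a)$, even though it sits over the same null-bordant $(T^2,f)$.

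Your Step 2 fails for the same reason. Surgering an interior circle $L\subset W$ keeps you over $M\times I$ only if $F_\ast[L]=0$ in $H_1(M;\z{})$ (compare Lemma \ref{lem:spin-filling}). But $F_\ast[L]$ is independent of the chosen bordism $W$, so it cannot be surgered away. Your appeal to $\Omega_2^{\Spin}\to\Omega_2^{\SO}=0$ concerns bordism over a point only, and so misses this summand.

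Following the paper will not close the gap either. Its proof asserts that trivializations of $\nu_f$ correspond to $[N,B\Spin_n]=\ast$. That set classifies spin bundles up to isomorphism, not spin structures on the fixed bundle $\nu_f$; the latter form an $H^1(N;\z{})$-torsor, as the paper itself states at the start of Subsection \ref{sec:spin-bdsm}.

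In fact no rule that determines the induced tangential spin structure from the oriented surface $N$ alone, naturally under orientation-preserving diffeomorphisms, can work. For $N=T^2$ such a rule must be $\mathrm{Diff}^+(T^2)$-invariant. This group permutes the three even spin structures transitively and fixes only $(1,1)$, so the rule must pick the Lie structure. Then already $s_0([T^2,\mathrm{const}])=e_\ast[T^2,\sigma_{\mathrm{Lie}}]\neq 0$, although $[T^2,\mathrm{const}]=0$.

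What survives is the surjectivity of $h_2$, via Lemma \ref{lem:surj-h_k}(2) as both you and the paper use. A splitting also exists, for example from Theorem \ref{thm:cohtp=-2:mfds} together with Lemma \ref{lem:fr-G}: its criterion holds for spin $M$ because $\Sq^2=0$ on $H^{n-1}(M;\z{})$ by Lemma \ref{lem:mfd:Sq2}. Any such section, however, involves choices depending on $f$, not a canonical $\sigma_0$.
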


\begin{proof}
   The surjectivity of $h_2$ refers to Lemma \ref{lem:surj-h_k} (2) with $k=2$. For the splitting section $s_0$, observe that given a class $[N,f]\in \Omega_2^{\SO}(M)$, the Whitney sum decomposition 
   $f^\ast(TM)\cong TN\oplus \nu_f$
   implies that the normal bundle $\nu_f$ is spinnable, and hence trivial by Lemma \ref{lem:spinnable-trivial}. The set of trivializations of $\nu_f$ is bijective to the homotopy set $[N,B\Spin_n]$, which is a singleton because $B\Spin_n$ is $3$-connected and $N$ has dimension $2$. Hence there is a unique trivialization of $\nu_f$, which determines a canonical (or unique) spin structure $\sigma_0$ on $\nu_f$ that is compatible with the orientation. Thus the assignment $s_0\colon \Omega_2^{\SO}(M)\to \Omega_2^{\Spin}(M)$ sending $[N,f]$ to $[N,f,\sigma_0]$ is well-defined. It is clear that $s_0$ is a homomorphism and a splitting section of $h_2$.
\end{proof}

Next, we study the kernel $\ker(h_2)$ in (\ref{eq:h_2:spin}). Let $[N,f,\sigma]\in \ker(h_2)$ be represented by a closed smooth spin $2$-manifold $(N,f,\sigma)$ in $M$. There exists an oriented bordism $(W,F)$ in $M\times I$ such that $\partial W=N$, $F|_{\partial W}=(f,0)$ and the orientation of $W$ restricted to the orientation on $N$ induced by the normal spin structure $\sigma$. The obstruction to extending $\sigma$ over $W$ is given by the relative Stiefel-Whitney class $w_2(\nu_F,\sigma)\in H^2(W,N;\z{})$. 
Applying the Poincar\'e-Lefschetz duality, we get the dual homology class
\[[L]=\mathrm{PD}(w_2(\nu_F,\sigma))=w_2(\nu_F,\sigma)\smallfrown [W,N]\in H_1(W;\z{}).\]  
Identifying $H_1(M;\z{})$ with $H_1(M\times I;\z{})$, we define
	\begin{equation}\label{eq:Phi0-spin}
		\Phi_0\colon \ker(h_2)\to \Omega_2^{\Spin}\!\oplus\! H_1(M;\z{}), ~~ [N,f,\sigma]\mapsto \big([N,\sigma], F_\ast([L])\big).
	\end{equation}
The map $\Phi_0$ is well-defined by Lemma \ref{lem:Phi-welldef} below and is clearly a homomorphism because addition of bordism classes is given by disjoint union of bordism classes.

\begin{lemma}\label{lem:Phi-welldef}
	The map $\Phi_0$ in (\ref{eq:Phi0-spin}) is a well-defined homomorphism.
\end{lemma}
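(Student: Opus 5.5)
The plan is to prove that $\Phi_0$ does not depend on the choices made. A representative $(N,f,\sigma)$ of a class in $\ker(h_2)$ involves two choices: the representative itself, and the oriented null-bordism $(W,F)$ in $M\times I$. The first component $[N,\sigma]\in\Omega_2^{\Spin}$ is simply $c_\ast[N,f,\sigma]$, where $c_\ast$ is the retraction induced by the constant map. It is therefore well-defined and additive, so all the work concerns the second component $F_\ast([L])\in H_1(M;\z{})$.

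\emph{Independence of the null-bordism.} Let $(W,F)$ and $(W',F')$ be two oriented null-bordisms of $(N,f)$. Glue them along $N$ to get a closed oriented $3$-manifold $Z=W\cup_N(-W')$ with a map $G\colon Z\to M\times I$. Since $\sigma$ is fixed on the common boundary $N$, the relative classes glue, and $\PD(w_2(\nu_F,\sigma))$ and $\PD(w_2(\nu_{F'},\sigma))$ combine to $\PD(w_2(\nu_G))\in H_1(Z;\z{})$. Therefore
\[F_\ast[L]-F'_\ast[L']=G_\ast\big(w_2(\nu_G)\smallfrown[Z]\big).\]
From $G^\ast T(M\times I)\cong TZ\oplus\nu_G$ and $w_2(M)=0$ (we are in the spin case), together with $w_1=w_2=0$ for $TZ$ on an orientable $3$-manifold, we get $w_2(\nu_G)=0$. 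The difference therefore vanishes.

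\emph{Independence of the representative.} Suppose $(N_0,f_0,\sigma_0)$ and $(N_1,f_1,\sigma_1)$ are spin-bordant through $(V,H,\tau)$ in $M\times I$. Given a null-bordism $W_1$ of $N_1$, the union $W_0=V\cup W_1$ is a null-bordism of $N_0$. On $V$ the spin structure $\tau$ extends $\sigma_0$, so the relative class on $W_0$ is supported in $W_1$, and it equals the relative class of $(W_1,\sigma_1)$. Hence $[L_0]$ and $[L_1]$ have the same image in $H_1(M;\z{})$. Combined with the previous step, this gives well-definedness.

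\emph{Additivity.} A null-bordism of $N\sqcup N'$ can be taken to be $W\sqcup W'$, and the relative obstruction classes and their duals add. So $\Phi_0$ is a homomorphism.

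The main obstacle is the gluing step. We must check that the relative $w_2$ classes, taken with respect to the fixed spin structure $\sigma$ on the boundary, really glue to the absolute $w_2$ of the closed manifold. The natural approach is to view $w_2(\nu_F,\sigma)$ as the primary obstruction to extending the section of the associated $B\Spin$-fibration, and to note that obstruction cochains from the two sides add across the common boundary where the section is already given.
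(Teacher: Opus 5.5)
Your proof is correct and is essentially the paper's argument: you prove independence of the null-bordism by gluing $W\cup_N(-W')$ and using that the normal bundle of a closed oriented $3$-manifold in the spin manifold $M\times I$ has vanishing $w_2$, and you get additivity from disjoint unions. The one difference is in independence of the representative, where you concatenate $V\cup_{N_1}W_1$ into a null-bordism of $N_0$ and reduce to your first step via naturality of relative obstruction classes, whereas the paper glues the spin bordism directly to null-bordisms of both ends and reruns the $w_2$-vanishing argument on that closed $3$-manifold — the same closed manifold your reduction produces implicitly.
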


\begin{proof}
We need to show that $\Phi_0$ is independent of the choices of (1) the oriented null-bordisms $(W,F)$, and $(2)$ the representatives of the bordism class $[N,f,\sigma]\in\ker(h_2)$. 

(1) Suppose that $(W',F')$ is another oriented null-bordism of $(N,f,\sigma)$ in $M\times I$ and that the Poincar\'e dual class 
\[[L']=\mathrm{PD}(w_2(\nu_{F'},\sigma))\in H_1(W';\z{})\]    
is represented by a collection of disjoint circles embedded in the interior of $W'$. Form the closed oriented $3$-manifold
\[Y=W\cup_N (-W'),\quad \wtd{F}=F\cup F'\colon Y\to M\times I.\] 
Consider the following homomorphisms
\begin{equation}\label{eq:excision}
	\begin{aligned}
\epsilon_W&\colon H^2(W,N;\z{})\xra[\cong]{\mathrm{exc}_{W'}}H^2(Y,W';\z{})\xra{j_{W'}^\ast}H^2(Y;\z{}), \\
\epsilon_{W'}&\colon H^2(W',N;\z{})\xra[\cong]{\mathrm{exc}_{W}}H^2(Y,W;\z{})\xra{j_{W}^\ast}H^2(Y;\z{}), 
\end{aligned}
\end{equation}
where $\mathrm{exc}_{W'}$ and $\mathrm{exc}_{W}$ are the excision isomorphisms, and $j_{W}\colon Y\to (Y,W)$ and $j_{W'}\colon Y\to (Y,W')$ are the canonical inclusion maps.
Then the obstruction to extending the normal spin structure $\sigma$
over the glued normal bundle $\nu_{\widetilde F}$ is given by 
\[
w_2(\nu_{\widetilde F})=\epsilon_W(w_2(\nu_F,\sigma))+\epsilon_{W'}(w_2(\nu_{F'},\sigma)).
\]
Applying the Poincar\'e-Lefschetz duality, we have
\[
\PD_Y(w_2(\nu_{\widetilde F}))=j_\ast([L])-j'_\ast([L'])\in H_1(Y;\Z/2).
\]
Since every compact oriented $3$-manifold is spin and $M$ is spin, we have $w_2(\nu_{\wtd{F}})=0$, and hence
\[j_\ast([L])=j'_\ast([L'])\in H_1(Y;\Z/2),\]
where $j\colon W\to Y$ and $j'\colon W'\to Y$ are the canonical inclusion maps.	
Thus,
	\[F_\ast([L])=\widetilde{F}_\ast j_\ast([L])=\widetilde{F}_\ast j'_\ast([L'])=F'_\ast([L']),\]
which proves that $\Phi_0$ is independent of the choice of the oriented null-bordism $(W,F)$. 

(2) Suppose that $[N,f,\sigma]=[N',f',\sigma']\in\ker(h_2)$. 
There exists a $3$-dimensional normal spin bordism $(Z,\overline{F},\xi)$ in $M\times I$ such that
\[
\partial Z= N \sqcup (-N'),\quad 
\overline{F}|_{N\sqcup N'} = (f,0)\sqcup (f',1),\quad 
\xi|_{N\sqcup N'} = \sigma\sqcup \sigma'.
\]
In particular, via the spin bordism $(Z,\xi)$, we have 
\[[N,\sigma]=[N',\sigma'] \in\Omega_2^{\Spin}.\]
Let $(W,F)$ and $(W',F')$ be  oriented null-bordisms of $(N,f,\sigma)$ and $(N',f',\sigma')$, respectively, and let
\[[L]=\mathrm{PD}(w_2(\nu_F,\sigma)),\quad [L']=\mathrm{PD}(w_2(\nu_{F'},\sigma')).\]   
Form the closed oriented $3$-manifold 
\[X= W \cup_N Z\cup_{N'} (-W'),\quad \widehat{F}=F\cup\overline{F}\cup F'\colon X\to M\times I.\]
Since the normal spin structure $\xi$ on $\nu_{\ol{F}}$ already extends $\sigma\sqcup \sigma'$, the obstruction to extending $\sigma\sqcup \sigma'$ over the glued normal bundle $\nu_{\widehat{F}}$ is given by 
\[w_2(\nu_{\widehat{F}})=\epsilon_W(w_2(\nu_F,\sigma))+\epsilon_{W'}(w_2(\nu_{F'},\sigma')),\]
where $\epsilon_W$ and $\epsilon_{W'}$ are the homomorphisms as defined in (\ref{eq:excision}). Since $X$ and $M$ are spin, so is the normal bundle $\nu_{\widehat{F}}$. Applying the Poincar\'e-Lefschetz duality, we get 
\[0=j_\ast([L])-j'_\ast([L'])\in H_1(X;\Z/2),\]
where $j\colon W\to X$ and $j'\colon W'\to X$ are the canonical inclusion maps. Thus,
\[F_\ast([L])=\widehat F_\ast j_\ast([L])=\widehat F_\ast j'_\ast([L'])=F'_\ast([L'])\in H_1(X;\Z/2),\]
which shows that $\Phi_0$ is independent of the representatives of the bordism class $[N,f,\sigma]$. Therefore, the map $\Phi_0$ is well-defined, and the proof completes.
\end{proof}

We need the following two auxiliary lemmas.
\begin{lemma}\label{lem:circle-bundle}
Let $\Sigma$ be a compact connected surface with boundary $L$, let
$\bar g:\Sigma\to X$ be a map, and let $E\to \Sigma$ be a $2$-plane  bundle together with a boundary trivialization
\[
E|_L\cong \varepsilon^2_L=L\times \R^2.
\]
Let $\pi\colon P=S(E)\to \Sigma$ be the associated circle bundle and define $G=\bar g\circ \pi\colon P\to X$.
Then there hold
\[\partial P\cong L\times S^1,\quad G|_{\partial P}=g\circ \mathrm{pr}_1,\] 
where $\mathrm{pr}_1\colon L\times S^1\to L$ is the canonical projection, and $g=\bar g|_L$.
Moreover, there is a stable bundle isomorphism
\[TP\oplus \varepsilon^1\cong \pi^*(T\Sigma\oplus E).\]
\end{lemma}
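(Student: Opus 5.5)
The plan is to construct everything directly from the sphere bundle of $E$ and read off the three claims.

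First, the boundary. Restricting the circle bundle to $L$ gives $\partial P=\pi^{-1}(L)=S(E|_L)$. The fixed trivialization $E|_L\cong L\times\R^2$ restricts to unit vectors, so $S(E|_L)\cong L\times S^1$, and under this identification $\pi|_{\partial P}$ becomes $\mathrm{pr}_1$. Hence $G|_{\partial P}=\bar g\circ\pi|_{\partial P}=g\circ\mathrm{pr}_1$. I would also note that $P$ is a compact $3$-manifold with boundary exactly $\pi^{-1}(\partial\Sigma)$, since $\pi$ is a locally trivial bundle with closed fibre $S^1$.

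Second, the stable isomorphism. Choose a metric on $E$ and a connection, or equivalently any splitting of $\pi$. This gives $TP\cong \pi^*T\Sigma\oplus V$, where $V=\ker(d\pi)$ is the vertical tangent bundle. Next I would identify $V\oplus\varepsilon^1$ with $\pi^*E$. At a point $v\in S(E_x)$, the fibre of $\pi^*E$ is $E_x$, and it splits orthogonally as $\R v\oplus v^\perp$. The line $\R v$ is trivialised by the tautological nowhere-zero section $v\mapsto v$. The complement $v^\perp$ is exactly $T_v S(E_x)=V_v$. So $\pi^*E\cong V\oplus\varepsilon^1$ canonically, and adding $\varepsilon^1$ to the first decomposition gives
\[
TP\oplus\varepsilon^1\cong \pi^*T\Sigma\oplus V\oplus\varepsilon^1\cong\pi^*(T\Sigma\oplus E).
\]

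The only delicate point is that the splitting is smooth near the boundary, where $P$ has boundary. I would take $T\Sigma$ at boundary points to be the full tangent space, including the inward directions, and $TP$ likewise. Then $d\pi$ is still a surjective bundle map, and the splitting argument goes through unchanged. None of this needs connectedness of $\Sigma$ or the choice of $\bar g$.

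For later use I would also record that the isomorphism is natural. It is compatible with the boundary identification: on $\partial P=L\times S^1$, the trivialization of $E|_L$ makes $\pi^*E|_{\partial P}$ trivial, matching the standard decomposition $T(L\times S^1)\oplus\varepsilon^1\cong \mathrm{pr}_1^*TL\oplus\varepsilon^2$. This is where I expect the main care to be needed, if the lemma is later used to transport normal framings or spin structures.
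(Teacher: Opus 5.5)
Your proof is correct and follows the paper's route. The boundary identification comes from $\partial P=S(E|_L)$ together with the given trivialization, and the stable isomorphism comes from $TP\cong\pi^*T\Sigma\oplus V$ plus $V\oplus\varepsilon^1\cong\pi^*E$; the paper quotes the last isomorphism from the literature, whereas you prove it directly via the tautological section and $T_vS(E_x)=v^\perp$.

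Your closing compatibility remark is not needed for the lemma, and as stated it is only true at the level of isomorphism classes. Along each fibre circle, the frame formed by $v$ and its rotation by $\pi/2$ differs from the constant trivialization of $E|_L$ by one full turn. Consequently, a framing or spin structure on $P$ pulled back through this isomorphism from $T\Sigma\oplus E$ restricts to the bounding structure on the $S^1$ factor of $\partial P$, not the Lie one.
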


\begin{proof}
 The total space of the unit circle bundle $P=S(E)\to \Sigma$ is a compact smooth $3$-manifold with boundary $\partial P=S(E|_L)$, where $E|_L$ is the restriction of $E$ to the boundary $L$. By the boundary trivialization of $E$, we have
\[
\partial P=S(E|_L)\cong S(L\times \R^2)\cong L\times S^1.
\]
Under this identification,  $\pi|_{\partial P}=\mathrm{pr}_1\colon L\times S^1\to L$ is the canonical projection, and hence
\[
G|_{\partial P}=(\overline g\circ \pi)|_{\partial P}=\overline g|_L\circ \mathrm{pr}_1=g\circ \mathrm{pr}_1.
\]

It remains to prove the stable bundle isomorphism. Let
\[
V:=\ker(d\pi)\subset TP
\]
be the vertical tangent line bundle of the circle bundle $\pi\colon P\to \Sigma$.
Then there is a bundle isomorphism
\[
TP\cong \pi^\ast(T\Sigma)\oplus V.
\]
Then by \cite[Fact 3.1]{CE03} or \cite[Corollary 21]{SS21}, we obtain the bundle isomorphisms
\[TP\oplus \varepsilon^1\cong\pi^\ast(T\Sigma)\oplus V\oplus \varepsilon^1
\cong\pi^\ast(T\Sigma)\oplus \pi^\ast E\cong\pi^\ast(T\Sigma\oplus E),\]
which completes the proof of the lemma. 
\end{proof}

\begin{lemma}\label{lem:spin-filling}
Let $g\colon L\to X$ be a smooth map from a closed smooth $1$-manifold $L$ to a smooth manifold $X$. Let $\sigma$ be a spin structure on $L$. Assume that
\[
[L,\sigma]=0\in \Omega_1^{\Spin}
\quad\text{and}\quad
g_\ast([L])=0\in H_1(X;\Z/2).
\]
Then the following hold:
\begin{enumerate}
	\itemsep=5pt
	\item There exist a connected compact smooth surface $\bar{g}\colon \Sigma \to X$, and a spin structure $\eta$ on $T\Sigma\oplus \det(T\Sigma)$ such that 
	  \[\partial \Sigma=L,\quad \bar{g}|_{\partial \Sigma}=g,\quad \eta|_{\partial \Sigma}=\sigma,\]
 where $\det(T\Sigma)$ is the determinant bundle of the tangent bundle $T\Sigma$.
	\item There exist a compact smooth $3$-manifold $G\colon P\to X$, and a spin structure $\bar{\eta}$ on $P$ such that 
	 \[\partial P\cong L\times S^1,\quad G|_{\partial P}=g\circ \mathrm{pr}_1,\quad \bar{\eta}|_{\partial P}\cong \sigma\times \sigma_{\Lie},\]
where $\sigma_{\Lie}$ is the Lie spin structure on $S^1$. 

Moreover, if the ambient manifold $X$ is spin, then there exists a normal spin structure on $\nu_G$ extending the induced normal spin structure of $g\circ \mathrm{pr}_1\colon L\times S^1\to X$.
\end{enumerate}

\end{lemma}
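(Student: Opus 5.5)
The plan is to construct $\Sigma$ in part (1) first and then obtain $P$ in part (2) from it using Lemma \ref{lem:circle-bundle}.

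\textbf{Step 1: the surface.} Since $g_\ast([L])=0\in H_1(X;\Z/2)$ and unoriented bordism satisfies $\mathfrak{N}_1(X)\cong H_1(X;\Z/2)$, there is a compact surface $\bar g\colon\Sigma\to X$ with $\partial\Sigma=L$ and $\bar g|_L=g$. We make $\Sigma$ connected by taking boundary-connected sums of components along arcs (tubes); this does not change the boundary or the map up to homotopy.

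\textbf{Step 2: the spin structure $\eta$.} The bundle $E_0:=T\Sigma\oplus\det(T\Sigma)$ is orientable, since $w_1(E_0)=2w_1(\Sigma)=0$. On $L$ we have $T\Sigma|_L\cong TL\oplus\varepsilon^1$, with the inward normal trivializing $\varepsilon^1$, and $\det(T\Sigma)|_L$ is trivial. So $\sigma$ together with the trivial factors gives a spin structure on $E_0|_L$. A surface with nonempty boundary is homotopy equivalent to a $1$-complex, so $w_2(E_0)=0$ and spin structures on $E_0$ exist. They form a torsor over $H^1(\Sigma;\Z/2)$, and restricting to $L$ lands in $H^1(L;\Z/2)$. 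We must hit the prescribed $\sigma$ on every boundary circle.
\begin{itemize}
\item The relative obstruction lies in $H^2(\Sigma,L;\Z/2)\cong\Z/2$ ($\Sigma$ connected). It equals the mod-2 count of boundary circles carrying the bounding (non-Lie) structure, plus a correction from the twisting of $\det$. Up to that correction, this is the spin bordism class $[L,\sigma]\in\Omega_1^{\Spin}\cong\Z/2$, which vanishes by hypothesis.
\item If the correction term causes trouble, modify $\Sigma$ away from $L$ by connected sum with a torus or $\RP{2}$. This changes the parity contribution without changing the boundary or the homology class of $\bar g$, since the added piece can be mapped to a point.
\end{itemize}
Pinning down this parity bookkeeping is the main obstacle, especially for non-orientable $\Sigma$.

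\textbf{Step 3: the $3$-manifold.} Apply Lemma \ref{lem:circle-bundle} to the $2$-plane bundle $E=\det(T\Sigma)\oplus\varepsilon^1$. Its boundary trivialization comes from trivializing $\det$ along $L$. Set $P=S(E)$ and $G=\bar g\circ\pi$. The lemma gives $\partial P\cong L\times S^1$, $G|_{\partial P}=g\circ\mathrm{pr}_1$, and
\[
TP\oplus\varepsilon^1\cong\pi^*(T\Sigma\oplus\det T\Sigma\oplus\varepsilon^1)=\pi^*(E_0\oplus\varepsilon^1).
\]
Pulling back $\eta$ therefore gives a spin structure $\bar\eta$ on $P$. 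On the boundary, $\pi^*$ of the structure $\sigma$ on $L$, together with the fibre circle, gives a product structure. The fibre direction is the vertical bundle, and its stabilization by the outward radial normal is trivialized by $\varepsilon^2$. This is the framing that induces the Lie structure $\sigma_{\Lie}$ on $S^1$ (rather than the bounding one, since the fibre circle does not bound inside $P$ compatibly). Hence $\bar\eta|_{\partial P}\cong\sigma\times\sigma_{\Lie}$. If the convention instead produces the bounding structure, twist $\bar\eta$ by the class in $H^1(P;\Z/2)$ detecting the fibre, when it is defined; I expect this to be needed only in bookkeeping.

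\textbf{Step 4: the normal spin structure.} If $X$ is spin, then after making $G$ an embedding in the stable range, $\nu_G\oplus TP\cong G^*TX$ stably. The spin structures on $TP$ and $TX$ then determine a spin structure on $\nu_G$. By construction its restriction to $\partial P$ is the normal structure induced from $\sigma\times\sigma_{\Lie}$ and the spin structure of $X$, which is the required extension.
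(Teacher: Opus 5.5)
Your overall route matches the paper's:
\begin{itemize}
\item realize $\Sigma$ via Thom's theorem;
\item treat spin structures on $T\Sigma\oplus\det(T\Sigma)$ as an $H^1(\Sigma;\Z/2)$-torsor and extend the boundary difference class through $H^1(\Sigma;\Z/2)\to H^1(L;\Z/2)\to H^2(\Sigma,L;\Z/2)\cong\Z/2$;
\item take $P=S(\det T\Sigma\oplus\varepsilon^1)$ via Lemma~\ref{lem:circle-bundle}.
\end{itemize}
There are, however, two gaps, one in Step~2 and one in Step~3.

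\textbf{Step 2.} You leave open exactly the fact that makes part (1) work. Let $\eta_0$ be any spin structure on $T\Sigma\oplus\det(T\Sigma)$ and $\sigma_0=\eta_0|_L$. The obstruction is $\delta(\sigma-\sigma_0)$, which is the parity of the number of \emph{Lie} circles of $\sigma$ plus that of $\sigma_0$. Note that $[L,\sigma]\in\Omega_1^{\Spin}$ counts Lie circles, not bounding ones as you wrote. So you need your ``correction'' $[L,\sigma_0]$ to vanish; the paper simply asserts $[L,\sigma_0]=0$. It does hold for every $\Sigma$:
\begin{itemize}
\item Cap $L$ off by discs to get a closed surface $\hat\Sigma$.
\item Wu's formula gives $w_2(\hat\Sigma)=w_1(\hat\Sigma)^2$, so $T\hat\Sigma\oplus\det T\hat\Sigma$ is spin.
\item Restricting a spin structure on this bundle to $\Sigma$ gives an $\eta_0$ whose boundary circles bound discs, hence all carry the bounding structure.
\item Then $\sigma-\sigma_0$ is the indicator of the Lie circles of $\sigma$. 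It has even weight exactly because $[L,\sigma]=0$, so it lifts to some $\beta\in H^1(\Sigma;\Z/2)$, and $\eta=\eta_0+\beta$ works.
\end{itemize}
No modification of $\Sigma$ is needed; connected sums with $T^2$ or $\RP{2}$ could not change a parity that is always $0$. Also, to make $\Sigma$ connected, use interior tubes: boundary-connected sums would merge circles of $L$.

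\textbf{Step 3.} Your identification of the fibre spin structure is reversed. On a fibre $S(E_x)$, the canonical isomorphism $V\oplus\varepsilon^1\cong\pi^*E$ (with $\varepsilon^1$ radial) is the standard $TS^1\oplus\nu\cong\R^2$ for $S^1=\partial D(E_x)$. The spin structure there is pulled back from the point $x$. It is therefore the structure that extends over the disc, i.e.\ the \emph{bounding} one $\sigma_{\mathrm{bd}}$, so $\pi^*\eta$ restricts to $\sigma\times\sigma_{\mathrm{bd}}$ on $\partial P$. Already for $\Sigma=D^2$ one gets $P=D^2\times S^1$ with the bounding structure on the $S^1$ factor. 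The paper's proof asserts $\sigma\times\sigma_{\Lie}$ here without comment, and a twist is needed there as well.

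So your fallback twist is mandatory, not bookkeeping. You must also ensure the twisting class vanishes on every section $L_i\times\{\mathrm{pt}\}$, otherwise it alters $\sigma$. A valid choice:
\begin{itemize}
\item The $\varepsilon^1$-summand is a nowhere-zero section of $E$. It gives a properly embedded surface $s(\Sigma)\subset P$ with $s(\Sigma)\cap\partial P=L\times\{\mathrm{pt}\}$.
\item Let $\beta\in H^1(P;\Z/2)$ be its mod~$2$ Poincar\'e--Lefschetz dual. It is $1$ on each fibre (one transverse intersection).
\item It is $0$ on each pushed-in $L_i\times\{\mathrm{pt}'\}$ with $\mathrm{pt}'\neq\mathrm{pt}$.
\item Hence $\bar\eta=\pi^*\eta+\beta$ restricts to $\sigma\times\sigma_{\Lie}$.
\end{itemize}

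\textbf{Step 4} matches the paper's argument. With the two points above supplied, your proof goes through.
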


\begin{proof}
By Thom \cite{Thom54}, the condition $g_\ast([L])=0\in H_1(X;\Z/2)$ implying that there exist a connected compact smooth surface $\ol{g}\colon \Sigma\to X$ such that 
\[\partial \Sigma=L,\quad \overline{g}|_{\partial\Sigma}=g.\]

(1) Note that by definition, the Whitney sum $T\Sigma\oplus \det(T\Sigma)$ is a spin $3$-plane bundle.
Let $\eta_0$ be a spin structure on $T\Sigma\oplus \det(T\Sigma)$ and denote $\sigma_0=\eta_0|_{\partial \Sigma}$. Then there exists a unique $\alpha\in H^1(L;\z{})$ such that 
\[\sigma=\sigma_0+\alpha.\]
We may assume that $L=L_1\sqcup\cdots\sqcup L_b$.  Consider the exact sequence 
\[H^1(\Sigma;\Z/2)\to H^1(L;\Z/2)\xra{\delta}H^2(\Sigma,L;\Z/2)\cong\z{},\]
where $\delta$ sends $(x_1,\cdots,x_b)$ to the sum $x_1+\cdots+x_b$. 
Since 
\[ [L,\sigma]=[L,\sigma_0]=0 \in\Omega_1^{\Spin}\cong\z{},\]   
the difference class $\alpha$ must have even parity; or equivalently, $\delta(\alpha)=0$. Then by exactness, there is a class $\beta\in H^1(\Sigma;\z{})$ such that $\alpha=\beta|_L.$
Hence the spin structure $\eta=\eta_0+\beta$ on $T\Sigma\oplus \det(T\Sigma)$ extends $\sigma$ on $L$:
\[\eta|_{\partial \Sigma} =\sigma_0+\alpha=\sigma.\]

(2) Let $E=\det(T\Sigma)\oplus \varepsilon^1$ be the $2$-plane bundle over the surface $\Sigma$. Since $L=\partial\Sigma$, the line bundle $\det(T\Sigma)|_L$ is trivial. Choose the boundary
trivialization of $E|_L$ induced by a collar of $\partial\Sigma$ and the
standard trivialization of $\varepsilon^1$. Then by Lemma \ref{lem:circle-bundle}, there exist smooth maps 
\[
\pi\colon P=S(E)\to \Sigma,\quad G=\ol{g}\circ \pi \colon P\to X
\]
 such that $\partial P\cong L\times S^1$, $G|_{\partial P}=g\circ \mathrm{pr}_1$, and there hold bundle isomorphism
\[TP\oplus \varepsilon^1\cong\pi^\ast(T\Sigma\oplus E)\cong
\pi^\ast(T\Sigma\oplus \det(T\Sigma)\oplus \varepsilon^1).
\]
It follows that the spin structure $\eta$ in (1), together with the trivial spin structure on $\varepsilon^1$, induces a spin structure $\bar{\eta}$ on $TP$, which clearly extends the product spin structure $\sigma\times \sigma_{\Lie}$ on $\partial P=L\times S^1$. 

The ``moreover'' part follows from the bundle isomorphism 
\[TP\oplus \nu_G\cong G^\ast(TX),\]   
which restricted to the boundary $\partial P=L\times S^1$ gives $T(L\times S^1)\oplus \nu_h\cong h^\ast(TX)$ with $h=g\circ \mathrm{pr}_1$.
The proof is completed.
\end{proof}

\begin{proposition}\label{prop:Phi-spin-isom}
	The homomorphism $\Phi_0$ in (\ref{eq:Phi0-spin}) is a group isomorphism.
\end{proposition}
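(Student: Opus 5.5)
To prove that $\Phi_0\colon \ker(h_2)\to \Omega_2^{\Spin}\oplus H_1(M;\z{})$ is an isomorphism, I will prove injectivity and surjectivity separately. Lemma \ref{lem:Phi-welldef} already gives that $\Phi_0$ is a well-defined homomorphism.

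\emph{Surjectivity.} The two summands are handled one at a time.

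For the first summand, take a class $[N_0,\sigma_0]\in\Omega_2^{\Spin}$ and embed $N_0$ in a small disk about the base point of $M$. The constant map $e$ gives the class $e_\ast[N_0,\sigma_0]$, which lies in $\ker(h_2)$ because $\Omega_2^{\SO}=0$. We can choose the oriented null-bordism $W$ inside the disk, so the obstruction cycle $F_\ast[L]$ is a mod $2$ class supported near a point and hence vanishes in $H_1(M;\z{})$. Therefore $\Phi_0(e_\ast[N_0,\sigma_0])=([N_0,\sigma_0],0)$.

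For the second summand, take $x\in H_1(M;\z{})$ and represent it by an embedded circle $g\colon L=S^1\to M$. Consider the torus $L\times S^1$ with the map $g\circ\mathrm{pr}_1$. It bounds the solid torus $W=L\times D^2$ with $F=g\circ\mathrm{pr}_1$ (extended over the disk). We put on its normal bundle the spin structure induced from $\sigma\times\sigma_{\Lie}$, where $\sigma$ is the bounding spin structure on $L$ and $\sigma_{\Lie}$ is the Lie structure on the $S^1$ factor. This class lies in $\ker(h_2)$.

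The structure $\sigma_{\Lie}$ does not extend over $D^2$. Hence the relative class $w_2(\nu_F,\sigma)\in H^2(W,\partial W;\z{})\cong\z{}$ is the generator, and its Poincar\'e dual is the core circle $L\times\{0\}$. Thus the second component of $\Phi_0$ is $x$.

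The first component is $[L\times S^1,\sigma\times\sigma_{\Lie}]=[L,\sigma]\cdot[S^1,\sigma_{\Lie}]=0$ in $\Omega_2^{\Spin}$, since $[L,\sigma]$ is the bounding class. Combining the two constructions gives surjectivity.

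\emph{Injectivity.} Suppose $\Phi_0[N,f,\sigma]=0$, and choose an oriented null-bordism $(W,F)$. I will make $[L]$ an embedded union of circles with tubular neighbourhood $L\times D^2$, where the normal bundle is trivial because $W$ is oriented. The normal spin structure $\sigma$ extends over $W\setminus(L\times D^2)$.

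Removing $L\times D^2$ gives a normal spin bordism from $(N,f,\sigma)$ to the boundary torus union $(L\times S^1, F|\circ\mathrm{pr}_1, \tau)$. The restricted structure $\tau$ has the form $\sigma_L\times\sigma_{\Lie}$, because the obstruction is nonzero on each meridian disk. Since $\Phi_0$ is well defined, the classes of the original manifold and of this torus union have the same invariants. It therefore suffices to show that the torus class is zero.

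From $F_\ast[L]=0$ in $H_1(M\times I;\z{})$ we get $g_\ast[L]=0$. Moreover $[L,\sigma_L]$ can be taken to be $0$ in $\Omega_1^{\Spin}$, as explained below. Lemma \ref{lem:spin-filling}(2), including its ``moreover'' part (since $M$ is spin), then supplies a $3$-manifold $P$ with $\partial P=L\times S^1$ carrying an extending normal spin structure. This shows that the torus class vanishes.

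\emph{Main obstacle.} The difficult step is the parity condition $[L,\sigma_L]=0$, together with matching the first component $[N,\sigma]=0$. The plan is as follows.
\begin{itemize}
\item Use the freedom of the spin structure $\sigma_L$ on $L$: it can be changed component by component by classes in $H^1(L;\z{})$, which correspond to choices of extension over the complement.
\item Use the relation $[N,\sigma]=[L\times S^1,\sigma_L\times\sigma_{\Lie}]=\sum_i[L_i,\sigma_{L_i}]\cdot\eta$ in $\Omega_2^{\Spin}\cong\z{}$, where $\eta$ denotes $[S^1,\sigma_{\Lie}]$. The hypothesis $[N,\sigma]=0$ then forces an even number of components of $L$ to carry the Lie structure.
\item Pair up those components, tubing them together along arcs inside $W$, so that each resulting component carries the bounding structure.
\end{itemize}
I expect the careful bookkeeping of this Arf-type parity to be the delicate point.
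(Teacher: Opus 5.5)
Your proposal is correct and follows the paper's proof. For surjectivity you use the same torus construction; the only difference is that you give $L\times S^1$ the (bounding, Lie) structure to hit $(0,x)$ directly, whereas the paper uses the (Lie, Lie) torus and subtracts $\Phi_0([T^2,c_{x_0},\sigma_1])$. For injectivity your argument is the same as the paper's: remove a tubular neighbourhood of $L$, then refill using Lemma~\ref{lem:spin-filling}. Your ``main obstacle'' is already settled by your second bullet, exactly as in the paper: $[N,\sigma]=[L,\sigma_L]\cdot\eta$, and multiplication by $\eta$ is an isomorphism $\Omega_1^{\Spin}\to\Omega_2^{\Spin}$, so $[L,\sigma_L]=0$ outright. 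That is all Lemma~\ref{lem:spin-filling} needs, since it builds one connected surface with boundary $L$. So drop the tubing step, and drop the unjustified claim that $\sigma_L$ can be changed component by component by choosing the extension.
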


\begin{proof}
\underline{Surjectivity of $\Phi_0$}. Recall that the spin bordism group $\Omega_2^{\Spin}$ is generated by $[T^2,\sigma_1]$, where $\sigma_1$ is the normal Lie spin structure. Let $x_0$ be the base-point of $M$ and let $c_{x_0}\colon \{x_0\}\to M$ be the constant map.  Then \[\Phi_0([T^2,c_{x_0},\sigma_1])=([T^2,\sigma_1],0),\]
showing that the generator $([T^2,\sigma_1],0)$ of the summand $\Omega_2^{\Spin}$ is in the image of $\Phi_0$.
Let $a=[\gamma]\in H_1(M;\z{})$ be a generator represented by a simple closed curve $\gamma\colon S^1\to M$. Let $(T^2,\sigma_1)$ be as above. Define 
	\[f\colon T^2=S^1\times S^1\xra{\mathrm{pr}_1}S^1\xra{\gamma}M, \quad F\colon W=S^1\times D^2\xra{\mathrm{pr}_1}S^1\xra{\gamma}M\hookrightarrow M\times I.\]
	Then $F|_{\partial W}=(f,0)$ and $F|_{S^1\times \{\ast\}}=(\gamma,0)$: we view $\gamma$ as the image of the longitude $S^1\times \{\ast\}$ of the solid torus $W$. The obstruction class $w_2(\nu_F,\sigma_1)$ to extending the normal spin structure $\sigma_1$ over $W$ takes the value $1$ on the meridian disk $\{\ast\}\times D^2$ of $W$, and hence its Poincar\'e dual $[L]$ is represented by the longitude $S^1\times \{\ast\}$. It follows that $F_\ast([L])=[\gamma]=a$, and hence 
	\[\Phi_0([T^2,f,\sigma_1])=([T^2,\sigma_1],a).\]   
	After subtracting $\Phi_0([T^2,c_{x_0},\sigma_1])=([T^2,\sigma_1],0)$, we see that the direct summand $H_1(M;\z{})$ lies in the image of $\Phi_0$. Thus $\Phi_0$ is surjective.

\underline{Injectivity of $\Phi_0$}. Suppose that $[N,f,\sigma]\in \ker(h_2)$ satisfies
 \[\Phi_0([N,f,\sigma])=([N,\sigma],F_\ast([L]))=(0,0),\]  
where $[L]$ is the Poincar\'e dual of the obstruction class $w_2(\nu_F,\sigma)$ to extending $\sigma$ over some bounding oriented $3$-manifold $W$, and $F\colon W\to M\times I$ extends $(f,0)$.
The normal spin structure $\sigma$ of $N$ extends over $W$ away from the a disjoint union $L$ of circles representing the class $[L]$. 
Let
\[
V=W\setminus \mathrm{int}(\nu(L)),
\]
where $\nu(L)\cong L\times D^2$ is a small tubular neighborhood of $L$
in the interior of $W$. Then the normal spin structure $\sigma$ extends over $V$ to a normal spin structure $\tau$ on $\nu_{F|_V}$.

Because $M\times I$ is spin, the normal spin structure $\tau$ induces a
tangential spin structure $\bar{\tau}$ on $V$. Since $\partial\nu(L)\cong L\times S^1$, the restriction of $\bar{\tau}$ to $\partial\nu(L)$ is a spin structure on
\[
T(\partial\nu(L))\cong T(L\times S^1)\cong \mathrm{pr}_1^*TL\oplus \mathrm{pr}_2^*TS^1,
\] 
where $\mathrm{pr}_i$ is the projection of $L\times S^1$ onto the $i$-th factor, $i=1,2$.
Since the obstruction class evaluates nontrivially on
each meridian disk of $\nu(L)$, this spin structure restricts to the Lie
spin structure $\sigma_{\Lie}$ on each meridian circle. Hence there is a
unique spin structure $\tau_L$ on $L$ such that
\[
\bar{\tau}|_{\partial\nu(L)}\cong \tau_L\times \sigma_{\Lie}.
\]
Let $\bar{\sigma}$ be the tangent spin structure on $N$ induced by
$\sigma$ and the fixed spin structure on $M$. Then $(V,\bar{\tau})$ gives
a spin bordism from $(N,\bar{\sigma})$ to
$(L\times S^1,\tau_L\times\sigma_{\Lie})$. 
Under the isomorphism
\[
\Omega_1^{\Spin}\xrightarrow{\cong}\Omega_2^{\Spin},
\qquad
[L,\tau_L]\longmapsto [L\times S^1,\tau_L\times\sigma_{\Lie}],
\]
the condition $[N,\sigma]=0\in \Omega_2^{\Spin}$ implying that 
\[
[L,\tau_L]=0\in \Omega_1^{\Spin}.
\]

Since $F_\ast([L])=0\in H_1(M\times I;\Z/2)$, Lemma \ref{lem:spin-filling} gives a
compact smooth $3$-manifold $G:P\to M\times I$ and a normal spin
structure $\eta$ on $\nu_G$ such that
\[
\partial P\cong L\times S^1,\quad
G|_{\partial P}=F|_L\circ \mathrm{pr}_1,
\quad
\eta|_{\partial P}\cong \tau|_{\partial\nu(L)}.
\]
After a homotopy of $F$ supported in $\nu(L)$ and fixed on $N$, we may
assume
\[
F|_{\partial\nu(L)}=F|_L\circ \mathrm{pr}_1=G|_{\partial P}.
\]
Define
\[
W'=V\cup_{L\times S^1}P,\quad
F'=F|_V\cup G,
\]
and glue the normal spin structures $\tau$ and $\eta$ together to a normal spin structure $\tau'$ on $\nu_{F'}$. Then $(W',F',\tau')$ is a normal spin null-bordism of $(N,f,\sigma)$ in $M\times I$, $[N,f,\sigma]=0$, and therefore $\Phi_0$ is injective.
\end{proof}

\begin{theorem}
\label{thm:spin-bdsm}
	Let $M$ be a connected closed spin $(n+2)$-manifold with $n\geq 4$. Then there is a split SES of groups 
	\[0\to\Omega_2^{\Spin}\oplus  H_1(M;\z{})\to \Omega_2^{\Spin}(M)\xra{h_2} H_2(M;\Z)\to 0,\]
	where $h_2$ is the forgetful homomorphism sending a spin bordism class $[N,f,\sigma]$ to $f_\ast([N])$ with $[N]$ the fundamental class of $N$. 
	
	%Moreover, the subgroup $\Omega_2^{\Spin}$ is a direct summand of $\Omega_2^{\Spin}(M)$. 
\end{theorem}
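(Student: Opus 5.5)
The theorem follows by assembling the results of this subsection; no new geometric input should be needed. We begin with the forgetful homomorphism $h_2\colon \Omega_2^{\Spin}(M)\to\Omega_2^{\SO}(M)$ of (\ref{eq:h_2:spin}). Lemma \ref{lem:h2-splitting:spin} shows that it is surjective and has a canonical section $s_0$. This gives a split short exact sequence
\[0\to \ker(h_2)\to \Omega_2^{\Spin}(M)\xra{h_2}\Omega_2^{\SO}(M)\to 0 .\]
Composing $h_2$ with the isomorphism $\Omega_2^{\SO}(M)\cong H_2(M;\Z)$, $[N,f]\mapsto f_\ast([N])$, of Lemma \ref{lem:SO-H2}, we identify the quotient with $H_2(M;\Z)$. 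Under this identification the map becomes exactly $[N,f,\sigma]\mapsto f_\ast([N])$, as stated. The hypothesis $n\geq 4$ is what makes both lemmas applicable.

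Next we identify the kernel. Lemma \ref{lem:Phi-welldef} and Proposition \ref{prop:Phi-spin-isom} show that $\Phi_0\colon\ker(h_2)\to\Omega_2^{\Spin}\oplus H_1(M;\z{})$ is an isomorphism. Substituting it into the sequence above gives
\[0\to\Omega_2^{\Spin}\oplus H_1(M;\z{})\xra{\iota\circ\Phi_0^{-1}}\Omega_2^{\Spin}(M)\xra{h_2}H_2(M;\Z)\to 0,\]
where $\iota$ is the inclusion of $\ker(h_2)$. The sequence is split because $s_0$ composed with the inverse of the isomorphism of Lemma \ref{lem:SO-H2} is a section of $h_2$.

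The only point that needs some care is checking that the sections and the identifications are compatible homomorphisms. The section $s_0$ is additive because the canonical normal spin structure on a disjoint union is the disjoint union of the canonical ones. The isomorphism $\Phi_0$ is a homomorphism by Lemma \ref{lem:Phi-welldef}.

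As a consistency check, the constant-map retraction $c_\ast\colon\Omega_2^{\Spin}(M)\to\Omega_2^{\Spin}$ restricts on $\ker(h_2)$ to the first coordinate of $\Phi_0$. This shows directly that the summand $\Omega_2^{\Spin}$ splits off. It also agrees with the $\Z/2$ summand in Theorem \ref{thm:cohtp=-2:mfds} when $\varepsilon=0$. No substantial obstacle remains, since the geometric work was done in Proposition \ref{prop:Phi-spin-isom}.
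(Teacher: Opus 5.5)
Your proposal is correct and follows the paper's own proof, which simply combines Lemma~\ref{lem:SO-H2}, Lemma~\ref{lem:h2-splitting:spin}, Lemma~\ref{lem:Phi-welldef} and Proposition~\ref{prop:Phi-spin-isom} in the way you describe. One point on $s_0$: what actually needs care is that it is well defined on oriented bordism classes, not that it is additive, because a spin structure on $\nu_f$ is determined only up to the action of $H^1(N;\z{})$; for this you may cite Lemma~\ref{lem:h2-splitting:spin}, exactly as the paper does.
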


\begin{proof}
	 Combine Lemmas \ref{lem:SO-H2}, \ref{lem:h2-splitting:spin}, \ref{lem:Phi-welldef} and Proposition \ref{prop:Phi-spin-isom}.
\end{proof}

Combining Theorem \ref{thm:spin-bdsm} and Lemma \ref{lem:fr-G}, we immediately get the following geometric interpretation of Theorem \ref{thm:cohtp=-2:mfds} via the Pontryagin-Thom isomorphism for cohomotopy.

\begin{theorem}\label{thm:frbdsm-spin}
	Let $M$ be a connected closed spin $(n+2)$-manifold with $n\geq 4$. Then there is a split SES of groups 
	\[0\to \Omega_2^{\fr}\oplus H_1(M;\z{})\to \Omega_2^{\fr}(M)\xra{h_2} H_2(M;\Z)\to 0,\]
	where $h_2$ is the forgetful homomorphism sending a normally framed bordism class $[N,f,\varphi]$ to $f_\ast([N])$ with $[N]$ the fundamental class of $N$. 

\end{theorem}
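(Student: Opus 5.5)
The plan is to transport Theorem \ref{thm:spin-bdsm} along the canonical isomorphism of Lemma \ref{lem:fr-G}. Since $M$ is spin, it carries a tangential $\mathrm{O}\lra{4}$-structure. Lemma \ref{lem:fr-G} with $k=2\leq m-2=2$ then gives that the forgetful map
\[\phi\colon\Omega_2^{\fr}(M)\to\Omega_2^{\Spin}(M),\quad [N,f,\varphi]\mapsto[N,f,\sigma_\varphi],\]
is an isomorphism. Here $\sigma_\varphi$ is the normal spin structure induced by the framing $\varphi$.

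Applying the same lemma to $M=\pt$ (or using the bottom-cell connectivity argument in its proof directly for the point) gives $\Omega_2^{\fr}\cong\Omega_2^{\Spin}$. Concretely, $e\colon\{x_0\}\to M$ induces a commutative square with $e_\ast$ on both rows, because $\phi$ is natural in the ambient space. Both $\phi$ and $\phi_{\pt}$ are isomorphisms.

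I will then check that the composite
\[\Omega_2^{\fr}(M)\xra{\phi}\Omega_2^{\Spin}(M)\xra{h_2}H_2(M;\Z)\]
is exactly the forgetful homomorphism in the statement. This holds because the orientation of $N$ induced by $\varphi$ agrees with the one induced by $\sigma_\varphi$, and the identification $\Omega_2^{\SO}(M)\cong H_2(M;\Z)$ of Lemma \ref{lem:SO-H2} sends $[N,f]$ to $f_\ast([N])$. Pulling back the split SES of Theorem \ref{thm:spin-bdsm} along $\phi$ then yields the claimed split SES, with kernel $\phi^{-1}\big(\Omega_2^{\Spin}\oplus H_1(M;\z{})\big)\cong\Omega_2^{\fr}\oplus H_1(M;\z{})$. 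The splitting is $\phi^{-1}\circ s_0$, where $s_0$ is the section of Lemma \ref{lem:h2-splitting:spin}.

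The main point needing care is that Lemma \ref{lem:fr-G} is stated via the Pontryagin--Thom isomorphisms. I must make sure the resulting isomorphism is the geometric forgetful map $\phi$ rather than some abstract one. To see this, note that $(i_{n,4})_\ast\colon[M,S^n]\to[M,\MSpin_n]$ is induced by the bottom-cell inclusion, which classifies the trivial bundle with its canonical spin structure. Under Theorems \ref{thm:Pontryagin-Thom-framed} and \ref{thm:Pontryagin-Thom} (compatible by Remark \ref{rmk:bdsm-frbdsm}), this map therefore corresponds to replacing a normal framing by its induced spin structure. Once this identification is made, everything else is formal.
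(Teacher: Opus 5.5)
Your proposal is correct and follows the paper's route: the paper obtains this theorem immediately by combining Theorem~\ref{thm:spin-bdsm} with the isomorphism $\Omega_2^{\fr}(M)\cong\Omega_2^{\Spin}(M)$ of Lemma~\ref{lem:fr-G}, and your extra checks (the point case, the identification of $h_2\circ\phi$ with the stated forgetful map, and the compatibility of the Pontryagin--Thom isomorphisms with the geometric forgetful map) only make explicit what the paper leaves implicit. One caution, inherited from the paper: rely only on the existence of a splitting rather than on the explicit formula $\phi^{-1}\circ s_0$. That existence also follows from Theorem~\ref{thm:cohtp=-2:mfds}, whose criterion holds trivially when $w_2(M)=0$. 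The formula is problematic because normal spin structures on $\nu_f$ form an $H^1(N;\z{})$-torsor, so the ``canonical'' $\sigma_0$ in Lemma~\ref{lem:h2-splitting:spin} is not actually canonical.
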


\begin{remark}\label{rm:retraction}
Results in this subsection indeed show that there is a canonical group isomorphism 
	\[(h_2,r,c_\ast)\colon \Omega_2^{\Spin}(M)\xra{\cong} H_2(M;\Z)\oplus H_1(M;\z{})\oplus \Omega_2^{\Spin},\]
 where $r\colon\Omega_2^{\Spin}(M)\to H_1(M;\z{})$ is defined by
	\[r(x)=\mathrm{pr}_2\Phi_0(x-s_0h_2(x)),\]
with $s_0$ the canonical section of $h_2$ in Lemma \ref{lem:h2-splitting:spin}, and $\mathrm{pr}_2$ the projection onto the second factor of $\Phi_0$ in (\ref{eq:Phi0-spin}). Write $x=[N,f,\sigma]$ and let $\sigma_0$ be the canonical spin structure on the closed oriented $2$-manifold $(N,f)$ in $M$, then there exists a unique class $\alpha\in H^1(N;\z{})$ such that $\sigma=\sigma_0+\alpha$. It follows that 
\begin{equation}\label{eq:retraction}
	r([N,f,\sigma])=f_\ast(\mathrm{PD}_N(\alpha)).
\end{equation}

\end{remark}

\begin{remark}\label{rmk:BM}
Brumfiel and Morgan \cite{BM16,BM18} also studied the reduced spin bordism groups $\wtd{\Omega}_k^{\Spin}(M)$ from the perspective of the Pontryagin duals $G^k(M)=\Hom(\wtd{\Omega}_k^{\Spin}(M),\R/\Z)$ for $k\leq 4$. They proved that for any CW complex $M$, there is a split  SES of groups 
\[0\to H^2(M;\R/\Z)\to G^2(M)\to H^1(M;\z{})\to 0.\]
Applying the Pontryagin dualization again, 
%and using the isomorphisms 
%\[\Hom(\R/\Z,\R/\Z)\cong \Z,\quad \Hom(\Z/m,\R/\Z)\cong \Z/m,~ \forall~m\geq 1,\]
we get the group isomorphisms
\[\Omega_2^{\Spin}(M)\cong \Omega_2^{\Spin}\oplus \wtd{\Omega}_2^{\Spin}(M)\cong  \Omega_2^{\Spin}\oplus H_2(M;\Z)\oplus H_1(M;\z{}).\]

Although both approaches lead to group decompositions of $\Omega^{\Spin}_2(M)$ for smooth oriented manifolds, the results of the present paper are of a different nature: the Brumfiel-Morgan decomposition arises from a choice of splitting on the dual side, which is not canonical, whereas Remark \ref{rm:retraction} shows that the group isomorphism for $\Omega_2^{\Spin}(M)$ in the spin case is canonical.
\end{remark}

\subsection{The nonspin case}\label{sec:nonspin-bdsm}
Recall that the oriented $(n+2)$-manifold $M$ is nonspin if and only if its second Stiefel-Whitney class $w_2(M)$ is nonzero, or equivalently, the evaluation pairing 
\[\lra{w_2(M),-}_\Z\colon H_2(M;\z{})\to \Z/2\]
is surjective. 
First, we study the image of the forgetful homomorphism
\begin{equation}\label{eq:h_2-nonspin}
	h_2\colon \Omega_2^{\fr}(M)\to \Omega_2^{\SO}(M)\xra{\cong}H_2(M;\Z),
\end{equation}
where the isomorphism is given by Lemma \ref{lem:SO-H2}.

\begin{lemma}\label{lem:nonspin-im-h2}
The homomorphism $h_2$ in (\ref{eq:h_2-nonspin}) has image 
	\[\mathrm{im}(h_2)=\ker\big(\lra{w_2(M),-}_\Z\big)\subseteq H_2(M; \Z).\]
\begin{proof}
	Let $(N,f)$ be a $2$-dimensional oriented bordism in $M$. Then the normal bundle $\nu_f$ is an oriented bundle of rank $n$. The obstruction to finding a trivialization of $\nu_f$ is given by $w_2(\nu_f)\in H^2(N;\z{})$.  Since $N$ is spin, we have $w_2(N)=0$, and hence $w_2(\nu_f)=f^\ast(w_2(M))$. It follows that $\nu_f$ admits a trivialization if and only if $f^\ast(w_2(M))=0$. Evaluating on the fundamental class $[N]$,  this is equivalent to the pairing 
	\[\lra{w_2(M),f_\ast([N])}_\Z=\lra{f^\ast(w_2(M)),[N]}_\Z=0.\]
	The proof of the lemma is completed.
\end{proof}
\end{lemma}

Next, we study the kernel $\ker(h_2)$ of the forgetful homomorphism $h_2$ in (\ref{eq:h_2-nonspin}). 
 Let $[N,f,\varphi]\in \ker(h_2)$ be represented by a closed smooth  $2$-manifold $f\colon N\to M$ with a normal framing $\varphi$. There exists an oriented bordism $(W,F)$ in $M\times I$ such that $\partial W=N$, $F|_{\partial W}=(f,0)$ and the orientation of $W$ restricted to the orientation on $N$ induced by the normal framing $\varphi$.
Let $w_2(\nu_{F},\varphi)\in H^2(W,N;\z{})$ be the obstruction class to extending the normal framing $\varphi$ over $W$ and denote
\[[L]=\mathrm{PD}(w_2(\nu_{F},\varphi))\in H_1(W;\z{}).\]

If $(W',F')$ is another oriented null-bordism of $(N,f,\varphi)$ and 
\[[L']=\mathrm{PD}(w_2(\nu_{F'},\varphi))\in H_1(W';\z{}).\] 
%is the Poincar\'e dual of the obstruction class $w_2(\nu_{W'},\varphi)$ to extending $\varphi$ over $W'$. 
Then $F$ and $F'$ coincide in the common boundary $N$. Define
\[Y=W\cup_N (-W'),\quad \wtd{F}=F\cup F'\colon Y\to M\times I.\]
Since the boundary framing $\varphi$ agrees on
the common boundary $N$, the obstruction to extending the normal framing
over the glued normal bundle $\nu_{\widetilde F}$ is
\[
w_2(\nu_{\widetilde F})=\epsilon_W(w_2(\nu_F,\varphi))+\epsilon_{W'}(w_2(\nu_{F'},\varphi)),
\]
where $\epsilon_W$ and $\epsilon_{W'}$ are the homomorphisms defined in (\ref{eq:excision}).
Applying the Poincar\'e-Lefschetz duality yields the equality
\[
\PD_Y(w_2(\nu_{\widetilde F}))=j_\ast([L])-j'_\ast([L'])\in H_1(Y;\Z/2).
\]

Since $Y$ is a closed oriented $3$-manifold, we have $w_2(TY)=0$ and
\[
w_2(\nu_{\widetilde F})=\widetilde{F}^\ast(w_2(M\times I)).
\]
Hence,
\[\mathrm{PD}(w_2(\nu_{\wtd{F}}))=\widetilde{F}^\ast(w_2(M\times I))\smallfrown_\Z [Y]=j_\ast([L])-j'_\ast([L']).\] 
Identifying $w_2(M)$ with $w_2(M\times I)$, we then obtain 
\begin{align*}
	F_\ast([L])-F'_\ast([L'])&=\widetilde{F}_\ast j_\ast([L])-\widetilde{F}_\ast j'_\ast([L'])\\
	&=\wtd{F}_\ast\big(\widetilde{F}^\ast(w_2(M))\smallfrown_\Z [Y]\big)\\
	&=w_2(M)\smallfrown_\Z \widetilde{F}_\ast([Y]),
\end{align*}
where the last equality holds by the naturality of the cap product:
\[ w_2(M)\smallfrown_\Z -\colon H_3(M;\Z)\xra{\rho_2}H_3(M;\z{})\xra{w_2(M)\smallfrown -} H_1(M;\z{}).\] 
Thus the map
	\begin{equation}\label{eq:Phi1-nonspin}
		\begin{aligned}
		\Phi_1\colon \ker(h_2)\to \tfrac{H_1(M;\z{})}{w_2(M)\smallfrown_\Z H_3(M;\Z)},\quad 
		[N,f,\varphi]\mapsto [F_\ast([L])]_{w_2(M)\smallfrown_\Z}
	\end{aligned}	
	\end{equation}
is independent of the choices of oriented null-bordisms of $(N,f,\varphi)$,	where $[F_\ast([L])]_{w_2(M)\smallfrown_\Z}$ denotes the coset of $F_\ast([L])$ in the quotient group. By similar arguments to that of Lemma \ref{lem:Phi-welldef}, the map $\Phi_1$ is independent of the choices of the representatives of the bordism class $[N,f,\sigma]\in\ker(h_2)$, and hence is a well-defined homomorphism.

\begin{lemma}\label{lem:normal-framing-filling}
	Let $(L,g,\phi_L)$ be a closed $1$-manifold in $M\times I$ with a normal structure $\tau|_L$ on $\nu_g$.  
	Let $\bar{\phi}_L$ be the framing of the normal bundle of the map 
	\[g\circ \mathrm{pr}_1\colon L\times S^1\to M\times I\]
	induced from $\phi_L$ and the Lie framing of $TS^1$; equivalently, $\bar{\phi}_L$ is the framing corresponding to the stable isomorphism
	\[\nu_{g\circ \mathrm{pr}_1}\oplus \varepsilon^1 \cong \mathrm{pr}_1^\ast\nu_g \oplus TS^1.\]
	Assume that $g_\ast([L])=0\in H_1(M\times I;\z{})$. Then the following hold:
	\begin{enumerate}
		\itemsep=5pt
		\item There exists a $2$-plane bundle $E\to \Sigma$ such that  
		 \[w_1(E)=w_1(\Sigma),\quad w_2(E)=\bar{g}^\ast(w_2(M\times I)),\quad E|_L\cong \varepsilon^2_L.\]
	where $\Sigma$ is a compact connected surface with boundary $\partial\Sigma=L$,   and $\bar{g}\colon \Sigma\to M\times I$ extends $g$.                                                        
		\item There exists a compact smooth oriented $3$-manifold $G\colon P\to M\times I$, and a framing $\varphi$ of $\nu_G$ such that 
	\[\partial P\cong L\times S^1,\quad
G|_{\partial P}=g\circ \mathrm{pr}_1,\quad \varphi|_{\partial P}=\bar{\phi}_L.\]

	\end{enumerate}
	
\end{lemma}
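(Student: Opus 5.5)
The plan is to follow the proof of Lemma \ref{lem:spin-filling}, replacing the spin structure $\eta$ on $T\Sigma\oplus\det(T\Sigma)$ by a genuine framing of the normal bundle of a suitable $3$-manifold mapping to $M\times I$.

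\textbf{Step 1: the surface and the bundle $E$.} Since $g_\ast([L])=0\in H_1(M\times I;\z{})$, Thom's theorem gives a compact connected surface $\Sigma$ with $\partial\Sigma=L$ and a map $\bar g\colon\Sigma\to M\times I$ extending $g$, exactly as in the proof of Lemma \ref{lem:spin-filling}. Rank-$2$ bundles over $\Sigma$ that are trivialized on $L$ correspond to relative classes. We take $E=\det(T\Sigma)\oplus\lambda$, where $\lambda$ is an auxiliary bundle; we may need a $2$-plane bundle $E'$ with prescribed $w_1$ instead of a literal Whitney sum. Concretely:
\begin{itemize}
\item $w_1(E)=w_1(\Sigma)$ is forced by requiring $\det(E)\cong\det(T\Sigma)$.
\item Over a $2$-complex, $O(2)$-bundles with fixed $w_1$ trivialized on $L$ are classified by a twisted relative Euler class in $H^2(\Sigma,L;\Z^{w_1})$. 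Its mod $2$ reduction is the relative $w_2$, and this reduction is surjective onto $H^2(\Sigma,L;\z{})\cong\z{}$.
\end{itemize}
We therefore choose the twisted Euler class so that its reduction is $\bar g^\ast w_2(M\times I)$, taken relative to the boundary trivialization. The trivialization $E|_L\cong\varepsilon^2_L$ comes from a collar, as before. The main care needed in this step is to keep track of relative versus absolute $w_2$.

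\textbf{Step 2: the circle bundle $P$.} Put $P=S(E)$, $\pi\colon P\to\Sigma$, and $G=\bar g\circ\pi$. Lemma \ref{lem:circle-bundle} gives $\partial P\cong L\times S^1$, $G|_{\partial P}=g\circ\mathrm{pr}_1$, and
\[TP\oplus\varepsilon^1\cong\pi^\ast(T\Sigma\oplus E).\]
Since $w_1(E)=w_1(\Sigma)$, this bundle has $w_1=0$, so $P$ is orientable. Its second Stiefel-Whitney class is
\[w_2(TP)=\pi^\ast\big(w_2(\Sigma)+w_1(\Sigma)^2+w_2(E)\big)=\pi^\ast\bar g^\ast w_2(M\times I),\]
because $w_2(\Sigma)=w_1(\Sigma)^2$ for a surface. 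Stably $\nu_G\cong G^\ast T(M\times I)-TP$, so $w_1(\nu_G)=0$ and $w_2(\nu_G)=G^\ast w_2-w_2(TP)=0$. The same vanishing holds relative to $\partial P$ once we use the chosen boundary trivializations.

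\textbf{Step 3: producing the framing.} By Lemma \ref{lem:spinnable-trivial} applied to the $3$-complex $P$, the bundle $\nu_G$ is trivial. The set of framings extending the boundary framing $\bar\phi_L$ is governed by relative obstruction theory for $SO_n$ with $n$ large. The obstructions lie in
\begin{itemize}
\item $H^{i+1}(P,\partial P;\pi_i SO)$ for $i=0,1,2$. Here $\pi_0 SO=0$, the group $\pi_1SO=\z{}$ contributes the relative $w_2$-type class, and $\pi_2 SO=0$.
\end{itemize}
So the only obstruction is a relative $w_2(\nu_G,\bar\phi_L)\in H^2(P,\partial P;\z{})$. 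This is the step I expect to be the main obstacle: Step 2 only shows that the absolute class vanishes, not the relative one.

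To handle it, I would first exploit the freedom in choosing $E$, by changing its twisted Euler class by even multiples (or changing $\lambda$), and also the freedom in choosing $\Sigma$. The exact sequence
\[H^1(P)\to H^1(\partial P)\to H^2(P,\partial P)\]
shows that a nonzero relative class can be absorbed by altering the framing data on $L$ through classes that extend. Failing that, I would adjust the chosen framing on $\varepsilon^2_L$, which changes the relative class by its image under $\delta$. This mirrors the parity argument in Lemma \ref{lem:spin-filling}(1), now with the boundary framing $\phi_L$ on $L$ playing the role of $\sigma$.

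Once the relative obstruction vanishes, the framing $\varphi$ of $\nu_G$ extending $\bar\phi_L$ exists, which proves (2).
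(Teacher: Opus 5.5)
You have a genuine gap. Your Steps 1 and 2 reproduce the paper's construction: a $2$-plane bundle $E$ with $w_1(E)=w_1(\Sigma)$ and prescribed relative twisted Euler class, then $P=S(E)$ and $G=\bar g\circ\pi$. You also correctly isolate the only real issue, the relative obstruction in $H^2(P,\partial P;\Z/2)$. Two side remarks: your sentence at the end of Step 2 that the vanishing ``holds relative to $\partial P$'' contradicts this, and the absolute statements are empty anyway, since $H^2(\Sigma;\Z/2)=0$.

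The proposal never disposes of this relative class, and the remedies you list cannot work.
\begin{itemize}
\item \emph{What pulling back gives on the fibres.} If you pull back a relative isomorphism $T\Sigma\oplus E\oplus\varepsilon^{n-1}\cong\bar g^\ast T(M\times I)$ along $\pi$, the fibre circles over $L$ get their framing from (vertical line $\oplus$ radial line) $\cong\pi^\ast E$ and $E|_L\cong\varepsilon^2_L$. That is the bounding (disc) framing, whereas $\bar\phi_L$ gives them the Lie framing.
\item \emph{The discrepancy cancels the $E$-dependence.} The difference is the class $c\in H^1(\partial P;\Z/2)$ dual to the fibres. Its coboundary $\delta c$ equals the relative Euler class of $E$ (with respect to $E|_L\cong\varepsilon^2_L$) mod $2$, times $\pi^\ast$ of the generator of $H^2(\Sigma,L;\Z/2)$. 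This exactly cancels the contribution of the relative $w_2(E)$.
\item \emph{Consequence.} The obstruction is independent of the choice of $E$ and of the trivialization of $\varepsilon^2_L$: even changes do nothing, and odd changes cancel.
\item \emph{Boundary changes.} Altering the boundary framing is not available, since $\bar\phi_L$ is prescribed. Classes that extend over $P$ have zero coboundary, so they change nothing.
\end{itemize}

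What remains is a $\Z/2$-valued invariant of $(\Sigma,\bar g,\phi_L)$, and it can be nonzero. Take $L$ a small circle in a coordinate ball with $[L,\phi_L]=\eta$. Then $(L\times S^1,\bar\phi_L)$ represents $e_\ast(\eta^2)$. When $M$ is spin this class is nonzero in $\Omega_2^{\fr}(M)$: it is detected by $c_\ast$ into $\Omega_2^{\Spin}$, cf.\ Theorem \ref{thm:frbdsm-spin}. So no argument that ignores $w_2(M)$ can prove (2).

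The missing idea is to use the standing hypothesis $w_2(M)\neq 0$ of this subsection, through the freedom in $\Sigma$ that you mention but never exploit:
\begin{itemize}
\item Choose a closed surface $h\colon S\to M$ with $\langle w_2(M),h_\ast[S]\rangle\neq 0$; it exists since unoriented bordism surjects onto $H_2(M;\Z/2)$.
\item Replace $(\Sigma,\bar g)$ by $(\Sigma\# S,\bar g\# h)$.
\item This changes the relative $w_2$ of $\bar g^\ast T(M\times I)$ by $1$.
\item The $T\Sigma\oplus E$ side, net of the cancelling $E$-term, changes by $\langle w_2(S)+w_1(S)^2,[S]\rangle=0$.
\item Hence the obstruction flips.
\end{itemize}

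The paper's own proof passes over this same point with ``by construction, we can require $\varphi|_{\partial P}$ coincides with $\bar\phi_L$.'' Pulling back along $\pi$ in fact produces the wrong framing on the fibres, so your suspicion was justified. Still, as written your argument does not establish the lemma.
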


\begin{proof}
(1) Since $g_\ast([L])=0\in H_1(M\times I;\z{})$, there exist a compact connected smooth surface $\Sigma$ with boundary $\partial \Sigma=L$ and a map $\bar{g}\colon \Sigma\to M\times I$ extending $g$. The given framing $\phi_L$ of $\nu_g$, together with the Lie framing of $TL$, gives a trivialization of the pullback bundle $g^\ast T(M\times I)$ over $L$. 

We construct the required $2$-plane bundle $E\to \Sigma$ with a chosen boundary trivialization. Choose a boundary trivialization
\[
\tau_\partial\colon E|_L\xrightarrow{\cong}\varepsilon^2_L
\]
so that the induced boundary trivialization of
\[
T\Sigma|_L\oplus E|_L\oplus \varepsilon^{n-1}
\]
is compatible with the above trivialization of $g^\ast T(M\times I)$.
First choose $E\to \Sigma$ with $w_1(E)=w_1(\Sigma)$.
%Since $H^3(\RP{\infty};\Z)=0$, the fibration 
%	\[K(\Z,2)\simeq B\SO_2\to B\mathrm{O}_2\xra{w_1=B\det}K(\z{},1)\] 
%implies a homotopy equivalence $B\mathrm{O}_2\simeq K(\z{},1)\times K(\Z,2)$. 
Note that a $2$-plane bundle over the surface $\Sigma$ is classified by its first Stiefel-Whitney class and the twisted Euler class
\[\wtd{e}(E,\tau_{\partial})\in H^2(\Sigma,L;\Z_{w_1(E)}),\]
where the local coefficient system $\Z_{w_1(E)}$ is determined by the first Stiefel-Whitney class $w_1(E)$ of $E$.
Since $\Sigma$ is connected, Poincar\'e-Lefschetz duality with local
coefficients gives 
\[
H^2(\Sigma,L;\Z_{w_1(E)})\cong \Z,
\]
and the reduction map
\[
\rho_2:H^2(\Sigma,L;\Z_{w_1(E)})
\longrightarrow H^2(\Sigma,L;\Z/2)
\]
is surjective.   Since $g^\ast(w_2(M\times I))|_L=0$, we regard $g^\ast(w_2(M\times I))$ as a class in $H^2(\Sigma,L;\Z/2)$.  Choose a lift of it in $H^2(\Sigma,L;\Z_{w_1(E)})$ and let $E\colon \Sigma$ be the corresponding $2$-plane bundle. Then $E$ satisfies the properties in the statement.

(2) Define
\[\pi\colon P=S(E)\to \Sigma,\quad G=\bar{g}\circ \pi\colon P\to M\times I.\]  
By Lemma \ref{lem:circle-bundle}, we have 
 \begin{align*}
	\partial P=S(E|_L)\cong L\times S^1,\quad G|_{\partial P}=g\circ \mathrm{pr}_1,\quad TP\oplus \varepsilon^1\cong \pi^\ast(T\Sigma\oplus E).
 \end{align*}
 We claim that there is a bundle isomorphism 
\begin{equation}\label{eq:stable-iso-aux2}
	T\Sigma\oplus E\oplus \varepsilon^{n-1}\cong \bar{g}^\ast T(M\times I)
\end{equation}
compatible with the chosen boundary trivializations.
 First, we have 
 \[w_1(T\Sigma\oplus E)=w_1(\Sigma)+w_1(E)=0=w_1(\bar{g}^\ast T(M\times I)),\]so the bundle $T\Sigma\oplus E\oplus \varepsilon^{n-1}$ is oriented. Recall that oriented $2$-plane bundles are classified by the second Stiefel-Whitney classes. By the properties of $E$ in (1), we compute that
 \begin{align*}
w_2(T\Sigma\oplus E\oplus \varepsilon^{n-1})
&=w_2(T\Sigma)+w_1(T\Sigma)w_1(E)+w_2(E) \\
&=w_1(\Sigma)^2+w_1(\Sigma)^2+\bar{g}^\ast(w_2(M\times I)) \\
&=w_2(\bar{g}^\ast T(M\times I)).
\end{align*}
Hence the claim is proved.
Pulling the isomorphism (\ref{eq:stable-iso-aux2}) back by $\pi$, we obtain 
\[TP\oplus \varepsilon^n \cong \pi^\ast(T\Sigma\oplus E\oplus \varepsilon^{n-1})
\cong G^\ast T(M\times I)\cong TP\oplus\nu_G.\]
It follows that $\nu_G$ is stably trivial. Since $\mathrm{rank}(\nu_G)=n>\dim(P)=3$, the normal bundle $\nu_G$ is actually trivial, and hence admits admits a framing, say $\varphi$. By construction, we can require $\varphi|_{\partial P}$ coincides with the normal framing $\bar{\phi}_L$ on $\partial P=L\times S^1$. The proof is completed.
\end{proof}

\begin{proposition}\label{prop:nonspin-kernel}
	The homomorphism $\Phi_1$ in (\ref{eq:Phi1-nonspin}) is a group isomorphism.
\end{proposition}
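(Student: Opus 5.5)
We prove surjectivity and injectivity of $\Phi_1$ separately, following the pattern of Proposition \ref{prop:Phi-spin-isom}. The new features in the nonspin case are that the summand $\Omega_2^{\fr}$ disappears and that $H_1(M;\z{})$ is replaced by the quotient by $w_2(M)\smallfrown_\Z H_3(M;\Z)$.

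\emph{Surjectivity.} Let $a\in H_1(M;\z{})$ be represented by a simple closed curve $\gamma\colon S^1\to M$. Take $f=\gamma\circ\mathrm{pr}_1\colon T^2\to M$ and the solid torus $F\colon S^1\times D^2\to S^1\xra{\gamma} M\hookrightarrow M\times I$. Here $f^\ast(w_2(M))$ factors through $H^2(S^1;\z{})=0$, so $\nu_f$ is trivial. We frame $\nu_f$ using the framing of $\gamma^\ast T(M\times I)$ over $S^1$ together with the Lie framing on the second $S^1$ factor, twisted so that the framing does not extend across the meridian disk. Then the obstruction $w_2(\nu_F,\varphi)$ evaluates to $1$ on the meridian disk, so $[L]$ is the core circle and $\Phi_1([T^2,f,\varphi])=[a]$. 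The class lies in $\ker(h_2)$ because $f_\ast[T^2]=0$.

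\emph{Injectivity.} Suppose $\Phi_1([N,f,\varphi])=0$, so that $F_\ast([L])=w_2(M)\smallfrown_\Z y$ for some $y\in H_3(M;\Z)$.

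First we modify the null-bordism to kill this class. Represent $y$ by a map $Z\to M$ of a closed oriented $3$-manifold; this is possible since $\Omega_3^{\SO}(M)\cong H_3(M;\Z)$, as in Lemma \ref{lem:SO-H2}. Form $W\sqcup(-Z)$, placed in $M\times I$ and made connected by a $1$-handle if desired. This is another oriented null-bordism of $N$. By the computation preceding (\ref{eq:Phi1-nonspin}), applied to $Z$ with empty boundary, the obstruction class for $Z$ is $\mathrm{PD}(F_Z^\ast w_2(M))$, whose image in $M$ is $w_2(M)\smallfrown_\Z y$. So after the replacement we may assume $F_\ast([L])=0\in H_1(M\times I;\z{})$.

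Then, as in the spin case, remove a tubular neighbourhood $\nu(L)\cong L\times D^2$ and extend $\varphi$ over $V=W\setminus\mathrm{int}\,\nu(L)$. The restriction of the framing to $\partial\nu(L)=L\times S^1$ is induced from a framing $\phi_L$ on $L$ together with the nontrivial (Lie) framing on the meridian circles; this is because the obstruction is nontrivial on each meridian disk. That is exactly the boundary framing $\bar\phi_L$ of Lemma \ref{lem:normal-framing-filling}. Since $F_\ast([L])=0$, Lemma \ref{lem:normal-framing-filling}(2) provides $G\colon P\to M\times I$ with $\partial P=L\times S^1$ and a normal framing restricting to $\bar\phi_L$. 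After a homotopy of $F$ near $L$, glue to obtain $W'=V\cup P$, which is a framed null-bordism of $(N,f,\varphi)$.

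\emph{Main obstacle.} The hard step is matching the framing on $\partial\nu(L)$ with $\bar\phi_L$. In the spin case there was an extra condition $[L,\tau_L]=0\in\Omega_1^{\Spin}$, coming from $[N,\sigma]=0\in\Omega_2^{\Spin}$. Here no such condition is available, so we must show that the $\Omega_1^{\fr}\cong\z{}$ ambiguity in the framing of $L$ can be absorbed. The first thing to try is this: because $w_2(M)\neq0$, pick a class $z\in H_2(M;\Z)$ with $\lra{w_2(M),z}_\Z\neq 0$. Tubing $W$ with the closed oriented $3$-manifold $S^1\times\Sigma_z$ (with the circle factor mapping to a point) changes $L$ by a circle $S^1\times\{\text{pt}\}$ that is null in $H_1(M;\z{})$ but carries the opposite framing. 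This absorbs the $\z{}$ obstruction, and it explains why $\Omega_2^{\fr}$ drops out of the kernel, in accordance with Theorem \ref{mainthm:frbdsm-dim2}. Once this parity adjustment is made, the filling in Lemma \ref{lem:normal-framing-filling}(1) can be chosen so that it matches the boundary data, which completes the proof of injectivity.
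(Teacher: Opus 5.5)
Your argument has the same skeleton as the paper's proof. Surjectivity uses the solid torus over $\gamma$ with a framing that does not extend across the meridian disk. Injectivity adds a closed oriented $Z$ realizing $y$ so that $F_\ast([L])=0$, excises $\nu(L)$, and caps off with Lemma \ref{lem:normal-framing-filling}. The one real difference is your parity step. The paper applies Lemma \ref{lem:normal-framing-filling} directly to whatever framing $\phi_L$ arises and never confronts an $\Omega_1^{\fr}$-type obstruction.

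Your worry is justified. The proof of Lemma \ref{lem:normal-framing-filling} never uses $w_2(M)\neq 0$ and never accounts for the framing class of $\phi_L$. Run verbatim for a spin $M$, with $L$ a small circle carrying the nonbounding framing, it would show that $S^1\times S^1$ with the Lie framing on both factors, i.e.\ $e_\ast(\eta^2)$, is null-bordant in $M$. That contradicts Theorem \ref{thm:frbdsm-spin}.

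So $w_2(M)\neq0$ must be used precisely at the capping-off step, and your $S^1\times\Sigma_z$ device does this correctly. Since $\lra{w_2(M),z}=1$, the obstruction on $S^1\times\Sigma_z$ is dual to $S^1\times\{\mathrm{pt}\}$. That circle maps to a point, so $F_\ast([L])$ is unchanged. Its framing can be chosen at will, by twisting the extension over $S^1\times(\Sigma_z\setminus D^2)$ by the class pulled back from $H^1(S^1;\z{})$.

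Two refinements are needed to turn this into a proof.

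First, in a nonspin $M$ the original components of $L$ have no intrinsic $\Omega_1^{\fr}$-parity. What the filling actually needs is the vanishing of a relative obstruction in $H^2(\Sigma,L;\z{})\cong\z{}$ for a chosen bounding surface $\Sigma$. Your new circle lies in a chart, so it flips exactly that class. Equivalently, you can replace $\Sigma$ by $\Sigma\#\Sigma_z$, which changes the class by $\lra{w_2(M),z}=1$.

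Second, you should state and check this parity-zero version of the filling lemma (the framed analogue of Lemma \ref{lem:spin-filling}), rather than asserting that the filling \emph{can be chosen} to match.

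With these, your argument is complete and covers a point the paper's proof passes over.
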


\begin{proof}
	\underline{Surjectivity of $\Phi_1$}: Let $\bar{a}\in \tfrac{H_1(M;\Z/2)}{w_2(M)\smallfrown_Z H_3(M;\Z)}$ and choose a representative $a=[\gamma]\in H_1(M;\z{})$ represented by an embedded circle $\gamma\colon S^1\to M$. Define
	\[f\colon T^2=S^1\times S^1\xra{\mathrm{pr}_1}S^1\xra{\gamma}M, \quad F\colon S^1\times D^2\xra{\mathrm{pr}_1}S^1\xra{\gamma}M\hookrightarrow M\times I.\]
	Equip $T^2=\partial(S^1\times D^2)$ with the normal framing $\phi_1$ induced from the product framing of the solid torus $S^1\times D^2$. Then the obstruction class $w_2(\nu_F,\phi_1)$ evaluates nontrivially on the meridian disk $\{\ast\}\times D^2$, so its Poincar\'e dual is represented by the longitude $S^1\times\{\ast\}$. Hence 
	\[\Phi_1([T^2,f,\phi_1])=\bar{a},\]
    and $\Phi_1$ is surjective.
	
	\underline{Injectivity of $\Phi_1$}. Suppose that $[N,f,\phi]\in \ker(h_2)$ satisfies 
	\[\Phi_1([N,f,\phi])=0\in\tfrac{H_1(M;\Z/2)}{w_2(M)\smallfrown_Z H_3(M;\Z)}.\]
	Choose an oriented null-bordism $(W,F)$ of $[N,f]$ in $M\times I$ and let 
	\[[L]=\PD(w_2(\nu_F,\phi))\in H_1(W;\z{}).\]
Since $[F_\ast([L])]_{w_2(M)\smallfrown_\Z}=0$, Thom's realization theorem (see \cite{Thom54} or \cite[Theorem 7.37]{Rudyakbook}) implies that there exist a closed oriented smooth $3$-manifold $Z$ and a map $G\colon Z\to M\times I$ such that
		\begin{equation*}\label{eq:FL}
			F_\ast([L])=w_2(M)\smallfrown_\Z G_\ast([Z]).
		\end{equation*}
Let 
		\[W'=W\sqcup Z,\quad F'=F\sqcup G\colon W'\to M\times I.\]
	Then $\partial W'=N$, $F'|_N=(f,0)$, and the obstruction to extending the normal framing $\varphi$ of $N$ over $W'$ is the sum 
		\[w_2(\nu_{F'},\varphi)=w_2(\nu_F,\varphi)+w_2(\nu_G).\]
	Since $Z$ is spin,  we have $w_2(\nu_G)=G^\ast(w_2(M))$, whose Poincar\'e dual is 
		\[[L_Z]=\mathrm{PD}(w_2(\nu_G))=G^\ast(w_2(M))\smallfrown_\Z [Z] \in H_1(Z;\z{}).\]
	It follows that
		\begin{equation*}\label{eq:GK}
		G_\ast([L_Z])=G_\ast\big(G^\ast(w_2(M))\smallfrown_\Z [Z]\big)=w_2(M)\smallfrown_\Z G_\ast([Z]).
		\end{equation*}
	Let $[L']=\mathrm{PD}(w_2(\nu_{F'},\varphi))$ be represented by a disjoint circle embedded in the interior of $W'$, then we get  
		\[F'_\ast([L'])=F_\ast([L])+G_\ast([L_Z])=0\in H_1(M\times I;\z{}).\]

Let 
\[V'=W'\setminus \mathrm{int}(\nu(L')).\]   
Then $\partial V'=N\sqcup L'\times S^1$, and the normal framing $\phi$ on $N$ extends to a normal framing $\phi_{V'}$ on $\nu_{F'|_{V'}}$. Denote by $\phi_{L'}$ the framing of $\nu_{F'|_L}$ induced by $\phi_V$. Denote by $\bar{\phi}_{L'}=\phi_{V'}|_{\partial \nu(L')}$ the induced framing on $\partial \nu(L')=L'\times S^1$.
Applying Lemma \ref{lem:normal-framing-filling} to the framed $1$-manifold $(L',F'|_{L'},\phi_{L'})$, there exist a compact smooth oriented $3$-manifold $H\colon P\to M\times I$ and a framing $\varphi$ on $\nu_H$ such that 
\[\partial P=L'\times S^1,\quad H|_{\partial P}=F'|_{L'}\circ \mathrm{pr}_1,\quad \varphi|_{\partial P}=\bar{\phi}_{L'}.\]
After a homotopy of $F'$ supported in $\nu(L')$, we may assume 
\[F'|_{\partial\nu(L')}=F'|_{L'}\circ \mathrm{pr}_1,\]   
so that $F'|_{V'}$ and $H$ coincides on the common boundary $L'\times S^1$, and that the normal framings $\phi_{V'}$ and $\varphi$ also coincide there. Define
\[W''=V'\cup_{L'\times S^1}P,\quad F''=F'\cup H,\quad \phi''=\phi_{V'}\cup \varphi.\]
Then $\partial W''=N$, $F''|_{\partial W''}=(f,0)$, and hence $(W'',F'',\phi'')$ is a normally framed null-bordism of $(N,f,\phi)$. Thus $[N,f,\phi]=0\in\ker(h_2)$,  the homomorphism $\Phi_1$ in (\ref{eq:Phi1-nonspin}) is injective. The proof of the Proposition is completed.
\end{proof}

\begin{theorem}\label{thm:frbdsm-nonspin}
	Let $M$ be a closed oriented nonspin $(n+2)$-manifold with $n\geq 4$. Then there is a SES of groups
	\begin{equation}\label{SES:frbdsm-nonspin}
		0\to \tfrac{H_1(M;\z{})}{w_2(M)\smallfrown_\Z H_3(M;\Z)}\to \Omega_2^{\fr}(M)\xra{h_2} \ker\big(\lra{w_2(M),-}_\Z\big)\to 0,
	\end{equation}
	which splits if and only if 
	\[w_2(M)\smallfrown \delta^{-1}\big({}_2\ker\left(\lra{w_2(M),-}_\Z\right)\big)\subseteq w_2(M)\smallfrown_\Z H_3(M;\Z).\]
\end{theorem}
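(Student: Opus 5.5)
The short exact sequence (\ref{SES:frbdsm-nonspin}) is assembled from the two preceding results. Lemma \ref{lem:nonspin-im-h2} identifies the image of $h_2$ with $\ker(\lra{w_2(M),-}_\Z)$ via the isomorphism $\Omega_2^{\SO}(M)\cong H_2(M;\Z)$ of Lemma \ref{lem:SO-H2}. Proposition \ref{prop:nonspin-kernel} identifies $\ker(h_2)$ with $\tfrac{H_1(M;\z{})}{w_2(M)\smallfrown_\Z H_3(M;\Z)}$ via $\Phi_1$. Exactness at the middle term is then just the definition of $\ker(h_2)$, so only the splitting criterion needs new work.

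For the splitting I would use the same mechanism as in Lemma \ref{lem:chtp=-1} and Theorem \ref{thm:cohtp=-2:CW}. Since the kernel is an elementary $2$-group, the extension is classified by the connecting homomorphism
\[\phi\colon {}_2\ker(\lra{w_2(M),-}_\Z)\to \tfrac{H_1(M;\z{})}{w_2(M)\smallfrown_\Z H_3(M;\Z)}\]
from the snake lemma for multiplication by $2$. The sequence splits if and only if $\phi=0$. Concretely, take $x\in{}_2\ker(\lra{w_2(M),-}_\Z)$ and lift it to $[N,f,\varphi]$; then $\phi(x)=\Phi_1(2[N,f,\varphi])$.

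Write $x=\delta(y)$ with $y\in H_3(M;\z{})$; this is possible because $2x=0$. The key step is to prove
\[\Phi_1(2[N,f,\varphi])=[\,w_2(M)\smallfrown y\,].\]
I would argue geometrically. Represent $y$ by a map $G\colon Z\to M\times I$, where $Z$ is a compact $3$-manifold with $\partial Z=N\sqcup N$, a mod-$2$ cycle whose boundary is $2N$; for instance, thicken a mod $2$ null-bordism of two copies of $(N,f)$. Then $2[N,f,\varphi]=[N\sqcup N, f\sqcup f,\varphi\sqcup\varphi]$ is oriented null-bordant through an oriented filling built from $Z$. Next compute the framing obstruction on this filling. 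As in the computation preceding (\ref{eq:Phi1-nonspin}), the spin property of $3$-manifolds gives $w_2(\nu)=G^\ast w_2(M)$ away from the boundary. Since the two boundary framings agree, the relative obstruction is Poincaré dual to $G^\ast w_2(M)\smallfrown[Z,\partial Z]$. Pushing forward, this yields $w_2(M)\smallfrown y$.

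Alternatively, I would transport the statement through Pontryagin--Thom and Poincaré duality to Theorem \ref{thm:cohtp=-2:mfds}. This route needs Lemma \ref{lem:mfd:Sq2} and the duality statements (1)--(2) of the introduction: they say $\Sq^2_\Z$ on $H^n$ and $\Sq^2$ on $H^{n-1}$ correspond to $\lra{w_2(M),-}_\Z$ and $w_2(M)\smallfrown_\Z-$, and that $\delta$ commutes with $\PD$ up to sign. Under these translations the splitting condition of Theorem \ref{thm:cohtp=-2:mfds} becomes exactly the stated homological condition. In the nonspin case $\varepsilon=1$, so the $\z{1-\varepsilon}$ summand vanishes and the two sequences agree.

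The main obstacle is the geometric identification of $\phi$. It requires a careful choice of the lift $\delta^{-1}(x)$ as an honest mod-$2$ relative cycle, and it must track that the framing obstruction on the doubled boundary contributes nothing extra. If the geometric route stalls, I would fall back on the duality translation from Theorem \ref{thm:cohtp=-2:mfds}. That approach also needs $h_2$ and $\Phi_1$ to correspond to $h^n$ and the inclusion of $QH^{n+1}(M;\Sq^2_\Z)$, which follows from naturality of the Pontryagin--Thom construction with respect to the Thom class $\mathrm{MSO}_n\to K_n(\Z)$.
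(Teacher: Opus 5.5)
Your geometric route is essentially the paper's proof. The sequence is assembled from Lemma~\ref{lem:nonspin-im-h2} and Proposition~\ref{prop:nonspin-kernel}, and the extension is detected by $x\mapsto 2\alpha$ in the elementary abelian group $\tfrac{H_1(M;\z{})}{w_2(M)\smallfrown_\Z H_3(M;\Z)}$. Then $2\alpha$ is computed by filling $N\sqcup N$ with an oriented $3$-manifold $W$ carrying $y\in\delta^{-1}(x)$, and $w_2(TW)=0$ together with naturality of the cap product gives $[w_2(M)\smallfrown y]$.

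The boundary issue you flag is handled in the paper only by that cap-product computation on $(W,\partial W)$. A cleaner justification is to glue the two copies of $N$, obtaining a closed $3$-manifold $\bar W$ whose mod-$2$ fundamental class maps to $y$. In $\bar W$ the surface $N$ is two-sided and dual to $w_1(\bar W)$, so $w_2(\bar W)=w_1(\bar W)^2=0$ and the glued obstruction class is just the pullback of $w_2(M)$.
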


\begin{proof}
The SES follows by Lemma \ref{lem:nonspin-im-h2} and Proposition \ref{prop:nonspin-kernel}.
It suffices to prove the splitting criterion. First,  the quotient group
\[Q:=\tfrac{H_1(M;\z{})}{w_2(M)\smallfrown_\Z H_3(M;\Z)}\]
is a elementary $2$-torsion group implying that for any $x\in {}_2\ker\big(\lra{w_2(M),-}_\Z\big)$ and $\alpha\in h_2^{-1}(x)$, the extension obstruction class is given by $2\alpha\in Q$ and the SES (\ref{SES:frbdsm-nonspin}) splits if and only if $2\alpha=0$ for all $x\in {}_2\ker\big(\lra{w_2(M),-}_\Z\big)$.

Write $\alpha=[N,f,\varphi]\in \Omega_2^{\fr}(M)$ with $x=f_\ast([N])$ a elementary $2$-torsion class. Then there exists some $y\in H_3(M;\z{})$ satisfying $x=\delta(y)$, where $\delta\colon H_3(M;\Z/2)\to H_2(M;\Z)$ is the Bockstein homomorphism. 
Let $F\colon W\to M\times I$ be an embedding of a smooth oriented compact $3$-manifold such that 
\[\partial W=N\sqcup N,\quad F|_{\partial W}=(f,0)\sqcup (f,1),~~ \text{ and } y=F_\ast([W]_2),\] 
where $[W]_2\in H_3(W;\z{})$ is a fundamental class that is not a mod-$2$ reduction of any integral class. Then the class 
\[F_\ast\big(\mathrm{PD}(w_2(\nu_F,\varphi\sqcup\varphi))\big)\in H_1(M;\z{})\]
exactly represents $2\alpha\in Q$.
%The obstruction to extending the normal framing of $\partial W$ over $W$ is given by the relative Stiefel-Whitney class $w_2(\nu_F,\varphi\sqcup\varphi)$, whose Poincar\'e dual $\mathrm{PD}(w_2(\nu_F,\varphi\sqcup\varphi))$ into $H_1(M;\z{})$ exactly represents $2\alpha\in Q$.

Let $j\colon W\to (W,\partial W)$ be the canonical inclusion. Then we have 
\[j_\ast([W]_2)=[W,\partial W]_2, \quad j^\ast\big(w_2(\nu_F;\varphi\sqcup\varphi)\big)=w_2(\nu_F), \] 
where the first equality holds because $[\partial W]_2=0\in H_2(\partial W;\z{})$. 
Using the naturality of cap products, we compute that
\begin{align*}
	w_2(\nu_F,\varphi\sqcup\varphi)\smallfrown [W,\partial W]_2&=w_2(\nu_F,\varphi\sqcup\varphi)\smallfrown j_\ast([W]_2)\\
	&=j^\ast(w_2(\nu_F,\varphi\sqcup\varphi))\smallfrown [W]_2\\
	&=w_2(\nu_F)\smallfrown [W]_2\\
	&=F^\ast(w_2(M\times I))\smallfrown [W]_2,
\end{align*}
where the last equality holds for $w_2(TW)=0$. 
Consequently, we get
\begin{align*}
	F_\ast\big(\mathrm{PD}(w_2(\nu_F,\varphi\sqcup\varphi))\big)&=F_\ast\big(w_2(\nu_F,\varphi\sqcup\varphi)\smallfrown [W,\partial W]_2\big)\\
	&=F_\ast(F^\ast(w_2(M\times I))\smallfrown [W]_2)\\
	&=w_2(M\times I)\smallfrown F_\ast([W]_2)\\
	&=w_2(M\times I)\smallfrown y.
\end{align*}
Therefore in the quotient group $Q$, we obtain the formula 
\[2\alpha=[w_2(M)\smallfrown y]_{w_2(M)\smallfrown_\Z},\]
which proves the splitting criterion in the statement.
\end{proof}

\begin{proof}[Geometric Proof of Theorem \ref{mainthm:frbdsm-dim2}]
	Combine Theorems \ref{thm:frbdsm-spin} and \ref{thm:frbdsm-nonspin}.
\end{proof}

The following lemma is a corollary of Lemmas \ref{lem:normal-framing-filling} and Lemma \ref{lem:spinnable-trivial}.

\begin{lemma}\label{lem:normal-spin-filling}
	Let $(L,g,\sigma_L)$ be a closed $1$-manifold in $M\times I$ with a normal structure $\sigma_L$ on $\nu_g$. Denote by $\bar{\sigma}_L$ the spin structure of the normal bundle of the map 
	\[g\circ \mathrm{pr}_1\colon L\times S^1\to M\times I\]
	induced from $\sigma_L$ and the Lie framing of $TS^1$. If $g_\ast([L])=0$,
then there exist a compact smooth oriented $3$-manifold $G\colon P\to M\times I$, and a normal spin structure $\eta$ on $\nu_G$ such that
\[
\partial P\cong L\times S^1,\quad
G|_{\partial P}=g\circ \mathrm{pr}_1,\quad \eta|_{\partial P}\cong \bar{\sigma}_L.
\]

\end{lemma}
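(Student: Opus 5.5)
The plan is to deduce this from Lemma \ref{lem:normal-framing-filling} by passing from normal spin structures to normal framings and back, using Lemma \ref{lem:spinnable-trivial} to make the translation. Presumably $M$ is spin in the intended application; if it is not, the construction of Lemma \ref{lem:normal-framing-filling} still applies verbatim, since it only uses $w_1$ and $w_2$ information.

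\textbf{Step 1: upgrade the spin structure on $L$ to a framing.} The normal bundle $\nu_g$ over the $1$-dimensional manifold $L$ is oriented of rank $n+2\geq 3$. It carries the spin structure $\sigma_L$, so $w_2(\nu_g)=0$. By the second part of Lemma \ref{lem:spinnable-trivial} (here $\dim L\le 2$), the choice of $\sigma_L$ determines a framing $\phi_L$ of $\nu_g$ whose induced spin structure is $\sigma_L$. Under this choice, the framing $\bar{\phi}_L$ of $\nu_{g\circ\mathrm{pr}_1}$ built from $\phi_L$ and the Lie framing of $TS^1$ induces exactly $\bar{\sigma}_L$. This holds because both are obtained from the same stable splitting $\nu_{g\circ \mathrm{pr}_1}\oplus\varepsilon^1\cong \mathrm{pr}_1^\ast\nu_g\oplus TS^1$, and passing from framings to spin structures is compatible with Whitney sums and stabilization.

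\textbf{Step 2: apply the framed filling.} Since $g_\ast([L])=0$ in $H_1(M\times I;\z{})$, Lemma \ref{lem:normal-framing-filling} applied to $(L,g,\phi_L)$ gives a compact oriented $3$-manifold $G\colon P\to M\times I$ with $\partial P\cong L\times S^1$ and $G|_{\partial P}=g\circ\mathrm{pr}_1$. It also gives a framing $\varphi$ of $\nu_G$ with $\varphi|_{\partial P}=\bar{\phi}_L$.

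\textbf{Step 3: descend to spin.} Let $\eta$ be the normal spin structure on $\nu_G$ induced by $\varphi$. Restriction commutes with passing from framings to spin structures, so $\eta|_{\partial P}$ is the spin structure induced by $\bar{\phi}_L$. By Step 1, this is $\bar{\sigma}_L$.

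The only delicate point is Step 1: checking that the framing produced by Lemma \ref{lem:spinnable-trivial} really induces $\sigma_L$, and hence that $\bar\phi_L$ induces $\bar\sigma_L$. I would argue as follows. Spin structures on $\nu_g$ form an $H^1(L;\z{})$-torsor, and framings modulo homotopy form a torsor under $[L,\SO_{n+2}]$. The map $[L,\SO_{n+2}]\to H^1(L;\z{})$ is induced by $\pi_1(\SO)\cong\z{}$ and is surjective, so some framing hits the prescribed spin structure.
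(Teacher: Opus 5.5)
Your proposal is correct and follows the paper's proof essentially step for step. You upgrade $\sigma_L$ to a compatible framing $\phi_L$ of $\nu_g$ via Lemma~\ref{lem:spinnable-trivial}, apply Lemma~\ref{lem:normal-framing-filling} to $(L,g,\phi_L)$, and take $\eta$ to be the spin structure induced by the resulting framing $\varphi$ of $\nu_G$; your torsor argument (surjectivity of $[L,\SO_{n+2}]\to H^1(L;\Z/2)$) just makes explicit the framing--spin compatibility that the paper asserts ``by construction.''
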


\begin{proof}
Since $L$ has dimension $1$ and $\mathrm{rank}(\nu_g)\geq 3$,
Lemma \ref{lem:spinnable-trivial} implies that the normal spin structure
$\sigma_L$ determines a compatible normal framing $\varphi_L$ of
$\nu_g$. Let $\overline{\phi}_L$ be the induced framing of the normal
bundle of $g\circ \mathrm{pr}_1$. By construction, the spin structure
induced by $\overline{\phi}_L$ is precisely $\bar{\sigma}_L$.
Applying Lemma \ref{lem:normal-framing-filling} to the normally framed
$1$-manifold $(L,g,\phi_L)$, we obtain a compact smooth oriented
$3$-manifold $G\colon P\to M\times I$, and a framing $\varphi$
of $\nu_G$ such that
\[
\partial P\cong L\times S^1,\quad
G|_{\partial P}=g\circ \mathrm{pr}_1,
\quad
\varphi|_{\partial P}=\overline{\phi}_L.
\]
The framing $\varphi$ induces a normal spin structure $\eta$ on $\nu_G$.
Since $\varphi|_{\partial P}=\overline{\phi}_L$ and the spin structure induced by $\overline{\phi}_L$ is $\bar{\sigma}_L$, we get $\eta|_{\partial P}\cong \bar{\sigma}_L$. The proof is completed.
\end{proof}

Utilizing Lemma \ref{lem:normal-spin-filling} and similar arguments to the proof of Theorems \ref{thm:spin-bdsm} and \ref{thm:frbdsm-nonspin}, we obtain the following more general result for the spin bordism groups $\Omega_2^{\fr,\Spin}(M)$, which recovers Theorem \ref{thm:spin-bdsm} by taking $w_2(M)=0$.

\begin{theorem}\label{thm:spinbdsm-dim2}
	Let $M$ be a closed smooth oriented (nonspin) $(n+2)$-manifold with $n\geq 4$. There is a SES of groups
	\[0\to \Omega_2^{\Spin}\oplus \tfrac{H_1(M;\z{})}{w_2(M)\smallfrown_\Z H_3(M;\Z)}\to \Omega_2^{\fr,\Spin}(M)\xra{h_2} \ker\big(\lra{w_2(M),-}_\Z\big)\to 0,\]
   which splits if and only if 
   \[w_2(M)\smallfrown \delta^{-1}\big(\ker(\lra{w_2(M),-}_\Z)\big)\subseteq w_2(M)\smallfrown_\Z H_3(M;\Z).\]
\end{theorem}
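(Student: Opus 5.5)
We follow the template of Theorems \ref{thm:spin-bdsm} and \ref{thm:frbdsm-nonspin}, now for the forgetful homomorphism
\[
h_2\colon \Omega_2^{\fr,\Spin}(M)\to \Omega_2^{\SO}(M)\cong H_2(M;\Z)
\]
(Lemma \ref{lem:SO-H2}).

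\textbf{Image of $h_2$.} The proof of Lemma \ref{lem:nonspin-im-h2} applies almost verbatim. An oriented surface $(N,f)$ admits a normal spin structure if and only if $w_2(\nu_f)=f^\ast(w_2(M))=0$. Since $N$ is a surface, this is equivalent to $\lra{w_2(M),f_\ast[N]}_\Z=0$. Hence $\mathrm{im}(h_2)=\ker(\lra{w_2(M),-}_\Z)$.

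\textbf{The kernel.} For $[N,f,\sigma]\in\ker(h_2)$, choose an oriented null-bordism $(W,F)$ in $M\times I$ and set $[L]=\PD(w_2(\nu_F,\sigma))\in H_1(W;\z{})$. Define
\[
\Phi\colon \ker(h_2)\to \Omega_2^{\Spin}\oplus \tfrac{H_1(M;\z{})}{w_2(M)\smallfrown_\Z H_3(M;\Z)},\qquad [N,f,\sigma]\mapsto\big([N,\bar\sigma],[F_\ast[L]]\big),
\]
where $\bar\sigma$ is the tangential spin structure on $N$ induced by $\sigma$ and the orientation of $M$. (In the nonspin case, $N$ still carries a tangential spin structure, because $TN\oplus\nu_f\cong f^\ast TM$ has $w_2=f^\ast w_2(M)=w_2(\nu_f)=0$ on $N$, and $\sigma$ determines it up to the usual torsor action.)

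Well-definedness of the second coordinate uses the gluing argument preceding (\ref{eq:Phi1-nonspin}): for two null-bordisms glued into a closed $3$-manifold $Y$, the difference of the two classes is $w_2(M)\smallfrown_\Z\wtd F_\ast[Y]$. Independence of the representative follows as in Lemma \ref{lem:Phi-welldef}(2).

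Well-definedness of the first coordinate must be checked separately, and this is where I expect the main obstacle. A normal spin bordism in $M\times I$ induces a tangential spin bordism only when $M$ is spin. I would handle this by defining $[N,\bar\sigma]$ through the Lie-type decomposition used in the injectivity argument, $\Omega_2^{\Spin}\cong\Omega_1^{\Spin}$ via $L\times S^1$, checking invariance directly on the complement $V=W\setminus\nu(L)$. If that fails, I would fall back on the Postnikov computation: Theorem \ref{thm:cohtp=-2:CW} applied through $P_1$ stages of $\mathrm{MSpin}_n$ shows that the extra $\Z/2$ summand survives. This is consistent with the paper's formula, in which $\Omega_2^{\Spin}$ appears even in the nonspin case.

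\textbf{Bijectivity of $\Phi$.} Surjectivity uses the tori $\gamma\circ\mathrm{pr}_1\colon T^2\to M$ with the Lie structure, together with $e_\ast(\Omega_2^{\Spin})$ realized by a torus at the base point.

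For injectivity:
1. Modify $W$ by adding a closed $Z$ given by Thom realization, so that $F_\ast[L]=0$.
2. Remove $\nu(L)$ to obtain $V$.
3. Read off a spin structure $\tau_L$ on $L$ from $\partial\nu(L)=L\times S^1$. The condition $[N,\bar\sigma]=0$ forces $[L,\tau_L]=0$.
4. Glue in the filling provided by Lemma \ref{lem:normal-spin-filling}, after adjusting the spin structure on $L$ by a class from $H^1(\Sigma;\z{})$ as in Lemma \ref{lem:spin-filling}(1).

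\textbf{Splitting criterion.} Exactly as in Theorem \ref{thm:frbdsm-nonspin}, the extension class of an element of order two is computed by a mod-$2$ bordism $W$ with $\partial W=N\sqcup N$. This gives
\[
2\alpha=[w_2(M)\smallfrown y]\in \tfrac{H_1(M;\z{})}{w_2(M)\smallfrown_\Z H_3(M;\Z)},
\]
with zero $\Omega_2^{\Spin}$-component. The $\Omega_2^{\Spin}$-component vanishes because the constant map $c$ gives a retraction onto that summand, via $c_\ast$ composed with the tangential spin class. This yields the stated criterion, with $\ker$ read as its elementary $2$-torsion part.
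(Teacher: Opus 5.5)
\textbf{There is a genuine gap.} It sits exactly where you suspected, and it is fatal rather than technical. The first coordinate $[N,\bar\sigma]\in\Omega_2^{\Spin}$ of $\Phi$ is undefined when $w_2(M)\neq 0$, and no repair can produce it.

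\begin{itemize}
\item A normal spin structure $\sigma$ on $\nu_f$ gives a tangential spin structure on $N$ only after you choose a spin structure on $f^\ast TM$.
\item Without a spin structure on $M$, these choices form an $H^1(N;\Z/2)$-torsor with no base point. Changing the choice in general changes the Arf invariant.
\item For the same reason there is no map $c_\ast$ onto $\Omega_2^{\Spin}$: Remark \ref{rmk:bdsm-frbdsm} requires a $G$-structure on $M$. So the retraction in your splitting step is unavailable too.
\end{itemize}
In fact the summand does not exist. By Theorem \ref{thm:Pontryagin-Thom-framed}, $\Omega_2^{\fr,\Spin}(M)\cong[M,\mathrm{MSpin}_n]$. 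Since $B\Spin_n$ is $3$-connected, the bottom cell $S^n\to\mathrm{MSpin}_n$ is $(n+3)$-connected; the paper itself uses this in Remark \ref{rmk:MP2S}. As $\dim M=n+2$, the canonical map $\phi_2\colon\Omega_2^{\fr}(M)\to\Omega_2^{\fr,\Spin}(M)$ is therefore an isomorphism for every closed oriented $M$, spin or not.

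Your Postnikov fallback reads Theorem \ref{thm:cohtp=-2:CW} backwards. When $w_2(M)\neq 0$, $\Sq^2$ is nonzero on $H^n(M;\Z/2)$. Part (1) then gives $QH^{n+2}(M;\ol{\Sq^2})=0$, so the top-cell class $\eta^2\circ q$ vanishes. That class is precisely the base-point torus with its Lie structure, which you use for surjectivity; it is null-bordant.

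A concrete counterexample to the statement is $M=\CP{2}\times S^{n-2}$ with $n\geq 4$.
\begin{itemize}
\item Here $H_1(M;\Z/2)=0$ and $\ker\lra{w_2(M),-}_\Z\subseteq H_2(M;\Z)$ is free. The displayed SES would therefore force a $\Z/2$ torsion subgroup.
\item On the other hand, $H^{n+1}(M;\Z/2)=0$ and $H^n(M;\Z)$ is free. By Theorem \ref{mainthm:codim2}, $\Omega_2^{\fr,\Spin}(M)\cong\pi^n(M)$ is torsion-free.
\end{itemize}

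The paper's one-line justification (``similar arguments'' to Theorems \ref{thm:spin-bdsm} and \ref{thm:frbdsm-nonspin}) gives no argument for the extra summand in the nonspin case and runs into the same obstruction. What actually holds for nonspin $M$ is Theorem \ref{thm:frbdsm-nonspin} transported along the isomorphism $\phi_2$. This also contradicts Corollary \ref{cor:nonspin-fr-to-spin}. Your template proves the statement only when $w_2(M)=0$, where it is Theorem \ref{thm:spin-bdsm}.
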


Recall that there is a canonical homomorphism $\phi_2\colon \Omega_2^{\fr}(M)\to \Omega_2^{\fr,\Spin}(M)$. Comparing Theorems \ref{thm:frbdsm-spin}, \ref{thm:frbdsm-nonspin} and \ref{thm:spinbdsm-dim2} and applying the snake lemma, we get the following corollary.
\begin{corollary}\label{cor:nonspin-fr-to-spin}
	Let $M$ be a closed smooth oriented $(n+2)$-manifold with $n\geq 4$. There is a split SES of groups
	\[0\to \Omega_2^{\fr}(M)\xra{\phi_2} \Omega_2^{\fr,\Spin}(M)\to \Omega_2^{\Spin}[\varepsilon]\to 0,\]
	where $\Omega_2^{\Spin}[\varepsilon]=\Omega_2^{\Spin}$ if $M$ is nonspin ($\varepsilon=1$), and $\Omega_2^{\Spin}[\varepsilon]=0$ otherwise.
\end{corollary}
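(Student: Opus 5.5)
The plan is to build a map of short exact sequences and apply the snake lemma. The top row is the sequence for $\Omega_2^{\fr}(M)$: Theorem \ref{thm:frbdsm-spin} when $w_2(M)=0$ (using $\Omega_2^{\fr}\cong\Omega_2^{\Spin}$), or Theorem \ref{thm:frbdsm-nonspin} when $w_2(M)\neq0$. The bottom row is the sequence for $\Omega_2^{\fr,\Spin}(M)$ from Theorem \ref{thm:spinbdsm-dim2}. The vertical maps are $\phi_2$ in the middle and the identity on $\ker(\lra{w_2(M),-}_\Z)$ on the right. The left map sends the kernel to $\Omega_2^{\Spin}\oplus Q$, where
\[Q=\tfrac{H_1(M;\z{})}{w_2(M)\smallfrown_\Z H_3(M;\Z)}.\]

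First I check that the diagram commutes. The right square commutes because both $h_2$'s send a class to $f_\ast[N]$, and a normal framing induces an orientation compatible with the spin one. For the left square, take $[N,f,\varphi]\in\ker(h_2)$ and choose one oriented null-bordism $(W,F)$. The obstruction $w_2(\nu_F,\varphi)$ to extending the framing is the same class as the obstruction to extending the induced spin structure $\sigma_\varphi$. This holds because $\nu_F$ has rank $n\geq4>3=\dim W$, and by Lemma \ref{lem:spinnable-trivial} extending a framing over the $2$-skeleton is the same as extending the spin structure; the only remaining obstruction lives in degree $2$, since $\pi_2(\SO)=0$ removes any degree-$3$ obstruction. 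Hence $\Phi_1$ and the $Q$-component of the spin-side invariant agree on $\phi_2$-images.

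Consequently, in the nonspin case the left vertical map is $x\mapsto(c(x),x)$ into $\Omega_2^{\Spin}\oplus Q$. Here $c(x)=[N,\sigma_\varphi]\in\Omega_2^{\Spin}$ is the underlying spin bordism class, and $c$ is a homomorphism $Q\to\Omega_2^{\Spin}$. The snake lemma then shows that $\phi_2$ is injective with cokernel isomorphic to $(\Omega_2^{\Spin}\oplus Q)/\mathrm{graph}(c)\cong\Omega_2^{\Spin}$. The projection to the $\Omega_2^{\Spin}$ summand, corrected by $c$, gives the map $\Omega_2^{\fr,\Spin}(M)\to\Omega_2^{\Spin}$. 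This map is split: $[T^2,c_{x_0},\sigma_{\Lie}]$ is a class of order $2$ mapping to the generator, so $\Omega_2^{\Spin}\cong\z{}$ splits off.

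In the spin case, the left map is the isomorphism $\Omega_2^{\fr}\oplus H_1(M;\z{})\to\Omega_2^{\Spin}\oplus H_1(M;\z{})$, since $\Omega_2^{\fr}\to\Omega_2^{\Spin}$ is an isomorphism. The five lemma then makes $\phi_2$ an isomorphism, so the cokernel is $0$, as claimed. This is also consistent with Remark \ref{rmk:bdsm-frbdsm} and Lemma \ref{lem:fr-G}.

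I expect the main obstacle to be the left square in the nonspin case: showing that the obstruction classes for framings and for induced spin structures coincide relative to $N$, so that $\Phi_1$ really factors through the spin-side invariant. I would argue this via the fibration $\Spin_n\to\SO_n$ and the vanishing $\pi_2(\Spin_n)=0$, comparing the obstruction cocycles cell by cell on a relative CW structure of $(W,N)$.
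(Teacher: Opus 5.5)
Your route is the paper's own: a map of short exact sequences followed by the snake lemma. In the spin case it works, since there $\phi_2$ is an isomorphism, as Lemma \ref{lem:fr-G} also gives directly. Your comparison of the framing and spin obstruction classes is also fine. The gap is in the nonspin case, and it cannot be repaired, because the nonspin half of the statement is false.

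\textbf{Where the argument fails.} The problem is the bottom row you import from Theorem \ref{thm:spinbdsm-dim2}, specifically its $\Omega_2^{\Spin}$-component $[N,f,\sigma]\mapsto[N,\sigma]$ (your $c$).
\begin{itemize}
\item A normal spin structure on $\nu_f$ gives a tangential spin structure on $N$ only after a spin structure on $f^\ast TM$ is chosen.
\item When $w_2(M)\neq 0$ there is no global choice. Local choices differ by classes in $H^1(N;\z{})$, which can change the Arf invariant, so this component is not a bordism invariant in $M$.
\end{itemize}
Here is an explicit instance. Take $M=\CP{2}\times S^{n-2}$ and map $W=S^1\times D^2$ into $M\times I$ through the $D^2$-factor, sending $\partial D^2$ to $x_0$ and $D^2/\partial D^2$ onto $\CP{1}$ with degree one.
\begin{itemize}
\item Since $\lra{w_2(M),[\CP{1}]}\neq 0$, the trivialization of $F^\ast TM$ on the meridian coming from $x_0$ differs from the one that extends over the meridian disk by the generator of $\pi_1(\SO)$.
\item Hence the restriction of $\sigma_{\Lie}$ to the meridian, which is nonbounding relative to $x_0$, does extend over the meridian disk.
\item The remaining obstruction lies in $\pi_2(\SO_n)=0$.
\end{itemize}
So $W$ is a normal spin, indeed normally framed, null-bordism of $(T^2,c_{x_0},\sigma_{\Lie})$. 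That is exactly the class you use to split off $\Omega_2^{\Spin}$.

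\textbf{Why the conclusion is false in general.}
\begin{itemize}
\item Theorem \ref{thm:Pontryagin-Thom-framed} gives $\Omega_2^{\fr,\Spin}(M)\cong[M,\mathrm{MSpin}_n]$ with no assumption on $w_2(M)$.
\item The bottom-cell inclusion $S^n\to\mathrm{MSpin}_n$ is $(n+3)$-connected because $B\Spin_n$ is $3$-connected (see the proof of Lemma \ref{lem:fr-G} and Remark \ref{rmk:MP2S}).
\item Since $\dim M=n+2$, the induced map $[M,S^n]\to[M,\mathrm{MSpin}_n]$ is a bijection, and under Pontryagin--Thom it is exactly $\phi_2$.
\end{itemize}
Thus $\phi_2$ is an isomorphism for every closed oriented $(n+2)$-manifold, and the cokernel vanishes in the nonspin case too.

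Equivalently, the Lie-framed torus at a point represents $\eta^2\circ q$. This is zero in $\pi^n(M)$ when $w_2(M)\neq 0$, which is the vanishing of the $\z{1-\varepsilon}$ term in Theorem \ref{thm:cohtp=-2:mfds}.

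For $M=\CP{2}\times S^{n-2}$ one has $H_1(M;\z{})=0$. Hence $\Omega_2^{\fr,\Spin}(M)\cong\Omega_2^{\fr}(M)\cong\ker(\lra{w_2(M),-}_\Z)$ is torsion-free, whereas the statement would force an extra $\z{}$ summand.

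\textbf{Correct version.} The true statement is that $\phi_2$ is always an isomorphism, i.e.\ $\Omega_2^{\Spin}[\varepsilon]$ should be $0$ in both cases. Its proof is the one-line connectivity argument above. Theorem \ref{thm:spinbdsm-dim2} must likewise drop its $\Omega_2^{\Spin}$ summand when $w_2(M)\neq0$.
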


%As a direct corollary, we have the following  description of the natural homomorphism $\phi_2\colon\Omega_2^{\fr}(M)\to \Omega_2^{\Spin}(M)$.

%\begin{corollary}
%\label{cor:nonspin-fr-to-spin}
%	Let $M$ be a closed smooth oriented $(n+2)$-manifold with $n\geq 4$. There is a split SES of groups
%	\[0\to \Omega_2^{\fr}(M)\xra{\phi_2} \Omega_2^{\Spin}(M)\to \Omega_2^{\Spin}[\varepsilon]\to 0,\]
%	where $\Omega_2^{\Spin}[\varepsilon]=\Omega_2^{\Spin}$ if $M$ is nonspin ($\varepsilon=1$), and $\Omega_2^{\Spin}[\varepsilon]=0$ otherwise.
%\end{corollary}

%Note that the SESs in theorems of this section are clearly split if $H_2(M;\Z)$ is elementary $2$-torsion-free.

%\begin{example}
%	If $M=\CP{2}\times \R P^3$, then the cap product and the pairing with $w_2(M)$ are both trivial. The SES of groups
%	\[0\to \z{}\to \Omega_2^{\fr}(M)\to \Z\to 0\]
%	splits to yield an isomorphism $\Omega_2^{\fr}(M)\cong\Z\oplus\z{}$.
%\end{example}

\section{Application to vector bundles}\label{sec:appl-VB}

Throughout this section, let $0\leq k\leq 3$, unless otherwise specified, we assume that $G=G(k)$ and $H=H(k)$ are the groups in Table \ref{table:GH-k}, that is
\begin{equation*}\label{eq:G_k}
	G(k)=\left\{\begin{array}{ll}
		\Spin&\text{ for }k=1,2;\\
		\String&\text{ for }k=3.
	\end{array}\right. \quad H(k)=\left\{\begin{array}{ll}
		\SO&\text{ for }k=1,2;\\
		\Spin&\text{ for }k=3.
	\end{array}\right. 
\end{equation*}
We also set $G(0)=\SO$.
Let $M$ be a closed smooth $G$-manifold of dimension $n+k$ with $n\geq k+2$, and let $\pi\colon E\to M$ be an oriented vector bundle of rank $n$ such that the classifying map $g\colon M\to B\SO_n$ admits a lift to $BG_n$; alternatively, we simply say that $\pi\colon E\to M$ is a $G$-vector bundle or is classified by a map $g\colon M\to BG_n$.

When $G=\SO$, recall that the universal vector bundle $\gamma^n_{\SO}$ over $B\SO_n$ possesses the universal Thom class $u=u_{\SO}\in \widetilde{H}^n(\MSO_n;\Z)$ and the universal Euler class 
\[e_{\SO}=\mathfrak{z}_{\SO}^\ast(u)\in H^n(B\SO_n;\Z),\] 
where $\mathfrak{z}_{\SO}\colon B\SO_n\to \MSO_n$ is the normalized zero section of the universal $\SO_n$-bundle over $B\SO_n$.  
For the vector bundle $E\to M$ classified by a map $g\colon M\to B\SO_n$, the ordinary Thom class $u(E)$ and the ordinary Euler class $e(E)$ for the oriented vector bundle $E\to M$ are defined by 
\begin{equation}\label{eq:Thom-Eulerclss}
	\begin{aligned}
		u(E)&=(Tg)^\ast(u)\in \widetilde{H}^n(\mathrm{T}(E);\Z),\\
		 e(E)&=\mathrm{pr}^\ast\mathfrak{z}_E^{\ast}(u(E))=g^\ast(e_{\SO})\in  H^n(M;\Z),
	\end{aligned}
\end{equation}
where $\mathrm{pr}\colon M_+\to M$ is the map such that $\mathrm{pr}|_M=\id_M$.% and $\mathrm{pr}^\ast\colon \widetilde{H}^n(M;\Z)\to \widetilde{H}^n(M_+;\Z)\cong H^n(M;\Z)$.                           
                     
Note that the ordinary Thom class and the ordinary Euler class are the ones with respect to the Eilenberg-MacLane spectrum $\mathbf{H}\Z$. There are also Thom class and Euler class with respect to the spectrum $\mathbf{ko}$ for the connective real $K$-theory and the spectrum $\mathbf{tmf}$ for the connective topological modular forms.
In \cite{ABS64}, Atiyah-Bott-Shapiro constructed the \emph{universal spin orientation} (also called the \emph{universal spin Thom class})
	\begin{align*}
		&u_{\Spin}\colon \mathbf{MSpin}\to \mathbf{ko},
		\text{or }u_{\Spin}\in \widetilde{\mathbf{ko}}^n(\MSpin_n),
	\end{align*}
	refining the $\widehat{A}$-genus; 
	Ando-Hopkins-Rezk \cite{AHR10} constructed the \emph{universal string orientation} (also called the \emph{universal string Thom class})
\begin{align*}
	u_{\String}&\colon \mathbf{MString}\to \mathbf{tmf}, 
	\text{ or } u_{\String}\in \widetilde{\mathbf{tmf}}^n(\MString_n),
\end{align*}
refining the Witten genus.
The universal spin and string Thom classes respectively induce the \emph{universal spin Euler class} 
	\[e_{\Spin}=\mathrm{pr}^\ast\mathfrak{z}_{\Spin}^\ast(u_{\Spin})\in H^n(B\Spin_n;\mathbf{ko}).\]
	and the \emph{universal string Euler class} 
	\[e_{\String}=\mathrm{pr}^\ast\mathfrak{z}_{\String}^\ast(u_{\String})\in \mathbf{tmf}^n(B\String_n).\]
	See also \cite[Part~I, Chapter~10]{tmfBook14}.   Let $G=\Spin$ or $\String$. For the $G$-vector bundle $E\to M$ that is classified by a map $g\colon M\to BG_n$, the \emph{$G$-Thom class} and the \emph{$G$-Euler class} of $E$
	\begin{equation}\label{eq:G-Eulercls-def}
		u_G(E)=(Tg)^\ast(u_G)\in \widetilde{h_G}^n(\mathrm{T}(E)),\quad e_G(E)=g^\ast(e_G)\in h_G^n(M)
	\end{equation}
	 are defined similarly to those in (\ref{eq:Thom-Eulerclss}), where $h_\Spin=\mathbf{ko}$ and $h_{\String}=\mathbf{tmf}$.
	Moreover, there is a commutative diagram of morphisms of spectra
	\begin{equation}\label{diag:tmf-ko-hz}
		\begin{tikzcd}
			\mathbf{MString}\ar[r,"u_{\String}"]\ar[d]&\mathbf{tmf}\ar[d,"c_0"]&\\
			\mathbf{MSpin}\ar[r,"u_{\Spin}"]\ar[d]&\mathbf{ko}\ar[d,"P_0"]\\
			\mathbf{MSO}\ar[r,"u_{\SO}"]&\mathbf{H}\Z
			\end{tikzcd},
	\end{equation}
	where the unlabeled vertical morphisms are the forgetful morphisms, $c_0$ is the map evaluating a modular form at the cusp $q=0$ (see  \cite[Section 3]{LN14}), and $P_0$ is the $0$-th Postnikov truncation $P_0(\mathbf{ko})=\mathbf{H}\Z$. Note that the morphism $c_0$ recovers the $\widehat{A}$-genus from the Witten genus \cite{AHR10,LN14}.
	It follows that for a $G(k)$-vector bundle $E$ over a $G(k)$-manifold $M$, there hold
	\begin{equation}\label{eq:Eulercls}
		(c_0)_\ast(e_{\String}(E))=e_{\Spin}(E),\text{~~ or ~~} (P_0)_\ast(e_{\Spin}(E))=e(E).
	\end{equation}

It is well-known that an oriented vector bundle $E\to M$ of rank $n$ admits a nowhere-vanishing section if and only if the ordinary Euler class $e(E)$ vanishes. The following theorem is a generalization of this classical result to the setting of $G$-vector bundles, which is a consequence of the Gysin cohomology exact sequence for the associated sphere bundle and the Moore-Postnikov tower of the map $B\iota_{n-1}\colon B\SO_{n-1}\to B\SO_n$.

\begin{theorem}\label{thm:nonzero-section}
	Let $M$ be a closed smooth $G$-manifold of dimension $n+k$ with $n\geq k+2$, $1\leq k\leq 3$. Then a $G$-vector bundle $E\to M$ of rank $n$ admits a nowhere-vanishing section if and only if the $G$-Euler class $e_G(E)$ vanishes.
\end{theorem}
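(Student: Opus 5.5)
We argue through the Pontryagin--Thom identification of the Euler class with a cohomotopy/bordism class. Let $g\colon M\to BG_n$ classify $E$ and let $\mathfrak{z}_E\colon M\to \mathrm{T}(E)$ be the zero section. By the classical obstruction-theory argument, $E$ admits a nowhere-vanishing section if and only if $\mathfrak{z}_E$ is null-homotopic. Indeed, a nowhere-vanishing section $s$ lets us deform $\mathfrak{z}_E$ through $ts$ to the point at infinity. Conversely, if $\mathfrak{z}_E$ is null-homotopic, then $\mathrm{pr}^\ast\mathfrak{z}_E^\ast$ of every class vanishes. More geometrically, the composite $M\xra{\mathfrak{z}_E}\mathrm{T}(E)\xra{Tg}\mathrm{MG}_n$ is the element $g^\ast(\text{universal zero section})\in[M,\mathrm{MG}_n]$. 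By Theorem~\ref{thm:Pontryagin-Thom-framed}, this element corresponds to the $G$-framed bordism class $[Z(s),\iota,\sigma]\in\Omega_k^{\fr,G}(M)$ of the zero locus of a generic section $s$, with normal bundle $E|_{Z(s)}$ and its induced $G$-structure. So the first step is: $E$ has a nowhere-vanishing section $\iff$ the class $\chi_G(E):=[Tg\circ\mathfrak{z}_E]\in[M,\mathrm{MG}_n]\cong\Omega_k^G(M)$ vanishes.

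For the forward direction of that equivalence, a nonvanishing section gives an empty zero locus. For the converse, a $G$-null-bordism $W\subset M\times I$ of $Z(s)$ with $E$-compatible normal $G$-structure can be used to cancel zeros. Alternatively, and more cleanly, we use the Moore--Postnikov tower of $BG_{n-1}\to BG_n$, whose fibre is $S^{n-1}$. The primary obstruction to lifting $g$ is the ordinary Euler class in $H^n(M;\Z)$. The subsequent obstructions live in $H^{n+i}(M;\pi_{n-1+i}(S^{n-1}))=H^{n+i}(M;\pi_i^S)$ for $i\le k\le 3$, and they assemble exactly into the stable cohomotopy class $\chi_G(E)$. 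Here we use that the Moore--Postnikov invariants of $S^{n-1}\to BSO_{n-1}\to BSO_n$ in the stable range are the $k$-invariants of $\mathrm{MSO}_n$, resp. $\mathrm{MG}_n$ (Lemma~\ref{lem:tower:Sn}).

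The second step identifies $\chi_G(E)$ with $e_G(E)$. By the definition \eqref{eq:G-Eulercls-def}, $e_G(E)$ is the image of $\chi_G(E)$ under the orientation $u_G\colon\mathbf{MG}\to h_G$. So it suffices to show that
\[(u_G)_\ast\colon [M,\mathrm{MG}_n]\to h_G^n(M)\]
is injective for $\dim M=n+k$. Since $u_G\colon\mathbf{MG}\to h_G$ is an isomorphism on $\pi_i$ for $i\le k$ and a surjection on $\pi_{k+1}$, it is $(k+1)$-connected. For $k=1,2$ we have $\pi_\ast\mathbf{MSpin}\to\pi_\ast\mathbf{ko}$ isomorphic in degrees $\le 7$. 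For $k=3$, the Ando--Hopkins--Rezk map $\mathbf{MString}\to\mathbf{tmf}$ is an isomorphism on $\pi_i$ for $i\le 3$, both groups being $\Z,\Z/2,\Z/2,\Z/24$ (e.g.\ by comparing Adams spectral sequences, or since $u_{\String}$ is $15$-connected). Then the fibre $F$ of $u_G$ is $(k+1)$-connected. Hence $F^{n}(M)=F^{n+1}(M)=0$ for the $(n+k)$-dimensional $M$, because $F^{n+j}(M)$ only sees cells of $M$ in dimension $\ge n+j+k+2$. The long exact sequence then gives that $(u_G)_\ast$ is an isomorphism $\mathbf{MG}^n(M)\to h_G^n(M)$, in particular injective. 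Combined with Theorem~\ref{thm:Pontryagin-Thom} and the stable range $n\ge k+2$ (so that $[M,\mathrm{MG}_n]=\mathbf{MG}^n(M)$), this yields $e_G(E)=0\iff\chi_G(E)=0$.

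The main obstacle is the first step: rigorously showing that $\chi_G(E)=0$ implies the existence of a nonvanishing section, rather than just vanishing of the Euler class in $G$-bordism. I would first try the Moore--Postnikov comparison. The lift problem $M\to BG_n$ through $BG_{n-1}$ has fibre $S^{n-1}$, and in the range $\dim M\le 2n-3$ (guaranteed by $n\ge k+2$, $k\le 3$, and $n\ge 5$ for $k=3$) the fibration $S^{n-1}\to BG_{n-1}\to BG_n$ is fibrewise stable. Consequently the total obstruction is the single stable class obtained by pulling back the universal ``Euler'' map $BG_n\to \Omega^\infty\Sigma^n(\text{Thom})$, which is precisely $\chi_G(E)$. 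Should this become delicate, the fallback is geometric: take a $G$-framed null-bordism of $Z(s)$ in $M\times I$ and extend $s$ to a nonvanishing section over $M\times\{1\}$ via the relative Pontryagin--Thom construction.
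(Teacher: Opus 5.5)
Your Step 2 is sound, and it splits the problem differently from the paper. You factor $e_G(E)=(u_G)_\ast[\mathfrak z_G\circ g]$ through $[M,\mathrm{MG}_n]\cong\mathbf{MG}^n(M)$ and then use the high connectivity of $\mathbf{MSpin}\to\mathbf{ko}$ and $\mathbf{MString}\to\mathbf{tmf}$. (A $(k+1)$-connected map has $k$-connected, not $(k+1)$-connected, fibre, but that still kills $F^n(M)$.) The paper never passes through $\mathrm{MG}_n$; it compares the Moore--Postnikov stage $Y_3$ (resp.\ $Y_4$) of $B\iota_{n-1}$ directly with the homotopy fibre of $e_G\colon BG_n\to\mathbf{ko}_n$ (resp.\ $\mathbf{tmf}_n$).

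The gap is Step 1, and none of your arguments closes it.
\begin{enumerate}
\item That a null-homotopy of $\mathfrak z_E$ kills all pulled-back classes is the wrong implication.
\item Passing from the twisted class $\mathfrak z_E\in[M,\mathrm{T}(E)]$ to $Tg\circ\mathfrak z_E$ keeps only the forward direction.
\item A $G$-null-bordism $W\subset M\times I$ of $Z(s)$ has its normal $G$-structure classified by a map $W\to BG_n$ that agrees with $g\circ\mathrm{pr}$ only on $\partial W$. So $W$ need not be the zero set of a section of $\mathrm{pr}^\ast E$, and cannot be used to cancel zeros.
\item Lemma \ref{lem:tower:Sn} describes the Postnikov tower of $S^n$, not the Moore--Postnikov invariants of $BG_{n-1}\to BG_n$, so ``the obstructions assemble into $\chi_G(E)$'' is unproved.
\end{enumerate}
The missing ingredient is a fibre comparison. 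The cofibre sequence $S(\gamma^G_n)\to BG_n\xra{\mathfrak z_G}\mathrm{MG}_n$ gives a map over $BG_n$ from the sphere bundle $S(\gamma^G_n)$ to $\mathrm{hofib}(\mathfrak z_G)$. On fibres it is $S^{n-1}\to\Omega S^n\to\Omega\mathrm{MG}_n$, hence $\min(2n-3,\,n+m-2)$-connected when $BG$ is $(m-1)$-connected. So if $n+k\le\min(2n-3,\,n+m-2)$, then $E$ has a nowhere-vanishing section iff $\mathfrak z_G\circ g\simeq\ast$. Work with $S(\gamma^G_n)$, not $BG_{n-1}$: for $G=\String$ and small $n$ the fibre of $B\String_{n-1}\to B\String_n$ is not $S^{n-1}$.

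That condition forces $n\ge k+3$. Your claim that $n\ge k+2$ guarantees $\dim M\le 2n-3$ is false: it only gives $n+k\le 2n-2$. At $n=k+2$ the top obstruction group $\pi_{2n-3}(S^{n-1})$ is unstable when $n-1$ is even, and the statement itself fails.
\begin{enumerate}
\item Take $k=1$, $n=3$, $M=S^4$, and let $E$ have clutching class $2\in\pi_3(\SO_3)\cong\Z$. Then $E$ is nontrivial, so it has no nowhere-vanishing section: it would split off an oriented $2$-plane bundle, and those are trivial over $S^4$. Yet $e_{\Spin}(E)\in\widetilde{\mathbf{ko}}^3(S^4)\cong\Z/2$ vanishes, because $[g]\mapsto g^\ast e_{\Spin}$ is a homomorphism $\pi_4(B\Spin_3)\cong\Z\to\Z/2$.
\item Take $k=3$, $n=5$, $M=S^8$. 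Since $\pi_7(\SO_4)\to\pi_7(\SO_5)\cong\Z$ is zero, every nontrivial rank-$5$ bundle lacks a section. But $e_{\String}$ lands in $\pi_3(\mathbf{tmf})\cong\Z/24$ and vanishes on $24\,\pi_7(\SO_5)$.
\end{enumerate}
The paper's proof has the same blind spot. Its Moore--Postnikov layers are labelled $K_{n+i}(\pi_i^S)$ instead of $K_{n+i}(\pi_{n+i-1}(S^{n-1}))$, and it asserts that $P_2S^{n-1}\to\Omega\mathbf{ko}_n$ is an isomorphism on $\pi_n$, which fails for $n=3$. With the fibre comparison supplied and the hypothesis strengthened to $n\ge k+3$, your route gives a complete and more conceptual proof.
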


\begin{proof}
If the vector bundle $\pi\colon E\to M$ admits a nowhere-vanishing section $s$, then the normalization $\bar{s}$ of $s$ is a section of the associated $S^{n-1}$-bundle $\bar{\pi}\colon S(E)\to M$. Then from the Gysin cohomology exact sequence for $S(E)$ (see  \cite[Chapter V, Theorem 1.25]{Rudyakbook}), we deduce that $e_G(E)=0$.
%\[\cdots\to h^i_G(M)\xra{e_G(E)}h_G^{i+n}(M)\xra{\bar{\pi}^\ast}h_G^{i+n}(S(E))\to\cdots,\]
%together with $\bar{s}^\ast\circ \bar{\pi}^\ast=id$, 
%implies that the cup product multiplication 
%\[e_G(E)\!\smallsmile\!\colon h_G^i(M)\to h_G^{i+n}(M)\]
%is trivial for arbitrary $i$. In particular, $e_G(E)=e_G(E)\!\smallsmile\! 1=0$.

For the opposite direction, consider the homotopy fibration 
 \[S^{n-1}\to B\SO_{n-1}\xra{B\iota_{n-1}}B\SO_{n}.\]
Recall that the oriented vector bundle $E\to M$ admits a nowhere-vanishing section if and only if the classifying map $g\colon M\to B\SO_n$ admits a lift $\hat{g}\colon M\to B\SO_{n-1}$. 
Let $\sk_r(M)$ be the $r$-skeleton of $M$. Since the map $B\iota_{n-1}$ is $(n-1)$-connected, such a lift $\hat{g}$ exists on $\sk_{n-1}(M)$.  We shall analyze the following Moore-Postnikov tower (see  \cite[Theorem 4.71]{Hatcher}) of the map $f=B\iota_{n-1}$:
\begin{figure}[H]
		\centering
\begin{tikzcd}
		&Y_4\ar[d,"p_4"]\\
		&Y_{3}\ar[d,"p_3"]\ar[r,"k_4"]&K_{n+3}(\pi_3^S)\\
		&Y_{2}\ar[d,"p_2"]\ar[r,"k_2"]&K_{n+2}(\pi_2^S)\\
		&Y_1\ar[r,"k_1"]\ar[d,"p_1"]&K_{n+1}(\pi_1^S)\\
		B\SO_{n-1}\ar[ur,"f_1",bend left]\ar[uur,"f_2",bend left]\ar[uuur,"f_3",bend left]\ar[uuuur,"f_4",bend left]\ar[r,"f=B\iota_{n-1}"]&B\SO_n=Y_0\ar[r,"k_0=e_{\SO}"]&K_n(\Z)
		\end{tikzcd}
		\caption{Partial Moore-Postnikov tower of $B\iota_{n-1}$.}\label{tower:Moore-Postnikov}
\end{figure}		
\noindent For each $i\geq 1$, the composition 
\[p_{1,i}=p_1\circ p_2\circ \cdots\circ p_i\colon Y_i\to B\SO_n\]   
is $(n+i-1)$-anticonnected, which means that the induced homomorphism $(p_{1,i})_\ast\colon \pi_j(Y_i)\to \pi_j(B\SO_n)$ is an isomorphism for $j\geq n+i$ and is a monomorphism for $j=n+i-1$; the map $f_i\colon B\SO_{n-1}\to Y_i$ is $(n+i-1)$-connected, implying that the induced map 
\begin{equation}\label{eq:lift-equiv}
	(f_i)_\ast\colon [\sk_{n+i-1}(X),B\SO_{n-1}]\to [\sk_{n+i-1}(X),Y_i]
\end{equation}
is surjective for any CW complex $X$. Hence the classifying map $g$ lifts to a map $\hat{g}\colon \sk_{n+i-1}(M)\to B\SO_{n-1}$ if and only if a lift $\tilde{g}_i\colon M\to Y_i$ of $g$ exists,  if and only if the composition $k_j\circ \tilde{g}_j$ is null-homotopic, or equivalently $\tilde{g}^\ast(k_j)=0\in H^{n+j}(M;\pi_j^S)$ for each $j\leq i-1$. 

For $i=1$, the composition $e_{\SO}\circ g$ is the ordinary Euler class $e(E)=e_{\SO}(E)$. Then $e(E)=0$ guarantees that the classifying map $g\colon M\to B\SO_n$ lifts to a map $\tilde{g}_1\colon M\to Y_1$. When $k=0$ and $G=\SO$, the lift $\hat{g}$ on $M$ exists by (\ref{eq:lift-equiv}). 
When $1\leq k\leq 3$ and $G=\Spin$ or $\String$, the $G$-Euler class $e_G(E)$ is zero implying the ordinary Euler class $e(E)$ is zero by (\ref{eq:Eulercls}), hence a lift $\tilde{g}_1\colon M\to Y_1$ of $g$ exists.

When $k=1,2$ and $G=\Spin$, the classifying map $g\colon M\to B\SO_n$ lifts to a map $g\colon M\to B\Spin_n$ and we replace $\SO$ by $\Spin$ in the above Moore-Postnikov tower. Let $F_3$ be the homotopy fibre of the composition $p_{1,3}\colon Y_3\to B\Spin_n$, and consider the homotopy commutative diagram with homotopy fibration rows
\[\begin{tikzcd}
S^{n-1}\ar[r]\ar[d]&B\Spin_{n-1}\ar[r,"f=B\iota_{n-1}"]\ar[d,"f_3"]&B\Spin_n\ar[d,equal]\\
F_3\ar[r]&Y_3\ar[r,"p_{1,3}"]&B\Spin_n
\end{tikzcd}.\]
Since $p_{1,3}$ is $(n+2)$-anticonnected,  $\pi_i(F_3)=0$ for $i\geq n+2$.
Since $f_3$ is $(n+2)$-connected, applying the Five-Lemma shows that the excision map $S^{n-1}\to F_3$ induces an isomorphism of homotopy groups in dimensions $\leq n+1$. It follows that $F_3$ is homotopy equivalent to the $(n+1)$-th Postnikov section $P_2S^{n-1}$ of $S^{n-1}$, and that the excision map $S^{n-1}\to F_3$ is homotopy equivalent to the canonical inclusion map $S^{n-1}\hookrightarrow P_2S^{n-1}$.

Note that the space $Y_3$ is constructed such that the pullback $p_{1,3}^\ast(\gamma^n_{\Spin})$ of the universal bundle $\gamma^n_{\Spin}$ over $B\Spin_n$ (or $B\SO_n$) admits a nowhere-vanishing section over the $(n+2)$-skeleton of $Y_3$. Let $i\colon \sk_{n+2}(Y_3)\to Y_3$ be the inclusion map. Then in $\mathbf{ko}^n(\sk_{n+2}(Y_3))$, we have 
\[0=e_{\Spin}(p_{1,3}^\ast(\gamma^n_{\Spin})|_{\sk_{n+2}(Y_3)})=i^\ast(e_{\Spin}(p_{1,3}^\ast(\gamma^n_{\Spin})))=i^\ast p_{1,3}^\ast(e_{\Spin}). \]
Since $i^\ast\colon \mathbf{ko}^n(Y_3)\to \mathbf{ko}^n(\sk_{n+2}(Y_3))$ is an isomorphism, $p_{1,3}^\ast(e_{\Spin})=0$, or equivalently the composition $e_{\Spin}\circ p_{1,3}$ is null homotopic. Denote by $F_{\mathbf{ko}}$ the homotopy fibre of the universal spin Euler class $e_{\Spin}$, then there is a homotopy commutative diagram with homotopy fibration rows 
\[\begin{tikzcd}
P_2S^{n-1}\ar[r]\ar[d,"\tilde{e}"]&Y_3\ar[d,"\phi_{\mathbf{ko}}"]\ar[r,"p_{1,3}"]&B\Spin_n\ar[d,equal]\\
\Omega \mathbf{ko}_n\ar[r]&F_{\mathbf{ko}}\ar[r]&B\Spin_n\ar[r,"e_{\Spin}"]&\mathbf{ko}_n
\end{tikzcd},\]
where the vertical maps $\tilde{e},\phi_{\mathbf{ko}}$ are excision maps. Note that the restriction of $\tilde{e}$ to the bottom sphere $S^{n-1}$ is the inclusion map $S^{n-1}\hookrightarrow \Omega \mathbf{ko}_n$, and that the induced homomorphism 
\[\tilde{e}_\ast\colon \pi_{n+i}(P_2S^{n-1})\to \pi_{n+i}(\Omega\mathbf{ko}_n)\]
is an isomorphism for $i\leq 2$. Then by the Five-Lemma, the induced homomorphism 
\[(\phi_{\mathbf{ko}})_\ast\colon \pi_{n+i}(Y_3)\to \pi_{n+i}(F_{\mathbf{ko}})\]   
is an isomorphism for $i\leq 2$, and hence the induced map 
\[(\phi_{\mathbf{ko}})_\ast\colon [\sk_{n+2}(M),Y_3]\to [\sk_{n+2}(M),F_{\mathbf{ko}}]\]
is a bijection. It follows that the spin Euler class $e_{\Spin}(E)=g^\ast(e_{\Spin})$ is zero implying that a lift $\tilde{g}\colon \sk_{n+2}(M)\to Y_3$, and hence a lift $\hat{g}$ of $g$ exists on $\sk_{n+2}(M)$. The proof in the case $G=\Spin$ and $k=1,2$ is completed.

When $k=3$ and $G=\String$, similar arguments above show that there is an excision map $\phi_{\mathbf{tmf}}\colon Y_4\to F_{\mathbf{tmf}}$ inducing a bijection 
\[(\phi_{\mathbf{tmf}})_\ast\colon [M,Y_4]\to [M,F_{\mathbf{tmf}}],\]
where $F_{\mathbf{tmf}}$ is the homotopy fibre of the universal string Euler class 
\[e_{\String}\colon B\String_n\to \mathbf{tmf}_n.\] Thus $e_{\String}(E)=0$ implying that the classifying map $g\colon M\to B\String_n$ admits a lift $\tilde{g}\colon M\to Y_4$ of $g$, and therefore a lift $\hat{g}\colon M\to B\SO_{n-1}$ by (\ref{eq:lift-equiv}). The proof of the Proposition is completed.
\end{proof}

Note that the spin Euler class $e_{\Spin}(E)$ encodes the first three obstructions to lifting the classifying map $g\colon M\to B\SO_n$ to $Y_3$, while the string Euler class $e_{\String}(E)$ encodes the first four obstructions.
In general, the string Euler class $e_{\String}(E)$ and $e_{\Spin}(E)$ are not fully understood in geometric topology. In the following we shall combine Theorems \ref{thm:SES-G-H}, \ref{thm:spin-bdsm} and \ref{thm:nonzero-section} to establish conditions equivalent to $e_G(E)=0$.

Let $h^{\bullet}_{G}$ (resp. $h_{\bullet}^{G(k)}$) be the cohomology (resp. homology) theory represented by the spectrum 
\[\mathbf{E}_{G(k)}=\begin{cases}
\mathbf{H}\Z& \text{ for }k=0,\\
\mathbf{ko}&\text{ for }k=1,2,\\
\mathbf{tmf}&\text{ for }k=3.
\end{cases}\]
%$\mathbf{E}_{G(k)}=\mathbf{H}\Z$ for $k=0$, $\mathbf{ko}$ for $k=1,2$, and $\mathbf{tmf}$ for $k=3$.	

\begin{lemma}\label{lem:h_kG}
	Let $G=G(k)$ be given by Table \ref{table:GH-k} and let $M$ be a closed smooth $G$-manifold of dimension $n+l$ with $n\geq l+2$, then the forgetful homomorphism 
\begin{align*}
	h_l\colon\Omega_l^G(M)\xra{}h_l^G(M),\quad [N,f,\sigma]\mapsto f_\ast ([N]_{\mathbf{E}_{G}})
\end{align*}
is an isomorphism in the following cases:
\begin{enumerate}
	\item\label{case:so} $G=\SO$ and $l\leq 2$.
	\item\label{case:spin} $G=\Spin$ and $l\leq 7$.
	\item\label{case:string} $G=\String$ and $l\leq 15$.
\end{enumerate}
\begin{proof}
	The isomorphism in the case $G=\SO$ and $l\leq 2$ refers to Lemma \ref{lem:SO-H2}; the isomorphisms in the second and third cases follow from the Pontryagin-Thom Theorem \ref{thm:Pontryagin-Thom}, the Poincar\'e duality theorem (\cite[Chapter V, Theorem 2.9]{Rudyakbook}), and the following facts: 
	when $n\geq l+2$,  the induced homomorphism 
	\[(u_{\Spin})_\ast\colon \pi_{n+i}(\mathbf{MSpin}_n)\to \pi_{n+i}(\mathbf{ko}_n)\]
	is an isomorphism for $i\leq l\leq 7$, which follows by the Anderson-Brown-Peterson's splitting theorem  \cite[Theorem 2.2]{ABP67}; and the induced homomorphism 
	\[(u_{\String})_\ast\colon \pi_{n+i}(\mathbf{MString}_n)\to \pi_{n+i}(\mathbf{tmf}_n)\]
	is an isomorphism for $i\leq l\leq 15$, see \cite[Theorem 2.1]{Hill09} and \cite{Hopkins02}.
\end{proof}
\end{lemma}

Let $s\colon M\to E$ be a transversal section,  meaning that it is transversal to the zero section $\mathfrak{z}_E\colon M\to E$. Then the zero locus $Z=s^{-1}(0)$ is a closed smooth $k$-dimensional submanifold embedded in $M$. Let $i_Z\colon Z\to M$ be the embedding map and denote $\nu(Z)=\nu_{i_Z}$. Then the differential map  \[ds|_{\nu(Z)}\colon \nu(Z)\to E|_Z=i_Z^\ast(E)\] is a bundle isomorphism. Hence, the normal bundle $\nu(Z)$ admits a $G$-structure induced by the $G$-structure $\sigma|_Z$ on $E|_Z$. We still denote the induced $G$-structure on $\nu(Z)$ by $\sigma|_Z$.
Thus we get a $G$-bordism class 
\[[Z,i_Z,\sigma|_Z]\in\Omega_k^G(M).\]

\begin{lemma}\label{lem:G-divisor-welldef}
	The $G$-bordism class $[Z,i_Z,\sigma|_Z]$ is independent of the choices of the transversal section $s$.
\end{lemma}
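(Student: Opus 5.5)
The plan is the standard bordism argument: connect two transversal sections by a transversal homotopy and take its zero locus in $M\times I$. Let $s_0,s_1\colon M\to E$ be two sections transversal to the zero section $\mathfrak{z}_E$, with zero loci $Z_j=s_j^{-1}(0)$, $j=0,1$. Let $\mathrm{pr}\colon M\times I\to M$ be the projection and consider the pulled-back bundle $\mathrm{pr}^\ast E\to M\times I$. It carries the pulled-back $G$-structure $\mathrm{pr}^\ast\sigma$. Take the straight-line homotopy
\[S(x,t)=(1-t)s_0(x)+ts_1(x),\]
which is a section of $\mathrm{pr}^\ast E$. It is already transversal to the zero section near $M\times\{0,1\}$, because $s_0,s_1$ are transversal there; to guarantee this, first reparametrize $t$ so that $S$ is constant in $t$ near the ends. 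By the relative transversality theorem, perturb $S$ rel a neighbourhood of $M\times\{0,1\}$ to a section $\widetilde S$ transversal to the zero section.

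Next, set $W=\widetilde S^{-1}(0)\subset M\times I$. This is a compact smooth $(k+1)$-manifold with
\[\partial W=Z_0\times\{0\}\sqcup Z_1\times\{1\},\]
and it meets the boundary $M\times\{0,1\}$ transversally, thanks to the collar condition. As in the construction of $[Z,i_Z,\sigma|_Z]$, the differential gives a bundle isomorphism
\[d\widetilde S|_{\nu(W)}\colon\nu(W)\xrightarrow{\cong}(\mathrm{pr}^\ast E)|_W.\]
Through it, $W$ acquires a normal $G$-structure $\tau=(\mathrm{pr}^\ast\sigma)|_W$.

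The step needing care is checking that $\tau$ restricts on the two ends to $\sigma|_{Z_0}$ and to $\sigma|_{Z_1}$ with the correct orientation convention, so that $\partial W=Z_0\sqcup -Z_1$ as $G$-manifolds. I expect this to be the main, though mild, obstacle. It is handled by the collar: near $t=j$ the section is $t$-independent, so there $\nu(W)=\mathrm{pr}^\ast\nu(Z_j)$ and $d\widetilde S$ restricts to $ds_j$, compatibly with the identifications. The outward normal at $t=0$ versus $t=1$ produces the orientation reversal on one end, exactly as in the definition of $G$-bordism. Because $\sigma$ is pulled back along $\mathrm{pr}$, the $G$-structures agree on the nose and not merely up to homotopy.

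Hence $(W,\mathrm{incl},\tau)$ is a $G$-bordism in $M\times I$ between $(Z_0,i_{Z_0},\sigma|_{Z_0})$ and $(Z_1,i_{Z_1},\sigma|_{Z_1})$, giving
\[[Z_0,i_{Z_0},\sigma|_{Z_0}]=[Z_1,i_{Z_1},\sigma|_{Z_1}]\in\Omega_k^G(M).\]
Finally, we must also check independence of the $G$-structure representative. Homotopic classifying maps give isomorphic pulled-back structures over $M\times I$, so the same argument, applied with a homotopy of $G$-structures in place of $\mathrm{pr}^\ast\sigma$, covers this as well.
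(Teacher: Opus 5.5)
Your proposal is correct and follows essentially the same route as the paper: you connect the two sections by a homotopy viewed as a section of $\mathrm{pr}^\ast E$ over $M\times I$, make it transversal rel the ends, and take its zero locus $W$, with the normal $G$-structure transported through $d\widetilde S|_{\nu(W)}\colon \nu(W)\cong(\mathrm{pr}^\ast E)|_W$. Your collar reparametrization only makes explicit the boundary compatibility that the paper asserts ``by construction''; your final remark about changing the representative of the $G$-structure goes beyond what the lemma asks and is harmless.
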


\begin{proof}
	Let $s'\colon M\to E$ be another transversal section and denote the corresponding $G$-bordism class of the zero locus by $[Z',i_{Z'},\sigma|_{Z'}]$. Since the space of sections of a vector bundle is contractible, there is a smooth homotopy $H\colon M\times I\to E$ such that 
	\[H(x,0)=s(x),\quad H(x,1)=s'(x).\]
Let $\mathrm{pr}\colon M\times I\to M$ be the projection, then $H$ is a section of the pullback bundle $\mathrm{pr}^\ast(E)$.  Using the transversality extension theorem (see  \cite[Theorem 6.2.12]{Mukh15}), we can homotope $H$ to a transversal section $M\times I\to \mathrm{pr}^\ast(E)$ such that $H|_{M\times \partial I}=s\sqcup s'$.
Then the zero locus $W=H^{-1}(0)\subseteq M\times I$ is a compact smooth $(k+1)$-manifold with boundary $\partial W=Z\sqcup -Z'$. Let $i_W\colon W\hookrightarrow M\times I$ be the inclusion, then the normal bundle $\nu(W)$ inherits a normal $G$-structure $\sigma|_W$ from $E$ by the isomorphism
\[dH|_{\nu(W)}\colon \nu(W)\to \mathrm{pr}^\ast(E)|_W.\]
By construction, $\sigma|_W$ restricts on the boundary to the given normal $G$-structures $\sigma|_Z$ and $\sigma|_{Z'}$. Thus $(W,i_W,\sigma|_W)$ is a normal $G$-bordism between $(Z,i_Z,\sigma|_Z)$ and $(Z',i_{Z'},\sigma|_{Z'})$. 
\end{proof}

%Recall the bordism invariants $\kappa_G$ in (\ref{eq:kappa}) and (\ref{eq:kappa2}):
%\[\kappa_G=\left\{\begin{array}{ll}
%	\kappa_1\colon \Omega_1^{\Spin}\to \pi_1^S & \text{ for }G=\Spin, k=1;\\[1ex]
%	\kappa_2\colon \Omega_2^{\Spin}\to \pi_2^S&  \text{ for }G=\Spin, k=2;\\[1ex]
%	\kappa_3\colon \Omega_3^{\String}\to \pi_3^S& \text{ for }G=\String, k=3.
%\end{array}\right.\]

\begin{definition}\label{def:G-divisor}
	Let $E\to M$ be a $G$-vector bundle as above. The \emph{$G$-divisor} $\kappa_G(E)$ of $E$ is defined by
	\[\kappa_G(E)\coloneqq c_\ast\big([Z,i_Z,\sigma|_Z]\big)=[Z,\sigma|_Z]\in \Omega_k^G.\]
	For $k=2$ and $G=\Spin$, we define the \emph{spin defect class} of $E$ by
	\[\theta_{\Spin}(E)\coloneqq r\big([Z,i_Z,\sigma|_Z]\big)=(i_Z)_\ast\big(\PD_\ast([Z,i_Z,\alpha_Z])\big),\] 
	where $\alpha_Z\in H^1(Z;\z{})$ is the class such that $\sigma|_Z=\sigma_0+\alpha_Z$ with $\sigma_0$ the canonical spin structure on $Z$; see Remark \ref{rm:retraction} and (\ref{eq:retraction}).  For $k=1,3$, set $\delta_G(E)=0$.
  
\end{definition}

By  \cite[Proposition 12.8]{BT89}, the ordinary Euler class $e(E)$ is precisely the Poincar\'e dual of
	$(i_Z)_\ast([Z])$, that is,
	\begin{equation}\label{eq:Eulercls-PD}
		e(E)=\mathrm{PD}\big((i_Z)_\ast([Z])\big),
	\end{equation}
	where $[Z]\in H_k(M;\Z)$ is the ordinary fundamental class of $Z$.
We note that analogous arguments to that of  \cite[Proposition~12.8]{BT89} prove the
	following analogues of \eqref{eq:Eulercls-PD} in generalized cohomology:
	\begin{equation}\label{eq:G-Eulercls-PD}
		\begin{aligned}
			e_{\Spin}(E)
			&=\mathrm{PD}_{\mathbf{ko}}\big((i_Z)_\ast([Z]_{\mathbf{ko}})\big), \\
			e_{\String}(E)
			&=\mathrm{PD}_{\mathbf{tmf}}\big((i_Z)_\ast([Z]_{\mathbf{tmf}})\big),
		\end{aligned}
	\end{equation}
where $[Z]_{\mathbf{E}_{G(k)}}$ denotes the fundamental class (or $\mathbf{E}_{G(k)}$-orientation) of the spin or string $k$-manifold $Z$ in the generalized homology theory represented by the spectrum $\mathbf{E}_{G(k)}$.

\begin{proposition}\label{prop:G-H-Eulercls}
	Let $k=1,2,3$, and let $(G,H)=(G(k),H(k))$ be the pair of groups in Table \ref{table:GH-k}. 
	Let $M$ be a closed $G$-manifold of dimension $n+k$ with $n\geq k+2$ and let $E\to M$ be a $G$-vector bundle of rank $n$. Then $e_G(E)=0$ if and only if 
	\[\kappa_G(E)=0, ~ \delta_G(E)=0, \text{ and } e_{H}(E)=0.\]
\end{proposition}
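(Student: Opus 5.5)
The plan is to translate the vanishing of $e_G(E)$ into the vanishing of the bordism class $[Z,i_Z,\sigma|_Z]\in\Omega_k^G(M)$, and then to read off its components from the split short exact sequences established earlier. First I use the generalized Poincaré duality formula \eqref{eq:G-Eulercls-PD}. It gives $e_G(E)=\PD_{\mathbf{E}_G}\big((i_Z)_\ast[Z]_{\mathbf{E}_G}\big)$, and Poincaré duality for the $\mathbf{E}_G$-oriented manifold $M$ is an isomorphism. Hence $e_G(E)=0$ if and only if $h_k([Z,i_Z,\sigma|_Z])=(i_Z)_\ast[Z]_{\mathbf{E}_G}=0$ in $h^G_k(M)$. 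By Lemma \ref{lem:h_kG}, applied with $l=k$ (here $k\le 2\le 7$ for $\Spin$ and $k=3\le 15$ for $\String$), $h_k$ is an isomorphism. So $e_G(E)=0$ if and only if $[Z,i_Z,\sigma|_Z]=0\in\Omega_k^G(M)$.

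Next I decompose $\Omega_k^G(M)$ according to the value of $k$.

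\emph{Cases $k=1$ and $k=3$.} Here $\Omega_k^H=0$ (indeed $\Omega_1^{\SO}=0$ and $\Omega_3^{\Spin}=0$) and $d_G=k+1$. So Theorem \ref{thm:SES-G-H} gives the canonical splitting $\Omega_k^G(M)\cong\Omega_k^G\oplus\Omega_k^H(M)$ via $(c_\ast,h_k)$, where $h_k$ is the forgetful map. The first component of $[Z,i_Z,\sigma|_Z]$ is $\kappa_G(E)$. For the second component, the forgetful image $[Z,i_Z,\sigma_H]$ corresponds under the isomorphism of Lemma \ref{lem:h_kG} (for $H$, with $l=k$) to $(i_Z)_\ast[Z]_{\mathbf{E}_H}$. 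By the $H$-version of \eqref{eq:Eulercls-PD}/\eqref{eq:G-Eulercls-PD}, this is Poincaré dual to $e_H(E)$. In these cases $\delta_G(E)=0$ by definition, so the criterion reads $\kappa_G(E)=0$ and $e_H(E)=0$.

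\emph{Case $k=2$, $G=\Spin$.} I use the canonical isomorphism $(h_2,r,c_\ast)$ of Remark \ref{rm:retraction}, which comes from Theorem \ref{thm:spin-bdsm} and Lemma \ref{lem:SO-H2}. The three components of $[Z,i_Z,\sigma|_Z]$ are as follows:
\begin{itemize}
\item $h_2([Z,i_Z,\sigma|_Z])=(i_Z)_\ast[Z]$, which is Poincaré dual to $e(E)=e_{\SO}(E)$ by \eqref{eq:Eulercls-PD};
\item $r([Z,i_Z,\sigma|_Z])$, which is by definition the defect class $\delta_{\Spin}(E)$, computed via \eqref{eq:retraction};
\item $c_\ast([Z,i_Z,\sigma|_Z])=\kappa_{\Spin}(E)$.
\end{itemize}
So the class vanishes if and only if all three of $e(E)$, $\delta_{\Spin}(E)$ and $\kappa_{\Spin}(E)$ vanish.

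The main obstacle is justifying \eqref{eq:G-Eulercls-PD} and its compatibility with $h_k$. One must check that the $\mathbf{ko}$- and $\mathbf{tmf}$-fundamental classes of $Z$ are those induced by the normal $G$-structure $\sigma|_Z$ together with the $G$-structure on $M$. Equivalently, the Thom class $u_G(E)$ pulled back along $s$ should be the Pontryagin–Thom collapse onto $Z$ composed with the orientation $\mathbf{MG}\to\mathbf{E}_G$. I would argue via naturality of the orientations in diagram \eqref{diag:tmf-ko-hz}. The point is that the transverse section $s\colon M\to\mathrm{T}(E)$ is homotopic to the collapse map onto the Thom space of $\nu(Z)\cong E|_Z$, and pulling back $u_G(E)$ along this collapse gives exactly the Pontryagin–Thom class of $[Z,i_Z,\sigma|_Z]$ mapped to $\mathbf{E}_G$-cohomology, which Poincaré duality identifies with $(i_Z)_\ast[Z]_{\mathbf{E}_G}$. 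A secondary point is that the $H$-Euler class $e_H(E)$ requires $E$ to carry the induced $H$-structure, which holds because the $G$-structure forgets to an $H$-structure.
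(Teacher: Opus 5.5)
Your proposal is correct and takes essentially the same route as the paper. You reduce $e_G(E)=0$ to $[Z,i_Z,\sigma|_Z]=0\in\Omega_k^G(M)$ via \eqref{eq:G-Eulercls-PD} and Lemma~\ref{lem:h_kG}. You then read off the components using Theorem~\ref{thm:SES-G-H} for $k=1,3$, and Theorem~\ref{thm:spin-bdsm} through the isomorphism $(h_2,r,c_\ast)$ of Remark~\ref{rm:retraction} for $k=2$. Finally, you identify the $H$-component with $e_H(E)$ by Lemma~\ref{lem:h_kG} applied to $H$. Your extra sketch of why \eqref{eq:G-Eulercls-PD} holds (the transverse section is homotopic to the Pontryagin--Thom collapse onto $\mathrm{T}(\nu(Z))\cong\mathrm{T}(E|_Z)$) fills in a step that the paper only asserts by analogy with Bott--Tu.
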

\begin{proof}
The formulas for $e_G(E)$ in (\ref{eq:G-Eulercls-PD}) imply that 
	\[e_G(E)=0\iff (i_{Z})_\ast([Z]_{\mathbf{E}_{G}})=0\in h_k^G(M),\]   
	which is equivalent to $[Z,i_Z,\sigma|_Z]=0\in \Omega_k^G(M)$ by Lemma \ref{lem:h_kG}. By Theorem \ref{thm:SES-G-H} for $k=1,3$ and Theorem \ref{thm:spin-bdsm} for $k=2$, we have   
	\begin{equation*}
		[Z,i_Z,\sigma|_Z]=0\iff \kappa_G(E)=0,~ \delta_G(E)=0 \text{ and }[Z,i_Z,\sigma_H|_Z]=0\in \Omega_k^H(M),
	\end{equation*} 
	where $\sigma_H$ is the normal $H$-structure on $E$ induced by the normal $G$-structure $\sigma$. On the other hand, combining Lemma \ref{lem:h_kG} and the formula for $e_H(E)$ in (\ref{eq:Eulercls-PD}) or (\ref{eq:G-Eulercls-PD}) yields the equivalences
	\[[Z,i_Z,\sigma_H|_Z]=0\iff (i_Z)_\ast([Z]_{\mathbf{E}_H})=0 \iff e_H(E)=0. \] 
	Thus $e_G(E)=0$ if and only if $\kappa_G(E)=0$, $\delta_G(E)=0$ and $e_H(E)=0$. 
	The proof is completed.
\end{proof}

\begin{proof}[Proof of Theorem \ref{mainthm:appl}]
	Combine Theorem \ref{thm:nonzero-section} and Proposition \ref{prop:G-H-Eulercls}.
\end{proof}

\medskip

\noindent\textbf{Funding.}
%The authors would like to thank the anonymous referee for their careful reading and valuable comments that improved the quality of this paper.
This work was supported by the National Natural Science Foundation of China (grant number 12571075) and the High-level Scientific Research Foundation of Hebei Province.
%Jianzhong Pan were partially supported by the National Natural Science Foundation of China (Grant No. 11971461) and Jie Wu was partially supported by the High-level Scientific Research Foundation of Hebei Province.

%\noindent\textbf{Acknowledgements.}

\bibliography{cohtp}
\bibliographystyle{amsplain}

\end{document}